\renewcommand*{\backref}[1]{}
\newcolumntype{C}{>{$}c<{$}}
\def\th@plain{
  \thm@notefont{}
  \itshape 
}
\def\th@definition{
  \thm@notefont{}
  \normalfont 
}
\newtheorem{theorem}{Theorem}[section]
\newtheorem{corollary}[theorem]{Corollary}
\newtheorem{proposition}[theorem]{Proposition}
\newtheorem{lemma}[theorem]{Lemma}
\theoremstyle{definition}
\newtheorem{definition}[theorem]{Definition}
\newtheorem{example}[theorem]{Example}
\newtheorem{remark}[theorem]{Remark}
\newtheorem{notation}[theorem]{Notation}
\DeclarePairedDelimiter{\abs}\lvert\rvert
\DeclarePairedDelimiter{\norm}\lVert\rVert
\renewcommand{\d}[1][t]{\ensuremath{\left.\frac{d}{d#1}\right|_{#1=0}}}
\newcommand\restr[2]{{
  \left.\kern-\nulldelimiterspace 
  #1 
  \vphantom{\big|} 
  \right|_{#2} 
  }}
\newcommand{\sn}[3]{\ensuremath{\norm{#3}_{H^{#1}(#2)}}}
\newcommand{\fourier}[1]{\ensuremath{\sum_{\tau\in\hat K_M}\pi_{Y_\tau}(\pi_{Y_\tau}^*(#1))}}
\newcommand*\bigcdot{\mathpalette\bigcdot@{.5}}
\newcommand*\bigcdott{\mathpalette\bigcdot@{.7}}
\newcommand*\bigcdot@[2]{\mathbin{\vcenter{\hbox{\scalebox{#2}{$\m@th#1\bullet$}}}}}
\g@addto@macro\bfseries{\boldmath}
\newcommand{\lra}{\ensuremath{\leftrightarrow}}
\newcommand{\lraomega}{\ensuremath{\xleftrightarrow\omega}}
\newcommand{\notleftrightomega}{\mathrel{\ooalign{$\xleftrightarrow\omega$\cr\hidewidth$/$\hidewidth}}}
\newcommand{\Ad}{\ensuremath{\on{Ad}}}
\newcommand{\hyp}[4]{\ensuremath{F\left(#1,#2,#3,#4\right)}}
\newcommand{\mb}[1]{\ensuremath{\mathbb{#1}}}
\newcommand{\mc}[1]{\ensuremath{\mathcal{#1}}}
\newcommand{\mf}[1]{\ensuremath{\mathfrak{#1}}}
\newcommand{\on}[1]{\ensuremath{\operatorname{#1}}}
\newcommand{\Hom}[3]{\ensuremath{\on{Hom}_{#1}(#2,#3)}}
\newcommand{\rhoa}{\ensuremath{\rho}}
\newcommand{\rhoc}{\ensuremath{\rho_c}}
\newcommand{\R}{\ensuremath{\mathbb{R}}}
\newcommand{\N}{\ensuremath{\mathbb{N}}}
\newcommand{\Z}{\ensuremath{\mathbb{Z}}}
\newcommand{\C}{\ensuremath{\mathbb{C}}}
\renewcommand{\H}{\ensuremath{\mathbb{H}}}
\newcommand{\ps}[2]{\ensuremath{H_{#1,#2}}}
\newcommand{\pss}[2]{\ensuremath{H_{#1,#2}^\infty}}
\newcommand{\psd}[2]{\ensuremath{H_{#1,#2}^{-\infty}}}
\newcommand{\psc}[2]{\ensuremath{H_{#1,#2}^{\mathrm{cpt}}}}
\newcommand{\pst}[1]{\ensuremath{H_{#1}}}
\newcommand{\pstd}[1]{\ensuremath{H_{#1}^{-\infty}}}
\newcommand{\I}{\ensuremath{\mathbf{I}}}
\newcommand{\Ck}[1]{\ensuremath{k_C(#1)}}
\newcommand{\Cp}[1]{\ensuremath{p_C(#1)}}
\newcommand{\Ik}[1]{\ensuremath{k_I(#1)}}
\newcommand{\In}[1]{\ensuremath{n_I(#1)}}
\newcommand{\SO}[1]{\ensuremath{\mathrm{SO}(#1)}}
\newcommand{\GSO}[1]{\ensuremath{\mathrm{SO}_0(#1,1)}}
\newcommand{\GSU}[1]{\ensuremath{\mathrm{SU}(#1,1)}}
\newcommand{\GSp}[1]{\ensuremath{\mathrm{Sp}(#1,1)}}
\newcommand{\GF}{\ensuremath{\mathrm{F}_4}}
\newcommand{\intd}{\ensuremath{\,\mathrm{d}}}
\newcommand{\ov}[1]{\ensuremath{\overline{#1}}}
\date{\today}
\begin{document}
\pagenumbering{arabic}
\title[Spectral Correspondences for Rank One Locally Symmetric Spaces]{Spectral Correspondences for Rank One Locally Symmetric Spaces - The Case of Exceptional Parameters}
\author[Arends]{Christian Arends}
\address{Institut f\"ur Mathematik, Universit\"at Paderborn, Warburger Str. 100,
        33098 Paderborn, Germany}
        \email{arendsc@math.upb.de}
\author[Hilgert]{Joachim Hilgert}
\address{Institut f\"ur Mathematik, Universit\"at Paderborn, Warburger Str. 100,
        33098 Paderborn, Germany}
        \email{hilgert@math.upb.de}
\maketitle 

\begin{abstract}
In this paper we complete the program of relating the Laplace spectrum for rank one compact locally symmetric spaces with the first band Ruelle-Pollicott resonances of the geodesic flow on its sphere bundle. This program was started in \cite{FF03} by Flaminio and Forni for hyperbolic surfaces, continued in \cite{DFG} for real hyperbolic spaces and in \cite{GHWb} for general rank one spaces. Except for the case of hyperbolic surfaces (see also \cite{GHWa}) a countable set of exceptional spectral parameters always left untreated since the corresponding Poisson transforms are neither injective nor surjective. We use vector valued Poisson transforms to treat also the exceptional spectral parameters. For surfaces the exceptional spectral parameters lead to discrete series representations of $\mathrm{SL}(2,\mathbb R)$ (see \cite{FF03, GHWa}). In higher dimensions the situation is more complicated, but can be described completely. 
\end{abstract}

\section{Introduction}
Dynamical systems with additional symmetry are surprisingly rigid. One manifestation of this observation is the close connection between geodesic flows on locally symmetric spaces and their quantizations, the Laplace-Beltrami wave kernels. This was first observed for tori in the form of the Poisson summation formula and its non-commutative analog, the Selberg trace formula, where the length spectrum of closed geodesics and the spectrum of the Laplacian enter. In specific cases correspondences on the level of eigenfunctions were established about twenty years ago \cite{LZ01,FF03,DH05,M06,Poh12}.

In \cite{DFG} Dyatlov, Faure and Guillarmou showed that the spectrum of the geodesic flow on compact hyperbolic manifolds essentially decomposes into bands, the first of which is in one to one correspondence with the Laplace spectrum. For these spectral values they also constructed linear isomorphisms between the corresponding eigenspaces.  In this context  \emph{essentially} means that there is a countable set of explicitly known spectral values for which the methods do not apply. 

In \cite{GHWa} the very explicit information available for hyperbolic surfaces was used to establish spectral correspondences also for the exceptional spectral values. In these cases the quantum side turns out to be related to the discrete series representations of $\mathrm{SL}(2,\R)$, whereas the regular spectral values were related to irreducible unitary spherical principal series representations. 

The theory of quantum-classical spectral correspondences with spherical principal series representations on the quantum side was extended to all rank one compact locally symmetric spaces in \cite{GHWb}. In this paper we complete the program for these spaces by establishing quantum-classical spectral correspondences on the level of eigenvectors for all exceptional spectral values.

We describe the setting in a little more detail. Let $G$ be a non-compact simple Lie group of real rank one and $\Gamma$ be a co-compact discrete subgroup of $G$. For simplicity we assume that $G$ has finite center and $\Gamma$ is torsion free. We fix a maximal compact subgroup $K$ and observe that the locally symmetric space $\Gamma\backslash G/K$ is a compact Riemannian manifold. Therefore its (elliptic) Laplace-Beltrami operator has discrete spectrum on $L^2(\Gamma\backslash G/K)$ with smooth eigenfunctions lifting to $\Gamma$-invariant eigenfunctions on $G/K$. Note that on $G/K$ the Laplace-Beltrami operator comes from a Casimir element and generates the algebra of $G$-invariant differential operators. For generic spectral parameters $\mu$ the eigenfunctions generate an irreducible $G$-representation which is equivalent to a spherical principal series representation $H_\mu$. The corresponding intertwiner is the Poisson transform $P_\mu$. So, generically the Laplace-Beltrami eigenspaces $^\Gamma E_{-\mu}$ can be identified with the $\Gamma$-invariant distribution vectors $^\Gamma \pstd{\mu}$ in the corresponding spherical principal series representation, where the normalization of the spectral parameters is taken from \cite{GHWb}. 

The word \emph{generic} in the previous paragraph can be given a precise meaning. Let  $\mathfrak g_0$ be the Lie algebra of $G$ and $\mathfrak g$ the complexification of $\mathfrak g_0$ (we use the analogous convention for all subspaces of $\mathfrak g_0$). The eigenvalues of the Laplace-Beltrami operator on $G/K$ are parameterized by elements of $\mathfrak a^*$ via the Harish-Chandra isomorphism, where $\mathfrak g=\mathfrak k+\mathfrak p$ is the Cartan decomposition of the Lie algebra $\mathfrak g$ fixed by the choice of $K$ and $\mathfrak a_0$ is a maximal abelian subspace of $\mathfrak p_0$. The parameters are unique up to the action of the Weyl group $W=N_K(\mathfrak a)/Z_K(\mathfrak a)$. A spectral parameter $\mu$ is \emph{generic} if and only if it is \emph{not} a zero of the Harish-Chandra $\mathbf e$-function which in turn is equivalent to the bijectivity of the intertwining Poisson transform $P_\mu$. Thus the exceptional parameters alluded to in the title of the paper are the zeros of the $\mathbf e$-function.

In the case of compact hyperbolic surfaces (see \cite{GHWa}) the exceptional spectral parameters are related to discrete series representations, which can be realized as smooth (in fact, holomorphic or anti-holomorphic) sections of certain $G$-homogeneous vector bundles over $G/K$. In these spaces of sections one has the action of a suitable \emph{Bochner-Laplace operator} (see \cite[Lemma~2.2]{Ol}). 
While these representations are no longer completely determined by the action of the Bochner-Laplacian, they are still irreducible unitary representations of $G$ obtained by a suitable vector valued Poisson transform. This part can be generalized and we view the $\Gamma$-invariant sections, which descend to the locally symmetric space, as part of the quantization of the cotangent bundle of this space.   

The cotangent bundle $T^*(\Gamma\backslash G/K)=\Gamma\backslash G\times_K\mathfrak p_0^*$ of $\Gamma\backslash G/K$ is foliated into the cosphere bundles $\Gamma\backslash G/Z_K(\mathfrak a)\times \{r\}$ with $r\in \mathfrak a_0^*\equiv \mathbb R$ determining the radius and the zero section $\Gamma\backslash G/K$. Each leaf of the foliation is invariant under the geodesic flow. On the zero section it is trivial, whereas on the cosphere bundles it is given by the right action $\Gamma\backslash G/M\times A\to \Gamma\backslash G/M,\  (gM,a)\mapsto gaM$, where we use the standard abbreviation $M$ for the centralizer $Z_K(\mathfrak a)$ and set $A=\exp(\mathfrak a_0)$. This decomposition reduces the spectral analysis of the geodesic flow to the $A$-action on $\Gamma\backslash G/M$. This action is Anosov as one sees from the Bruhat decomposition $T(\Gamma\backslash G/M)=G\times_M(\mathfrak n_0^++\mathfrak a_0+\mathfrak n_0^-)$, where $\mathfrak g_0=\mathfrak k_0+\mathfrak a_0+\mathfrak n_0^\pm$ are the two Iwasawa decompositions of $\mathfrak g_0$ associated with the two possible orderings of the set $\Sigma$ of restricted roots in $\mathfrak a_0^*$. The approach to Ruelle-Pollicott resonances for the geodesic flow used in \cite{GHWb} makes use of the set $\mathcal D_+'(\Gamma\backslash G/M)$ consisting of the distributions $u\in \mathcal D'(\Gamma\backslash G/M)$ whose wavefront set  $\mathrm{WF}(u)$ is contained in the annihilator $\Gamma\backslash G\times_M (\mathfrak n_0^++\mathfrak a_0)^\perp\subseteq T^*(\Gamma\backslash G/M)$. Then the set of \emph{resonant states} for the spectral parameter $\mu\in\mathfrak a^*$ is defined as
$$\mathrm{Res}(\mu):= \{u\in \mathcal D_+'(\Gamma\backslash G/M)\mid \forall H\in\mathfrak a_0:\ H\cdot u+ \mu(H)u=0\},$$
where $H$ acts as a left-invariant vector field on $G/M$ descending to $\Gamma\backslash G/M$. A spectral parameter $\mu\in\mathfrak a^*$ is called a \emph{Ruelle-Pollicott resonance} if $\mathrm{Res}(\mu)\not=0$. The Ruelle-Pollicott resonances form a discrete set and the corresponding spaces of resonant states are finite dimensional. A \emph{first band resonant state} is a resonant state $u$ which satisfies $X\cdot u =0$, where $X$ is any vector field on $\Gamma\backslash G/M$ which is a section of the subbundle $G\times_M \mathfrak n_0^-\subseteq T(\Gamma\backslash G/M)$. We denote the space of first band resonant states for the spectral parameter $\mu\in\mathfrak a^*$ by $\mathrm{Res}^0(\mu)$. In the case of generic spectral parameters the \emph{quantum-classical spectral correspondence} says that the push-forward of the canonical projection $\mathrm{pr}: \Gamma\backslash G/M\to \Gamma\backslash G/K$ is a linear isomorphism $\pi_*: \mathrm{Res}^0(\mu-\rho) \to {}^\Gamma E_\mu$, where $\rho\in\mathfrak a_0^*$ is the usual half-sum of positive restricted roots counted with multiplicity (see \cite[Thm.\@ 4.5]{GHWb}). 

The strategy for our extension of the quantum-classical correspondence to exceptional spectral parameters is as follows. As in the generic case (see \cite[\S~3.2]{GHWb}) we start by lifting the first band Ruelle-Pollicott resonances to $\Gamma$-invariant distributions on the global symmetric space. The lifted spaces can be interpreted in terms of spherical principal series (that part works for all spectral parameters, see \cite[Prop.~3.8]{GHWb}) and the first band resonant states $\mathrm{Res}^0(-\mu-\rho)$ correspond to the space $^\Gamma H_\mu^{-\infty}$ of $\Gamma$-invariant distribution vectors of the corresponding principal series. For an exceptional spectral parameter $\mu$ the corresponding principal series $H_\mu$ is no longer irreducible. But it has a managable composition series and it turns out that the $\Gamma$-invariant distribution vectors are all contained in the socle (i.e. the sum of all irreducible subrepresentations) of the representation, see Thm.~\ref{thm:Gamma_inv_ps}. In each of the rank one cases except $\mathrm{SO}_0(2,1)$ (the case of surfaces, see \cite{GHWa}) the socle turns out to be irreducible with a unique minimal $K$-type $\tau_\mu$ (see Thm.~\ref{thm:socle}) and we can show that the vector valued Poisson transform associated with this $K$-type (sum of $K$-types in the case of surfaces) is injective, see Prop.~\ref{prop:Poisson_sum}. The image consists of spaces of $\Gamma$-invariant sections of vector bundles over $\Gamma\backslash G/K$ and we have a quantum-classical correspondence as soon as we have characterized the image of this Poisson transform.

We achieve the characterization of the image of the minimal $K$-type Poisson transform via Fourier expansions of $M$-invariant functions with respect to $M$-spherical $K$-representations. More precisely, we determine necessary and sufficient conditions for a Fourier series to represent a distribution vector of the reducible spherical principal series $H_\mu$, see Thm.~\ref{thm:fourier_char}, where the conditions are given in terms of generalized gradients (see \cite{BransonOlafsson}). In each of the cases it is possible to  determine a $G$-invariant system of differential equations on the sections of the homogeneous bundle $G\times_K V_{\tau_\mu}$ given by the minimal $K$-type $(\tau_\mu,V_{\tau_\mu})$ of the socle such that on the space of $\Gamma$-invariant solutions we can write down an explicit boundary value on $K/M$ in terms of Fourier coefficients, see Thms.~\ref{thm:spectral_corres_SUn2}, \ref{thm:spectral_corres_Spn} and \ref{thm:spectral_corres_F4}. Then our Fourier characterization of $H_\mu^{-\infty}$ allows us to show that the boundary values are contained in $^\Gamma H_\mu^{-\infty}$. In the case of $\mathrm{SO}_0(n,1)$ and for most exceptional spectral parameters in the case of $\mathrm{SU}(n,1)$ we have an alternative (and simpler) characterization of the vector valued Poisson transform, which is based on techniques developed in \cite{M89} to study Cauchy-Szegö maps for $\mathrm{SU}(n,1)$, see Thms.~\ref{thm:spectral_corres_SOn} and \ref{thm:spectral_corres_SUn1}.   

We can explicitly determine the socle of all reducible spherical principal series representations in rank one (see Theorem \ref{thm:socle}), and we see that the surface case, which so far was the only one known, is quite untypical. Not only is it the only case where the socle is not irreducible, it is also one of the very few cases in which the representation generated by the resonant states belongs to the discrete series. This is only the case for $\mathrm{SO}_0(2,1)$ (surfaces), $\mathrm{SU}(2,1)$, $\mathrm{Sp}(2,1)$ and $F_4$, see Theorem \ref{thm:langlands}. On the other hand it turns out that all of these representations are unitarizable, see Theorem \ref{thm:socle}. We can determine the Langlands parameters (see Theorem \ref{thm:langlands}), and in some cases geometric realizations, e.g. as solution spaces of differential equations are well-known (see \cite{Ol,G88}). But for most cases we did not find such descriptions in the literature, so that our geometric realization as solution spaces of differential equations describing the  images of minimal $K$-type Poisson transforms might actually be new.

As mentioned above, our results complete the picture of first band quantum-classical correspondences for compact locally symmetric spaces of rank one. In higher rank an analogous  quantum-classical correspondence for generic spectral parameters has been established in \cite{HWW21}. Extending that result to exceptional spectral parameters will be substantially harder as the information available on composition series of spherical principal series is much less explicit in higher rank. Moreover, some of the multiplicity one results we use (Props.\@ \ref{prop:multone}, \ref{prop:soc_multone_rank_one}, \ref{prop:tensor_decomp_eq_rk}) or prove (Prop.\@ \ref{prop:mult_one}) here are not always available in higher rank. As far as non-compact locally symmetric spaces are concerned, one has to replace the (discrete) spectrum of the algebra of invariant differential operators by a suitable concept of quantum resonances. So far one only has quantum-classical correspondences for convex co-compact real hyperbolic spaces and, for dimensions larger than two, only generic spectral parameters \cite{GHWa,Ha20}. For locally symmetric spaces with cusps the results on record are either very special (e.\@g.\@ \cite{LZ01,M06}) or else give only very rough information (e.\@g.\@ \cite{DH05}). In view of \cite{GW17,Poh12}, however, a quantum-classical correspondence for surfaces seems to be within reach. Finally, we mention \cite{KW21}, where quantum-classical correspondences for lifts of geodesic flows on compact locally symmetric spaces of rank one are treated for generic spectral parameters. That exceptional spectral parameters occur also in such situations can be seen from \cite{KW20}, where the authors have to leave out the case of three dimensional hyperbolic spaces because the Gaillard Poisson transform they use is not bijective.

We conclude this introduction with a brief description of the way the paper is structured. In Section \ref{sec:red_princ_series} we collect the information on principal series representations and their $K$-types. In Section \ref{sec:PT} we recall the scalar Poisson transforms for symmetric spaces and introduce the minimal $K$-type Poisson transforms. In Section~\ref{sec:Gamma_inv} we show that $\Gamma$-invariant distribution vectors in principal series representations have to be contained in the socle of the representation. Moreover, we determine the socles and their minimal $K$-types in all cases. In Section \ref{sec:Fourier} we study Fourier expansions of $M$-invariant  functions with respect to $M$-spherical  $K$-representations. Apart from convergence issues we deal with the technicalities needed to characterize the spherical principal series representations in terms of Fourier expansions. In Section \ref{sec:spec_corres} we complete the determination of the spectral correspondences by describing the $\Gamma$-invariant vectors in the image of the minimal $K$-type Poisson transform. Appendix~\ref{app:comp_scalars} is devoted to the case by case calculations we could not avoid in proving the technical results of Sections~\ref{sec:Gamma_inv} and \ref{sec:Fourier}.

\section{Reducible principal series}\label{sec:red_princ_series}
In this section we recall the main facts about principal series representations we use in this paper. 
\subsection{Basic notation}
Let $G$ be a noncompact, connected, real, semisimple Lie group with finite center and $\Gamma\leq G$ a co-compact, torsion free lattice. We denote the Iwasawa decomposition of G by $G=KAN$. The $K$-, $A$-, or $N$-component in the Iwasawa decomposition is denoted by $k_I$, $a_I$, or $n_I$, respectively. Let $M\coloneqq Z_K(A)$ denote the centralizer of $A$ in $K$. The corresponding Lie algebras will be denoted by $\mf{g}_0,\mf{k}_0,\mf{a}_0,\mf{n}_0,\mf{m}_0$ with complexifications $\mf g,\mf k,\mf a,\mf n,\mf m$. Moreover, let $\mf g=\mf k\oplus\mf p$ be the Cartan decomposition and denote the corresponding Cartan involution by $\theta$. Associated with the $\mf a_0$-action we define the restricted root spaces $\mf g^\alpha$ corresponding to the restricted roots $\Sigma\subset\mf a_0^*$. Furthermore, we have the Bruhat decomposition given by $\mf g_0=\mf a_0\oplus\mf m_0\oplus\bigoplus_{\alpha\in\Sigma}\mf g^\alpha$. The Iwasawa decomposition determines a positive system $\Sigma^+\subset\Sigma$. The half-sum of positive roots is denoted by $\rhoa\coloneqq\frac{1}{2}\sum_{\alpha\in\Sigma^+}m_\alpha\alpha$ with the multiplicities $m_\alpha\coloneqq\dim_\R\mf g^\alpha$. If $\log:A\to\mf a_0$ denotes the logarithm on $A$ and $\mu\in\mf a^*$ we define $a^\mu\coloneqq e^{\mu(\log a)}$. By $\hat K$ (resp.\@ $\hat G,\, \hat M$) we denote the equivalence classes of irreducible unitary representations of $K$ (resp.\@ $G,\, M$). The Weyl group of $(\mf g_0,\mf a_0)$ is denoted by $W$. Let $\kappa$ denote the Killing form of $\mf g$ and $\mc U(\mf g)$ denote the universal enveloping algebra of $\mf g$. For $H\in\{K,M\}$ and a finite-dimensional representation $(\tau,Y)$ of $H$ we define the associated vector bundle $G\times_HY$ as the quotient $(G\times Y)/\sim$, where
\begin{gather*}
\forall g\in G,\, h\in H,\, v\in Y\colon (g,v)\sim(gh,\tau(h^{-1})v).
\end{gather*}
We always identify the space of smooth sections of this bundle with
\begin{gather*}
C^\infty(G\times_HY)\coloneqq\{f\in C^\infty(G,Y)\mid \forall g\in G,\, h\in H\colon f(gh)=\tau(h^{-1})f(g)\}.
\end{gather*}

\subsection{Realizations of principal series representations}
The principal series representations can be realized in different ways (``pictures'') all of which have their advantages. Let $(\sigma,V_\sigma)\in\hat M$ with inner product $\langle\cdot,\cdot\rangle_\sigma$ and $\mu\in\mf a^*$. Denote by $L^2(K,V_\sigma)$ the space of $V_\sigma$-valued functions which are $L^2$ with respect to the normalized Haar measure $\intd k$ on $K$. 

In the \emph{induced picture} the representation space $\ps{\sigma}{\mu}$ is given by all measurable functions $f:G\to V_\sigma$ such that
\begin{enumerate}
\item $f(gman)=a^{\mu-\rhoa}\sigma(m^{-1})f(g)$ for all $g\in G,m\in M,a\in A,n\in N$,
\item $\restr{f}{K}\in L^2(K,V_\sigma)$.
\end{enumerate}
The representation is given by
\begin{gather*}
(\pi_{\sigma,\mu}(g)f)(x)\coloneqq f(g^{-1}x),\qquad g,x\in G,\ f\in \ps{\sigma}{\mu}.
\end{gather*}
Endowed with the norm $\norm{f}^2\coloneqq\int_K\norm{f(k)}^2_\sigma\intd k$ this realization is a Hilbert space representation. The parametrization is chosen such that $\ps{\sigma}{\mu}$ is unitary if $\mu\in i\mf a_0^*$ is imaginary. 

The Iwasawa decomposition shows that a function in $\ps{\sigma}{\mu}$ is completely determined by its restriction to $K$. Thus, the surjective isometry
\begin{gather}\label{eq:cpt_induced}
\ps{\sigma}{\mu}\cong\psc{\sigma}{\mu},
\end{gather}
where $\psc{\sigma}{\mu}$ denotes the Hilbert space of all functions $f$ in $L^2(K,V_\sigma)$ such that $f(km)=\sigma(m^{-1})f(k)$ for all $k\in K,\ m\in M$, endowed with the same norm as above, gives another realization of the principal series representation. This realization is called the \emph{compact picture}. Note that the representation space does not depend on $\mu$. However, in this picture the $G$-action is more complicated compared to the induced picture. It is induced by the action $\pi_{\sigma,\mu}$ via the isometry above and given by
\begin{gather*}
(\pi_{\sigma,\mu}^{\mathrm{cpt}}(g)f)(k)\coloneqq a_{I}(g^{-1}k)^{\mu-\rhoa}f(k_{I}(g^{-1}k)),
\end{gather*}
where $k\in K, g\in G$ and $f\in\psc{\sigma}{\mu}$. In the following, we will simply write $\ps{\sigma}{\mu}$ for both realizations for the sake of simplicity. If $\sigma$ is the trivial representation we write $(\pi_\mu,\pst{\mu})$ and refer to these representations as the \emph{spherical principal series}. The representation spaces in the spherical case naturally factor through the quotient $G/M$, respectively $K/M$, in the induced or compact picture and we will use these realizations in that case.
\subsection{Globalizations and infinitesimal character}
Let $(\pi,\mc H)$ denote a Hilbert space realization of a (subrepresentation of a) principal series representation. In this paragraph we define smooth and distribution vectors in $(\pi,\mc H)$. We call a vector $v\in\mc H$ a \emph{smooth} or $C^\infty$-vector for $\pi$ if 
\begin{gather*}
G\to\mc H,\qquad g\mapsto\pi(g)v
\end{gather*}
is smooth. Let $\mc H^\infty\subseteq\mc H$ denote the vector space of all smooth vectors in $\mc H$. For $\pi=\pi_{\sigma,\mu}^{\mathrm{cpt}}$ the smooth vectors are actually smooth functions (see e.g.\@ \cite[Equation (5.15)(a)]{VoganUnitaryRep}):
\begin{gather*}
\mc H^\infty=\{f:K\to V_\sigma\ \text{smooth}\mid\forall k\in K,\ m\in M \colon f(km)=\sigma(m^{-1})f(k)\}.
\end{gather*}
The \emph{distributional vectors} $\mc H^{-\infty}$ are given by the elements of the dual representation of the smooth vectors in the dual representation of $(\pi,\mc H)$. We give an alternative description which often is more convenient. Let $\tilde\sigma$ denote the dual representation of $\sigma$. Then, using \cite[ch.\@ I, §5.3.\@, Eq.\@ (25)]{GAGA}, we see that
\begin{gather*}
\langle\cdot,\cdot\rangle_{\sigma,\mu}\colon\ps{\sigma}{\mu}\times\ps{\tilde\sigma}{-\mu}\to\C,\quad \langle f_1,f_2\rangle_{\sigma,\mu}\coloneqq\int_Kf_2(k)(f_1(k))\intd k
\end{gather*}
is a nondegenerate, bilinear, and $G$-invariant pairing between $\ps{\sigma}{\mu}$ and $\ps{\tilde\sigma}{-\mu}$ for each $\sigma\in\hat M$ and $\mu\in\mf a^*$. By this pairing, we see that the distributional vectors $\psd{\sigma}{\mu}$ of the principal series representation $\ps{\sigma}{\mu}$ are given by the contragredient representation of $\pss{\tilde\sigma}{-\mu}$. If $\sigma$ is trivial, the distributional vectors can be realized on $\mc D'(K/M)\coloneqq C^\infty(K/M)'$, the space of distributions on $K/M$. We also define the space of distributions $\mc D'(G/M)\coloneqq C_c^\infty(G/M)'$ on $G/M$.

In rank one we have a unique $H\in\mf a_0\label{def:H}$ such that $\alpha(H)=1$ for the unique simple positive restricted root $\alpha$ of $(\mf g,\mf a)$. In this case, the distributional vectors in the induced picture of $\pst{\mu}$ are given by
\begin{gather*}\label{eq:distr_elem_ps}
\mc R(\mu-\rhoa)\coloneqq\{u\in\mc D'(G/M)\mid (H-\mu(H)+\rhoa(H))u=0,\forall\, U\in C^\infty(G\times_M\mf n_0)\colon Uu=0\},
\end{gather*}
equipped with the left regular representation, and there is a topological isomorphism
\begin{gather}\label{eq:end_point}
\mc Q_\mu\colon(\mc D'(K/M),\pi_{\mu}^{\mathrm{cpt}})\to\mc R(\mu-\rhoa)
\end{gather}
which intertwines the $G$-actions and extends \eqref{eq:cpt_induced} (cf.\@ \cite[Prop.\@ 3.4]{GHWb}).

Note that we always have the linear embedding 
\begin{gather*}
\iota_{\sigma,\mu}\colon\ps{\sigma}{\mu}\hookrightarrow\psd{\sigma}{\mu},\quad\iota_{\sigma,\mu}(f_1)(f_2)\coloneqq\langle f_1,f_2\rangle_{\sigma,\mu}.
\end{gather*}
For each subrepresentation $V\leq\ps{\sigma}{\mu}$ we have the restricted pairing
\begin{gather*}
V\times(\ps{\tilde\sigma}{-\mu}/V^{\perp_{\sigma,\mu}})\to\C,\quad \langle f_1,f_2+V^{\perp_{\sigma,\mu}}\rangle_{\sigma,\mu}\coloneqq\int_K f_2(k)(f_1(k))\intd k,
\end{gather*}
where
\begin{gather}\label{eq:perp_ps}
V^{\perp_{\sigma,\mu}}\coloneqq\{f_2\in\ps{\tilde\sigma}{-\mu}\mid\forall f_1\in V\colon\langle f_1,f_2\rangle_{\sigma,\mu}=0\}.
\end{gather}
This implies that $V^{-\infty}$ is the contragredient representation of $(\ps{\tilde\sigma}{-\mu}/V^{\perp_{\sigma,\mu}})^\infty$.

Any principal series representation has an infinitesimal character. In order to describe the infinitesimal character of $\ps{\sigma}{\mu}$ we first fix some notation. Let $\mf t\leq\mf m$ denote a $\theta$-stable Cartan subalgebra of $\mf m$, $\lambda_\sigma$ be the highest weight of $\sigma$ with respect to some ordering in $\mf t^*$ and $\rho_{\mf m}$ denote the half-sum of positive roots for $(\mf m,\mf t)$. Then $\ps{\sigma}{\mu}$ has infinitesimal character $\lambda_\sigma+\rho_{\mf m}-\mu$ relative to $\mf h\coloneqq\mf a\oplus\mf t$ (cf. \cite[Prop.\@ 8.22]{Kna86}). We recall the \emph{Casimir element} $\Omega_{\mf g}$, an important element of the center $\mc Z(\mc U(\mf g))$ of $\mc U(\mf g)$. Let $B$ be a fixed multiple of the Killing form $\kappa$. For a basis $X_1,\ldots,X_{\dim\mf g_0}$ of $\mf g_0$ let $(g^{ij})_{ij}$ denote the inverse matrix of $(B(X_i,X_j))_{i,j}$. Then the dual basis $(X^i)_i$ is given by $X^i=\sum g^{ij}X_j$ and the Casimir element is defined by
\begin{gather*}
\Omega_{\mf g}\coloneqq\sum\nolimits_{i}X^iX_i=\sum\nolimits_{i,j}g^{ij}X_jX_i\in\mc Z(\mc U(\mf g)).
\end{gather*}
Since $B$ is nondegenerate, there are unique elements $X_\varphi\in\mf g_0$ for each $\varphi\in\mf g_0^*$ such that $\varphi(X)=B(X,X_\varphi)$ for each $X\in\mf g_0$. We put $\langle\varphi,\psi\rangle\coloneqq B(X_\varphi,X_\psi)$ for $\varphi,\psi\in\mf g_0^*$ resp.\@ $\mf g^*$. Let us extend the ordering on $\mf a$ to $\mf h$ such that $\Sigma^+$ arises by restriction from the positive roots of $(\mf g,\mf h)$. By \cite[La.\@ 12.28]{Kna86}, the action of the Casimir element is then given by the scalar
\begin{align*}
\pi_{\sigma,\mu}(\Omega_{\mf g})&=\langle\lambda_\sigma+\rho_{\mf m},\lambda_\sigma+\rho_{\mf m}\rangle+\langle\mu,\mu\rangle-\langle\rhoa+\rho_{\mf m},\rhoa+\rho_{\mf m}\rangle\\
&=\langle\lambda_\sigma,\lambda_\sigma+2\rho_{\mf m}\rangle+\langle\mu,\mu\rangle-\langle\rhoa,\rhoa\rangle.
\end{align*}
\subsection{Reducibility}\label{subsec:red}
We are particularly interested in principal series representations which are not irreducible, i.e.\@ in the set
\begin{gather*}
\mc A'\coloneqq\{(\sigma,\mu)\in\hat M\times\mf a^*\mid\ps{\sigma}{\mu}\text{ reducible}\}.
\end{gather*}
In this subsection we introduce the representation theoretic tools we need to describe the structure of these reducible representations.
\subsubsection*{Composition series, minimal \texorpdfstring{$K$}{K}-types and socle}
In general, principal series representations are not completely reducible. However, they are all of \emph{finite length} (cf.\@ \cite{Kra78}). This means, there exists a finite \emph{composition series}, i.e.\@ a chain of subrepresentations of $\ps{\sigma}{\mu}$ of the form
\begin{gather*}
0\subsetneq V_1\subsetneq\ldots\subsetneq V_n=\ps{\sigma}{\mu}
\end{gather*}
such that the quotients $V_{i+1}/V_i$, the \emph{composition factors}, are irreducible. By the Jordan-Hölder theorem, any two composition series have the same length and the same composition factors up to permutation and isomorphism.

Let $\pi$ denote an admissible Hilbert representation of $G$ (i.e.\@ a continuous representation such that each $K$-isotypic component has finite dimension) and fix a Cartan subalgebra $\mf b_0$ of $\mf k_0$. With respect to some ordering, we define $\rho_{\mf k}$ as the half-sum of the positive roots of $(\mf k,\mf b)$. We say that $Y\in\hat K$ with highest weight $\lambda$ is a \emph{minimal $K$-type} of $\pi$ if $Y$ occurs in $\pi$ restricted to $K$ and
\begin{gather*}
\langle\lambda+2\rho_{\mf k},\lambda+2\rho_{\mf k}\rangle
\end{gather*}
is minimal with respect to this property. The set of minimal $K$-types is independent of the choice of the ordering and its cardinality is finite and at least one. For principal series representations $\pi_{\sigma,\mu}$ each minimal $K$-type of $\pi_{\sigma,\mu}$ occurs in $\restr{\pi_{\sigma,\mu}}{K}$ with multiplicity one (cf.\@ \cite[Thm.\@ 1.1]{Vo79}).

For any Hilbert representation $(\pi,\mc H)$ of $G$ we define $\on{soc}\pi$, the \emph{socle of $\pi$}, as the closure (in the sense of \cite[Thm.\@ 8.9]{Kna86}) of the sum of all completely reducible $(\mf g,K)$-submodules of the underlying $(\mf g,K)$-module of $(\pi,\mc H)$ (see \cite[p.\@ 538]{cohomo}).

\subsubsection*{Decomposition as \texorpdfstring{$K$}{K}-representation and \texorpdfstring{$M$}{M}-spherical functions} We begin with a brief discussion of the decomposition of $\restr{\pi_{\sigma,\mu}}{K}$ in general and then give some more precise results of this decomposition in the rank one case. For the decomposition as $K$-representation we consider the compact picture $\psc{\sigma}{\mu}$. As $K$-representation this coincides with the induced representation $\on{Ind}_M^K\sigma$ of $\sigma$ to $K$. By Frobenius reciprocity we thus obtain for each $Y\in\hat K$ that
\begin{gather*}
\Hom{K}{\psc{\sigma}{\mu}}{Y}=\Hom{K}{\on{Ind}_M^K\sigma}{Y}\cong\Hom{M}{V_\sigma}{Y}.
\end{gather*}
Let us denote the multiplicity of $V_\sigma$ in $Y$ (and analogously for other groups and representations) by 
\begin{gather*}
\on{mult}_M(V_\sigma,Y)\coloneqq\dim_\C\Hom{M}{V_\sigma}{Y}.
\end{gather*}
Then, writing
\begin{gather*}
\hat K_\sigma\coloneqq\{Y\in\hat K\colon\on{mult}_M(V_\sigma,Y)\neq0\},
\end{gather*}
we have that, denoting equivalence as $K$-representations by $\cong_K$ and the Hilbert space direct sum by $\widehat\bigoplus$,
\begin{gather*}
\psc{\sigma}{\mu}\cong_K\widehat\bigoplus_{Y\in\hat K_\sigma}\on{mult}_M(V_\sigma,Y)Y.
\end{gather*}
In the spherical case we will abbreviate $\hat K_M\coloneqq\hat K_{\on{triv}_M}$. If not stated otherwise, we will always realize $Y\in\hat K_M$ as a subrepresentation of $\psc{\sigma}{\mu}=L^2(K/M)$. Note that $L^2(K/M)$ carries the left regular representation $L$. We denote the derived representation of $L$ by $\ell$.

Let us now assume that $G$ has real rank one and consider the spherical principal series. In this case some more precise results can be achieved. Most importantly, $(K,M)$ is a Gelfand pair in this case (cf.\@ \cite[ch.\@ II, §6, Cor.\@ 6.8]{GASS}). This implies the following  

\begin{proposition}\label{prop:multone}
Let $\C$ denote the trivial $M$-representation. Then
\begin{gather}\label{eq:multone}
\forall\, Y\in\hat K_M\colon\on{mult}_K(Y,\pst{\mu})=\on{mult}_M(\C,Y)=\dim_\C Y^M=1,
\end{gather}
where 
\begin{gather*}
Y^M\coloneqq\{v\in Y\mid\forall m\in M\colon m.v=v\}
\end{gather*}
denotes the space of $M$-invariant elements in $Y$. In particular, the decomposition
\begin{gather*}
\pst{\mu}\cong_K\widehat\bigoplus_{Y\in\hat K_M}Y
\end{gather*}
is multiplicity free.  
\end{proposition}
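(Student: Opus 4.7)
The proof is essentially a direct consequence of the Gelfand pair property of $(K,M)$ in the rank one case, combined with Frobenius reciprocity, which has already been set up in the paragraph preceding the statement. The plan is to chase the three equalities in \eqref{eq:multone} in turn and observe that each is forced.

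First, I would apply Frobenius reciprocity (as recorded in the excerpt) in the spherical case $\sigma = \on{triv}_M$: for every $Y \in \hat K$,
\begin{equation*}
\on{mult}_K(Y, \pst{\mu}) \;=\; \on{mult}_K(Y, \on{Ind}_M^K \C) \;=\; \dim_\C \Hom{M}{\C}{Y} \;=\; \dim_\C Y^M,
\end{equation*}
the last equality because an $M$-homomorphism out of the trivial representation is the same as a choice of $M$-fixed vector. This simultaneously identifies $\on{mult}_K(Y, \pst{\mu})$ with $\on{mult}_M(\C, Y)$ and with $\dim_\C Y^M$, so all three quantities in \eqref{eq:multone} coincide.

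Second, I would invoke the fact that $(K,M)$ is a Gelfand pair when $G$ has real rank one, quoted from \cite[ch.\@ II, §6, Cor.\@ 6.8]{GASS}. By the standard characterization of Gelfand pairs, this is equivalent to the statement $\dim_\C Y^M \leq 1$ for every $Y \in \hat K$. Combining this with the definition of $\hat K_M$, which requires $\on{mult}_M(\C, Y) \neq 0$, i.e.\@ $\dim_\C Y^M \geq 1$, gives $\dim_\C Y^M = 1$ for every $Y \in \hat K_M$. By the chain of equalities above, all three multiplicities in \eqref{eq:multone} equal $1$.

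Finally, the multiplicity-free decomposition $\pst{\mu} \cong_K \widehat\bigoplus_{Y \in \hat K_M} Y$ is then immediate from the general decomposition $\psc{\on{triv}_M}{\mu} \cong_K \widehat\bigoplus_{Y \in \hat K_M} \on{mult}_M(\C, Y) Y$ stated just before the proposition, once each multiplicity is known to be one. There is no real obstacle here; the only nontrivial ingredient is the Gelfand pair property of $(K,M)$ in rank one, which is being cited from Helgason.
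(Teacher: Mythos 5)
Your proposal is correct and follows essentially the same route as the paper: Frobenius reciprocity identifies all three quantities with $\dim_\C Y^M$, and the Gelfand pair property of $(K,M)$ in rank one (cited from Helgason) forces this dimension to be at most one, while membership in $\hat K_M$ forces it to be at least one. The paper's proof is just a two-line citation of exactly these two ingredients.
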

\begin{proof}
 The first equality follows from Frobenius reciprocity and the second equality follows from \cite[ch.\@ V, Thm.\@ 3.5 (iv)]{GAGA}.
\end{proof}

Note that $\on{Ind}_M^K(\on{triv}_M)$ is given by the left regular representation of $K$ on $L^2(K/M)$ resp.\@ $L^2(K)^M$, the $M$-invariant elements of $L^2(K)$ with respect to the right regular representation. The following proposition describes the $M$-spherical elements $Y^M$ for each $Y\in\hat K_M$ and is well-known to specialists. We give a proof for the convenience of the reader.
\begin{proposition}[cf. {\cite[Introduction, Proposition 3.2 (ii)]{GAGA}}]\label{prop:helg_general}
Let $0\neq(\tau,Y)\leq L^2(K)^M$ be an irreducible representation. Then
\begin{enumerate}
\item there exists a unique $\phi_Y\in Y^M$ such that $\phi_Y(e)=1$ and $Y^M=\C\phi_Y$,\label{prop:helg_general_i}
\item $\varphi(k)\langle\phi_Y,\phi_Y\rangle_{L^2(K)}=\langle\varphi,\tau(k)\phi_Y\rangle_{L^2(K)}$ for $k\in K,\ \varphi\in Y$\label{prop:helg_general_ii},
\item $\langle\phi_Y,\phi_Y\rangle_{L^2(K)}=\frac{1}{\on{dim}Y},\ \phi_Y(k^{-1})=\overline{\phi_Y(k)},\ \abs{\phi_Y(k)}\leq1$ for $k\in K$\label{prop:helg_general_iii}.
\end{enumerate}
\end{proposition}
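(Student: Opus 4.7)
My plan is to reduce all three claims to a single evaluation-at-identity formula that comes from the Gelfand pair property of $(K,M)$. Since $Y\leq L^2(K)^M\cong L^2(K/M)$ is a non-zero irreducible $K$-subrepresentation, Proposition~\ref{prop:multone} together with Frobenius reciprocity yields $\dim Y^M=\on{mult}_M(\C,Y)=1$. Next I observe that the evaluation functional $\on{ev}_e\colon Y\to\C$, $f\mapsto f(e)$, is $M$-invariant: for $f\in L^2(K)^M$ and $m\in M$ the right $M$-invariance gives $(\tau(m)f)(e)=f(m^{-1})=f(e)$. Moreover $\on{ev}_e\neq 0$, for otherwise $f(k)=(\tau(k^{-1})f)(e)=0$ would hold for all $f\in Y$ and $k\in K$, contradicting $Y\neq 0$. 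Since $Y$ is finite-dimensional and unitary, there is a unique $\phi\in Y$ representing $\on{ev}_e$ via $f(e)=\langle f,\phi\rangle_{L^2(K)}$, and the $M$-invariance of $\on{ev}_e$ forces $\phi\in Y^M$. Evaluating the formula at $f=\phi$ yields $\phi(e)=\norm{\phi}_{L^2(K)}^2>0$, so $\phi_Y\coloneqq\phi/\norm{\phi}_{L^2(K)}^2$ is the unique element of $Y^M$ with $\phi_Y(e)=1$, giving (i). After rescaling, the evaluation formula becomes
\begin{gather*}
f(e)\norm{\phi_Y}_{L^2(K)}^2=\langle f,\phi_Y\rangle_{L^2(K)}\qquad(f\in Y).
\end{gather*}

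For (ii), I substitute $f=\tau(k^{-1})\varphi\in Y$ into this identity. The left-hand side becomes $\varphi(k)\norm{\phi_Y}_{L^2(K)}^2$ and, since $\tau$ is unitary, the right-hand side equals $\langle\varphi,\tau(k)\phi_Y\rangle_{L^2(K)}$, which is exactly (ii).

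For (iii), the first two assertions drop straight out of (ii). Substituting $\varphi=\phi_Y$, replacing $k$ by $k^{-1}$, and using $\langle\phi_Y,\tau(k^{-1})\phi_Y\rangle=\overline{\langle\phi_Y,\tau(k)\phi_Y\rangle}$ gives $\phi_Y(k^{-1})=\overline{\phi_Y(k)}$, while Cauchy--Schwarz applied to (ii) at $\varphi=\phi_Y$ yields
\begin{gather*}
\abs{\phi_Y(k)}\norm{\phi_Y}_{L^2(K)}^2=\abs{\langle\phi_Y,\tau(k)\phi_Y\rangle_{L^2(K)}}\leq\norm{\phi_Y}_{L^2(K)}^2,
\end{gather*}
so $\abs{\phi_Y(k)}\leq 1$. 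Finally, to compute $\norm{\phi_Y}_{L^2(K)}^2$, I square (ii) at $\varphi=\phi_Y$ in modulus, integrate over $K$, and invoke Schur orthogonality for matrix coefficients of the irreducible unitary $K$-representation $Y$, which gives $\int_K\abs{\langle\phi_Y,\tau(k)\phi_Y\rangle}^2\intd k=\norm{\phi_Y}_{L^2(K)}^4/\dim Y$. Comparing this with $\norm{\phi_Y}_{L^2(K)}^4\int_K\abs{\phi_Y(k)}^2\intd k=\norm{\phi_Y}_{L^2(K)}^6$ forces $\norm{\phi_Y}_{L^2(K)}^2=1/\dim Y$.

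There is no real obstacle here; the only point requiring care is the $M$-equivariance of $\on{ev}_e$, which crucially uses the right $M$-invariance $f(km)=f(k)$ for $f\in L^2(K)^M$ in order to rewrite $f(m^{-1})$ as $f(e)$.
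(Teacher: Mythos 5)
Your proof is correct, and its overall architecture matches the paper's: everything is reduced to the reproducing identity $f(e)\,\langle\phi_Y,\phi_Y\rangle_{L^2(K)}=\langle f,\phi_Y\rangle_{L^2(K)}$, after which (ii) follows by unitarity and (iii) by Schur orthogonality exactly as in the paper. The one genuine difference is how you reach that identity and the existence statement in (i). The paper produces $\phi_Y$ by translating an arbitrary nonzero $\psi\in Y$ so that $\psi(e)\neq 0$ and then averaging over $M$, and it proves the reproducing formula by inserting an $M$-average into the inner product and using $Y^M=\C\phi_Y$; you instead observe that the evaluation functional $\on{ev}_e$ is a nonzero $M$-invariant functional (using the right $M$-invariance of elements of $L^2(K)^M$, which you correctly flag as the crucial point), take its Riesz representative, and note that $M$-invariance of the functional forces the representative into $Y^M$. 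Your route is slightly cleaner in that the reproducing formula and the existence of $\phi_Y$ come out of a single Riesz argument, and it only invokes $\dim Y^M=1$ where it is really needed (for $Y^M=\C\phi_Y$); the paper's averaging argument is more hands-on but uses no Hilbert-space duality. Both are complete and correct.
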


\begin{proof}
\ref{prop:helg_general_i} By Equation \eqref{eq:multone} we have $\dim_\C Y^M=1$. Let $0\neq\psi\in Y$ and choose some $k\in K$ such that $\psi(k)\neq0$. Replacing $\psi$ by $\tau(k^{-1})\psi$ we may assume that $\psi(e)\neq0$. The function
\begin{gather*}
\Psi:K\to\C,\quad k\mapsto\int_M\tau(m)\psi(k)\intd m
\end{gather*}
is contained in $Y^M$ with $\Psi(e)=\psi(e)\neq0$. This proves the first part.\par
\ref{prop:helg_general_ii} For each $m\in M$ we have by the $K$-invariance of the Haar measure 
\begin{align*}
\langle\varphi,\phi_Y\rangle_{L^2(K)}&=\int_{K}\varphi(k)\ov{\phi_Y(k)}\intd k
=\int_{K}\varphi(k)\ov{\phi_Y(m^{-1}k)}\intd k\\
&=\int_{K}\varphi(mk)\ov{\phi_Y(k)}\intd k
=\int_{K}\ov{\phi_Y(k)}\int_M\varphi(mk)\intd m\intd k.
\end{align*}
Note that the map
\begin{gather*}
\theta:K\to\C,\quad k\mapsto\int_M\varphi(mk)\intd m=\int_M\tau(m^{-1})\varphi(k)\intd m
\end{gather*}
is contained in $V^M=\C\phi_Y$. We infer that $\theta=\theta(e)\phi_Y=\varphi(e)\phi_Y$ and thus
\begin{gather*}
\langle\varphi,\phi_Y\rangle_{L^2(K)}=\varphi(e)\int_{K}\ov{\phi_Y(k)}\phi_Y(k)\intd k=\varphi(e)\langle\phi_Y,\phi_Y\rangle_{L^2(K)}.
\end{gather*}
Replacing $\varphi$ by $\tau(k^{-1})\varphi$ we obtain \ref{prop:helg_general_ii}.\par
\ref{prop:helg_general_iii} By the Schur orthogonality relations we have
\begin{align*}
\frac{1}{\on{dim}Y}\langle\varphi,\varphi\rangle_{L^2(K)}\langle\phi_Y,\phi_Y\rangle_{L^2(K)}&=\int_{K}\langle\tau(k)\phi_Y,\varphi\rangle_{L^2(K)}\ov{\langle\tau(k)\phi_Y,\varphi\rangle_{L^2(K)}}\intd k\\
&\stackrel{\ref{prop:helg_general_ii}}{=}\int_{K}\ov{\varphi(k)\langle\phi_Y,\phi_Y\rangle_{L^2(K)}}\varphi(k)\langle\phi_Y,\phi_Y\rangle_{L^2(K)}\intd k\\
&=\langle\phi_Y,\phi_Y\rangle_{L^2(K)}^2\int_{K}\ov{\varphi(k)}\varphi(k)\intd k\\
&=\langle\phi_Y,\phi_Y\rangle_{L^2(K)}^2\langle\varphi,\varphi\rangle_{L^2(K)}.
\end{align*}
This proves $\langle\phi_Y,\phi_Y\rangle_{L^2(K)}=\frac{1}{\dim Y}$.
By \ref{prop:helg_general_ii} we deduce
\begin{gather*}
\phi_Y(k)=\dim Y\langle\phi_Y,\tau(k)\phi_Y\rangle_{L^2(K)}=\dim Y\ov{\langle\phi_Y,\tau(k^{-1})\phi_Y\rangle_{L^2(K)}}=\ov{\phi_Y(k^{-1})}
\end{gather*}
and, using the Cauchy-Schwarz inequality,
\begin{gather*}
\abs{\phi_Y(k)}=\dim Y\abs{\langle\phi_Y,\tau(k)\phi_Y\rangle_{L^2(K)}}\leq\dim Y\langle\phi_Y,\phi_Y\rangle_{L^2(K)}=1.\qedhere
\end{gather*}
\end{proof}

\subsubsection*{Intertwiner}
Finally, we describe a procedure to obtain $G$-equi\-vari\-ant maps between sections of associated vector bundles. These \emph{generalized gradients} generalize the classical raising and lowering operators of $\mathrm{PSL}(2,\R)$. 

The following fact allows the definition of generalized gradients.
\begin{proposition}[{cf. \cite[Proposition 3.1]{oersted}}]\label{prop:oersted_nabla}
    Let $K$ act on $\mf p^*$ by the coadjoint representation. The following map is defined for every $(\tau, Y)\in\hat K$:
    \begin{gather*}
        \nabla: C^\infty(G\times_KY)\to C^\infty(G\times_K(Y\otimes\mf p^*)),\\
        (\nabla f)(g)\in\on{Hom}(\mf p, Y)\cong Y\otimes\mf p^*,\ (\nabla f)(g)(X)\coloneqq\d f(g\exp tX).
    \end{gather*}
    Moreover, it defines a $G$-equivariant covariant derivative with zero torsion. 
\end{proposition}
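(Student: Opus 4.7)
First, I would check that $\nabla f$ lies in $C^\infty(G\times_K(Y\otimes\mf p^*))$, i.e.\@ that it has the right $K$-equivariance. Under the identification $\on{Hom}(\mf p,Y)\cong Y\otimes\mf p^*$, with $K$ acting diagonally via $\tau\otimes\Ad^*$, the required identity reads $(\nabla f)(gk)(X)=\tau(k^{-1})(\nabla f)(g)(\Ad(k)X)$ for all $k\in K,\,X\in\mf p$. Combining $gk\exp(tX)=g\exp(t\Ad(k)X)k$ with the $K$-equivariance of $f$ makes this a one-line computation, and smoothness in $g$ is inherited from $f$.

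Second, both $G$-equivariance and the Leibniz rule fall out directly from the definition. For the left regular action $(L_{g_0}f)(g)=f(g_0^{-1}g)$ one has $(\nabla L_{g_0}f)(g)(X)=\d f(g_0^{-1}g\exp(tX))=(L_{g_0}\nabla f)(g)(X)$. For $\phi\in C^\infty(G/K)$, the ordinary product rule applied along $t\mapsto g\exp(tX)$ yields $(\nabla(\phi f))(g)(X)=(X\phi)(g)f(g)+\phi(g)(\nabla f)(g)(X)$, which is the Leibniz identity $\nabla(\phi f)=d\phi\otimes f+\phi\nabla f$; combined with the evident $\C$-linearity, this exhibits $\nabla$ as a $G$-equivariant covariant derivative on every $G$-homogeneous vector bundle $G\times_KY$.

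It remains to verify vanishing of the torsion in the special case $(\tau,Y)=(\restr{\Ad}{K},\mf p)$, where $G\times_K\mf p=T(G/K)$. By $G$-equivariance of $\nabla$, of the Lie bracket, and of the transitive $G$-action on $G/K$, it suffices to check $T(V,W)_{eK}=0$ on a frame at $eK$. For $X\in\mf p$ the fundamental vector field $X^*$ of the left $G$-action on $G/K$ is represented by the $K$-equivariant map $X^*(g)=\on{pr}_\mf p(\Ad(g^{-1})X)$ and satisfies $X^*(e)=X$, so $\{X^*\mid X\in\mf p\}$ spans $T_{eK}(G/K)$. A first-order expansion gives
\begin{gather*}
(\nabla_{X^*}Y^*)(e)=\d\on{pr}_\mf p(\Ad(\exp(-tX))Y)=-\on{pr}_\mf p([X,Y]),
\end{gather*}
while the standard identity $[X^*,Y^*]=-[X,Y]^*$ for fundamental vector fields of a left action yields $[X^*,Y^*]_{eK}=-\on{pr}_\mf p([X,Y])$. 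All three terms in $T(X^*,Y^*)_{eK}$ vanish because $G/K$ is Riemannian symmetric, so $[\mf p,\mf p]\subseteq\mf k$. The main technical point is precisely this last step: for a general reductive homogeneous space the analogous computation yields a nonzero torsion $-[X,Y]_{\mf p}$, and it is the symmetric-space hypothesis that forces each individual term to vanish.
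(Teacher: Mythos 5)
Your proof is correct and complete. Note that the paper itself offers no proof of this proposition — it is quoted directly from the cited reference (\cite[Prop.\@ 3.1]{oersted}) — so there is no in-paper argument to compare against; your verification (the $K$-equivariance of $\nabla f$ from $gk\exp(tX)=g\exp(t\Ad(k)X)k$, $G$-equivariance and the Leibniz rule from the definition, and torsion-freeness checked tensorially at $eK$ on fundamental vector fields using $[\mf p,\mf p]\subseteq\mf k$) is the standard argument and fills in exactly what the citation delegates.
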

\begin{definition}\label{def:gen_grad}
    Let $(\tau_i, Y_{\tau_i})\in\hat K,\ i\in\{1,2\},$ with $Y_{\tau_2}\leq Y_{\tau_1}\otimes\mf p^*$ and let $T\in\Hom{K}{Y_{\tau_1}\otimes\mf p^*}{Y_{\tau_2}}$.
 Then we define the \emph{generalized gradient}
    \begin{gather*}
        T\circ\;\nabla: C^\infty(G\times_KY_{\tau_1})\to C^\infty(G\times_KY_{\tau_2}).
    \end{gather*}
    If not stated otherwise, we choose $T=\on{pr}_{\tau_2}$, the orthogonal projection onto $Y_{\tau_2}$.
\end{definition}
\section{Poisson transforms}\label{sec:PT}
In this section we connect principal series representations with joint eigenspaces of differential operators on vector bundles over $G/K$.
 We will first recall the standard scalar Poisson transform and show how it is related to the exceptional parameters of \cite{DFG} and \cite{GHWb}.
 Then we introduce vector valued generalizations based on \cite{Ol}, discuss some mapping properties and relate them to specific generalized gradients.
\subsection{Invariant differential operators and eigenspaces}
Let $(\tau,Y)\in\hat K$. A differential operator $D$ on $C^\infty(G\times_KY)$ is called \emph{invariant} if it commutes with the left regular representation $L$ on $C^\infty(G\times_KY)$.
Let $\mathbb{D}(G,\tau)$ denote the algebra of all invariant differential operators on $C^\infty(G\times_KY)$ and abbreviate $\mathbb{D}(G/K)\coloneqq\mathbb{D}(G,\on{triv})$. Then $\mathbb{D}(G,\tau)$ is isomorphic to $\mc U(\mf g)^K/(\mc U(\mf g)I_{\tilde\tau})^K$ via the right regular representation $r$, where $I_{\tilde\tau}\coloneqq\ker\tilde\tau\subset\mf k$ denotes the kernel of $\tilde\tau$, the dual representation of $\tau$ (see \cite[Satz 2.4]{Ol}).

For the trivial bundle the \emph{Harish-Chandra homomorphism} $\chi:\mathbb{D}(G/K)\to S(\mf a_0)^W$ allows us to identify $\mathbb{D}(G/K)$ with the $W$-invariants $S(\mf a_0)^W$ of the symmetric algebra $S(\mf a_0)$ of $\mf a_0$ (see \cite[ch.\@ II, Thm.\@ 4.3, 5.18]{GAGA}). Moreover, every character of $\mathbb{D}(G/K)$ is of the form 
\begin{gather*}
\chi_\mu\colon\mathbb{D}(G/K)\to\C,\quad\chi_\mu(D)\coloneqq\chi(D)(\mu)
\end{gather*}
for some $\mu\in\mf a^*$ and $\chi_\nu=\chi_\mu$ if and only if $\nu\in W\mu$ (cf.\@ \cite[ch.\@ III, Lemma 3.11]{GAGA}). Let us denote the space of joint eigenfunctions of $\mathbb{D}(G/K)$ by
\begin{gather*}
\mc E_\mu\coloneqq\{f\in C^\infty(G/K)\mid \forall D\in\mathbb{D}(G/K)\colon Df=\chi_\mu(D)f\},
\end{gather*}
and, with the Riemannian distance function $d_{G/K}$ on $G/K$, for each $r\geq0$
\begin{gather}\label{eq:E_distr}
\mc E_{\mu,r}(G/K)\coloneqq\{f\in\mc E_\mu\mid\sup_{g\in G}\abs{e^{-rd_{G/K}(eK,gK)}f(g)}<\infty\}.
\end{gather}
We put $\mc E_{\mu,\infty}(G/K)\coloneqq\bigcup_{r\geq0}\mc E_{\mu,r}(G/K)$, equipped with the direct limit topology. 

For arbitrary $(\tau,Y)\in\hat K$ we define a representation $\chi_{\sigma,\mu}$ of $\mathbb{D}(G,\tau)$ for each $\mu\in\mf a^*$ and $(\sigma,V)\in\hat M$ with $\on{mult}_M(V,Y)\neq0$ by
\begin{gather*}
\chi_{\sigma,\mu}:\mathbb{D}(G,\tau)\to\on{End}(\Hom{K}{\ps{\sigma}{\mu}}{Y}),\quad\chi_{\sigma,\mu}(r(u))(T)\coloneqq T\circ\pi_{\sigma,\mu}(\on{opp}u),
\end{gather*}
where $u\in\mc U(\mf g)^K$ and $\on{opp}:\mc U(\mf g)\to\mc U(\mf g)$ is defined by $\on{opp}(X)\coloneqq -X$ for $X\in\mf g$ (see \cite[Def.\@ 2.10]{Ol}). If $\on{mult}_M(V,Y)=1$ these representations are one dimensional and we can define the space of joint eigensections
\begin{gather*}
E_{\sigma,\mu}\coloneqq\{f\in C^\infty(G\times_KY)\mid\forall D\in\mathbb{D}(G,\tau)\colon Df=\chi_{\sigma,\mu}(D)f\},
\end{gather*}
where we identified $\on{End}(\Hom{K}{\ps{\sigma}{\mu}}{Y})$ with $\C$. Each $E_{\sigma,\mu}$ has an infinitesimal character and it coincides with that of $\ps{\sigma}{\mu}$ (see \cite[Folg.\@ 2.15]{Ol}).

\subsection{Mapping properties of scalar Poisson transforms}
The asymptotics of joint eigenfunctions in $\mc E_\mu$ can be described by a specific meromorphic function on $\mf a^*$, the \emph{Harish-Chandra $c$-function} $\mathbf{c}(\mu)$. We define its ``denominator'', the meromorphic function $\mathbf{e}(\mu)^{-1}$, by ($\mu\in\mf a^*$)
\begin{gather*}
\mathbf{e}(\mu)^{-1}\coloneqq\prod_{\alpha\in\Sigma^+}\Gamma\left(\frac{1}{2}\left(\frac{1}{2}m_\alpha+1+\frac{\langle\mu,\alpha\rangle}{\langle\alpha,\alpha\rangle}\right)\right)\Gamma\left(\frac{1}{2}\left(\frac{1}{2}m_\alpha+m_{2\alpha}+\frac{\langle\mu,\alpha\rangle}{\langle\alpha,\alpha\rangle}\right)\right),
\end{gather*}
see e.g.\@ \cite[Eq.\@ (5.17)]{Sc84}. Then $\mathbf{e}$ is an entire function on $\mf a^*$ without zeros on the closure of the positive Weyl chamber. 
\begin{definition}[{cf. \cite[Thm.\@ 10.6, 12.2]{vdBS87}}]\label{def:sc_PT}
For $\mu\in\mf a^*$ define the \emph{scalar Poisson transform}
\begin{gather*}
P_\mu:\mc D'(K/M)\to\mc E_{\mu,\infty}(G/K),\quad P_\mu(T)(gK)\coloneqq T(kM\mapsto a_I(g^{-1}k)^{-(\mu+\rhoa)}).
\end{gather*}
Then $P_\mu$ is a topological isomorphism if and only if $\mathbf{e}(\mu)\neq 0$. If $\mathbf{e}(\mu)=0$, then $P_\mu$ is neither injective nor surjective.
\end{definition}

\begin{definition}
We call
\begin{gather*}
\mathbf{Ex}\coloneqq\{\mu\in\mf a^*\mid \mathbf{e}(\mu)=0\}
\end{gather*}
the set of \emph{exceptional parameters}.
\end{definition}

\begin{example}\label{ex:Ex_rank_one}
The exceptional parameters are exactly the parameters which were excluded in \cite{DFG} and \cite{GHWb}. Indeed, let $G$ be of real rank one. Then the $\mathbf{e}$-function is zero if and only if one of the Gamma functions has a pole which is the case if and only if
\begin{gather*}
\mu\in\left(-\frac{1}{2}m_\alpha-1-2\N_0\right)\alpha\cup\left(-\frac{1}{2}m_\alpha-m_{2\alpha}-2\N_0\right)\alpha,
\end{gather*}
where $\alpha$ denotes the unique simple positive real root. Moreover, (see \cite[ch.\@ IV, Thm.\@ 1.1]{Hel70})
\begin{gather*}
\pst{\mu}\text{ irreducible}\quad\Leftrightarrow\quad\mathbf{e}(\mu)\mathbf{e}(-\mu)\neq0.
\end{gather*}
Therefore, irreducibility of $\pst{\mu}$ is sufficient but not necessary for the bijectivity of $P_\mu$.
\end{example}

\subsection{Vector valued Poisson transforms}
In this subsection we describe generalized Poisson transforms based on \cite{Ol}, which will serve as a substitute for the scalar Poisson transform for the exceptional parameters.

\begin{definition}[cf. {\cite[Definition 3.2/Satz 3.4]{Ol}}]\label{def:PoissonTransformation}
  Let $\tau\in\hat K,\ \sigma\in\hat M$ and $\mu\in\mf a^*$. Then we define the \emph{(vector valued) Poisson transform} by
  \begin{gather}
    P_{\sigma, \mu}^\tau: \Hom{K}{\ps{\sigma}{\mu}}{V_\tau}\otimes\psd{\sigma}{\mu}\to C^\infty(G\times_K V_\tau),\quad P_{\sigma,\mu}^\tau(T\otimes f)(g)=T(\pi_{\sigma,\mu}(g^{-1})f).\label{eq:olbrich_Poisson_G}
  \end{gather}
If $F:\Hom{K}{\ps{\sigma}{\mu}}{V_\tau}\cong\Hom{M}{V_\sigma}{V_\tau}$ denotes the Frobenius isomorphism we have
\begin{align*}
P_{\sigma,\mu}^\tau(T\otimes f)(g)&=\int_K\tau(k)F(T)(f(gk))\intd k\\
&=\int_K a_{I}(g^{-1}k)^{-(\mu+\rhoa)}\tau(k_{I}(g^{-1}k))F(T)(f(k))\intd k
\end{align*}
for $T\in\Hom{K}{\ps{\sigma}{\mu}}{V_\tau},\, f\in\ps{\sigma}{\mu}$ and $g\in G$. The image of $P_{\sigma, \mu}^\tau$ is contained in $E_{\sigma,\mu}$ and $P_{\sigma,\mu}^\tau$ is $\mathbb{D}(G,\tau)\times G$-equivariant, where $\mathbb{D}(G,\tau)$ acts on $\Hom{K}{\ps{\sigma}{\mu}}{V_\tau}$ by $\chi_{\sigma,\mu}$. We abbreviate $P_{\sigma,\mu}^\tau$ by $P_\mu^\tau$ if $\sigma$ is the trivial representation of $M$.
\end{definition}

\begin{remark}[Scalar vs.\@ vector valued]
If $\tau=\on{triv}_K$ is the trivial $K$-representation we have $\Hom{K}{\pst{\mu}}{V_\tau}\cong\Hom{M}{\C}{\C}\cong\C$. Let $t\in\Hom{M}{\C}{\C}$ be the identity and $T\coloneqq F^{-1}(t)$. Then
\begin{gather*}
P_{\mu}^\tau(T\otimes f)(g)=\int_K a_{I}(g^{-1}k)^{-(\mu+\rhoa)}f(k)\intd k=P_\mu(f)(gK).
\end{gather*}
\end{remark}
The following lemma illustrates the naturality of Olbrich's Poisson transforms.
\begin{lemma}[cf. {\cite[Remark after Lemma 3.3]{Ol}}]\label{la:Poisson_natural}
    Let $\Psi: \ps{\sigma}{\mu}\to C^\infty(G\times_KV_\tau)$ be a $G$-equivariant map. Then 
    \begin{gather*}
        \Psi = P_{\sigma,\mu}^\tau(T\otimes\bigcdot)
    \end{gather*}
    where $T\in\Hom{K}{\ps{\sigma}{\mu}}{V_\tau}$ is defined by $T(f)\coloneqq\Psi(f)(e)$.
\end{lemma}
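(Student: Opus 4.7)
The plan is to verify the lemma in two short steps: first check that $T$ defined by $T(f):=\Psi(f)(e)$ actually lies in $\Hom{K}{\ps{\sigma}{\mu}}{V_\tau}$, and then use the $G$-equivariance of $\Psi$ to reduce the evaluation at an arbitrary $g\in G$ to the evaluation at $e$, which by construction of $T$ agrees with the formula \eqref{eq:olbrich_Poisson_G} defining $P_{\sigma,\mu}^\tau$.

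First I would check $K$-equivariance of $T$. For $k\in K$ and $f\in\ps{\sigma}{\mu}$ we have, by $G$-equivariance of $\Psi$, the identity $\Psi(\pi_{\sigma,\mu}(k)f)=L_k\Psi(f)$ where $L$ denotes the left regular representation on $C^\infty(G\times_KV_\tau)$. Evaluating at $e$ yields $T(\pi_{\sigma,\mu}(k)f)=\Psi(f)(k^{-1})$, and using the defining relation $s(gh)=\tau(h^{-1})s(g)$ for sections of $G\times_KV_\tau$ with $g=e$, $h=k^{-1}$, this equals $\tau(k)\Psi(f)(e)=\tau(k)T(f)$. Hence $T\in\Hom{K}{\ps{\sigma}{\mu}}{V_\tau}$, so the right-hand side of the claimed equality makes sense.

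Next, for arbitrary $g\in G$ and $f\in\ps{\sigma}{\mu}$, I would write $f=\pi_{\sigma,\mu}(g)\pi_{\sigma,\mu}(g^{-1})f$ and apply $G$-equivariance again to get $\Psi(f)=L_g\Psi(\pi_{\sigma,\mu}(g^{-1})f)$. Evaluating at $g$ gives
\begin{equation*}
\Psi(f)(g)=\Psi(\pi_{\sigma,\mu}(g^{-1})f)(g^{-1}g)=\Psi(\pi_{\sigma,\mu}(g^{-1})f)(e)=T(\pi_{\sigma,\mu}(g^{-1})f).
\end{equation*}
Comparing with the defining formula $P_{\sigma,\mu}^\tau(T\otimes f)(g)=T(\pi_{\sigma,\mu}(g^{-1})f)$ of the Poisson transform finishes the proof.

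There is no real obstacle here; the only place one needs to be careful is the convention for the equivariance condition of sections of $G\times_KV_\tau$ when computing $\Psi(f)(k^{-1})$ in the first step, and keeping track of where the inverse sits in $L_g$ in the second step. Both amount to unwinding definitions, so no further ingredients are needed.
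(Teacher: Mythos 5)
Your proposal is correct and is essentially identical to the paper's own proof: both verify that $T$ is a $K$-homomorphism by evaluating the equivariance relation at $e$ and using the section condition, and both recover $\Psi(f)(g)=T(\pi_{\sigma,\mu}(g^{-1})f)$ by the same translation argument. No differences worth noting.
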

\begin{proof}
  For every $k\in K$ we have 
  \begin{gather*}
    T(\pi_{\sigma,\mu}(k)f)=\Psi(\pi_{\sigma,\mu}(k)f)(e)=\Psi(f)(k^{-1})=\tau(k)\Psi(f)(e)=\tau(k)T(f)
  \end{gather*}
  and thus $T\in\Hom{K}{\ps{\sigma}{\mu}}{V_\tau}$. Moreover we have for every $g\in G$ and $f\in \ps{\sigma}{\mu}$
  \begin{gather*}
    P_{\sigma,\mu}^\tau(T\otimes f)(g)=\Psi(\pi_{\sigma,\mu}(g^{-1})f)(e)=\Psi(f)(g).\qedhere
  \end{gather*}
\end{proof}
This lemma admits the following important implications.

\begin{corollary}\label{cor:no_poisson_transforms}
    Let $\Psi: \ps{\sigma}{\mu}\to C^\infty(G\times_KV_\tau)$ be a $G$-equivariant map where $V_\tau$ does not contain the $M$-representation $V_\sigma$. Then $\Psi=0$.
\end{corollary}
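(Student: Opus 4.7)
The proof is a direct consequence of Lemma \ref{la:Poisson_natural} combined with Frobenius reciprocity, so there is no real obstacle, only a short argument to write down. The plan is to show that, under the hypothesis, the intertwiner $T$ produced by Lemma \ref{la:Poisson_natural} must already be zero, which forces $\Psi$ itself to vanish.

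First I would apply Lemma \ref{la:Poisson_natural} to obtain
\begin{gather*}
\Psi=P_{\sigma,\mu}^\tau(T\otimes\bigcdot),\qquad T\in\Hom{K}{\ps{\sigma}{\mu}}{V_\tau},\qquad T(f)=\Psi(f)(e).
\end{gather*}
Passing to the compact picture gives $\ps{\sigma}{\mu}\cong_K\on{Ind}_M^K\sigma$, so Frobenius reciprocity (exactly as recalled in the paragraph preceding Proposition \ref{prop:multone}) yields a natural isomorphism
\begin{gather*}
\Hom{K}{\ps{\sigma}{\mu}}{V_\tau}\cong\Hom{M}{V_\sigma}{V_\tau}.
\end{gather*}

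Next I would translate the hypothesis on $V_\tau$ into a vanishing statement for this Hom space. Since $M$ is compact, the restriction $\restr{V_\tau}{M}$ is completely reducible, hence $V_\sigma$ fails to occur in $V_\tau$ as an $M$-subrepresentation if and only if $\Hom{M}{V_\sigma}{V_\tau}=0$. Combining this with the displayed isomorphism gives $T=0$, and therefore $\Psi=P_{\sigma,\mu}^\tau(0\otimes\bigcdot)=0$, as claimed. The entire argument is essentially a reformulation of Frobenius reciprocity, which is why it reads as a corollary rather than as a theorem in its own right.
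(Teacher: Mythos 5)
Your proposal is correct and follows exactly the same route as the paper: apply Lemma \ref{la:Poisson_natural} to write $\Psi=P_{\sigma,\mu}^\tau(T\otimes\bigcdot)$ and then kill $T$ via the Frobenius isomorphism $\Hom{K}{\ps{\sigma}{\mu}}{V_\tau}\cong\Hom{M}{V_\sigma}{V_\tau}=0$. The extra remark about complete reducibility of $\restr{V_\tau}{M}$ is a harmless elaboration of a step the paper leaves implicit.
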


\begin{proof}
 By Lemma \ref{la:Poisson_natural} there exists $T\in\Hom{K}{\ps{\sigma}{\mu}}{V_\tau}$ such that $\Psi = P_{\sigma,\mu}^\tau(T\otimes\bigcdot)$. But $\Hom{K}{\ps{\sigma}{\mu}}{V_\tau}\cong\Hom{M}{V_\sigma}{V_\tau}=0$ by Frobenius reciprocity.
\end{proof}

\begin{corollary}\label{cor:constant_Poisson_transforms}
    Let $(\tau_i,V_{\tau_i})\in\hat K,\ i\in\{1,2\},$ be such that 
    \begin{gather*}
        \on{mult}_K(V_{\tau_i}, \ps{\sigma}{\mu})=\dim_\C\Hom{K}{\ps{\sigma}{\mu}}{V_{\tau_i}}=1
    \end{gather*}
    and let $\Phi:C^\infty(G\times_KV_{\tau_1})\to C^\infty(G\times_KV_{\tau_2})$ be a $G$-equivariant map. By choosing $0\neq T_i\in\Hom{K}{\ps{\sigma}{\mu}}{V_{\tau_i}}$ we consider the Poisson transforms $P_{\sigma,\mu}^{\tau_i}$ as maps from $\ps{\sigma}{\mu}$ to $C^\infty(G\times_KV_{\tau_i})$. Then there exists some $c\in\C$ such that 
    \begin{gather*}
        \Phi\circ P_{\sigma,\mu}^{\tau_1}=c\cdot P_{\sigma,\mu}^{\tau_2}.
    \end{gather*}
\end{corollary}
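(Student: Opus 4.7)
The plan is to apply Lemma~\ref{la:Poisson_natural} to the composition $\Phi \circ P_{\sigma,\mu}^{\tau_1}$ and then use the multiplicity-one hypothesis to pin down the resulting intertwiner up to a scalar.

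First I would check that the composition is $G$-equivariant. By Definition~\ref{def:PoissonTransformation} (or rather the remark following it) the map $f \mapsto P_{\sigma,\mu}^{\tau_1}(T_1\otimes f)$ intertwines $\pi_{\sigma,\mu}$ with the left regular $G$-action on $C^\infty(G\times_K V_{\tau_1})$. Since $\Phi$ is $G$-equivariant by assumption, the composition
\begin{gather*}
\Psi \colon \ps{\sigma}{\mu}\to C^\infty(G\times_K V_{\tau_2}),\qquad \Psi(f)\coloneqq \Phi\bigl(P_{\sigma,\mu}^{\tau_1}(T_1\otimes f)\bigr),
\end{gather*}
is again $G$-equivariant.

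Next I would invoke Lemma~\ref{la:Poisson_natural}, which applies verbatim to $\Psi$ and produces an element $T\in\Hom{K}{\ps{\sigma}{\mu}}{V_{\tau_2}}$, given by $T(f)=\Psi(f)(e)$, such that $\Psi = P_{\sigma,\mu}^{\tau_2}(T\otimes \bigcdot)$. Now the multiplicity-one hypothesis $\dim_\C\Hom{K}{\ps{\sigma}{\mu}}{V_{\tau_2}}=1$ enters: the space is spanned by $T_2$, so $T = c\, T_2$ for some scalar $c\in\C$. Substituting back and using linearity of $P_{\sigma,\mu}^{\tau_2}$ in its first argument yields the claimed identity $\Phi\circ P_{\sigma,\mu}^{\tau_1} = c\cdot P_{\sigma,\mu}^{\tau_2}$.

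There is no real obstacle here: the corollary is a formal Schur-type consequence of Lemma~\ref{la:Poisson_natural} combined with the one-dimensionality of the relevant $K$-intertwiner space. The only point requiring a moment of care is that the Poisson transform was originally defined on $\Hom{K}{\ps{\sigma}{\mu}}{V_{\tau_i}}\otimes \psd{\sigma}{\mu}$, but fixing $T_i$ and restricting along the embedding $\iota_{\sigma,\mu}\colon\ps{\sigma}{\mu}\hookrightarrow\psd{\sigma}{\mu}$ gives precisely the map $\ps{\sigma}{\mu}\to C^\infty(G\times_K V_{\tau_i})$ to which Lemma~\ref{la:Poisson_natural} applies; this is exactly the identification the parenthetical sentence in the statement makes. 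The mild asymmetry in the hypothesis that $\on{mult}_K(V_{\tau_1},\ps{\sigma}{\mu})=1$ (which is never invoked in the argument above) serves only to ensure that the factor $T_1$ is the unique $K$-intertwiner up to scale, so that the constant $c$ is intrinsic to $\Phi$ (and the normalisations of $T_1,T_2$), not to any hidden choice.
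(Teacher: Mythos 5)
Your proposal is correct and follows exactly the paper's own argument: apply Lemma~\ref{la:Poisson_natural} to the $G$-equivariant composition $\Phi\circ P_{\sigma,\mu}^{\tau_1}(T_1\otimes\bigcdot)$ to obtain $T\in\Hom{K}{\ps{\sigma}{\mu}}{V_{\tau_2}}$, then use $\dim_\C\Hom{K}{\ps{\sigma}{\mu}}{V_{\tau_2}}=1$ to write $T=c\,T_2$. No gaps.
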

\begin{proof}
    Since 
    \begin{gather*}
        \Phi\circ P_{\sigma,\mu}^{\tau_1}(T_1\otimes\bigcdot):\ps{\sigma}{\mu}\to C^\infty(G\times_KV_{\tau_2})
    \end{gather*}
    is a $G$-equivariant map there exists some $T\in\Hom{K}{\ps{\sigma}{\mu}}{V_{\tau_2}}$ such that 
    \begin{gather*}
        \Phi\circ P_{\sigma,\mu}^{\tau_1}(T_1\otimes\bigcdot)=P_{\sigma,\mu}^{\tau_2}(T\otimes\bigcdot)
    \end{gather*}
     by Lemma \ref{la:Poisson_natural}. Since $\dim_\C\Hom{K}{\ps{\sigma}{\mu}}{V_{\tau_i}}=1$ there exists some $c\in\C$ with $T=c\cdot T_2$.
\end{proof}

\subsection{Injectivity of vector valued Poisson transforms}\label{subsec:inj_PT}
In this subsection we investigate specific vector valued Poisson transforms. We will see that if we pick a minimal $K$-type for each irreducible subspace of the representation, the direct sum of the associated Poisson transforms is injective. By our rank one assumption each spherical principal series representation $\pst{\mu}$ decomposes multiplicity-freely as a $K$-representation (see Proposition \ref{prop:multone}). Therefore we have the following

\begin{lemma}\label{la:orth_proj_scalar}
Let $0\neq(\tau,V)\leq\pst{\mu}$ be an irreducible $K$-representation and $t\in\Hom{M}{\C}{V}$. Then
\begin{gather*}
P_\mu^\tau(F^{-1}(t)\otimes\bigcdot)=\frac{t(1)(e)}{\dim V}P_\mu^\tau(\on{pr}_V\otimes\;\bigcdot),
\end{gather*}
where as above $F$ denotes the Frobenius isomorphism.
\end{lemma}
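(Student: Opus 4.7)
The plan is to identify the two $K$-intertwiners $F^{-1}(t)$ and $\on{pr}_V$ in $\Hom{K}{\pst{\mu}}{V}$ and then invoke the linearity of the Poisson transform in its first tensor slot. By Proposition \ref{prop:multone} we have $\dim_\C\Hom{K}{\pst{\mu}}{V} = \dim V^M = 1$, so $F^{-1}(t)$ and $\on{pr}_V$ must be proportional; the whole task reduces to computing the unique scalar $c\in\C$ with $F^{-1}(t) = c\cdot\on{pr}_V$, since the claimed identity then follows from $P_\mu^\tau(F^{-1}(t)\otimes\bigcdot) = c\cdot P_\mu^\tau(\on{pr}_V\otimes\bigcdot)$.

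To determine $c$ I would make the Frobenius isomorphism $F$ explicit via the integral representation of the Poisson transform in Definition \ref{def:PoissonTransformation}. Setting $g=e$ in $P_\mu^\tau(T\otimes f)(g) = \int_K\tau(k)F(T)(f(gk))\,\intd k$ and using the evaluation $P_\mu^\tau(T\otimes f)(e) = T(f)$ (as in Lemma \ref{la:Poisson_natural}) yields $T(f) = \int_K f(k)\,\tau(k)F(T)(1)\,\intd k$ for every $T\in\Hom{K}{\pst{\mu}}{V}$ and $f\in\pst{\mu}$. Specialising to $T = F^{-1}(t)$ and regarding $V$ as a subspace of $L^2(K/M)$, evaluation at $\ell\in K$ gives $F^{-1}(t)(f)(\ell) = \int_K f(k)\,t(1)(k^{-1}\ell)\,\intd k$. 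Since $t(1)\in V^M = \C\phi_V$ by Proposition \ref{prop:helg_general}\ref{prop:helg_general_i} and $\phi_V(e) = 1$, we have $t(1) = t(1)(e)\phi_V$, and hence $F^{-1}(t)(f)(\ell) = t(1)(e)\int_K f(k)\,\phi_V(k^{-1}\ell)\,\intd k$.

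On the other hand, Proposition \ref{prop:helg_general}\ref{prop:helg_general_ii} together with the normalisation $\langle\phi_V,\phi_V\rangle_{L^2(K)} = 1/\dim V$ from \ref{prop:helg_general_iii} exhibits the reproducing kernel of $V\leq L^2(K/M)$ at $\ell$ as $\dim V\cdot\tau(\ell)\phi_V$; combined with the conjugation identity $\overline{\phi_V(\ell^{-1}k)} = \phi_V(k^{-1}\ell)$ (also a consequence of \ref{prop:helg_general_iii}) this delivers the parallel formula $\on{pr}_V(f)(\ell) = \dim V\cdot\int_K f(k)\,\phi_V(k^{-1}\ell)\,\intd k$. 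Comparing these two integral expressions at a single nonzero $f$ immediately yields $c = t(1)(e)/\dim V$ and hence the lemma.

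The argument is essentially bookkeeping; the one delicate point I would want to verify carefully is the normalisation built into $F$ by Definition \ref{def:PoissonTransformation}, since this is precisely what produces the factor $\dim V$ in the denominator. Once that is pinned down via the integral formula above, the spherical function identities of Proposition \ref{prop:helg_general} do all of the remaining work.
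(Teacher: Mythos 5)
Your proof is correct, and it uses the same two ingredients as the paper (the multiplicity-one statement of Proposition \ref{prop:multone} to reduce everything to a single scalar $c$, and the spherical-function identities of Proposition \ref{prop:helg_general} to compute it), but it organizes the computation of $c$ differently. The paper works downstream: it evaluates both Poisson transforms at the specific vector $\phi_V$ and at the points $g=e$, $k=e$, obtaining $P_\mu^\tau(F^{-1}(t)\otimes\phi_V)(e)(e)=t(1)(e)\langle\phi_V,\phi_V\rangle_{L^2(K)}=t(1)(e)/\dim V$ versus $P_\mu^\tau(\on{pr}_V\otimes\phi_V)(e)(e)=\phi_V(e)=1$, and takes the ratio. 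You instead work upstream and prove the stronger operator identity $F^{-1}(t)=\frac{t(1)(e)}{\dim V}\on{pr}_V$ in $\Hom{K}{\pst{\mu}}{V}$ directly, by exhibiting both sides as integral operators with kernel proportional to $\phi_V(k^{-1}\ell)$ — $F^{-1}(t)$ via the explicit Frobenius formula read off from Definition \ref{def:PoissonTransformation} at $g=e$, and $\on{pr}_V$ via the reproducing kernel $\dim V\cdot\phi_V(k^{-1}\ell)$ coming from Proposition \ref{prop:helg_general}\,\ref{prop:helg_general_ii} and \ref{prop:helg_general_iii}. The lemma then follows by linearity in the first tensor slot. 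Both arguments are equally rigorous; yours makes the relationship between the two intertwiners transparent before the Poisson transform is ever applied (which is slightly more information than the lemma asserts), while the paper's is marginally shorter because it never needs to verify that the kernel $\phi_V(k^{-1}\ell)$ really reproduces $\on{pr}_V$ on all of $L^2(K/M)$ — a point you handle correctly, since the kernel annihilates $V^{\perp}$.
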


\begin{proof}
By Equation \eqref{eq:olbrich_Poisson_G} we have for each $f\in\pstd{\mu}$ and $g\in G$ that
\begin{gather*}
P_\mu^\tau(F^{-1}(t)\otimes f)(g)=F^{-1}(t)(\pi_\mu(g)^{-1}f)\text{ and }
P_\mu^\tau(\on{pr}_V\otimes\; f)(g)=\on{pr}_V(\pi_\mu(g)^{-1}f).
\end{gather*}
By Proposition \ref{prop:multone} we obtain that $\Hom{K}{\pst{\mu}}{V}=\C\on{pr}_V$ is one-dimensional since $(\tau,V)\in\hat K_M$. This proves that there exists some $c\in\C$ such that
\begin{gather*}
P_\mu^\tau(F^{-1}(t)\otimes\bigcdot)=cP_\mu^\tau(\on{pr}_V\otimes\;\bigcdot).
\end{gather*}
Recall the $M$-invariant function $\phi_V$ from Proposition \ref{prop:helg_general}. By Definition \ref{def:PoissonTransformation} we have
\begin{align*}
P_\mu^\tau(F^{-1}(t)\otimes\phi_V)(e)&=\int_K\tau(k)t(\phi_V(k))\intd k=\int_K\phi_V(k)\tau(k)t(1)\intd k\\
&=\int_K\phi_V(k)\tau(k)t(1)(e)\phi_V\intd k,
\end{align*}
where we recall that $V$ is realized in $L^2(K)$ so that $t(1)(e)$ makes sense, and we used Proposition \ref{prop:helg_general}\,\ref{prop:helg_general_i} for the last equality. Using Proposition \ref{prop:helg_general}\,\ref{prop:helg_general_iii} we infer
\begin{align*}
P_\mu^\tau(F^{-1}(t)\otimes \phi_V)(e)(e)&=\int_K\phi_V(k)t(1)(e)\phi_V(k^{-1})\intd k=\int_K\phi_V(k)t(1)(e)\ov{\phi_V(k)}\intd k\\
&=t(1)(e)\langle\phi_V,\phi_V\rangle_{L^2(K)}=\frac{t(1)(e)}{\dim V}.
\end{align*}
On the other hand \eqref{eq:olbrich_Poisson_G} yields
\begin{gather*}
P_\mu^\tau(\on{pr}_V\otimes\;\phi_V)(e)=\on{pr}_V(\phi_V)=\phi_V.
\end{gather*}
Thus,
\begin{gather*}
c=\frac{P_\mu^\tau(F^{-1}(t)\otimes \phi_V)(e)(e)}{P_\mu^\tau(\on{pr}_V\otimes\;\phi_V)(e)(e)}=\frac{t(1)(e)}{\phi_V(e)\dim V}=\frac{t(1)(e)}{\dim V}.\qedhere
\end{gather*}
\end{proof}
From now on we choose $t\in\Hom{M}{\C}{V}$ for each $(\tau,V)\in\hat K_M$ by $t(1)\coloneqq\phi_V$ and define
\begin{gather*}
P_{\mu}^\tau: \pstd{\mu}\to C^\infty(G\times_K V),\qquad P_{\mu}^\tau(f)\coloneqq P_{\mu}^\tau(F^{-1}(t)\otimes f).
\end{gather*}
Note that, by Lemma \ref{la:orth_proj_scalar}, we have for each $f\in\pstd{\mu}$ and $g\in G$
\begin{gather}\label{eq:Poisson_proj}
P_{\mu}^\tau(f)(g)=\frac{1}{\dim V}\on{pr}_V(\pi_\mu(g)^{-1}f).
\end{gather}

\begin{proposition}\label{prop:PoissonInjective}
    Let $[(\tau, V_\tau)]\in\hat K_M$ and $\mu\in\mf a^*$. Then the Poisson transform 
    \begin{align*}
        P_{\mu}^\tau: \pstd{\mu}\to C^\infty(G\times_K V)
    \end{align*}
     is injective if and only if every non-trivial $G$-invariant subspace of $\pstd{\mu}$ contains $\tau$. Moreover, the kernel is given by the distributional elements in the closure of the sum of all $G$-invariant subspaces $V\leq\pst{\mu}$ with $\on{mult}_K(\tau, V)=0$.
\end{proposition}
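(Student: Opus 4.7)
My approach is to use the identity \eqref{eq:Poisson_proj}, $P_\mu^\tau(f)(g)=\frac{1}{\dim V}\on{pr}_V(\pi_\mu(g^{-1})f)$, to identify $\ker(P_\mu^\tau)$ with the largest closed $G$-invariant subspace of $\pstd{\mu}$ whose $K$-spectrum omits $\tau$. Since $P_\mu^\tau$ is continuous and $G$-equivariant, $\ker(P_\mu^\tau)$ is automatically a closed $G$-invariant subspace, and by Proposition~\ref{prop:multone} the $\tau$-isotypic subspace of $\pstd{\mu}$ is precisely the distinguished copy $V\leq L^2(K/M)$.

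\textbf{Kernel avoids $\tau$.} First I would check that $\tau$ does not occur in $\ker(P_\mu^\tau)$. For any $0\ne v\in V$, evaluating at $g=e$ gives $P_\mu^\tau(v)(e)=v/\dim V\ne 0$, so $V\cap\ker(P_\mu^\tau)=0$; by multiplicity one this yields $\on{mult}_K(\tau,\ker(P_\mu^\tau))=0$.

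\textbf{Reverse inclusion.} For the converse, let $W\leq\pstd{\mu}$ be a closed $G$-invariant subspace with $\on{mult}_K(\tau,W)=0$. For $f'\in W$ I would write the $\tau$-isotypic projection as the character average $\on{pr}_V(f')=\dim V\int_K\ov{\on{tr}(\tau(k))}\,\pi_\mu(k)f'\intd k$. This Bochner-type integral converges in $\pstd{\mu}$ because the $K$-action on $\pstd{\mu}$ is continuous, and it takes values in $W$ since $W$ is closed and $K$-invariant; hence $\on{pr}_V(f')\in W\cap V=0$. Applying this to $f'=\pi_\mu(g^{-1})f$ with $f\in W$ then yields $P_\mu^\tau(f)\equiv 0$, so $W\subseteq\ker(P_\mu^\tau)$.

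\textbf{Conclusion.} The two inclusions identify $\ker(P_\mu^\tau)$ as the largest closed $G$-invariant subspace of $\pstd{\mu}$ not containing $\tau$, which immediately gives the iff statement for injectivity (passing to the closure of an invariant subspace is harmless since $\tau$-isotypic components are finite-dimensional and consist of smooth vectors). The ``moreover'' assertion follows from the standard correspondence $W\mapsto W^{-\infty}$ between closed $G$-invariant subspaces of $\pst{\mu}$ and of $\pstd{\mu}$, which preserves $K$-multiplicities because both are controlled by the common underlying Harish-Chandra module. The principal technical point is the topological one: showing that the $K$-character average defining $\on{pr}_V$ converges in $\pstd{\mu}$ and preserves closed $K$-invariant subspaces; this reduces to continuity and admissibility of the $K$-action on $\pstd{\mu}$.
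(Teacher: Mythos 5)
Your proposal is correct and takes essentially the same route as the paper: both arguments rest on the identity \eqref{eq:Poisson_proj} and establish the two inclusions by (a) noting that any closed invariant subspace omitting $\tau$ is killed because the $\tau$-isotypic projection of its elements vanishes, and (b) using evaluation at $g=e$ to see that $\tau$ cannot occur in the kernel. The only cosmetic differences are that you make step (a) explicit via the $K$-character average (which the paper simply asserts) and that for step (b) the paper passes through the cyclic subspace generated by a kernel element rather than showing $V\cap\ker P_\mu^\tau=0$ directly.
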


\begin{proof}
Since $P_{\mu}^\tau$ is $G$-equivariant, the kernel $\ker P_{\mu}^\tau$ is $G$-invariant. We claim that it equals the closure of the sum of all invariant subspaces of $\pst{\mu}$ which do not contain the $K$-representation $(\tau,V_\tau)$:\\
   If $\{0\}\neq W\leq\pst{\mu}$ is an invariant subspace of $\pst{\mu}$ which does not contain the $K$-representation $\tau$, by \eqref{eq:Poisson_proj} we have
   \begin{gather*}
        P_{\mu}^\tau(f)(g)=\frac{1}{\dim V_\tau}\on{pr}_{V_\tau}(\pi_\mu(g^{-1})f)=0
   \end{gather*}
   for every $f\in W$ and $g\in G$ since $\pi_\mu(g^{-1})f\in W$. Thus, $f\in\ker P_{\mu}^\tau$. This proves the first inclusion because the kernel is closed.\\
   Conversely, let $f\in\ker P_{\mu}^\tau$. Since the kernel is invariant, the distributional elements in the $G$-cyclic space $W_f$ of $f$ are also contained in the kernel of $P_{\mu}^\tau$. Therefore, $f$ is contained in an invariant space which does not contain $\tau$ (if $W_f$ contains $\tau$ we can choose $g=e$ to get a contradiction to $W_f\subseteq\ker P_{\mu}^\tau$).
\end{proof}

\begin{proposition}\label{prop:Poisson_sum}
 Let $\mu\in\mf a^*$ and $\mathbf{Irr}(\mu)$ be the set of all non-zero irreducible subrepresentations of $\pst{\mu}$. Then, if $(\tau_U, V_{\tau_U})$ is any non-zero $K$-type of $U$ for $U\in\mathbf{Irr}(\mu)$, the direct sum of the corresponding Poisson transforms
  \begin{gather*}
    \oplus_{U\in\mathbf{Irr}(\mu)} P_\mu^{\tau_U}: \pstd{\mu}\to \bigoplus_{U\in\mathbf{Irr}(\mu)}C^\infty(G\times_KV_{\tau_U}) 
  \end{gather*}
  is injective. A natural choice of $(\tau_U,V_{\tau_U})$ is given by a minimal $K$-type of $U$.
\end{proposition}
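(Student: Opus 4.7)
The plan is to establish injectivity of $\bigoplus_{U\in\mathbf{Irr}(\mu)} P_\mu^{\tau_U}$ by showing that $\bigcap_{U\in\mathbf{Irr}(\mu)} \ker P_\mu^{\tau_U}=0$. First I would reformulate Proposition~\ref{prop:PoissonInjective} at the level of Harish-Chandra modules: since any sum of $G$-invariant subspaces of $\pst{\mu}$ in which the $K$-type $\tau_U$ does not occur is again such a subspace (the $\tau_U$-isotypic component of a sum of invariant subspaces is the sum of the $\tau_U$-isotypic components), there is a unique largest such subspace, and consequently the Harish-Chandra module $\mc K_U$ of $\ker P_\mu^{\tau_U}$ is the largest $(\mf g,K)$-submodule of the Harish-Chandra module $\mc H_\mu$ of $\pst{\mu}$ satisfying $\on{mult}_K(\tau_U,\mc K_U)=0$.

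Next, given $f\in\bigcap_{U}\ker P_\mu^{\tau_U}$, I would let $W$ denote the closure of the $G$-cyclic span of $f$ in $\pstd{\mu}$ and pass to its $K$-finite part $\mc M\subseteq\bigcap_U\mc K_U$. Since $\pst{\mu}$ has finite length, if $\mc M$ were non-zero it would contain a simple $(\mf g,K)$-submodule whose Hilbert completion is an irreducible subrepresentation $U_0\in\mathbf{Irr}(\mu)$. Then $\tau_{U_0}$ occurs in $U_0\subseteq\mc M\subseteq\mc K_{U_0}$, contradicting the defining property of $\mc K_{U_0}$. Hence $\mc M=0$, which forces $W=0$ (since $K$-finite vectors are dense in any admissible $G$-representation, in particular in $W$), and therefore $f=0$. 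The final assertion is immediate: every non-zero admissible $(\mf g,K)$-module admits at least one minimal $K$-type in the sense of Subsection~\ref{subsec:red}, providing a canonical (though in general non-unique) choice of $\tau_U$.

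The main subtlety I anticipate is the tacit identification of closed $G$-invariant subspaces of the distribution globalization $\pstd{\mu}$ with $(\mf g,K)$-submodules of $\mc H_\mu$ via the functor of taking $K$-finite vectors. Beyond this standard fact for admissible representations of finite length, the only non-formal inputs are the finite length of the spherical principal series and the elementary observation that a simple $(\mf g,K)$-submodule of a Hilbert representation integrates to an irreducible closed subrepresentation — both already implicit in the socle discussion of Subsection~\ref{subsec:red} — so the argument really reduces to a clean application of Proposition~\ref{prop:PoissonInjective} combined with the socle structure.
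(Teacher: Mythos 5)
Your argument is correct and follows essentially the same route as the paper: both reduce injectivity to the statement that every non-zero closed $G$-invariant subspace of $\pst{\mu}$ contains an irreducible subrepresentation (via finite length of the principal series), which then lies in $\mathbf{Irr}(\mu)$ and hence contributes its $K$-type $\tau_U$, contradicting membership in $\ker P_\mu^{\tau_U}$ by Proposition~\ref{prop:PoissonInjective}. The extra bookkeeping you do with Harish-Chandra modules and the density of $K$-finite vectors is a more explicit version of the identification the paper also uses tacitly, not a different proof.
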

\begin{proof}
    Since the kernel of the direct sum $\oplus_{U\in\mathbf{Irr}(\mu)} P_\mu^{\tau_U}$ is the intersection of the kernels of  $P_\mu^{\tau_U},\ U\in\mathbf{Irr}(\mu),$ we can apply Proposition \ref{prop:PoissonInjective} to deduce
    \begin{gather*}
        \oplus_{U\in\mathbf{Irr}(\mu)} P_\mu^{\tau_U}\text{ injective }\Leftrightarrow\forall\, \{0\}\neq V\leq \pst{\mu}\ \exists\, U\in\mathbf{Irr}(\mu)\colon\on{mult}_K(\tau_U,V)\neq 0.
    \end{gather*}
    Let $\{0\}\neq V\leq \pst{\mu}$ be a non-trivial (closed) $G$-invariant subspace. We claim that there exists some $U\in\mathbf{Irr}(\mu)$ such that $\on{mult}_K(\tau_U, V)\neq0$. In fact, since $\pst{\mu}$ has a composition series, $V$ also has a composition series by \cite[p. 815]{cohomo}. In particular, there exists an irreducible subrepresentation $\{0\}\neq I\leq V$. But $I\in\mathbf{Irr}(\mu)$ by the definition of $\mathbf{Irr}(\mu)$ and $\on{mult}_K(\tau_I,V)\neq 0$ since $I\leq V$. 
\end{proof}

\subsection{The role of generalized gradients}
In this subsection we use generalized gradients to connect different Poisson transforms associated with inequivalent $K$-representations. We first introduce some notation.

\begin{notation}\label{not:inn_prod_dual_basis}
We define the inner product
\begin{gather*}
\langle\cdot,\cdot\rangle\coloneqq-\frac{\kappa(\cdot,\theta\cdot)}{\kappa(H,H)}
\end{gather*}
and identify
\begin{gather*}
\I:\mf p\to\mf p^*,\quad X\mapsto \langle X,\cdot\rangle.
\end{gather*}
If $X_1,\ldots,X_{\dim\mf p}$ is a basis of $\mf p$ we denote the dual basis with respect to $\langle\cdot,\cdot\rangle$ by $\tilde X_1,\ldots,\tilde X_{\dim\mf p}$, i.e.
\begin{gather*}
\I(\tilde X_i)(X_j)=\langle\tilde X_i,X_j\rangle=\delta_{ij}.
\end{gather*}
\end{notation}

\begin{lemma}\label{la:gen_diffops}
For $Y\in\hat K_M$ let $\mathrm{d}_V^Y\coloneqq T_V^Y\circ\nabla$ with $T_V^Y\in\Hom{K}{Y\otimes\mf p^*}{V}$, where $V\leq L^2(K)$ denotes an irreducible subrepresentation of $Y\otimes\mf p^*$, be a generalized gradient and $\mu\in\mf a^*$. Choose a basis $X_1,\ldots,X_{\dim\mf p}$ of $\mf p_0$ such that $X_1\in\mf a$ and $X_j\in\mf k\oplus\mf n$ (e.g.\@ an orthonormal basis of $\mf p$ with $X_1\in\mf a$). Let
\begin{gather*}
p_{Y,\mu}\coloneqq(\mu+\rhoa)(X_1)\phi_Y\otimes\I(\tilde X_1)-\sum_{j=2}^{\dim\mf p}\ell(\Ik{X_j})\phi_Y\otimes\I(\tilde X_j)\in Y\otimes\mf p^*,
\end{gather*}
where $\Ik{X_j}\in\mf k$ denotes the $\mf k$-component in the $\mf k\oplus\mf a\oplus\mf n$-decomposition of $X_j$.
Then
\begin{enumerate}
\item\label{it:gen_diffops1}$p_{Y,\mu}$ is independent of the basis and $M$-invariant, 
\item\label{it:gen_diffops2} $\mathrm{d}_V^Y\circ P_\mu^Y=T_V^Y(p_{Y,\mu})(e)P_\mu^V$ if $V$ is $M$-spherical, i.e.\@ $V\leq L^2(K)^M$,
\item\label{it:gen_diffops3}$\mathrm{d}_V^Y\circ P_\mu^Y=0$ if $V$ is not $M$-spherical, i.e.\@ $V^M=0$.
\end{enumerate}
\end{lemma}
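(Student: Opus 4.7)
My plan for Lemma \ref{la:gen_diffops} treats the three claims separately.

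For (i) I rewrite $p_{Y,\mu}$ in a manifestly basis-free form. Using $\on{a}_I(X_1) = X_1$, $\Ik{X_1} = 0$ (since $X_1 \in \mf a$) and $\on{a}_I(X_j) = 0$ for $j \geq 2$, the given expression becomes
\[
p_{Y,\mu} = \sum_{i=1}^{\dim \mf p} A(X_i) \otimes \I(\tilde X_i),
\]
for the linear map $A\colon \mf p \to Y$, $A(X) = (\mu+\rhoa)(\on{a}_I(X))\phi_Y - \ell(\Ik{X})\phi_Y$. This is the canonical element of $Y \otimes \mf p^*$ attached to $A$, and hence basis-independent. For $M$-invariance I verify that $A$ is $M$-equivariant, using that $\Ad(M)$ acts trivially on $\mf a$ and preserves the Iwasawa subalgebras $\mf k$ and $\mf n$ (since $M = Z_K(A)$ normalises $N$), so that $\on{a}_I(\Ad(m)X) = \on{a}_I(X)$ and $\Ik{\Ad(m)X} = \Ad(m)\Ik{X}$. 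Combining these with $\tau(m)\phi_Y = \phi_Y$ and $\tau(m)\ell(Z)\phi_Y = \ell(\Ad(m)Z)\phi_Y$ gives $\tau(m)A(X) = A(\Ad(m)X)$.

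Part (iii) is immediate from Corollary \ref{cor:no_poisson_transforms}: $\mathrm{d}_V^Y \circ P_\mu^Y$ is a $G$-equivariant map $\pstd{\mu} \to C^\infty(G \times_K V)$, and if $V^M = 0$ then any such map is zero. For (ii), Proposition \ref{prop:multone} gives $\on{mult}_K(V,\pst{\mu}) = 1$, so by Corollary \ref{cor:constant_Poisson_transforms} there is a constant $c \in \C$ with $\mathrm{d}_V^Y \circ P_\mu^Y = c P_\mu^V$. To compute $c$ I use Lemma \ref{la:Poisson_natural}: both sides are characterised by their boundary value at $g = e$. Differentiating \eqref{eq:Poisson_proj} yields $(\nabla P_\mu^Y(f))(e)(X) = -\frac{1}{\dim Y}\on{pr}_Y(\pi_\mu(X)f)$, so the boundary value of $\mathrm{d}_V^Y \circ P_\mu^Y$ is the $K$-map $f \mapsto T_V^Y(\nabla P_\mu^Y(f)(e))$, while that of $c P_\mu^V$ is $\frac{c}{\dim V}\on{pr}_V$. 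By multiplicity one these two $K$-maps $\pst{\mu}^\infty \to V$ are proportional; testing on $f = \phi_V$ and evaluating at $k = e$ via the reproducing identity $\varphi(e) = \dim V \langle \varphi, \phi_V\rangle$ from Proposition \ref{prop:helg_general}(ii) isolates $c$. Identifying this $c$ with $T_V^Y(p_{Y,\mu})(e)$ then requires expanding $\pi_\mu(X)$ in the compact picture via the Iwasawa decomposition $X = \Ik{X} + \on{a}_I(X) + \In{X}$, obtained by Iwasawa-decomposing $\exp(-tX)k$ to first order.

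The main obstacle is this last step in (ii). The compact-picture formula for $\pi_\mu(X)$ naturally contains $-(\mu-\rhoa)$, whereas $p_{Y,\mu}$ contains $(\mu+\rhoa)$; bridging this gap requires the adjointness $\langle \pi_\mu(X)f, \phi_Y\rangle_{\mu,-\mu} = -\langle f, \pi_{-\mu}(X)\phi_Y\rangle_{\mu,-\mu}$ of the $G$-invariant pairing $\pst{\mu} \times \pst{-\mu} \to \C$, after which the $\mf n$-contribution of $\pi_{-\mu}(X)\phi_Y$ at $e$ vanishes because of the Iwasawa identities $a_I(\exp(-tN)) = e$ and $k_I(\exp(-tN)) = e$ for $N \in \mf n$. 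Once these signs and normalisations are sorted out, the surviving $\mf a$- and $\mf k$-terms assemble coefficient by coefficient into $A$ and hence into $p_{Y,\mu}$, giving the stated identification.
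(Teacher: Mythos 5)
Parts (i) and (iii) of your plan are fine and coincide with the paper's argument: the basis-free reformulation via the linear map $A\colon\mf p\to Y$ together with the $M$-equivariance check is exactly what is needed for (i), and (iii) is indeed immediate from Corollary \ref{cor:no_poisson_transforms}. The reduction of (ii) to a single constant $c$ via multiplicity one and Corollary \ref{cor:constant_Poisson_transforms} is also the right first move.

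The gap is the evaluation of $c$, which is the actual content of the lemma, and your own text flags it as the unresolved "main obstacle". Testing on $f=\phi_V$ forces you to compute $\on{pr}_Y(\pi_\mu(X)\phi_V)$ for the derived compact-picture action, which produces $\mu-\rhoa$; your proposed fix is to move $\pi_\mu(X)$ across the bilinear pairing $\pst{\mu}\times\pst{-\mu}\to\C$ and then invoke $a_I(\exp(-tN))=e$, $k_I(\exp(-tN))=e$ for $N\in\mf n$. But after the adjointness step the relevant quantity is $\langle\phi_V,\pi_{-\mu}(X)\overline{\phi_Y}\rangle$, an integral over all of $K$; the integrand involves the Iwasawa decomposition of $\Ad(k^{-1})X$ for \emph{every} $k\in K$, so vanishing of the $\mf n$-contribution at $k=e$ does not dispose of it. Carrying this out honestly amounts to reproducing the computations with the function $\omega$ and the scalars $\lambda(V,Y)$ from Section~\ref{sec:Fourier} and Appendix~\ref{app:comp_scalars} — far more than "sorting out signs and normalisations", and there is no guarantee the terms assemble "coefficient by coefficient" into $p_{Y,\mu}$ rather than only after projection by $T_V^Y$. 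The paper sidesteps all of this by testing on the delta distribution $\delta_{eM}$ instead of on $\phi_V$: there one has the closed formula $P_\mu^Y(\delta_{eM})(g)=a_I(g^{-1})^{-(\mu+\rhoa)}\tau(k_I(g^{-1}))\phi_Y$, in which $-(\mu+\rhoa)$ appears natively (it is the exponent of the Poisson kernel, not of the $G$-action on $\pst{\mu}$). Differentiating at $g=e$ then only needs the first-order Iwasawa decomposition of $\exp(tX_j)$ itself: the $\mf a$-direction gives $(\mu+\rhoa)(X_1)\phi_Y$, the $\mf n$-part of $X_j$ contributes nothing because the function is constant on $N$, and the $\mf k$-part gives $-\ell(\Ik{X_j})\phi_Y$. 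Hence $(\nabla P_\mu^Y(\delta_{eM}))(e)=p_{Y,\mu}$, and comparing with $P_\mu^V(\delta_{eM})(e)=\phi_V$ and $\phi_V(e)=1$ yields $c=T_V^Y(p_{Y,\mu})(e)$ in one line. You should either adopt this evaluation point or actually carry out the integral computation you sketch.
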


\begin{proof}
\ref{it:gen_diffops1} Identifying
\begin{gather*}
Y\otimes\mf p^*\cong\Hom{}{\mf p}{Y},\quad f\otimes\lambda\mapsto(X\mapsto\lambda(X)f),
\end{gather*}
the tensor $p_{Y,\mu}$ corresponds to the homomorphism given by
\begin{alignat*}{2}
p_{Y,\mu}(X)&=(\mu+\rhoa)(X)\phi_Y\quad &&\forall X\in\mf a,\\
p_{Y,\mu}(X)&=\ell(\Ik{X})\phi_Y\ &&\forall X\in\mf p\cap(\mf k\oplus\mf n), 
\end{alignat*}
which is independent of the basis. For the $M$-invariance note first that the $K$-action on $\Hom{}{\mf p}{Y}$ is given by
\begin{gather*}
(k.\Phi)(X)=k.\Phi(k^{-1}.X)=L(k)\Phi(\on{Ad}(k^{-1})X),\quad X\in\mf p,\ \Phi\in\Hom{}{\mf p}{Y}.
\end{gather*}
Since $M$ stabilizes $\mf a$ and $\phi_Y$ is $M$-invariant we have for each  $X\in\mf a$,
\begin{align*}
(m.p_{Y,\mu})(X)&=L(m)p_{Y,\mu}(\on{Ad}(m^{-1})X)=L(m)p_{Y,\mu}(X)\\
&=(\mu+\rhoa)(X)L(m)\phi_Y=(\mu+\rhoa)(X)\phi_Y=p_{Y,\mu}(X).
\end{align*}
Moreover, since $M$ leaves $\mf k,\, \mf a$ and $\mf n$ invariant, we have for each $X\in\mf p\cap(\mf k\oplus\mf n)$,
\begin{align*}
(m.p_{Y,\mu})(X)&=L(m)p_{Y,\mu}(\on{Ad}(m^{-1})X)=L(m)\ell(\Ik{\on{Ad}(m^{-1})X})\phi_Y\\
&=L(m)\ell(\on{Ad}(m^{-1})\Ik{X})\phi_Y\\
&=L(m)L(m^{-1})\ell(\Ik{X})L(m)\phi_Y\\
&=\ell(\Ik{X})\phi_Y=p_{Y,\mu}(X).
\end{align*}
This proves the first part.\par
\ref{it:gen_diffops2}, \ref{it:gen_diffops3} Let $\delta_{eM}$ denote the Delta distribution at $eM$ on $K/M$. Then 
\begin{gather}\label{eq:PT_at_delta}
P_\mu^{Y}(\delta_{eM})(g)=a_{I}(g^{-1})^{-(\mu+\rho)}\tau(k_{I}(g^{-1}))\phi_Y\in C^\infty(G\times_KY).
\end{gather}
We first obtain 
\begin{align*}
(\nabla\circ P_\mu^{Y}(\delta_{eM}))(e)(X_1)&=\d P_\mu^{Y}(\delta_{eM})(\exp tX_1)\\
&=\d a_{I}(\exp-tX_1)^{-(\mu+\rho)}\phi_Y\\
&=\d e^{t(\mu+\rhoa)(X_1)}\phi_Y=(\mu+\rhoa)(X_1)\phi_Y.
\end{align*}
For $j\in\{2,\ldots,\dim\mf p\}$ we write $X_j=\Ik{X_j}+\In{X_j}\in\mf k_0\oplus\mf n_0$ and obtain
\begin{align*}
(\nabla\circ P_\mu^{Y}(\delta_{eM}))(e)(X_j)&=(\nabla\circ P_\mu^{Y}(\delta_{eM}))(e)(\Ik{X_j})
+(\nabla\circ P_\mu^{Y}(\delta_{eM}))(e)(\In{X_j})\\
&=(\nabla\circ P_\mu^{Y}(\delta_{eM}))(e)(\Ik{X_j})\\
&=\d\tau(\exp-t\Ik{X_j})\phi_Y=-\ell(\Ik{X_j})\phi_Y,
\end{align*}
where we used in the second step that $P_\mu^{Y}(\delta_{eM})(n)=\phi_Y$ for $n\in N$ by \eqref{eq:PT_at_delta}. Thus,
\begin{gather*}
(\nabla\circ P_\mu^{Y}(\delta_{eM}))(e)=(\mu+\rhoa)(X_1)\phi_Y\otimes\I(\tilde X_1)-\sum_{j=2}^{\dim\mf p}\ell(\Ik{X_j})\phi_Y\otimes\I(\tilde X_j)
\end{gather*}
and therefore
\begin{align*}
(\mathrm{d}_V^Y\circ P_\mu^{Y}(\delta_{eM}))(e)&=T_V^Y((\nabla\circ P_\mu^{Y}(\delta_{eM}))(e))\\
&=T_V^Y\left((\mu+\rhoa)(X_1)\phi_Y\otimes\I(\tilde X_1)-\sum_{j=2}^{\dim\mf p}\ell(\Ik{X_j})\phi_Y\otimes\I(\tilde X_j)
\right).
\end{align*}
By Corollary \ref{cor:no_poisson_transforms} and \ref{cor:constant_Poisson_transforms}, $\mathrm{d}_V^Y\circ P_\mu^{Y}$ has to be a multiple of $P_\mu^{V}$ if $V$ is $M$-spherical and $0$ otherwise. In particular, we deduce that
\begin{gather*}
T_V^Y\left((\mu+\rhoa)(X_1)\phi_Y\otimes\I(\tilde X_1)-\sum_{j=2}^{\dim\mf p}\ell(\Ik{X_j})\phi_Y\otimes\I(\tilde X_j)\right)
\end{gather*}
is a multiple of $P_\mu^{V}(\delta_{eM})(e)=\phi_V$. Since $\phi_V(e)=1$ this multiple is given by
\begin{gather*}
T_V^Y\left((\mu+\rhoa)(X_1)\phi_Y\otimes\I(\tilde X_1)-\sum_{j=2}^{\dim\mf p}\ell(\Ik{X_j})\phi_Y\otimes\I(\tilde X_j)\right)(e).\qedhere
\end{gather*}
\end{proof}

\section{\texorpdfstring{$\Gamma$-invariant elements}{Gamma-invariant elements}}\label{sec:Gamma_inv}
 In this section we investigate which principal series representations admit $\Gamma$-invariant distributional elements and, if the representation is reducible, in which composition factors they can occur. We do not have to assume that the co-compact lattice $\Gamma\leq G$ is torsion free in this section.

\begin{theorem}[Location of $\Gamma$-invariant elements]\label{thm:Gamma_inv_ps}
Let $\sigma\in\hat M$ and $\mu\in\mf a^*$. Assume that the socle of $\ps{\sigma}{\mu}$ decomposes multiplicity-freely. Then
\begin{gather*}
{}^\Gamma\psd{\sigma}{\mu}\cong{}^\Gamma(\on{soc}\ps{\sigma}{\mu})^{-\infty}=\bigoplus_{V\leq\ps{\sigma}{\mu}\text{ irred.}}{}^\Gamma V^{-\infty},
\end{gather*}
where the sum on the right hand side is finite. Moreover, for each irreducible $V\leq\ps{\sigma}{\mu}$, the existence of $\Gamma$-invariant distributional elements in $V$ implies that $V$ is infinitesimally unitary.
\end{theorem}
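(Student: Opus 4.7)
My strategy is to attach to each $\Gamma$-invariant distribution vector a matrix coefficient valued in $C^\infty(\Gamma\backslash G)$ and exploit the discrete decomposition of $L^2(\Gamma\backslash G)$ in order to force the vector into the distribution globalization of the socle; the unitarity is then a byproduct of having landed inside $L^2(\Gamma\backslash G)$.

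For $u\in{}^\Gamma\psd{\sigma}{\mu}$ I would introduce
\begin{gather*}
\varphi_u\colon\pss{\tilde\sigma}{-\mu}\to C^\infty(\Gamma\backslash G),\qquad \varphi_u(f)(\Gamma g)\coloneqq\langle u,\pi_{\tilde\sigma,-\mu}(g)f\rangle_{\sigma,\mu}.
\end{gather*}
The $G$-invariance of the pairing together with the $\Gamma$-invariance of $u$ makes $\varphi_u(f)$ well-defined on $\Gamma\backslash G$, and the assignment is $G$-equivariant for the right regular representation on the target. Since $\Gamma\backslash G$ is compact, $L^2(\Gamma\backslash G)$ decomposes as a Hilbert direct sum of irreducible unitary $G$-representations with finite multiplicities, so the image of $\varphi_u$, being a finite length $(\mf g,K)$-module sitting inside $C^\infty(\Gamma\backslash G)$, is completely reducible with each composition factor infinitesimally unitary. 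Consequently $\varphi_u$ factors through the cosocle, i.e.\@ annihilates the Jacobson radical $\on{rad}(\pss{\tilde\sigma}{-\mu})$.

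Dualizing, the pairing $\langle\cdot,\cdot\rangle_{\sigma,\mu}$ interchanges submodules and quotient representations while preserving semisimplicity, so $(\on{soc}\ps{\sigma}{\mu})^{\perp_{\sigma,\mu}}=\on{rad}(\pss{\tilde\sigma}{-\mu})$. Evaluating $\varphi_u(f)$ at the coset $\Gamma e$ recovers $\langle u,f\rangle_{\sigma,\mu}$, so the previous step forces $u|_{(\on{soc}\ps{\sigma}{\mu})^{\perp_{\sigma,\mu}}}=0$; via the identification $V^{-\infty}=((\pss{\tilde\sigma}{-\mu}/V^{\perp_{\sigma,\mu}})^\infty)'$ recalled in Section \ref{sec:red_princ_series}, this is exactly the statement that $u$ lies in the image of the natural embedding $(\on{soc}\ps{\sigma}{\mu})^{-\infty}\hookrightarrow\psd{\sigma}{\mu}$. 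The reverse inclusion is automatic, and the multiplicity-free hypothesis on the socle together with the finite length of $\ps{\sigma}{\mu}$ yields the finite direct sum $(\on{soc}\ps{\sigma}{\mu})^{-\infty}=\bigoplus_V V^{-\infty}$. For the unitarity claim, I would apply the same construction to a nonzero $u\in{}^\Gamma V^{-\infty}$ to obtain a $G$-equivariant map $\varphi_u^V\colon V^\infty\to C^\infty(\Gamma\backslash G)\subset L^2(\Gamma\backslash G)$ which is nonzero (since $\varphi_u^V(v)(\Gamma e)=u(v)$ is not identically zero) and hence injective by irreducibility of $V$; pulling back the $L^2$-inner product exhibits a $G$-invariant positive definite form on $V^\infty$.

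The main technical obstacle I foresee is reconciling the different globalizations: the clean $(\mf g,K)$-module statements about socles, radicals, and semisimplicity must be transported to the smooth/distribution vector levels where the orthogonal complements $V^{\perp_{\sigma,\mu}}$ naturally live, and the factorization of $\varphi_u$ through the cosocle has to be interpreted at the Hilbert space level. I would handle this by invoking the Casselman-Wallach correspondence between finite length admissible Hilbert representations and their underlying Harish-Chandra modules, which guarantees that the passage to $(\cdot)^\infty$ and $(\cdot)^{-\infty}$ is exact on the relevant subquotients, thereby legitimizing the identifications $(\on{soc}\ps{\sigma}{\mu})^{\perp_{\sigma,\mu}}=\on{rad}(\pss{\tilde\sigma}{-\mu})$ and $(\on{soc}\ps{\sigma}{\mu})^{-\infty}=\bigoplus_V V^{-\infty}$ used above.
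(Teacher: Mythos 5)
Your proposal is correct and takes essentially the same route as the paper: your matrix-coefficient map $\varphi_u$ is precisely the Frobenius-reciprocity identification ${}^\Gamma\psd{\sigma}{\mu}\cong\Hom{G}{\pss{\tilde\sigma}{-\mu}}{C^\infty(\Gamma\backslash G)}$ used there, the discrete decomposition of $L^2(\Gamma\backslash G)$ plays the same role, and your ``factor through the cosocle, then dualize via $(\on{soc}\ps{\sigma}{\mu})^{\perp_{\sigma,\mu}}=\on{rad}(\pss{\tilde\sigma}{-\mu})$'' is a repackaging of the paper's explicit enumeration of the irreducible quotients $\ps{\tilde\sigma}{-\mu}/V_j$ and of the closed-range/Casselman--Wallach arguments you flag. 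The only imprecision is that for $u\in{}^\Gamma V^{-\infty}$ the matrix coefficients are naturally defined on $(\ps{\tilde\sigma}{-\mu}/V^{\perp_{\sigma,\mu}})^\infty$ rather than on $V^\infty$, so one first obtains unitarizability of the dual of $V$ --- which is also what the paper's proof literally establishes.
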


\begin{proof}
Note first that $\ps{\sigma}{\mu}$ has finitely many irreducible subrepresentations by the finite length of $\ps{\sigma}{\mu}$ and our multiplicity one assumption. We claim that the dual principal series representation $\ps{\tilde\sigma}{-\mu}$ has finitely many irreducible quotients. Indeed, let $\ps{\tilde\sigma}{-\mu}/V$, for some subrepresentation $V\leq\ps{\tilde\sigma}{-\mu}$, denote an irreducible quotient of $\ps{\tilde\sigma}{-\mu}$. Then we have that $V^{\perp_{\tilde\sigma,-\mu}}\leq\ps{\sigma}{\mu}$ is a subrepresentation (see Equation \eqref{eq:perp_ps} for the notation). Moreover, $V^{\perp_{\tilde\sigma,-\mu}}\leq\ps{\sigma}{\mu}$ is the dual representation of $\ps{\tilde\sigma}{-\mu}/V$ and therefore irreducible. If $\ps{\tilde\sigma}{-\mu}/V_1\neq\ps{\tilde\sigma}{-\mu}/V_2$ are two different irreducible quotients, we obtain two different irreducible subrepresentations $V_1^{\perp_{\tilde\sigma,-\mu}}\neq V_2^{\perp_{\tilde\sigma,-\mu}}\leq\ps{\sigma}{\mu}$ by the non-degeneracy of $\langle\cdot,\cdot\rangle_{\tilde\sigma,-\mu}$. Since there are only finitely many of the latter, $\ps{\tilde\sigma}{-\mu}$ resp.\@ $\pss{\tilde\sigma}{-\mu}$ has finitely many irreducible quotients $\ps{\tilde\sigma}{-\mu}/V_j,\ j=1,\ldots,n$ resp.\@ $\pss{\tilde\sigma}{-\mu}/V_j^\infty,\ j=1,\ldots,n$.

By definition we have that $\psd{\sigma}{\mu}=\Hom{\C}{\pss{\tilde\sigma}{-\mu}}{\C}$ is the space of continuous linear maps from $\pss{\tilde\sigma}{-\mu}$ to $\C$, equipped with the dual representation of $\pss{\tilde\sigma}{-\mu}$. This implies that
\begin{gather}\label{eq:pf_Gamma_inv}
{}^\Gamma\psd{\sigma}{\mu}={}^\Gamma\Hom{\C}{\pss{\tilde\sigma}{-\mu}}{\C}=\Hom{\Gamma}{\pss{\tilde\sigma}{-\mu}}{\C}.
\end{gather}
Note that $\pss{\tilde\sigma}{-\mu}$ is a nuclear Fréchet space (consider the compact picture and see e.g.\@ \cite[§2]{CHM00}) and a differentiable $G$-module. Moreover, $\C$ is a differentiable nuclear $\Gamma$-module. Therefore we may use Frobenius reciprocity to obtain (see \cite[La.\@ 1.3]{Z78})
\begin{gather*}
{}^\Gamma\psd{\sigma}{\mu}=\Hom{\Gamma}{\pss{\tilde\sigma}{-\mu}}{\C}\cong\Hom{G}{\pss{\tilde\sigma}{-\mu}}{\on{Ind}_\Gamma^{G,\infty}(\C)},
\end{gather*}
where $\on{Ind}_\Gamma^{G,\infty}(\C)\cong C^\infty(\Gamma\backslash G)$ denotes the representation smoothly induced by the trivial representation of $\Gamma$. 
By \cite[Thm.\@, ch.\@ 1, §2.3]{GGPS69}, there exists a countable subset $\hat G_\Gamma\subset\hat G$ such that $\on{Ind}_\Gamma^{G}(\C)$ decomposes as a direct sum
\begin{gather*}
\on{Ind}_\Gamma^{G}(\C)\cong\widehat\bigoplus_{\pi\in\hat G_\Gamma}m_\Gamma(\pi)\pi,
\end{gather*}
where each multiplicity $m_\Gamma(\pi)\geq 1$ is finite. Therefore, if $0\neq\varphi\in{}^\Gamma\psd{\sigma}{\mu}$ with corresponding $\varphi_F\in\Hom{G}{\pss{\tilde\sigma}{-\mu}}{\on{Ind}_\Gamma^{G,\infty}(\C)}$, there exists some $\pi\in\hat G_\Gamma$ such that $\on{pr}_\pi\circ\,\varphi_F\neq 0$, where $\on{pr}_\pi$ denotes the orthogonal projection onto one copy of $\pi$ in $\on{Ind}_\Gamma^{G}(\C)$. Since $\varphi_F$ and $\on{pr}_\pi$ are continuous and linear they are smooth. Therefore, $\on{pr}_\pi\circ\,\varphi_F$ maps $\pss{\tilde\sigma}{-\mu}$ into $\pi^\infty$. By \cite[§4.4, p.\@ 253]{W72}, $\pss{\tilde\sigma}{-\mu}$ and $\pi^\infty$ are smooth Fréchet representations. Therefore, the image of $\on{pr}_\pi\circ\,\varphi_F$ is closed and a topological summand of $\pi^\infty$ \cite[La.\@ 11.5.1, Thm.\@ 11.6.7(2)]{W92}. Since $\pi$ is irreducible, $\pi^\infty$ is irreducible (see e.g.\@ \cite[p.\@ 254]{W72}) and therefore $\on{pr}_\pi\circ\,\varphi_F$ is surjective. Now \cite[Thm.\@ 12.16.8]{D70} implies that the canonical factorization $\pss{\tilde\sigma}{-\mu}/\ker(\on{pr}_\pi\circ\,\varphi_F)\to\pi^\infty$ is a topological isomorphism. Since $\pi^\infty$ is irreducible, $\pss{\tilde\sigma}{-\mu}/\ker(\on{pr}_\pi\circ\,\varphi_F)$ is irreducible. It follows that $\ker(\on{pr}_\pi\circ\varphi_F)=V_j^\infty$ for some $j\in\{1,\ldots,n\}$.
Thus we proved that if $\on{pr}_\pi\circ\varphi_F\neq0$, then it factors through an irreducible quotient of $\pss{\tilde\sigma}{-\mu}$. 

Consider the finite set
\begin{gather*}
F\coloneqq\{\pi\in\hat G_{\Gamma}\mid\exists\:j\in\{1,\ldots,n\}\colon \pi^\infty\cong\pss{\tilde\sigma}{-\mu}/V_j^\infty\}.
\end{gather*}
For $\pi\in F$ with $\pi^\infty\cong\pss{\tilde\sigma}{-\mu}/V_j^\infty$ we set $j(\pi)\coloneqq j$. Moreover, let
\begin{gather*}
 I_\Gamma\coloneqq\{j\in\{1,\ldots,n\}\mid\exists\: \pi_j\coloneqq\pi\in F\colon j(\pi)=j\}.
\end{gather*}
Then
\begin{align*}
\Hom{G}{\pss{\tilde\sigma}{-\mu}}{\on{Ind}_\Gamma^{G,\infty}(\C)}&=\Hom{G}{\pss{\tilde\sigma}{-\mu}}{\bigoplus_{\pi\in F}m_{\Gamma}(\pi)\pi}\\
&\cong\bigoplus_{\pi\in F}\bigoplus_{k=1}^{m_{\Gamma}(\pi)}\Hom{G}{\pss{\tilde\sigma}{-\mu}}{\pi}\\
&\cong\bigoplus_{\pi\in F}\bigoplus_{k=1}^{m_{\Gamma}(\pi)}\Hom{G}{\ps{\tilde\sigma}{-\mu}^\infty/V_{j(\pi)}^\infty}{\pi}\\
&\cong\bigoplus_{\pi\in F}\Hom{G}{\ps{\tilde\sigma}{-\mu}^\infty/V_{j(\pi)}^\infty}{m_{\Gamma}(\pi)\pi}\\
&\cong\bigoplus_{j\in I_\Gamma}\Hom{G}{\ps{\tilde\sigma}{-\mu}^\infty/V_{j}^\infty}{m_{\Gamma}(\pi_j)\pi_j}\\
&\cong\bigoplus_{j\in I_\Gamma}\Hom{G}{\ps{\tilde\sigma}{-\mu}^\infty/V_{j}^\infty}{\on{Ind}_\Gamma^{G,\infty}(\C)}\\
&\cong\bigoplus_{j\in I_\Gamma}\Hom{\Gamma}{\ps{\tilde\sigma}{-\mu}^\infty/V_{j}^\infty}{\C}\\
&\cong\bigoplus_{j\in I_\Gamma}\Hom{\Gamma}{(\ps{\tilde\sigma}{-\mu}/V_{j})^\infty}{\C}.
\end{align*}
Note that the dual representation of $\ps{\tilde\sigma}{-\mu}/V_j$ is given by $W_j\coloneqq V_j^{\perp_{\tilde\sigma,-\mu}}\leq\ps{\sigma}{\mu}$. Therefore, as in \eqref{eq:pf_Gamma_inv},
\begin{gather*}
\bigoplus_{j\in I_\Gamma}\Hom{\Gamma}{(\ps{\tilde\sigma}{-\mu}/V_{j})^\infty}{\C}=\bigoplus_{j\in I_\Gamma}{}^\Gamma W_j^{-\infty}.
\end{gather*}
This proves the first part. We now prove the second part concerning the infinitesimal unitarity. Let $\varphi_F$ and $\pi$ as above. Then, denoting the $K$-finite elements by $\cdot_K$, we have (cf.\@ \cite[Cor.\@ 11.6.8]{W92})
\begin{gather*}
\left(\pss{\tilde\sigma}{-\mu}/\ker(\on{pr}_\pi\circ\varphi_F)\right)_K\cong\pi_K
\end{gather*}
as $(\mf g,K)$-modules. Since $\pi$ is unitary we infer that $\ps{\tilde\sigma}{-\mu}/\ker(\on{pr}_\pi\circ \varphi_F)$ is infinitesimally unitary.
\end{proof}

Note that Theorem \ref{thm:Gamma_inv_ps} applies if $\ps{\sigma}{\mu}$ is irreducible. The following proposition shows that the hypotheses of Theorem \ref{thm:Gamma_inv_ps} are in particular satisfied in the rank one case.

\begin{proposition}\label{prop:soc_multone_rank_one}
Let $G$ be of real rank one. Then the socle of $\ps{\sigma}{\mu}$ decomposes multiplicity-freely for each $\sigma\in\hat M$ and $\mu\in\mf a^*$.
\end{proposition}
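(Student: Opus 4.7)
The plan is to show that no two distinct isomorphic irreducible subrepresentations occur in $\ps{\sigma}{\mu}$; combined with the finite length of $\ps{\sigma}{\mu}$, this immediately yields multiplicity-freeness of the socle, which is by definition a direct sum of irreducible submodules. If $\ps{\sigma}{\mu}$ is irreducible, the socle equals $\ps{\sigma}{\mu}$ and there is nothing to prove, so I assume $\ps{\sigma}{\mu}$ is reducible and suppose, for contradiction, that $V_1 \neq V_2$ are distinct irreducible submodules with $V_1 \cong V_2$. Since both are irreducible, $V_1 \cap V_2 = 0$, so $V_1 \oplus V_2 \hookrightarrow \ps{\sigma}{\mu}$, and any $K$-type occurring in the common isomorphism class of $V_1$ and $V_2$ appears in $\ps{\sigma}{\mu}\mid_K$ with multiplicity at least two.

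\emph{Spherical case} $\sigma = \on{triv}_M$: Proposition~\ref{prop:multone} directly rules this out, since $\pst{\mu}\mid_K$ decomposes without multiplicity and so cannot contain any $K$-type twice.

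\emph{Non-spherical case:} Here I would combine Vogan's minimal $K$-type theorem \cite[Thm.~1.1]{Vo79} — which asserts that each minimal $K$-type of $\ps{\sigma}{\mu}$ occurs with multiplicity one in $\ps{\sigma}{\mu}\mid_K$ — with the explicit classifications of reducible rank-one principal series (Hirai for $\GSO{n}$, Kraljević for $\GSU{n}$, Baldoni-Silva for $\GSp{n}$, Collingwood for $\GF$). From those classifications one reads off that distinct composition factors of $\ps{\sigma}{\mu}$ carry distinct minimal $K$-types, and that the minimal $K$-types of each composition factor remain minimal for the ambient principal series. Granting this, the shared minimal $K$-type $\tau$ of $V_1 \cong V_2$ would be a minimal $K$-type of $\ps{\sigma}{\mu}$ appearing with multiplicity at least two, contradicting Vogan's theorem and completing the argument.

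\emph{Main obstacle.} The non-spherical step rests on the case-by-case verification that minimal $K$-types distinguish composition factors of $\ps{\sigma}{\mu}$ and survive as minimal $K$-types of the ambient representation in each of the four rank-one families; this is exactly the kind of combinatorial bookkeeping that the authors announce in their appendix. A uniform proof avoiding the classification would require more intrinsic tools, such as a Langlands-parameter argument showing that composition factors of $\ps{\sigma}{\mu}$ sharing an infinitesimal character and a minimal $K$-type must coincide, and is harder to make work in this generality.
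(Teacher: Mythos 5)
The paper's own proof is a one-line citation to Collingwood \cite[Theorem (6.1.3)]{C85}, so any self-contained argument is necessarily a different route. Your reduction is sound in outline: since the socle is a sum of irreducibles, multiplicity in the socle forces two distinct isomorphic irreducible submodules $V_1\cong V_2$ with $V_1\cap V_2=0$, hence some $K$-type of multiplicity at least two in $\ps{\sigma}{\mu}|_K$. Your spherical case is complete and correct: Proposition~\ref{prop:multone} kills this immediately, and this is a genuinely self-contained alternative to the citation for $\sigma=\on{triv}_M$.

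The non-spherical case, however, has a genuine gap, and it sits exactly where you flag it. Your contradiction needs the shared minimal $K$-type $\tau$ of $V_1\cong V_2$ to be a \emph{minimal $K$-type of the ambient representation} $\ps{\sigma}{\mu}$, since Vogan's theorem only controls multiplicities of minimal $K$-types of $\ps{\sigma}{\mu}$ itself. It is false in general that a minimal $K$-type of a submodule is minimal for the ambient module, so this step carries the entire content of the proposition; asserting that "one reads it off" from the Hirai/Kraljevi\'c/Baldoni-Silva/Collingwood classifications, without carrying out the verification for all $\sigma\in\hat M$ and all reducibility points $\mu$, leaves the proof incomplete. (Your parenthetical claim that distinct composition factors have distinct minimal $K$-types is also not what is needed: the dangerous scenario is a single isomorphism class occurring twice among the \emph{submodules}, and only the conditional Vogan argument addresses that.) Note also that the bookkeeping you defer to "the authors' appendix" is not there: Appendix~\ref{app:comp_scalars} computes tensor-product decompositions and scalars relating Poisson transforms, not socle structure. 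To close the gap you would either have to do the case-by-case verification yourself or, as the paper does, cite the structural result of \cite{C85} directly.
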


\begin{proof}
See \cite[Theorem (6.1.3)]{C85}.
\end{proof}

\begin{example}
Figure \ref{fig:unit_sub} describes the spherical principal series representations which can possibly contain $\Gamma$-invariant elements for $G=\GSO{n},\ n\geq 2$, and $G=\GSp{n},\ n\geq 2$. The unitary principal series is given by $\mu\in i\mf a_0^*$ in both cases and the complementary series consists of the parameters $\mu$ with $\mu(H)\in]-\rhoa(H),\rhoa(H)[$ resp.\@ $\mu(H)\in]-\rhoa(H)+2,\rhoa(H)-2[$, where $H\in\mf a_0$ as before denotes the unique element with $\alpha(H)=1$ for the unique simple positive real root $\alpha$. Moreover, $\pst{\mu}$ is reducible if and only if $\mu\in\pm(\rhoa+\N_0\alpha)$ resp.\@ $\mu\in\pm(\rhoa+(2\N_0-2)\alpha)$ and $\mu$ is exceptional if and only if $\pst{\mu}$ is reducible and has a unitarizable subrepresentation. In each case, the constant functions form an irreducible subspace of $\pst{\rhoa}$ and thus ${}^\Gamma(\on{soc}\pst{\rhoa})^{-\infty}\neq\{0\}$.
\end{example}
\begin{figure}
\begin{center}
\begin{minipage}{0.45\textwidth}
\begin{tikzpicture}[scale=1.5,extended line/.style={shorten >=-#1,shorten <=-#1},]
\draw [->](0,-1.2)--(0,1.2) node[right]{$i\mf a_0^*$};
\draw [->](-1.7,0)--(1.7,0) node[right]{$\mf a_0^*$};
\foreach \x/\xtext in {-1.5/{}, -1/{}, -0.5/-\rhoa, 0.5/\phantom{-}\hspace{-0.25cm}\rhoa, 1/{}, 1.5/{}}
{\draw (\x cm,1pt ) -- (\x cm,-1pt ) node[anchor=north] {$\scriptstyle{\xtext}$};}
\foreach \y/\ytext in {-1/{}, -0.5/{},0.5/{}, 1/{}}
{\draw (0pt,\y cm) -- (-0pt ,\y cm) node[anchor=east] {$\ytext$};}
\draw [red,extended line=0cm,line width=0.5mm] (0,-1.2)--(0,1.18);  
\draw [red,extended line=0cm,line width=0.5mm] (-.5,0)--(.5,0); 
\fill [red](-.5,0) circle(1pt);
\fill [red](-1,0) circle(1pt);
\fill [red](-1.5,0) circle(1pt);
\fill (.5,0) circle(1pt);
\fill (1,0) circle(1pt);
\fill (1.5,0) circle(1pt);
\end{tikzpicture}
\end{minipage}
\begin{minipage}{0.45\textwidth}
\begin{tikzpicture}[scale=1.5,extended line/.style={shorten >=-#1,shorten <=-#1},]
\draw [->](0,-1.2)--(0,1.2) node[right]{$i\mf a_0^*$};
\draw [->](-1.7,0)--(1.7,0) node[right]{$\mf a_0^*$};
\foreach \x/\xtext in {-1.5/{}, -1/{-\rhoa}, -0.5/-\rhoa+2\alpha, 0.5/\phantom{-}\rhoa-2\alpha, 1/{\hspace{-0.25cm}\phantom{-}\rhoa}, 1.5/{}}
{\draw (\x cm,1pt ) -- (\x cm,-1pt ) node[anchor=north] {$\scriptstyle{\xtext}$};}
\foreach \y/\ytext in {-1/{}, -0.5/{},0.5/{}, 1/{}}
{\draw (0pt,\y cm) -- (-0pt ,\y cm) node[anchor=east] {$\ytext$};}
\draw [red,extended line=0cm,line width=0.5mm] (0,-1.2)--(0,1.18);  
\draw [red,extended line=0cm,line width=0.5mm] (-.5,0)--(.5,0);  
\fill [red](-.5,0) circle(1pt);
\fill [red](-1,0) circle(1pt);
\fill [red](-1.5,0) circle(1pt);
\fill (.5,0) circle(1pt);
\fill (1,0) circle(1pt);
\fill (1.5,0) circle(1pt);
\end{tikzpicture}
\end{minipage}
\end{center}
\caption{Parameters $\mu$ for which $\pst{\mu}$ has a unitarizable subrepresentation (red) resp.\@ is reducible (dots) for $G=\GSO{n},\ n\geq 2,$ (left) resp.\@ $G=\GSp{n},\ n\geq2,$ (right). The exceptional set is given by the red dots.}
\label{fig:unit_sub}
\end{figure}

\begin{remark}\label{rem:poisson_qc}
Recall from Theorem \ref{thm:Gamma_inv_ps} that
\begin{gather*}
{}^\Gamma\pstd{\mu}=\bigoplus_{U\in\mathbf{Irr}(\mu)}{}^\Gamma U^{-\infty}.
\end{gather*}
Choosing $(\tau_U,V_{\tau_U})$ as in Proposition \ref{prop:Poisson_sum} (e.g.\@ a minimal $K$-type of $U$) we have by Proposition \ref{prop:PoissonInjective} that each $\restr{P_\mu^{\tau_U}}{U^{-\infty}}$ is injective and therefore
\begin{gather*}
{}^\Gamma\pstd{\mu}\cong\bigoplus_{U\in\mathbf{Irr}(\mu)}{}^\Gamma P_\mu^{\tau_U}(U^{-\infty})\subseteq\bigoplus_{U\in\mathbf{Irr}(\mu)}{}^\Gamma C^\infty(G\times_KV_{\tau_U}).
\end{gather*}
\end{remark}

We describe the socle in more detail.
\begin{theorem}\label{thm:socle}
Denoting the set of minimal $K$-types by $\tau_{\on{min}}$ and the Harish-Chandra module of $\on{soc}(\pst{\mu})$ by $\on{soc}(\pst{\mu})_K$ we have (see Appendix \ref{app:comp_scalars} for the notation)
\begin{center}
\def\arraystretch{1.2}
\setlength{\tabcolsep}{10pt}
\begin{tabular}{l|lll}
$G$&$\mathbf{Ex}=\{\mu_\ell\mid\ell\in\N_0\}$& $\on{soc}(\pst{\mu_\ell})_K$&$\tau_{\on{min}}(\on{soc}(\pst{\mu_\ell}))$\\\hline
$\GSO{2}$&$\mu_\ell=-\rhoa-\ell\alpha$&$\bigoplus_{k\geq\ell+1} Y_{k}\oplus Y_{-k}$&$\{Y_{-(\ell+1)}, Y_{(\ell+1)}\}$\\
$\GSO{n},\ n\geq3$&$\mu_\ell=-\rhoa-\ell\alpha$&$\bigoplus_{k=\ell+1}^\infty Y_k$&$\{Y_{\ell+1}\}$\\
$\GSU{n},\ n\geq2$&$\mu_\ell=-\rhoa-2\ell\alpha$&$\bigoplus_{p,q=\ell+1}^\infty Y_{p,q}$&$\{Y_{\ell+1,\ell+1}\}$\\
$\GSp{n},\ n\geq2$&$\mu_\ell=-\rhoa-(2\ell-2)\alpha$&$\bigoplus_{a\geq b\geq\ell+1} V_{a,b}$&$\{V_{\ell+1,\ell+1}\}$\\
\GF&$\mu_\ell=-\rhoa-(2\ell-6)\alpha$&$\bigoplus_{\substack{m-k\geq 2\ell+2\\m\equiv k\on{ mod }2}} V_{m,k}$&$\{V_{2\ell+2,0}\}$
\end{tabular}
\end{center}
In each case, every irreducible subrepresentation of $\on{soc}(\pst{\mu})$ is unitarizable and has a unique minimal $K$-type. For $G\neq\GSO{2}$ the socle is irreducible for all exceptional parameters. For $G=\GSO{2}$ the socle decomposes into two irreducible subrepresentations which are given by discrete series representations.
\def\arraystretch{1}
\end{theorem}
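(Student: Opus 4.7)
The strategy is a case-by-case analysis across the five families $\GSO{2}$, $\GSO{n}$ ($n\geq 3$), $\GSU{n}$, $\GSp{n}$, and $\GF$, organized in three stages: (i) identify the exceptional parameters; (ii) compute the $K$-type decomposition of $\pst{\mu_\ell}$ and locate the socle inside it; (iii) establish irreducibility, minimal $K$-types, and unitarizability.

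First, for each group I would read off the restricted root multiplicities $(m_\alpha,m_{2\alpha})$ and insert them into the formula of Example \ref{ex:Ex_rank_one}. Using $\rhoa=(\tfrac12 m_\alpha+m_{2\alpha})\alpha$ this immediately produces the four sequences $\mu_\ell$ listed in the table. Next, since $(K,M)$ is a Gelfand pair (Proposition \ref{prop:multone}), the $K$-type decomposition of $\pst{\mu_\ell}$ is multiplicity free and indexed by the well-known spherical duals $\hat K_M$ (harmonic polynomials on spheres of the relevant division algebra, respectively a known branching list for $\on{Spin}(9)$ in the $\GF$ case).

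To locate the socle I would exploit Lemma \ref{la:gen_diffops}: the generalized gradients $\mathrm{d}_V^Y\colon C^\infty(G\times_K Y)\to C^\infty(G\times_K V)$ intertwine the Poisson transforms up to the scalar $T_V^Y(p_{Y,\mu_\ell})(e)$, and by Proposition \ref{prop:PoissonInjective} a $K$-type $Y\in\hat K_M$ lies in a proper $G$-invariant subspace of $\pst{\mu_\ell}$ iff every gradient out of $Y$ to a $K$-type not in that subspace vanishes at $\mu_\ell$. For each rank-one group I would tabulate the nonzero generalized gradients between adjacent $K$-types in $\hat K_M$ and determine, using the scalar formulas, exactly at which parameter $\mu_\ell$ each gradient vanishes. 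This identifies the $K$-types on which the gradients out of the claimed socle all vanish, and shows conversely that the gradients into the socle from outside do not, so the claimed set of $K$-types is precisely the $(\mf g,K)$-closure of the proposed minimal $K$-type $\tau_{\on{min}}$. Combined with Proposition \ref{prop:soc_multone_rank_one}, which guarantees multiplicity-one of the socle, this gives irreducibility of $\on{soc}(\pst{\mu_\ell})$ for $G\neq\GSO{2}$ and yields the unique minimal $K$-type in each case. The case $\GSO{2}$ is different because $\hat K_M$ splits into the positive and negative weight spaces under $K\cong \mathrm{SO}(2)$, and no gradient connects the two halves; the socle therefore decomposes into two irreducible pieces, each generated by a single one-dimensional $K$-type $Y_{\pm(\ell+1)}$, which by an elementary weight computation are the holomorphic and antiholomorphic discrete series representations.

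Unitarizability of the irreducible socle in each case I would deduce by the Langlands/Knapp--Zuckerman classification: since the exceptional parameters $\mu_\ell$ lie on the real axis and the composition series structure of $\pst{\mu_\ell}$ in rank one is completely known (e.g.\@ from Baldoni-Silva and Kraljevi\'c), one can identify the socle with a Langlands subquotient that admits a positive invariant Hermitian form. For $\GSO{n}$ and $\GSU{n}$ these subquotients are the well-known complementary-series endpoints and cohomological unitary representations; for $\GSp{n}$, $\GF$, and the two extra $\GSU{2,1}$ and $\mathrm{SU}(2,1)$-flavoured cases singled out in the introduction, the socle is the unique discrete series with the stated minimal $K$-type, and Theorem \ref{thm:langlands} (whose proof feeds into the same case analysis) makes the Langlands parameters explicit.

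The main obstacle is the case-by-case computation of the gradient scalars $T_V^Y(p_{Y,\mu_\ell})(e)$ appearing in Lemma \ref{la:gen_diffops}(ii): these determine exactly at which $\mu_\ell$ each generalized gradient between adjacent spherical $K$-types vanishes, and they must be computed explicitly (with the right normalizations for the zonal spherical functions $\phi_Y$) for each of $\GSO{n}$, $\GSU{n}$, $\GSp{n}$, and $\GF$. These computations are precisely the technical content that the authors have relegated to Appendix \ref{app:comp_scalars}, and I would likewise carry them out separately rather than in the body of the proof.
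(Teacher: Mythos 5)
Your outline is coherent, but it takes a substantially different and heavier route than the paper, and it carries a circularity hazard you must resolve. The paper's proof is essentially a citation: the composition series, irreducibility and $K$-module structure of $\on{soc}(\pst{\mu_\ell})$ for $G\neq\GSO{2}$ are read off from Johnson--Wallach (\cite[Thm.\@ 5.1]{JW}, \cite[Thm.\@ 5.2]{JW2}), unitarizability from \cite[Thm.\@ 6.3]{JW}, \cite[Thm.\@ 5.3]{JW2}, and the $\GSO{2}$ case from Knapp's explicit description of $\mathcal D^\pm_{2(\ell+1)}$; only the minimal-$K$-type identification uses the highest-weight data of the appendix. You instead propose to rederive the submodule structure from the vanishing pattern of the scalars $T_V^Y(p_{Y,\mu_\ell})(e)$. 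Two things then need care. First, the paper's own evaluation of these scalars (Proposition \ref{prop:nu_explicit} via Lemma \ref{la:nu_comp_series}) presupposes the Johnson--Wallach composition series: it deduces $\nu(V,Y)$ from the existence of invariant subspaces with prescribed $K$-types. If you want to run the implication in the opposite direction you must compute $\nu(V,Y)=T_Y^V(p_{V,-\rhoa})(e)$ independently from the definition (projections of $\ell(k_C(X_j))\phi_V$ against $\phi_Y$, in the spirit of Lemma \ref{la:p_second_part_local}), not borrow Proposition \ref{prop:nu_explicit}. Second, you should state and justify the dictionary you are using: a subset $S\subseteq\hat K_M$ spans a $(\mf g,K)$-submodule of $\pst{\mu_\ell}$ if and only if $T_V^Y(p_{Y,\mu_\ell})(e)=0$ for all $Y\in S$ and all $V\lraomega Y$ with $V\notin S$. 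This is true (it is the content of the transition-coefficient analysis underlying Propositions \ref{prop:a_action} and \ref{prop:n_action}), but it is not literally what Proposition \ref{prop:PoissonInjective} says, and it needs to be extracted explicitly.

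Beyond that, your unitarizability argument is sketchier than the paper's: identifying the socle as a Langlands subquotient does not by itself produce a positive invariant Hermitian form, so you are ultimately deferring to the same kind of classification results (Baldoni-Silva, Kraljevi\'c, or Johnson--Wallach) that the paper cites directly; make that citation precise rather than gesturing at ``complementary-series endpoints.'' Finally, note that in the $\GSO{2}$ case the two irreducible summands are not single $K$-types but are each generated by the one-dimensional $K$-type $Y_{\pm(\ell+1)}$ and contain all $Y_{\pm k}$, $k\geq\ell+1$; with that reading your argument matches the paper's.
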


\begin{proof}
The exceptional parameters can be computed by Example \ref{ex:Ex_rank_one} and Table \ref{table:structure_rank_one}. Using \cite[Thm.\@ 5.1 (2-4)]{JW} resp.\@ \cite[Thm.\@ 5.2 (2)]{JW2} with $\nu=(\rhoa-\mu_\ell)(H)$ we have that $\on{soc}(\pst{\mu_\ell})$ is irreducible and can determine its $K$-module structure for $G\neq\GSO{2}$. Moreover, \cite[Thm.\@ 6.3 (1-3)]{JW} resp.\@ \cite[Thm.\@ 5.3 (2)]{JW2} show that these socles are unitarizable. For $G=\GSO{2}$ the decomposition of the socle follows from \cite[p.\@ 38]{Kna86} with $n=2(\ell+1)$, where two (unitary, irreducible) discrete series representations $\mc D_{2(\ell+1)}^+$ and $\mc D_{2(\ell+1)}^-$ occur. The $K$-types of these representations are determined in \cite[p.\ 40]{Kna86}. The highest weights of the $K$-representations needed for the computation of the minimal $K$-types are determined in Appendix \ref{app:comp_scalars}.
\end{proof}

\begin{theorem}[Langlands parameters]\label{thm:langlands}
We have the following Langlands parameters for $\on{soc}(\pst{\mu_\ell}),\ \mu_\ell\in\mathbf{Ex}$ (see Theorem \ref{thm:socle}), in the notation of \cite[Thm.\@ 8.54]{Kna86}
\begin{center}
\def\arraystretch{1.2}
\setlength{\tabcolsep}{13pt}
\begin{tabular}{l|lll}
$G$&$S$& $\omega\in\hat M$&$\nu\in\mf a^*$\\\hline
\multirow{2}{*}{$\GSO{n},\ n\geq2$}&$G\text{ if }n=2$&$-$&$-$\\
&$P\text{ if }n\neq2$&$(\ell+1)e_1$&$(n-\frac{3}{2})\alpha$\\\hline
\multirow{2}{*}{$\GSU{n},\ n\geq2$}&$G$ if $n=2$&$-$&$-$\\
&$P$ if $n\neq2$&$(\ell+1)(\overline{\overline{\varepsilon}}_2-\overline{\overline{\varepsilon}}_n)$&$(n-2)\alpha$\\\hline
\multirow{2}{*}{$\GSp{n},\ n\geq2$}&$G\text{ if }n=2$&$-$&$-$\\
&$P$ if $n\neq2$&$(\ell+1)(\overline{\varepsilon}_2+\overline{\varepsilon}_3)$&$(2n-3)\alpha$\\\hline
\GF&$G$&$-$&$-$
\end{tabular}
\end{center}
Here, the highest weight of the $M$-representation $\omega$ is denoted as in \cite[Lem.\@ 4.3, 5.3]{B79} for $G\in\{\GSU{n},\GSp{n}\}$ and as in Appendix \ref{app:comp_scalars} for $G=\GSO{n}$ (then $M\cong\mathrm{SO}(n-1)$). By definition, if $S=G$, the socle $\on{soc}(\pst{\mu_\ell})$ is tempered. Moreover, in these cases, it is a discrete series representation if and only if $\mu_\ell(H)\leq-\rhoa(H)$. The Blattner parameter of the discrete series (see \cite[Terminology p.\@ 310]{Kna86}) is given by its minimal $K$-type. If $\mu_\ell(H)>-\rhoa(H)$, the socle is a limit of discrete series representation (this case only occurs for $G=\GSp{2}$ and $G=\GF$).
\end{theorem}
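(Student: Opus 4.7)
The plan is to proceed case by case, in each case identifying the socle (which by Theorem~\ref{thm:socle} is a known irreducible, infinitesimally unitary admissible representation with an explicit unique minimal $K$-type) among the Langlands parameters $(S,\omega,\nu)$. By the Langlands classification in Knapp \cite[Thm.~8.54]{Kna86}, such data is unique up to conjugacy, and in real rank one the only possibilities are $S=G$ (tempered case) and $S=P=MAN$ (non-tempered case, with $\omega\in\hat M$ and $\nu\in\mf a^*$ dominant). The strategy is to fix a candidate $(S,\omega,\nu)$ from the table and verify both the infinitesimal character condition and the minimal $K$-type condition, after which uniqueness in Langlands' theorem forces the claimed identification.

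For the non-tempered cases $S=P$ (which are $\GSO{n},\ n\geq 3$; $\GSU{n},\ n\geq 3$; $\GSp{n},\ n\geq 3$), first verify that the infinitesimal character of the standard module $\on{Ind}_P^G(\omega\otimes e^\nu\otimes 1)$ matches $\rho_\mf m-\mu_\ell$ modulo $W(\mf g,\mf h)$. Using the formula from Section~2 together with the explicit data for $\omega$ and $\nu$ tabulated and the case data from Appendix~\ref{app:comp_scalars}, this reduces to a direct computation of $\lambda_\omega+\rho_\mf m-\nu$ and comparing with $\rho_\mf m-\mu_\ell$. Second, I would show that the Langlands quotient $J(P,\omega,\nu)$ has the minimal $K$-type prescribed by Theorem~\ref{thm:socle}: by Frobenius reciprocity, a $K$-type $(\tau,V_\tau)$ can occur only if $\omega\hookrightarrow V_\tau|_M$, and the minimal such $V_\tau$ with the correct minimality in the sense of $\langle\lambda+2\rho_\mf k,\lambda+2\rho_\mf k\rangle$ must be determined from the branching tables. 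In every case the candidate minimal $K$-type listed in Theorem~\ref{thm:socle} contains $\omega$ upon restriction to $M$, and has the minimal inner product among such types, so it is the unique minimal $K$-type of $J(P,\omega,\nu)$. Having matched both the infinitesimal character and the minimal $K$-type, the multiplicity-one result of Proposition~\ref{prop:soc_multone_rank_one} together with Vogan's minimal $K$-type theorem forces $\on{soc}(\pst{\mu_\ell})\cong J(P,\omega,\nu)$, which is what we want. The numerical values of $\omega$ and $\nu$ themselves are read off from the reducibility analysis in Johnson--Wallach \cite{JW,JW2} and Baldoni-Silva \cite{B79}, already used in the proof of Theorem~\ref{thm:socle}.

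For the tempered cases $S=G$ (which are $\GSO{2}$ for all $\ell$, and $\GSU{2},\ \GSp{2},\ \GF$ in the appropriate subranges), the socle is unitarizable by Theorem~\ref{thm:socle} and has infinitesimal character $\rho_\mf m-\mu_\ell\in\mf h^*$ which is non-imaginary, so it cannot be a unitary principal series. Hence it is either a discrete series or a limit of discrete series. To distinguish, I would use the Harish-Chandra criterion: a tempered irreducible representation is a discrete series precisely when its infinitesimal character is regular with respect to the Weyl group of the (compact) Cartan $\mf h_K=\mf b$, and a limit of discrete series when this character is singular. Explicit translation of $\rho_\mf m-\mu_\ell$ to $\mf b^*$ shows that the regularity condition is equivalent to $\mu_\ell(H)\leq -\rhoa(H)$ in the cases listed, while the boundary equality $\mu_\ell(H)=-\rhoa(H)$ puts one onto a wall and produces a limit (which, by dimension considerations from the tables, can happen only in $\GSp{2}$ and $\GF$). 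The Blattner parameter is then identified with the minimal $K$-type through the Hecht--Schmid theorem and Vogan's minimal $K$-type theorem, reducing to the explicit values of $\tau_{\min}(\on{soc}(\pst{\mu_\ell}))$ from Theorem~\ref{thm:socle}.

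The main obstacle will be the $K$-type bookkeeping in the non-tempered cases: one must verify case by case that the proposed minimal $K$-type from Theorem~\ref{thm:socle} does contain the prescribed $\omega\in\hat M$ with multiplicity one upon restriction, and is indeed minimal among all $K$-types with this property. This requires invoking the $M$-branching data collected in Appendix~\ref{app:comp_scalars} and a direct inner product computation in each of the four families. A secondary technical point is the correct handling of the Weyl group action on $\mf h^*$ when matching infinitesimal characters, since the $W(\mf m,\mf t)$-orbit of $\lambda_\omega$ may need to be used to identify $\nu$ with $-\mu_\ell$.
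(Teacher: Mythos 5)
Your proposal follows essentially the same route as the paper: find $\omega\in\hat M$ by matching the minimal $K$-type of the socle with that of $\on{Ind}_M^K(\omega)$ via branching rules, determine $\nu$ by comparing infinitesimal characters up to the Weyl group action, and in the tempered cases distinguish discrete series from limits by the regularity or singularity of the infinitesimal character, identifying the Blattner parameter with the minimal $K$-type. One caveat: in the tempered cases your inference ``unitarizable and not a unitary principal series, hence discrete series or limit of discrete series'' is not valid as stated (complementary series and other non-tempered unitary representations exist); the correct justification, as in the paper, is that $S=G$ follows by exhaustion of the Langlands classification once the $S=P$ construction fails to produce a valid pair $(\omega,\nu)$.
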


\begin{proof}
Using the branching rules described in \cite{B79} and \cite[Thm.\@ 9.16]{knapplie} we first try to find $\omega\in\hat M$ such that the minimal $K$-type of $\on{soc}(\pst{\mu_\ell})$ is also minimal for the induced representation $\on{Ind}_M^K(\omega)$. To determine $\nu\in\mf a^*$ we compare the infinitesimal character of the socle, which is the same as that of $\pst{\mu_\ell}$, with the infinitesimal character of the principal series representation corresponding to the pair $(\omega,\nu)$. They have to coincide up to the action of an element of the Weyl group and can be calculated using \cite[Prop.\@ 8.22]{Kna86}. If one of the two steps above does not work, we must have $S=G$, i.e.\@ the socle is tempered. In this case \cite[Thm.\@ 14.2]{KZ82} shows that it has to be a discrete series representation or a limit of discrete series representation depending on the infinitesimal character being regular or singular. The connection to the Blattner parameter follows from \cite[ch. XV,\S 1, Ex.\@ (1)]{Kna86}.
\end{proof}

\section{Fourier series}\label{sec:Fourier}
In this section we consider spherical principal series representations for exceptional parameters in the rank one case. Our aim is to find explicit realizations of the unitary irreducible subrepresentations occurring in Theorem \ref{thm:Gamma_inv_ps} in the space of smooth sections of a specific vector bundle. For this purpose we determine conditions the images of $\Gamma$-invariant elements under the injective vector valued Poisson transforms from Section \ref{subsec:inj_PT} have to satisfy (Lemma \ref{la:gen_diffops2}). We then prove that these conditions suffice to describe the image (Theorem \ref{thm:fourier_char}) and use this characterization to give explicit descriptions of the images in each of the cases listed in Section \ref{sec:spec_corres}.

\subsection{Fourier expansions}
In the following we describe a generalized Fourier series that is closely related to the Poisson transform and essentially gives that, properly interpreted, each $f\in\pst{\mu}$ is the sum of all its Poisson transform images.
\begin{definition}\label{def:pi_gamma}
For each $Y\in\hat K_M$ let
\begin{gather*}
\pi_Y:C^\infty(G\times_KY)\hookrightarrow C^\infty(G)^M,\quad\pi_Y(\varphi)(g)\coloneqq \varphi(g)(e),
\end{gather*}
where $C^\infty(G)^M$ denotes the right $M$-invariant elements in $C^\infty(G)$.
Moreover, let $\mc D'(G\times_K~Y)$ denote the dual of $C_c^\infty(G\times_K\tilde Y)$, where we realize the dual representation $\tilde Y$ of $Y$ as the complex conjugate representation of $Y$. We embed $C^\infty(G/M)$ into $\mc D'(G/M)$ by
\begin{gather*}
\iota_{G/M}\colon C^\infty(G/M)\hookrightarrow\mc D'(G/M),\quad\iota_{G/M}(f)(\varphi)\coloneqq\int_G f(gM)\varphi(gM)\intd g
\end{gather*}
and $C^\infty(G\times_KY)$ into $\mc D'(G\times_KY)$ by
\begin{gather*}
\iota_Y:C^\infty(G\times_KY)\hookrightarrow\mc D'(G\times_KY),\quad\iota_Y(f)(\varphi)\coloneqq\int_{G}\pi_Y(f)(g)\pi_{\tilde Y}(\varphi)(g)\intd g.
\end{gather*}
If it is clear from the context we omit the embeddings $\iota_*$ for the sake of readability. We further define the pullback
\begin{gather*}
\pi_{Y}^*:\mc D'(G/M)\to\mc D'(G\times_KY),\quad \pi_{Y}^*(f)(\varphi)\coloneqq f(\pi_{\tilde Y}(\varphi)).
\end{gather*}
\end{definition}

\begin{lemma}\label{la:decomp_smooth}
Let $f\in C^\infty(G)^M$ be a right $M$-invariant smooth function and 
\begin{gather*}
\on{pr}_{Y_\tau}:L^2(K/M)\to Y_\tau
\end{gather*}
denote the orthogonal projection onto $Y_\tau\in\hat K_M$. For every fixed $g\in G$, the series
\begin{gather*}
\sum_{\tau\in\hat K_M}\on{pr}_{Y_\tau}(f(g\bigcdott)),
\end{gather*} 
where $f(g\bigcdott)\in C^\infty(K/M)$ is defined by
\begin{gather*}
f(g\bigcdott):K/M\to\C,\quad kM\mapsto f(gk),
\end{gather*}
converges absolutely and uniformly to $f(g\bigcdott)$. Moreover, we can uniquely decompose 
 \begin{gather*}
 	f=\sum_{\tau\in\hat K_M}f_{Y_\tau}
 \end{gather*}
 with $f_{Y_\tau}\in\pi_{Y_\tau}(C^\infty(G\times_KY_\tau))$ where the series converges pointwise. The functions $f_{Y_\tau}$ are given by
 \begin{gather*}
	f_{Y_\tau}=\pi_{Y_\tau}(g\mapsto\on{pr}_{Y_\tau}(f(g\bigcdott))).
\end{gather*} 
\end{lemma}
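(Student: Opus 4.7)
The plan is to apply Peter--Weyl on the compact homogeneous space $K/M$ to the smooth function $f(g\bigcdot)$ for each fixed $g$, then promote the resulting $\tau$-isotypic decomposition to one on $G$ via an equivariance argument. For fixed $g \in G$, right translation makes $k \mapsto f(gk)$ smooth on $K$, and right $M$-invariance of $f$ makes it descend to $f(g\bigcdot)\in C^\infty(K/M)$. By Proposition~\ref{prop:multone} we have the orthogonal Hilbert decomposition $L^2(K/M) \cong_K \widehat\bigoplus_{\tau\in\hat K_M} Y_\tau$ (multiplicity free), so $\on{pr}_{Y_\tau}(f(g\bigcdot))$ is well-defined and the Fourier series converges in $L^2$.

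For absolute and uniform convergence on $K/M$, I would use the $K$-Casimir $\Omega_K$, which acts on $Y_\tau$ as a non-negative scalar $c_\tau$ with $c_\tau\to\infty$ as $\tau$ varies over $\hat K_M$. For every $N\geq 0$ the function $(1+\ell(\Omega_K))^N f(g\bigcdot)$ is smooth, hence in $L^2(K/M)$, so $(1+c_\tau)^N\|\on{pr}_{Y_\tau}(f(g\bigcdot))\|_{L^2}$ is square-summable in $\tau$. Combined with the Weyl-type polynomial bound $\dim Y_\tau = O(c_\tau^{d})$ and the standard reproducing-kernel estimate $\|v\|_{L^\infty(K/M)}\leq(\dim Y_\tau)^{1/2}\|v\|_{L^2}$ for $v\in Y_\tau$ (a consequence of Proposition~\ref{prop:helg_general} applied to the zonal function $\phi_{Y_\tau}$), choosing $N$ large enough forces absolute and uniform convergence on $K/M$. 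I expect this sup-norm control of matrix coefficients to be the main technical step.

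To promote this to a decomposition on $G$, define $F_\tau: G\to Y_\tau$ by $F_\tau(g) := \on{pr}_{Y_\tau}(f(g\bigcdot))$. Smoothness of $F_\tau$ follows from smoothness of the map $G\to C^\infty(K/M)$, $g\mapsto f(g\bigcdot)$, together with continuity of the finite-rank projection and finite-dimensionality of $Y_\tau$. The identity $f(gk\cdot k'M)=f(g\cdot kk'M)$ rewrites as $f(gk\bigcdot) = L(k^{-1})f(g\bigcdot)$; since $Y_\tau$ is $L(K)$-invariant, the projection commutes with $L(k)$, yielding $F_\tau(gk) = L(k^{-1})F_\tau(g) = \tau(k^{-1})F_\tau(g)$. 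Hence $F_\tau \in C^\infty(G\times_K Y_\tau)$. Setting $f_{Y_\tau}:=\pi_{Y_\tau}(F_\tau)$ and evaluating the uniformly convergent $K/M$-Fourier series at the coset $eM$ gives the pointwise identity
\[
 f(g)=f(g\bigcdot)(eM)=\sum_{\tau\in\hat K_M}\on{pr}_{Y_\tau}(f(g\bigcdot))(eM)=\sum_{\tau\in\hat K_M} f_{Y_\tau}(g).
\]

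For uniqueness, suppose $f=\sum_\tau g_{Y_\tau}$ pointwise with $g_{Y_\tau}=\pi_{Y_\tau}(G_\tau)$ for some $G_\tau\in C^\infty(G\times_K Y_\tau)$. The $K$-equivariance of $G_\tau$ gives $g_{Y_\tau}(gk') = G_\tau(g)(k'M)$, so $g_{Y_\tau}(g\bigcdot)=G_\tau(g)\in Y_\tau\subseteq L^2(K/M)$. Pairing the pointwise identity $f(g\bigcdot)=\sum_\tau G_\tau(g)$ against any $\psi\in Y_\sigma$ (term-by-term integration justified by the uniformly convergent expansion constructed above, which provides an $L^\infty$-dominating series) and using orthogonality of distinct $Y_\tau$ forces $G_\sigma(g)=\on{pr}_{Y_\sigma}(f(g\bigcdot))=F_\sigma(g)$, hence $g_{Y_\sigma}=f_{Y_\sigma}$.
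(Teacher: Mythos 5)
Your proof is correct and follows essentially the same route as the paper's: fix $g$, apply the Peter--Weyl decomposition of $L^2(K/M)$ to $f(g\bigcdot)$, check the $K$-equivariance of $g\mapsto\on{pr}_{Y_\tau}(f(g\bigcdot))$ so that it defines a section of $G\times_KY_\tau$, evaluate at $eM$, and get uniqueness from orthogonality of the $Y_\tau$. The only difference is that you re-derive the absolute and uniform convergence from scratch (Casimir eigenvalue decay plus the sup-norm bound $\norm{v}_\infty\leq(\dim Y_\tau)^{1/2}\norm{v}_{L^2}$ and the Weyl dimension bound), whereas the paper simply cites \cite[ch.\@ V, Thm.\@ 3.5\,(iii)]{GAGA}, which is proved by exactly that argument.
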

 
 \begin{proof}
  We decompose $f$ according to the right regular $K$-representation: For fixed $g\in G$ consider the function
 \begin{gather*}
  f_g:K\to\C,\quad f_g(k)\coloneqq f(gk).
 \end{gather*}
  Then $f_g\in C^\infty(K)^M\cong C^\infty(K/M)$. Since, by definition of $\hat K_M$,
\begin{gather*}
L^2(K/M)\cong\widehat\bigoplus_{\tau\in\hat K_M}Y_\tau
\end{gather*}
we can decompose
  \begin{gather}\label{eq:decomp_smooth}
    f_g=\sum_{\tau\in\hat K_M}\on{pr}_{Y_\tau}(f_g)
  \end{gather}
  which converges in the Hilbert sense. By \cite[ch.\@ V, Theorem 3.5 (iii)]{GAGA} this convergence is absolute and uniform. Thus, we have for every $k\in K$
  \begin{gather*}
  f(gk)=f_g(k)=\sum_{\tau\in\hat K_M}\on{pr}_{Y_\tau}(f_g)(k).
  \end{gather*}
  Note that $(g\mapsto\on{pr}_{Y_\tau}(f_g))\in C^\infty(G\times_KY_\tau)$; indeed
  \begin{gather*}
  	f_{g\tilde k}(k)=f(g\tilde kk)=f_g(\tilde k k)=\sum_{\tau\in\hat K_M}\on{pr}_{Y_\tau}(f_g)(\tilde kk)=\sum_{\tau\in\hat K_M}(\tau(\tilde k)^{-1}\on{pr}_{Y_\tau}(f_g))(k)
  \end{gather*}
  implies $\on{pr}_{Y_\tau}(f_{g\tilde k})=\tau(\tilde k)^{-1}\on{pr}_{Y_\tau}(f_g)$ for every $g\in G,\, \tilde k\in K$.
  We can write
  \begin{gather*}
    f=\sum_{\tau\in\hat K_M}f_\tau,
  \end{gather*}
  where $f_\tau\coloneqq\pi_{Y_\tau}(g\mapsto\on{pr}_{Y_\tau}(f_g))\in C^\infty(G)^M$ since, by Equation \eqref{eq:decomp_smooth},
  \begin{gather*}
   f(g)=f_{g}(e)=\sum_{\tau\in\hat K_M}\on{pr}_{Y_\tau}(f_{g})(e)=\sum_{\tau\in\hat K_M}f_\tau(g)
  \end{gather*}
  for every $g\in G$. 
  
  Conversely, for proving uniqueness, let
  \begin{gather*}
  	f=\sum_{\tau\in\hat K_M}\pi_{Y_\tau}(\varphi_\tau)
  \end{gather*}
  for some $\varphi_\tau\in C^\infty(G\times_KY_\tau)$. We need to show that $\varphi_\tau(g)=\on{pr}_{Y_\tau}(f_g)$ for every $g\in G$. We calculate for $k\in K, g\in G$
  \begin{align*}
  	f_g(k)=f(gk)&=\sum_{\tau\in\hat K_M}\pi_{Y_\tau}(\varphi_\tau)(gk)=\sum_{\tau\in\hat K_M}\varphi_\tau(gk)(e)\\
  	&=\sum_{\tau\in\hat K_M}(\tau(k)^{-1}\varphi_\tau(g))(e)=\sum_{\tau\in\hat K_M}\varphi_\tau(g)(k).
  \end{align*}
  This yields $\on{pr}_{Y_{\tau}}(f_g)=\varphi_\tau(g)$ and proves the uniqueness.
\end{proof}

\begin{notation}\label{not:pi_gamma}
Let
\begin{gather*}
\pi_Y\colon\mc D'(G\times_KY)\to\mc D'(G/M),\quad\pi_Y(f)(\varphi)\coloneqq f(\pi_{\tilde Y}^*(\varphi)).
\end{gather*}
In Lemma \ref{la:pi_gamma}\,\ref{it:la_pi_gamma4} we will see that this extends the definition of $\pi_Y$ from Definition \ref{def:pi_gamma}.
\end{notation}

\begin{lemma}\label{la:pi_gamma}
Let $Y\in\hat K_M$ and recall the maps $\iota_{G/M},\, \iota_Y$ from Definition \ref{def:pi_gamma}.
\begin{enumerate}
\item\label{it:la_pi_gamma2} $\pi_{Y}^*(f)(g)=\on{pr}_{Y}(f(g\bigcdot))$ for each $f\in C^\infty(G/M),\ g\in G$, so that $\pi_Y^*(C^\infty(G/M))\subseteq C^\infty(G\times_KY)$ and $\pi_Y^*(C_c^\infty(G/M))\subseteq C_c^\infty(G\times_KY)$,
\item\label{it:la_pi_gamma3} $f=\sum_{\tau\in\hat K_M}\pi_{Y_\tau}(\pi_{Y_\tau}^*(f))$ pointwise for each $f\in C^\infty(G/M)$,
\item\label{it:la_pi_gamma4} $\pi_Y(\iota_Y(f))=\iota_{G/M}(\pi_Y(f))$ for each $f\in C^\infty(G\times_KY)$ and
\item\label{it:la_pi_gamma5} $\forall\mu\in\mf a^*\colon P_\mu^{Y}=\frac{1}{\dim Y}\pi_{Y}^*\circ\mc Q_\mu$ on $\mc D'(K/M)$.
\end{enumerate}
\end{lemma}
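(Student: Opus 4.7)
The plan is to handle the four parts in order. Part~\ref{it:la_pi_gamma2} is the main computational input; parts~\ref{it:la_pi_gamma3}--\ref{it:la_pi_gamma5} then reduce to symmetric applications of the reproducing identity $\varphi(e) = (\dim Y)\langle\varphi,\phi_Y\rangle_{L^2(K)}$ for $\varphi\in Y$, which is Proposition~\ref{prop:helg_general}\,(ii) specialized to $k=e$. For~\ref{it:la_pi_gamma2}, I first check that $g\mapsto\on{pr}_Y(f(g\bigcdot))$ is $K$-equivariant, hence defines a smooth section: right translation by $k$ of $f(g\bigcdot)$ corresponds to left translation by $k^{-1}$ on $L^2(K/M)$, which commutes with $\on{pr}_Y$. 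I then verify that this section represents the distribution $\pi_Y^*(f)$ through $\iota_Y$: for $\varphi\in C_c^\infty(G\times_K\tilde Y)$, expanding $\on{pr}_Y(f(g\bigcdot))(e) = (\dim Y)\int_K f(gk)\overline{\phi_Y(k)}\intd k$ via Proposition~\ref{prop:helg_general}, performing the substitution $g\mapsto gk^{-1}$, and invoking the companion identity $\psi(e) = (\dim Y)\int_K\phi_Y(k)\psi(k)\intd k$ for $\psi\in\tilde Y$ (obtained from Proposition~\ref{prop:helg_general}\,(iii) in the complex-conjugate realization of $\tilde Y$), the double integral collapses to $\int_G f(g)\varphi(g)(e)\intd g = f(\pi_{\tilde Y}(\varphi))$. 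Preservation of compact support in $g$ is immediate.

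Part~\ref{it:la_pi_gamma3} is then a direct restatement of Lemma~\ref{la:decomp_smooth}: by~\ref{it:la_pi_gamma2}, $\pi_{Y_\tau}(\pi_{Y_\tau}^*(f))(g) = \on{pr}_{Y_\tau}(f(g\bigcdot))(e) = f_{Y_\tau}(g)$ in the notation of Lemma~\ref{la:decomp_smooth}, so the pointwise Peter--Weyl expansion established there reads precisely $f = \sum_\tau\pi_{Y_\tau}(\pi_{Y_\tau}^*(f))$.

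For~\ref{it:la_pi_gamma4}, I unwind
\[
\pi_Y(\iota_Y(f))(\psi) = \iota_Y(f)(\pi_{\tilde Y}^*(\psi)) = \int_G\pi_Y(f)(g)\,\pi_{\tilde Y}^*(\psi)(g)(e)\intd g
\]
for $\psi\in C_c^\infty(G/M)$. Using~\ref{it:la_pi_gamma2} to replace $\pi_{\tilde Y}^*(\psi)(g)(e)$, changing variables by $g\mapsto gk^{-1}$, and invoking $\phi_Y(k^{-1}) = \overline{\phi_Y(k)}$ from Proposition~\ref{prop:helg_general}\,(iii) reduces the inner $K$-integral to $(\dim Y)\langle\pi_Y(f)(g\bigcdot),\phi_Y\rangle_{L^2(K)}$. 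Since $\pi_Y(f)(g\bigcdot)\in Y$ for every $g$ (as $\pi_Y(f)$ comes from a section of $G\times_KY$), the reproducing formula at $k=e$ gives $(\dim Y)\langle\pi_Y(f)(g\bigcdot),\phi_Y\rangle = \pi_Y(f)(g)$, so the pairing equals $\int_G\pi_Y(f)(g)\psi(g)\intd g = \iota_{G/M}(\pi_Y(f))(\psi)$.

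Finally, for~\ref{it:la_pi_gamma5}, I check the identity first on smooth $T\in C^\infty(K/M)\subseteq\mc D'(K/M)$ and then extend by continuity (both $\pi_Y^*\circ\mc Q_\mu$ and $P_\mu^Y$ are continuous, and $C^\infty(K/M)$ is weak-$*$ dense in $\mc D'(K/M)$). For smooth $T$, the endpoint isomorphism $\mc Q_\mu$ just produces the induced-picture realization $\mc Q_\mu(T)(g) = a_I(g)^{\mu-\rhoa}T(k_I(g)M)$, so that $\mc Q_\mu(T)(g\bigcdot) = \pi_\mu(g)^{-1}T$ as elements of $C^\infty(K/M)$. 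Applying~\ref{it:la_pi_gamma2} and the projection formula~\eqref{eq:Poisson_proj} yields $\pi_Y^*(\mc Q_\mu(T))(g) = \on{pr}_Y(\pi_\mu(g)^{-1}T) = (\dim Y)P_\mu^Y(T)(g)$. The main obstacle throughout is really just bookkeeping in~\ref{it:la_pi_gamma2}: one must correctly track the complex-conjugate realization of $\tilde Y$ so that the normalizations produced by Proposition~\ref{prop:helg_general} cancel cleanly. The factor $\tfrac{1}{\dim Y}$ appearing in~\ref{it:la_pi_gamma5} is the direct reflection of this Schur normalization.
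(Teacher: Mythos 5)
Your proof is correct and follows the same overall architecture as the paper's: part~\ref{it:la_pi_gamma2} is established by a direct computation with the defining pairing, \ref{it:la_pi_gamma3} is read off from Lemma~\ref{la:decomp_smooth}, \ref{it:la_pi_gamma4} is an integral manipulation reducing to~\ref{it:la_pi_gamma2}, and \ref{it:la_pi_gamma5} restricts to smooth $T$ by density/continuity and then combines~\ref{it:la_pi_gamma2} with the projection formula \eqref{eq:Poisson_proj}. The only (immaterial) difference is that in~\ref{it:la_pi_gamma2} and~\ref{it:la_pi_gamma4} you collapse the $K$-integrals via the reproducing identity $\varphi(e)=\dim Y\,\langle\varphi,\phi_Y\rangle_{L^2(K)}$ together with a change of variables, whereas the paper expands the relevant function into its $K$-isotypic components and invokes orthogonality of distinct $K$-types; both mechanisms rest on Proposition~\ref{prop:helg_general} and Schur orthogonality, so the arguments are equivalent.
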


\begin{proof}
\ref{it:la_pi_gamma2} By Lemma \ref{la:decomp_smooth} we can write $f=\sum_{\tau\in\hat K_M}\pi_{Y_\tau}(u_\tau)$, where $u_\tau\in C^\infty(G\times_KY_{\tau})$ is given by $u_\tau(g)=\on{pr}_{Y_\tau}(f(g\bigcdot))$. For each $\varphi\in C_c^\infty(G\times_K\tilde Y)$ we use the orthogonality of the $Y_\tau$ to obtain
\begingroup
\allowdisplaybreaks
\begin{align*}
\pi_{Y}^*(f)(\varphi)&=f(\pi_{\tilde Y}(\varphi))=\int_G\pi_{\tilde Y}(\varphi)(g)f(g)\intd g\\
&=\int_{G/K}\int_K\pi_{\tilde Y}(\varphi)(gk)f(gk)\intd k\intd gK\\
&=\int_{G/K}\int_K\varphi(g)(k){\sum_{\tau\in\hat K_M}\pi_{Y_\tau}(u_\tau)}(gk)\intd k\intd gK\\
&=\int_{G/K}\sum_{\tau\in\hat K_M}\int_K\varphi(g)(k)u_\tau(g)(k)\intd k\intd gK\\
&=\int_{G/K}\int_K\varphi(g)(k)u_{Y}(g)(k)\intd k\intd gK\\
&=\int_{G/K}\int_K\pi_{\tilde Y}(\varphi)(gk)\pi_Y(u_Y)(gk)\intd k\intd gK\\
&=\int_{G}\pi_{\tilde Y}(\varphi)(g)\pi_Y(u_Y)(g)\intd g\\
&=\iota_Y(u_Y)(\varphi).
\end{align*}
Note that if $f$ has compact support $\on{supp}f\subset G/M$ and $\on{pr}:G\to G/M$ denotes the canonical projection, we have that $\on{supp}(\pi_Y^*(f))\subseteq\on{pr}^{-1}(\on{supp}f)\cdot K$ is compact since $M$ is compact.
\endgroup
\par
\ref{it:la_pi_gamma3} follows from Lemma \ref{la:decomp_smooth} and \ref{it:la_pi_gamma2}.\par
\ref{it:la_pi_gamma4} Let $f\in C^\infty(G\times_KY)$ and $\varphi\in C_c^\infty(G/M)$. By \ref{it:la_pi_gamma3} we decompose 
\begin{gather*}
\varphi=\sum_{\tau\in\hat K_M}\pi_{Y_\tau}(\pi_{Y_\tau}^*(\varphi))
\end{gather*}
where $\pi_{Y}^*(\varphi)\in C^\infty(G\times_KY)$. By the orthogonality of the $Y_\tau$ we have
\begin{align*}
\iota_{G/M}(\pi_Y(f))(\varphi)&=\int_G\pi_Y(f)(gM)\varphi(gM)\intd g=\int_{G/K}\int_K\pi_Y(f)(gkM)\varphi(gkM)\intd k\intd gK\\
&=\int_{G/K}\sum_{\tau\in\hat K_M}\int_Kf(g)(kM)\pi_{Y_\tau}^*(\varphi)(g)(k)\intd k\intd gK\\
&=\int_{G/K}\int_Kf(g)(kM)\pi_{\tilde Y}^*(\varphi)(g)(k)\intd k\intd gK\\
&=\int_{G}\pi_Y(f)(g)\pi_{\tilde Y}(\pi_{\tilde Y}^*(\varphi))(g)\intd g=\iota_Y(f)(\pi_{\tilde Y}^*(\varphi))=\pi_Y(\iota_Y(f))(\varphi).
\end{align*}
\ref{it:la_pi_gamma5} By continuity (recall Equation \eqref{eq:end_point}) we restrict our attention to smooth functions $\phi\in C^\infty(K/M)$. Then the equality follows from Lemma \ref{la:orth_proj_scalar} and \ref{it:la_pi_gamma2} (recall that $\phi_{Y_\tau}(e)=1$).
\end{proof}

\subsection{Convergence of generalized Fourier series}
In the following we will prove that the convergence in Lemma \ref{la:pi_gamma}\,\ref{it:la_pi_gamma3} is uniform on compact sets and that the same is true for each derivative. Therefore the convergence is a convergence in $C_c^\infty(G/M)$ for $f\in C_c^\infty(G/M)$, where we equip $C_c^\infty(G/M)$ with the inductive limit topology $C_c^\infty(G/M)=\lim_{C\subseteq G/M}C_C^\infty(G/M)$, where the limit runs over all compact subsets $C\subseteq G/M$ and we denote by $C_C^\infty(G/M)\subseteq C_c^\infty(G/M)$ the subset of all functions which are supported in $C$. 

Let $\mc B\coloneqq\{X_1,\ldots,X_n\}\subseteq\mf g_0$ be a basis of $\mf g_0$. For $\ell\in\N_0$ and $C\subset G$ compact we introduce the following norm on $C^\infty(G/M)$ 
\begin{gather*}
\sn{\ell}{C}{f}\coloneqq\sum_{k=0}^\ell\sum_{X_1,\ldots,X_k\in\mc B}\sup_{g\in C}\abs{(X_1\cdots X_k f)(gM)},
\end{gather*}
where $X\in\mf g_0$ acts on $f\in C^\infty(G/M)$ by the derived left regular representation
\begin{gather*}
\forall g\in G\colon(Xf)(gM)\coloneqq\d f(\exp(-tX)gM).
\end{gather*}
The summand for $k=0$ is understood as not differentiating, i.e.\@ as $\sup_{g\in C}\abs{f(gM)}$. We have the following lemma related to the Riemann-Lebesgue lemma. 

\begin{lemma}\label{la:fourier_conv}
Let $f\in C^\infty(G/M)$. For each $C\subset G$ compact, $\ell\in\N_0$ and $N\in\N$ there exists a constant $C_{f,C,N,\ell}>0$ independent of $Y_\tau$ such that
\begin{gather*}
\forall\, Y_\tau\in\hat K_M\colon\sn{\ell}{C}{\pi_{Y_\tau}(\pi_{Y_\tau}^*(f))}\leq C_{f,C,N,\ell}\cdot (1+\norm{\tau}^2)^{-N},
\end{gather*}
where $\norm{\tau}$ denotes the length of the highest weight of $Y_\tau$. Moreover, if $f_n\to 0$ in $C^\infty(G/M)$ we can find $C_{f_n,C,N,\ell}$ such that $\lim_{n\to\infty}C_{f_n,C,N,\ell}=0$.
\end{lemma}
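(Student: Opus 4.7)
The plan is a Riemann--Lebesgue argument based on the Casimir operator of $K$. Let $\Omega_K\in\mc U(\mf k)^K$ be the Casimir built from (a positive multiple of) the negative of the Killing form restricted to $\mf k_0$, so that on any irreducible $K$-representation of highest weight $\lambda$ it acts by the non-negative scalar $c_\lambda=\langle\lambda,\lambda+2\rho_{\mf k}\rangle\geq\|\lambda\|^2$. Set $D_K:=1+\Omega_K$; acting on $L^2(K/M)$ via the left regular representation of $K$, $D_K$ is $K$-equivariant and acts on each $Y_\tau$ by the scalar $\mu_\tau:=1+c_\tau\geq 1+\|\tau\|^2$. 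Since $\Omega_K\in\mc U(\mf k)^K\subset\mc U(\mf k)^M$, the ``right translation'' formula $\tilde X f(g):=\tfrac{d}{dt}|_{t=0}f(g\exp(-tX))$ extended from $X\in\mf k$ to $\mc U(\mf k)^M$ produces a well-defined second-order differential operator $\tilde D_K=1+\tilde\Omega_K$ on $C^\infty(G)^M=C^\infty(G/M)$.

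The first step will be the pointwise identity $D_K\bigl[kM\mapsto f(gk)\bigr]=\bigl[kM\mapsto(\tilde D_K f)(gk)\bigr]$ on $K/M$ for every $g\in G$, which follows by differentiating $g\exp(-tX)k=gk\exp(-t\on{Ad}(k^{-1})X)$ and using the $\on{Ad}(K)$-invariance of $\Omega_K$. Because $D_K$ commutes with the orthogonal projection onto $Y_\tau$ and acts on it by $\mu_\tau$, iterating this $N$ times yields
\[\pi_{Y_\tau}(\pi_{Y_\tau}^*(f))(g)\;=\;\mu_\tau^{-N}\,\pi_{Y_\tau}\bigl(\pi_{Y_\tau}^*(\tilde D_K^N f)\bigr)(g),\qquad N\in\N_0.\]
Moreover, the left translations $(X\cdot h)(gM):=\tfrac{d}{dt}|_{t=0}h(\exp(-tX)gM)$ with $X\in\mf g_0$ act only on the $g$-variable and so commute with the Fourier projection in the $K$-variable; by the chain rule the identity above persists after applying any monomial $X_1\cdots X_j$ of basis vectors to $f$ on the right-hand side.

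For the pointwise bound I will combine the reproducing formula from Proposition~\ref{prop:helg_general}\,\ref{prop:helg_general_ii}--\ref{prop:helg_general_iii},
\[|\on{pr}_{Y_\tau}(\phi)(eM)|\;=\;\dim Y_\tau\,\bigl|\langle\phi,\phi_{Y_\tau}\rangle_{L^2(K)}\bigr|\;\leq\;\sqrt{\dim Y_\tau}\,\|\phi\|_{L^2(K/M)},\]
with the polynomial Weyl dimension bound $\dim Y_\tau\leq C_K(1+\|\tau\|)^{d_K}$, where $d_K$ is the number of positive roots of $(\mf k,\mf b)$. Together these yield, for $g\in C$ and $j\leq\ell$,
\[\bigl|X_1\cdots X_j\pi_{Y_\tau}(\pi_{Y_\tau}^*(f))(g)\bigr|\;\leq\;\mu_\tau^{-N}\sqrt{\dim Y_\tau}\;\sup_{g'\in CK}\bigl|\tilde D_K^N X_1\cdots X_j f(g')\bigr|.\]
Summing over $j$ and basis monomials, and replacing $N$ by $N+\lceil d_K/4\rceil$ in the Casimir step in order to absorb the factor $\sqrt{\dim Y_\tau}$ into $(1+\|\tau\|^2)^{-N}$, I obtain the claimed estimate with $C_{f,C,N,\ell}$ equal to a finite sum of $C^{2N+2\lceil d_K/4\rceil+\ell}$-seminorms of $f$ on the compact set $CK\subset G$. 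Since this quantity is a continuous seminorm in the $C^\infty(G/M)$-topology, the second assertion for $f_n\to 0$ is automatic.

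The only genuine technical subtlety is the well-definedness of $\tilde\Omega_K$ on right-$M$-invariant functions, for which one needs precisely $\on{Ad}(m)\Omega_K=\Omega_K$ for $m\in M$ (free here because $\Omega_K$ is in fact $K$-invariant); everything else is a direct combination of the centrality of $\Omega_K$, the commutation of projections with $K$-intertwiners, and the Weyl dimension bound.
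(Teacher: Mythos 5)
Your argument is correct and follows essentially the same route as the paper: the paper invokes Helgason's estimate (cited as \cite[ch.\@ V, Lemma 3.2]{GAGA}) to trade powers of $c_\tau^{-1}$ against $K$-derivatives of $f(g\,\bigcdott)$, bounds $\dim Y_\tau$ polynomially via the Weyl dimension formula, and commutes the left-invariant derivatives $X_1\cdots X_k$ past $\pi_{Y_\tau}^*$ exactly as you do. The only difference is that you re-derive the cited estimate from scratch (via the Casimir eigenvalue identity, the reproducing property of $\phi_{Y_\tau}$ from Proposition~\ref{prop:helg_general}, and Cauchy--Schwarz, obtaining $\sqrt{\dim Y_\tau}$ in place of Helgason's $\dim(Y_\tau)^2$), which is a harmless and self-contained variant of the same mechanism.
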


\begin{proof}
For each $g\in G$ we have $f(g\bigcdot)\in C^\infty(K/M)$. By a slight abuse of notation we will write $\tau$ also for the highest weight of $(\tau,Y_\tau)$. Applying \cite[ch.\@ V, Lemma 3.2]{GAGA} to $C^\infty(K/M)$ with the uniform norm $\norm{\bigcdot}_\infty$ and the left regular representation $\lambda$ we obtain
\begin{gather}\label{eq:gaga_inequality}
\forall\, Y_\tau\in\hat K_M,\,\forall m\in\N\colon\quad\norm{\pi_{Y_\tau}^*(f)(g)}_\infty\leq C_1c_\tau^{-m}\dim(Y_\tau)^2\norm{\lambda(\Omega^m)f(g\bigcdot)}_\infty,
\end{gather}
where
\begin{enumerate}
\item $\Omega$ is a bi-invariant differential operator on $K$ with
\begin{gather*}
\Omega\chi_\tau=c_\tau\chi_\tau
\end{gather*}
for the character $\chi_\tau$ of $Y_\tau$ (cf.\@ proof of \cite[Thm.\@ V.3.1]{GAGA}),
\item $c_\tau\geq1+\langle\tau+\rho_{[\mf k,\mf k]},\tau+\rho_{[\mf k,\mf k]}\rangle-\langle\rho_{[\mf k,\mf k]},\rho_{[\mf k,\mf k]}\rangle=1+\langle\tau,\tau+2\rho_{[\mf k,\mf k]}\rangle$, where $\rho_{[\mf k,\mf k]}$ denotes the half-sum of positive roots in the semisimple part $[\mf k,\mf k]$ of $\mf k$, (see \cite[ch.\@ V, Eq.\@ (16) \& proof of Lemma 3.2]{GAGA})
\item $C_1>0$ is some constant independent of $f,C,N,\ell$ and $g$ given by the continuity of $\lambda$ on $C^\infty(K/M)$.  
\end{enumerate}
By the Weyl dimension formula we have
\begin{gather*}
\dim(Y_\tau)=\prod_{\alpha\in\Delta_{[\mf k,\mf k]}^+}\frac{\langle\tau+\rho_{[\mf k,\mf k]},\alpha\rangle}{\langle\rho_{[\mf k,\mf k]},\alpha\rangle},
\end{gather*}
where $\Delta_{[\mf k,\mf k]}^+$ denotes the positive roots in $[\mf k,\mf k]$. Therefore we can conclude that there exists a constant $\tilde C$ depending only on $\mf k$ such that, for $m\geq m_N\in\N$ large enough,
\begin{gather*}
c_\tau^{-m}\dim(Y_\tau)^2\leq\tilde C\cdot (1+\norm{\tau}^2)^{-N}
\end{gather*}
and thus by Equation \eqref{eq:gaga_inequality}
\begin{gather*}
\forall\, Y_\tau\in\hat K_M\colon\quad\norm{\pi_{Y_\tau}^*(f)(g)}_\infty\leq C_1\tilde C\norm{\lambda(\Omega^{m_N})f(g\bigcdot)}_\infty\cdot (1+\norm{\tau}^2)^{-N}.
\end{gather*}
Taking the supremum over $C$ on both sides we hence infer
\begin{gather*}
\forall\, Y_\tau\in\hat K_M\colon\quad\sup_{g\in C}\norm{\pi_{Y_\tau}^*(f)(g)}_\infty\leq C_1\tilde C\sup_{g\in C}\norm{\lambda(\Omega^{m_N})f(g\bigcdot)}_\infty\cdot (1+\norm{\tau}^2)^{-N}.
\end{gather*}
Note that since the map $g\mapsto\norm{\lambda(\Omega^{m_N})f(g\bigcdot)}_\infty$ from $G$ to $\R_{\geq0}$ is continuous by the smoothness of $f$, the suprema are actually finite. We abbreviate
\begin{gather*}
C_{f,C,N,0}\coloneqq C_1\tilde C\sup_{g\in C}\norm{\lambda(\Omega^{m_N})f(g\bigcdot)}_\infty<\infty.
\end{gather*}
Note that the procedure above also works for $X_1\cdots X_k f$ instead of $f$ for $X_1,\ldots,X_k\in\mc B$ and $0\leq k\leq\ell$. We set
\begin{gather*}
C_{f,C,N,\ell}\coloneqq\max\{C_{\varphi,C,N,0}\mid\exists\, 0\leq k\leq\ell,\,\exists X_1,\ldots,X_k\in\mc B\colon\varphi=X_1\cdots X_kf\}.
\end{gather*}
By the definition of $\pi_{Y_\tau}^*$ we have $\pi_{Y_\tau}^*(X_1\cdots X_kf)=X_1\cdots X_k\pi_{Y_\tau}^*(f)$ for all $X_1,\ldots,X_k$ as above. Finally we obtain that for each $Y_\tau\in\hat K_M$
\begin{align*}
&\hphantom{\leq\ }\sup_{g\in C}\abs{(X_1\cdots X_k\pi_{Y_\tau}(\pi_{Y_\tau}^*(f)))(g)}=
\sup_{g\in C}\abs{(\pi_{Y_\tau}^*(X_1\cdots X_kf))(g)(e)}\\
&\leq\sup_{g\in C}\norm{\pi_{Y_\tau}^*(X_1\cdots X_kf)(g)}_\infty\leq C_{f,C,N,\ell}\cdot (1+\norm{\tau}^2)^{-N}.
\end{align*}
This proves the first part and the second part follows from the definition of $C_{f,C,N,\ell}$.
\end{proof}

\begin{lemma}\label{la:fourier_C_c}
Let $f\in C_c^\infty(G/M)$. Then
\begin{gather}\label{eq:la_series}
\sum_{\tau\in\hat K_M}\pi_{Y_\tau}(\pi_{Y_\tau}^*(f))
\end{gather}
is absolutely convergent with respect to each $\sn{\ell}{C}{\bigcdot}$ and converges to $f$ in $C_c^\infty(G/M)$.
\end{lemma}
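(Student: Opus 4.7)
The strategy is to combine the decay estimate of Lemma \ref{la:fourier_conv} with a common-support argument and the polynomial growth of the set of $M$-spherical $K$-types. Convergence in the inductive limit topology of $C_c^\infty(G/M)$ requires two ingredients: every partial sum must lie in a common Fréchet subspace $C^\infty_{C_f}(G/M)$ of functions supported in a fixed compact set $C_f$, and the series must converge there with respect to every seminorm $\sn{\ell}{C}{\cdot}$.

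\emph{Common compact support.} Since $f\in C_c^\infty(G/M)$, there exists a compact set $\tilde C\subset G$ whose image $\tilde C\cdot M/M$ in $G/M$ contains $\on{supp}(f)$. By Lemma \ref{la:pi_gamma}\,\ref{it:la_pi_gamma2}, $\pi_{Y_\tau}^*(f)(g)=\on{pr}_{Y_\tau}(f(g\bigcdot))$ vanishes unless $gkM\in\on{supp}(f)$ for some $k\in K$, i.e.\ unless $g\in\tilde C\cdot M\cdot K^{-1}=\tilde C\cdot K$. Hence $\pi_{Y_\tau}(\pi_{Y_\tau}^*(f))$ is supported in the compact set $C_f\coloneqq(\tilde C\cdot K)/M\subset G/M$, independently of $\tau$.

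\emph{Absolute convergence in each seminorm.} Fix $\ell\in\N_0$ and a compact $C\subset G$ whose image covers $C_f$. Lemma \ref{la:fourier_conv} yields, for every $N\in\N$,
\begin{gather*}
\sn{\ell}{C}{\pi_{Y_\tau}(\pi_{Y_\tau}^*(f))}\leq C_{f,C,N,\ell}\,(1+\norm{\tau}^2)^{-N}.
\end{gather*}
The elements of $\hat K_M$ are parameterized by integral dominant weights of $\mf k$ satisfying the $M$-sphericity condition, so the cardinality of $\{\tau\in\hat K_M\colon\norm{\tau}\leq R\}$ grows at most polynomially in $R$. Choosing $N$ larger than this polynomial degree, the scalar series $\sum_{\tau\in\hat K_M}(1+\norm{\tau}^2)^{-N}$ converges, which yields absolute convergence of \eqref{eq:la_series} with respect to $\sn{\ell}{C}{\cdot}$.

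\emph{Identification of the limit and transfer to $C_c^\infty(G/M)$.} Since absolute convergence holds in every seminorm $\sn{\ell}{C}{\cdot}$, the series \eqref{eq:la_series} converges in the Fréchet space $C^\infty_{C_f}(G/M)$. Convergence in this space implies pointwise convergence, and by Lemma \ref{la:pi_gamma}\,\ref{it:la_pi_gamma3} the pointwise sum equals $f$. Finally, the inclusion $C^\infty_{C_f}(G/M)\hookrightarrow C_c^\infty(G/M)$ is continuous for the inductive limit topology, so the convergence transfers to $C_c^\infty(G/M)$.

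I do not expect a substantial obstacle: the decay estimate of Lemma \ref{la:fourier_conv} does all the analytic work, and the polynomial count of $K$-types is a standard Weyl-dimension-formula consequence. The only point requiring care is the common compact support, which is what makes the inductive limit topology on $C_c^\infty(G/M)$ actually see the convergence; this is why one has to extract it explicitly rather than just quote Lemma \ref{la:fourier_conv} directly.
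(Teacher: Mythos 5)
Your proof is correct and follows essentially the same route as the paper: the common compact support $\on{supp}(f)\cdot K$, the decay estimate of Lemma \ref{la:fourier_conv}, the identification of $\hat K_M$ with a subset of a lattice to get convergence of $\sum_\tau(1+\norm{\tau}^2)^{-N}$, and the identification of the limit via the pointwise statement of Lemma \ref{la:pi_gamma}\,\ref{it:la_pi_gamma3}. No substantive differences.
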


\begin{proof}
Let $\on{pr}\colon G\to G/M$ denote the canonical projection. By the definition of the inductive limit topology on $C_c^\infty(G/M)$ we have to find a compact set $C\subset G/M$ such that $\on{supp}(\pi_{Y_\tau}(\pi_{Y_\tau}^*(f)))\subseteq C$ for each $Y_\tau\in\hat K_M$ and such that for each $\ell\in\N_0$ we have that $\sum_{\tau\in\hat K_M}\pi_{Y_\tau}(\pi_{Y_\tau}^*(f))$ converges to $f$ with respect to $\norm{\bigcdot}_{H^\ell(\on{pr}^{-1}(C))}$. As in the proof of Lemma \ref{la:pi_gamma}\,\ref{it:la_pi_gamma2} we see that the condition on the supports is fulfilled if we choose $C\coloneqq\on{supp}(f)\cdot K$. Let $\ell\in\N_0$ and $N\in\N$ be fixed. By Lemma \ref{la:fourier_conv} there exists a constant $C_{f,C,N,\ell}$ independent of $Y_\tau$ such that
\begin{gather*}
\forall\, Y_\tau\in\hat K_M\colon\sn{\ell}{C}{\pi_{Y_\tau}(\pi_{Y_\tau}^*(f))}\leq C_{f,C,N,\ell}\cdot (1+\norm{\tau}^2)^{-N}.
\end{gather*}
Thus we have for each finite subset $F\subseteq\hat K_M$ that
\begin{align}
\norm{\sum_{\tau\in F}\pi_{Y_\tau}(\pi_{Y_\tau}^*(f))}_{H^\ell(\on{pr}^{-1}(C))}\leq\sum_{\tau\in F}\norm{\pi_{Y_\tau}(\pi_{Y_\tau}^*(f))}_{H^\ell(\on{pr}^{-1}(C))}\leq C_{f,C,N,\ell}\sum_{\tau\in F}(1+\norm{\tau}^2)^{-N}.\label{eq:pr_conv}
\end{align}
Let $\varepsilon>0$. Note that the weight lattice of $[\mf k,\mf k]$ is a lattice in the finite dimensional space $(i\mf t_0)^*$, where $\mf t_0$ denotes the Lie algebra of a maximal torus $T$ in $\tilde K$, the analytic subgroup of $[\mf k_0,\mf k_0]$. Therefore, we may identify $\hat K_M$ with a subset of $\Z^d$ in $\R^d$ with $d\coloneqq\dim\mf t_0$. We infer that if $N$ is large enough, there exists a finite set $F_0\subseteq\hat K_M$ such that the right hand side of \eqref{eq:pr_conv} is smaller than $\varepsilon$ for each finite set $F\subseteq\hat K_M$ with $F\cap F_0=\emptyset$. Therefore, for each such $F$,
\begin{gather*}
\norm{\sum_{\tau\in F}\pi_{Y_\tau}(\pi_{Y_\tau}^*(f))}_{H^\ell(\on{pr}^{-1}(C))}\leq\sum_{\tau\in F}\norm{\pi_{Y_\tau}(\pi_{Y_\tau}^*(f))}_{H^\ell(\on{pr}^{-1}(C))}\leq C_{f,C,N,\ell}\cdot\varepsilon.
\end{gather*}
Hence, the series in \eqref{eq:la_series} converges absolutely and to its pointwise limit $f$ (see Lemma \ref{la:pi_gamma}\,\ref{it:la_pi_gamma3}) with respect to $\norm{\bigcdot}_{H^\ell(\on{pr}^{-1}(C))}$. 
\end{proof}

We can also decompose distributions.
\begin{lemma}\label{la:distribution_decomp}
Let $u\in\mc D'(G/M)$ be a distribution. Then the sum
\begin{gather*}
\fourier{u}
\end{gather*}
converges absolutely and to $u$ in the weak sense.
\end{lemma}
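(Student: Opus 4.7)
The plan is to reduce the statement to Lemma \ref{la:fourier_C_c} by duality. Unraveling the definitions of $\pi_{Y_\tau}^*$ and of the distributional extension of $\pi_{Y_\tau}$ from Notation \ref{not:pi_gamma}, one computes for any test function $\varphi \in C_c^\infty(G/M)$ that
\begin{gather*}
\pi_{Y_\tau}(\pi_{Y_\tau}^*(u))(\varphi) = \pi_{Y_\tau}^*(u)\bigl(\pi_{\tilde Y_\tau}^*(\varphi)\bigr) = u\bigl(\pi_{\tilde Y_\tau}(\pi_{\tilde Y_\tau}^*(\varphi))\bigr).
\end{gather*}
Since the involution $\tau \mapsto \tilde\tau$ restricts to a bijection of $\hat K_M$ onto itself (because $\dim Y^M = \dim \tilde Y^M$ for the compact group $M$), summing the right-hand side over $\tau \in \hat K_M$ is, up to this relabeling, exactly the Fourier expansion of $\varphi$ treated in Lemma \ref{la:fourier_C_c}.

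That lemma states that $\sum_\tau \pi_{\tilde Y_\tau}(\pi_{\tilde Y_\tau}^*(\varphi))$ converges to $\varphi$ in $C_c^\infty(G/M)$. Since $u$ is continuous on $C_c^\infty(G/M)$, one immediately obtains the weak convergence
\begin{gather*}
\sum_{\tau \in \hat K_M} \pi_{Y_\tau}(\pi_{Y_\tau}^*(u))(\varphi) = \sum_{\tau \in \hat K_M} u\bigl(\pi_{\tilde Y_\tau}(\pi_{\tilde Y_\tau}^*(\varphi))\bigr) = u(\varphi).
\end{gather*}

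To upgrade this to absolute convergence, pick a compact set $C \subset G$ with $\on{supp}(\varphi) \subseteq \on{pr}(C)$, where $\on{pr}\colon G \to G/M$ is the canonical projection; by Lemma \ref{la:pi_gamma}\,\ref{it:la_pi_gamma2}, every summand $\pi_{\tilde Y_\tau}(\pi_{\tilde Y_\tau}^*(\varphi))$ is then supported in $\on{pr}(C)$ as well. The restriction of $u$ to the Fréchet space of functions supported in $\on{pr}(C)$ is continuous, so there exist $\ell \in \N_0$ and $C_u > 0$ with $\abs{u(f)} \leq C_u \sn{\ell}{C}{f}$ for any such $f$. Combining this with the decay estimate $\sn{\ell}{C}{\pi_{\tilde Y_\tau}(\pi_{\tilde Y_\tau}^*(\varphi))} \leq C_{\varphi,C,N,\ell}(1+\norm{\tau}^2)^{-N}$ of Lemma \ref{la:fourier_conv} and the observation (already used in the proof of Lemma \ref{la:fourier_C_c}) that $\hat K_M$ embeds into a lattice in a finite-dimensional Euclidean space, one obtains absolute convergence by choosing $N$ sufficiently large. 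The only subtle point is the bookkeeping of the dualities; the analytic heavy lifting has been done in the preceding two lemmas.
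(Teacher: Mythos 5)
Your proposal is correct and follows essentially the same route as the paper: weak convergence is obtained by dualizing against Lemma \ref{la:fourier_C_c} via the identity $\pi_{Y_\tau}(\pi_{Y_\tau}^*(u))(\varphi)=u(\pi_{\tilde Y_\tau}(\pi_{\tilde Y_\tau}^*(\varphi)))$, and absolute convergence from the finite order of $u$ on a fixed compact set combined with the decay estimate of Lemma \ref{la:fourier_conv}. Your explicit remark that $\tau\mapsto\tilde\tau$ permutes $\hat K_M$ is a detail the paper leaves implicit, but the argument is the same.
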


\begin{proof}
Let $f\in C_c^\infty(G/M)$. For each $Y_\tau\in\hat K_M$ we have (see Definition \ref{def:pi_gamma} and Notation \ref{not:pi_gamma}) 
\begin{gather*}
\pi_{Y_\tau}(\pi_{Y_\tau}^*(u))(f)=\pi_{Y_\tau}^*(u)(\pi_{\tilde Y}^*(f))=u(\pi_{\tilde Y}(\pi_{\tilde Y}^*(f)))
\end{gather*}
and therefore, by Lemma \ref{la:fourier_C_c} and the continuity of $u$,
\begin{gather*}
\sum_{\tau\in\hat K_M}\pi_{Y_\tau}(\pi_{Y_\tau}^*(f))=f\text{ in }C_c^\infty(G/M)\Rightarrow\sum_{\tau\in\hat K_M}u(\pi_{\tilde Y}(\pi_{\tilde Y}^*(f)))=u(f).
\end{gather*}
For the absolute convergence note that (see \cite[Def.\@ 2.1.1]{H90}) $u$ restricted to $C^\infty(\on{supp}(f)K)$ is of finite order, i.e.\@ there exist $\ell\in\N_0$ and $C>0$ with
\begin{gather*}
\forall \varphi\in C^\infty(\on{supp}(f)K)\colon\qquad\abs{u(\varphi)}\leq C\norm{\varphi}_{H^\ell(\on{supp}(f)K)}.
\end{gather*}
Then 
\begin{gather*}
\abs{\pi_{Y_\tau}(\pi_{Y_\tau}^*(u)(f))}=\abs{u(\pi_{\tilde Y}(\pi_{\tilde Y}^*(f)))}\leq C\norm{\pi_{\tilde Y}(\pi_{\tilde Y}^*(f))}_{H^\ell(\on{supp}(f)K)}.
\end{gather*}
The absolute convergence now follows from Lemma \ref{la:fourier_conv}.
\end{proof}

\begin{lemma}\label{la:def_distr}
Fix $c>0$ and $N\in\N$. If $\psi_\tau\in C^\infty(G\times_KY_\tau)$ for $\tau\in\hat K_M$ are chosen such that	
\begin{gather*}
		\iota_{G/M}(\pi_{Y_\tau}(\psi_{\tau}))(\pi_{\tilde Y_\tau}(\overline{\psi_{\tau}}) )\leq c\cdot(1+\norm{\tau}^2)^N,
\end{gather*}
then $\psi\coloneqq\sum_{\tau\in\hat K_M}\iota_{G/M}(\pi_{Y_\tau}(\psi_\tau))$ is absolutely convergent in the weak sense and defines a distribution on $G/M$.
\end{lemma}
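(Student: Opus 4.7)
The plan is to reduce, via the definition of $\iota_{G/M}$, to showing that for every $f \in C_c^\infty(G/M)$ the scalar series $\sum_{\tau}\iota_{G/M}(\pi_{Y_\tau}(\psi_\tau))(f)$ converges absolutely with a bound continuous in $f$. First I would fix $f$, set $C \coloneqq \on{supp}(f)\cdot K$ (compact in $G$), and use Lemma~\ref{la:fourier_C_c} to expand $f = \sum_\sigma \pi_{Y_\sigma}(\pi_{Y_\sigma}^*(f))$ with each summand still supported in $C$.

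The key algebraic move will be to exploit $K$-type orthogonality under the bilinear pairing on $L^2(K/M)$. For any $g \in G$, the $K$-equivariance of $\psi_\tau$ together with the convention that $Y_\tau$ sits inside $L^2(K/M)$ under the left regular action shows that the function $kM \mapsto \pi_{Y_\tau}(\psi_\tau)(gk)$ equals the vector $\psi_\tau(g) \in Y_\tau$, and similarly for $\pi_{Y_\sigma}^*(f)(g) \in Y_\sigma$. Fubini on $G/K \times K$ therefore reduces the inner integral to a bilinear pairing $\int_{K/M} \phi_1 \phi_2\intd(kM)$ between elements of $Y_\tau$ and $Y_\sigma$. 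Rewriting this as the Hermitian inner product $\langle \phi_1, \overline{\phi_2}\rangle_{L^2(K/M)}$ and using that $\overline{Y_\sigma}$ is, by multiplicity freeness (Proposition~\ref{prop:multone}), the $Y_{\tilde\sigma}$-isotypic component, one sees that only $\sigma = \tilde\tau$ survives, leaving
\[
\iota_{G/M}(\pi_{Y_\tau}(\psi_\tau))(f) = \iota_{G/M}(\pi_{Y_\tau}(\psi_\tau))(\pi_{Y_{\tilde\tau}}(\pi_{Y_{\tilde\tau}}^*(f))).
\]

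A Cauchy--Schwarz estimate then closes the argument. The hypothesis amounts to $\norm{\pi_{Y_\tau}(\psi_\tau)}_{L^2(G/M)}^2 \leq c(1+\norm{\tau}^2)^N$, since $\pi_{\tilde Y_\tau}(\overline{\psi_\tau}) = \overline{\pi_{Y_\tau}(\psi_\tau)}$. Because $\pi_{Y_{\tilde\tau}}(\pi_{Y_{\tilde\tau}}^*(f))$ is supported in $C$, Lemma~\ref{la:fourier_conv} combined with $\norm{\tilde\tau} = \norm{\tau}$ will give, for every $N' \in \N$, a bound of the form $\norm{\pi_{Y_{\tilde\tau}}(\pi_{Y_{\tilde\tau}}^*(f))}_{L^2(G/M)} \leq \on{vol}(C)^{1/2} C_{f,C,N',0}(1+\norm{\tau}^2)^{-N'}$. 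Choosing $N'$ so large that $\sum_{\tau \in \hat K_M}(1+\norm{\tau}^2)^{N/2-N'}$ converges --- possible because $\hat K_M$ embeds into a lattice in a finite-dimensional space, as at the end of the proof of Lemma~\ref{la:fourier_C_c} --- gives the desired absolute convergence.

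Finally, continuity of $\psi$ on each $C_C^\infty(G/M)$, and thus the fact that $\psi$ is a distribution, comes for free from the second clause of Lemma~\ref{la:fourier_conv}: for a null sequence $f_n \to 0$ sharing a common compact support, the constants $C_{f_n,C,N',0}$ tend to zero uniformly, forcing $\psi(f_n) \to 0$. The main obstacle I anticipate is the clean bookkeeping of the $K$-type orthogonality --- in particular, matching $\tau$ with its contragredient $\tilde\tau \in \hat K_M$ under the bilinear pairing on $L^2(K/M)$ and checking that $\pi_{Y_{\tilde\tau}}(\pi_{Y_{\tilde\tau}}^*(f))$ indeed retains compact support in $C$ (which follows because $\pi_{Y_\sigma}^*(f)(g)$ vanishes once $f(gk) = 0$ for every $k \in K$).
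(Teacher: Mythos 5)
Your proposal is correct and follows essentially the same route as the paper: the identity $\iota_{G/M}(\pi_{Y_\tau}(\psi_\tau))(f)=\int_G\pi_{Y_\tau}(\psi_\tau)\,\pi_{\tilde Y_\tau}(\pi_{\tilde Y_\tau}^*(f))$ (which you rederive via $K$-type orthogonality, while the paper simply invokes Lemma~\ref{la:pi_gamma}\,\ref{it:la_pi_gamma4}), followed by Cauchy--Schwarz, the polynomial bound from the hypothesis against the rapid decay from Lemma~\ref{la:fourier_conv}, and continuity from the second clause of that lemma.
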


\begin{proof}
We first prove the pointwise convergence of $\psi$ on $C_c^\infty(G/M)$. For each test function $f\in C_c^\infty(G/M)$ we have by Lemma \ref{la:pi_gamma}\,\ref{it:la_pi_gamma4}, Notation \ref{not:pi_gamma} and Definition \ref{def:pi_gamma}
\begin{align*}
\iota_{G/M}(\pi_{Y_\tau}(\psi_{\tau}))(f)=\pi_{Y_\tau}(\iota_{Y_\tau}(\psi_{\tau}))(f)=\iota_{Y_\tau}(\psi_\tau)(\pi_{\tilde Y_\tau}^*(f))
=\int_G\pi_{Y_\tau}(\psi_\tau)(g)\pi_{\tilde Y_\tau}(\pi_{\tilde Y_\tau}^*(f))(g)\intd g.
\end{align*}
The Cauchy-Schwarz inequality thus implies that
\begin{gather*}
\abs{\iota_{G/M}(\pi_{Y_\tau}(\psi_{\tau}))(f)}^2\leq\int_G\abs{\pi_{Y_\tau}(\psi_\tau)(g)}^2\intd g\cdot\int_G\abs{\pi_{\tilde Y_\tau}(\pi_{\tilde Y_\tau}^*(f))(g)}^2\intd g.
\end{gather*}
For the first factor we obtain
\begin{gather*}
\int_G\abs{\pi_{Y_\tau}(\psi_\tau)(g)}^2\intd g=\iota_{G/M}(\pi_{Y_\tau}(\psi_{\tau}))(\pi_{\tilde Y_\tau}(\overline{\psi_{\tau}}) )\leq c\cdot(1+\norm{\tau}^2)^N.	
\end{gather*}
For the second factor Lemma \ref{la:fourier_conv} implies that for each $m\in\N$ there exists a constant $\tilde C\coloneqq C_{\varphi,\on{pr}^{-1}(\on{supp}(f))K,m,0}$ independent of $Y_\tau$ such that
\begin{gather*}
\forall\, Y_\tau\in\hat K_M\colon\sn{0}{\on{pr}^{-1}(\on{supp}(f))K}{\pi_{Y_\tau}(\pi_{Y_\tau}^*(f))}\leq \tilde C\cdot (1+\norm{\tau}^2)^{-m}.
\end{gather*}
Choosing $m$ sufficiently large we thus obtain that
\begin{gather*}
\sum_{\tau\in\hat K_M}\abs{\iota_{G/M}(\pi_{Y_\tau}(\psi_{\tau}))(f)}<\infty
\end{gather*}
converges absolutely. We now prove the continuity of $\psi$. Let $C\subset G$ be a compact set and $(f_n)_{n\in\N}$ be a sequence of functions $f_n\in C_c^\infty(G/M)$ such that $\on{supp}(f_n)\subseteq CM$ for each $n\in\N$ and $\sn{\ell}{CM}{f_n}$ converges to $0$ for each fixed $\ell\in\N_0$. We have to prove that $\psi(f_n)\to0$ (see \cite[Thm.\@ 2.1.4]{H90}). Again by Lemma \ref{la:fourier_conv} we may choose for each $m\in\N$ constants $\tilde C_n$ independent of $Y_\tau$ such that
\begin{gather*}
\forall\, Y_\tau\in\hat K_M\colon\sn{0}{CM}{\pi_{Y_\tau}(\pi_{Y_\tau}^*(f_n))}\leq \tilde C_n\cdot (1+\norm{\tau}^2)^{-m}.
\end{gather*}
Moreover, by the second part of Lemma \ref{la:fourier_conv} we may choose the constants $\tilde C_n$ such that $\lim_{n\to\infty}\tilde C_n=0$. Proceeding as above we arrive at
\begin{gather*}
\sum_{\tau\in\hat K_M}\abs{\iota_{G/M}(\pi_{Y_\tau}(\psi_{\tau}))(f_n)}\leq\sqrt{c\cdot\tilde C_n}\sum_{\tau\in\hat K_M}(1+\norm{\tau}^2)^{\frac{N-m}{2}}\to 0,
\end{gather*}
since the series on the right hand side converges for $m$ large enough. 
\end{proof}

\subsection{Tensor product decompositions}
In this section we prove a number of technical results on the $K$-type decomposition of $Y\otimes\mf p$ for $Y\in\hat K$. Some of the calculations have to be done case by case. Those calculations we put into Appendix \ref{app:comp_scalars} to make the arguments presented in this subsection more transparent. 
\begin{notation}
For $V,\ Y\in\hat K$ we write 
\begin{gather*}
V\leftrightarrow Y\vcentcolon\Leftrightarrow V\leq Y\otimes\mf p\Leftrightarrow Y\leq V\otimes\mf p,
\end{gather*}
where the second equivalence follows from \cite[Remark 2.8]{BransonOlafsson}.
\end{notation}

\begin{definition}\label{def:omega}
Define the $K$-equivariant map 
\begin{gather*}
\omega:\mf p\to C^\infty(K/M),\quad \omega(X)(kM)\coloneqq\langle\Ad(k^{-1})X,H\rangle,
\end{gather*}
where $H\in\mf a_0$ is defined on page \pageref{def:H}. Note that $\omega(H)(eM)=1$. For each $Y\in\hat K_M$ we further define the $K$-equivariant map
\begin{gather*}
\omega_Y:Y\otimes\mf p\to C^\infty(K/M),\quad\omega_Y(\varphi\otimes X)\coloneqq\omega(X)\varphi.
\end{gather*}
Let $V\in\hat K$ with $V\lra Y$. We write
\begin{gather*}
V\lraomega Y\vcentcolon\Leftrightarrow V\leq\omega_Y(Y\otimes\mf p)
\end{gather*}
Note that $V\lraomega Y$ implies $V\in\hat K_M$ since the image of $\omega_Y$ is contained in $C^\infty(K/M)$. By \cite[Lemma 4.4\,(c)]{BransonOlafsson} we have
\begin{gather*}
V\lraomega Y\Leftrightarrow Y\lraomega V.
\end{gather*}
 We realize $V\leq L^2(K/M)$ and define $T_V^Y\in\Hom{K}{Y\otimes\mf p^*}{V}$ by
\begin{gather*}
T_V^Y:Y\otimes\mf p^*\to V,\quad T_V^Y(\varphi\otimes\psi)\coloneqq\on{pr}_V(\omega_Y(\varphi\otimes\I^{-1}(\psi))),
\end{gather*}
where $\on{pr}_V$ denotes the orthogonal projection
\begin{gather*}
\on{pr}_V:L^2(K/M)\cong\widehat\bigoplus_{W\in\hat K_M}W\to V.
\end{gather*}
If $V\lra Y$ but not $V\lraomega Y$ we define
\begin{gather*}
T_V^Y:Y\otimes\mf p^*\to V,\quad T_V^Y\coloneqq\on{pr}_V\circ\,(\on{id}_Y\otimes\,\I^{-1}),
\end{gather*}
with the orthogonal projection $\on{pr}_V:Y\otimes\mf p\to V$. Since the tensor product decomposes multiplicity-freely by Proposition \ref{prop:mult_one}, there exist uniquely determined homomorphisms $\iota_Y^V\in\Hom{K}{V}{Y\otimes\mf p^*}$ such that 
\begin{gather*}
T_V^Y\circ\iota_Y^V=\on{id}_V\text{ and }T_V^Y\circ\iota_Y^W=0
\end{gather*}
for each $W\leftrightarrow Y$ with $V\not\cong W$. In Proposition \ref{prop:iota_gen} we give an explicit formula for $\iota_Y^V$ in the case $V\in\hat K_M$.
\end{definition}

\begin{remark}\label{rem:sum_of_projections}
By definition we have for each $Y\in\hat K_M$
\begin{gather*}
\sum_{V\lraomega Y}T_V^Y=\omega_Y\circ(\on{id}_Y\otimes\,\I^{-1}).
\end{gather*}
\end{remark}

In the following we will describe the embeddings $\iota_Y^V$ from Definition \ref{def:omega} in more detail.

\begin{lemma}\label{la:lambdaVY}
Let $Y,\, V\in\hat K_M$ with $V\lra Y$. Then the operator
\begin{gather*}
\Phi:V\to Y\otimes\mf p^*,\quad\Phi(f)\coloneqq\sum_{j=1}^{\dim\mf p}\on{pr}_Y(\omega(X_j)f)\otimes\I(\tilde X_j)
\end{gather*}
is independent of the basis and $K$-equivariant. Moreover, the map
\begin{gather*}
V\to V,\quad f\mapsto\sum_{j=1}^{\dim\mf p}\on{pr}_V(\omega(\tilde X_j)\on{pr}_Y(\omega(X_j)f))
\end{gather*}
is a multiple of the identity. We denote the corresponding scalar by $\lambda(V,Y)$.
\end{lemma}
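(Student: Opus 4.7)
The plan is to handle the three assertions in order, with the final scalar claim reducing to Schur's lemma once enough $K$-equivariances have been collected.

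First, for basis independence of $\Phi$, I would identify $Y\otimes\mf p^*$ with $\Hom(\mf p,Y)$ in the standard way ($y\otimes\lambda\leftrightarrow(X\mapsto\lambda(X)y)$). Under this identification $\Phi(f)$ corresponds to the linear map $X\mapsto\on{pr}_Y(\omega(X)f)$, whose description makes no reference to a basis. Equivalently, since $\langle\tilde X_j,X_k\rangle=\delta_{jk}$, the element $\sum_jX_j\otimes\I(\tilde X_j)\in\mf p\otimes\mf p^*$ is nothing but the canonical tensor representing $\on{id}_\mf p\in\on{End}(\mf p)\cong\mf p\otimes\mf p^*$, and is therefore basis-free.

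Next, for $K$-equivariance of $\Phi$, the key point is that $\omega:\mf p\to C^\infty(K/M)$ intertwines the adjoint action on $\mf p$ with the left regular action $L$, as can be read off from
\begin{gather*}
\omega(\on{Ad}(k)X)(k'M)=\langle\on{Ad}((k')^{-1}k)X,H\rangle=\omega(X)(k^{-1}k'M)=(L(k)\omega(X))(k'M).
\end{gather*}
Combined with the $K$-equivariance of pointwise multiplication in $C^\infty(K/M)$ and of $\on{pr}_Y$, together with the $K$-invariance of $\sum_jX_j\otimes\I(\tilde X_j)\in(\mf p\otimes\mf p^*)^K$ (this is the canonical tensor from the previous paragraph, and $K$ acts on $\mf p$ through $\on{Ad}$), one reads off that $\Phi(L(k)f)=k.\Phi(f)$ for all $k\in K$, $f\in V$.

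For the scalar claim, denote the map in question by $S:V\to V$. The point is that $S$ factors as
\begin{gather*}
S=\on{pr}_V\circ\,\omega_Y\circ(\on{pr}_Y\otimes\on{id}_{\mf p})\circ M,
\end{gather*}
where $M:V\to C^\infty(K/M)\otimes\mf p$ sends $f\mapsto\sum_j\omega(X_j)f\otimes\tilde X_j$; equivalently, up to the identification $\I$, the composition agrees with $T_V^Y\circ\Phi$ in the $M$-spherical case of Definition \ref{def:omega}. Each of the outer factors is manifestly $K$-equivariant by construction, and $M$ is $K$-equivariant by the argument of the previous paragraph (using the $K$-invariance of $\sum_jX_j\otimes\tilde X_j\in\mf p\otimes\mf p$). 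Hence $S$ is a $K$-intertwiner of the irreducible $K$-module $V$, and Schur's lemma forces $S=\lambda(V,Y)\on{id}_V$ for a uniquely determined scalar. No step is a serious obstacle; the only point that demands care is the bookkeeping around the dual basis and the identification $\I:\mf p\to\mf p^*$, which must be tracked consistently to ensure that the various tensors invoked are indeed the canonical $K$-invariant ones.
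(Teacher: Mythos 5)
Your proposal is correct and follows essentially the same route as the paper: identify $Y\otimes\mf p^*$ with $\Hom{}{\mf p}{Y}$ so that $\Phi(f)$ becomes the basis-free map $X\mapsto\on{pr}_Y(\omega(X)f)$, deduce $K$-equivariance from the intertwining property of $\omega$ (together with that of $\on{pr}_Y$ and the invariance of the canonical tensor), and then recognize the second map as $T_V^Y\circ\Phi$, a self-intertwiner of the irreducible $V$, so Schur's lemma gives the scalar. No gaps.
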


\begin{proof}
Let $k\in K$ and consider $Y\otimes\mf p^*$ as $\Hom{}{\mf p}{Y}$ by
\begin{gather*}
Y\otimes\mf p^*\cong\Hom{}{\mf p}{Y},\quad f\otimes\lambda\mapsto(X\mapsto\lambda(X)f).
\end{gather*}
Then, for $f\in V$,
\begin{gather*}
\Phi(k.f)(X_i)=\sum_{j=1}^{\dim\mf p}\on{pr}_Y(\omega(X_j)(k.f))\I(\tilde X_j)(X_i)=\on{pr}_Y(\omega(X_i)(k.f)).
\end{gather*}
By linearity we obtain $\Phi(k.f)(X)=\on{pr}_Y(\omega(X)(k.f))$ for each $X\in\mf p$. Note that this expression and thus $\Phi$ is independent of the basis. On the other hand, note that
\begin{gather*}
k.\Phi(f)=\sum_{j=1}^{\dim\mf p}k.\on{pr}_Y(\omega(X_j)f)\otimes\on{Ad}^*(k)\I(\tilde X_j)
\end{gather*}
and thus
\begin{gather*}
(k.\Phi(f))(\on{Ad}(k)X_i)=k.\on{pr}_Y(\omega(X_i)f)=\on{pr}_Y((k.\omega(X_i))(k.f))=\on{pr}_Y(\omega(\on{Ad}(k)X_i)(k.f)).
\end{gather*}
Since $\on{Ad}(k)X_1,\ldots,\on{Ad}(k)X_{\dim\mf p}$ is a basis of $\mf p$ we have $(k.\Phi(f))(X)=\on{pr}_Y(\omega(X)(k.f))$ for each $X\in\mf p$. This proves $\Phi(k.f)=k.\Phi(f)$ and thus the first part of the lemma. From Definition \ref{def:omega} we recall that
\begin{gather*}
\Psi\coloneqq\on{pr}_V\circ\,\omega_Y\circ(\on{id}_Y\otimes\I^{-1})\colon Y\otimes\mf p^*\to V
\end{gather*}
is $K$-equivariant. The map in the lemma is given by the composition $\Psi\circ\Phi$. It is scalar by Schur's lemma. 
\end{proof}

The scalar $\lambda(V,Y)$ has the following properties.
\begin{proposition}[cf. {\cite[Lemma 4.4, Theorem 4.6]{BransonOlafsson}}]\label{prop:lambdaVY_BO}
Let $V,\, Y\in\hat K_M$ such that $V\lra Y$. Then
\begin{enumerate}
\item\label{it:lambdaVY_1} $\lambda(V,Y)\geq0$,
\item\label{it:lambdaVY_2} $V\lraomega Y\Leftrightarrow\lambda(V,Y)\neq0\Leftrightarrow\lambda(Y,V)\neq0$,
\item\label{it:lambdaVY_3} $\sum_{W\lraomega Y}\lambda(Y,W)=1$,
\item\label{it:lambdaVY_4} $\lambda(V,Y)\dim V=\lambda(Y,V)\dim Y$.
\end{enumerate}
\end{proposition}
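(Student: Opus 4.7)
The plan is to fix an orthonormal basis $X_1,\ldots,X_{\dim\mf p}$ of $\mf p_0$ — possible since $\langle\cdot,\cdot\rangle$ is positive definite on $\mf p_0$, so $\tilde X_j = X_j$ — and rewrite the defining equation of $\lambda(V,Y)$ from Lemma~\ref{la:lambdaVY} as
\[
\sum_{j=1}^{\dim\mf p} \on{pr}_V\bigl(\omega(X_j)\,\on{pr}_Y(\omega(X_j) f)\bigr) = \lambda(V,Y)\,f, \qquad f \in V,
\]
where multiplication by the real-valued function $\omega(X_j)\in C^\infty(K/M)$ is a self-adjoint operator on $L^2(K/M)$ and $\on{pr}_V,\on{pr}_Y$ are orthogonal projections.

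For (i) and one direction of (ii), I would pair the identity above with $f\in V$; the self-adjointness collapses the right-hand side to
\[
\lambda(V,Y)\|f\|^2 = \sum_j \|\on{pr}_Y(\omega(X_j) f)\|^2 \geq 0,
\]
which is (i). Vanishing forces $\on{pr}_Y(\omega(X_j) f)=0$ for every $j$ and $f\in V$, meaning that $Y$ has no component in $\omega_V(V\otimes\mf p)$, i.e.\ $Y\notleftrightomega V$; combined with the symmetry $V\lraomega Y\Leftrightarrow Y\lraomega V$ from \cite[Lemma 4.4\,(c)]{BransonOlafsson} this gives the first equivalence in (ii). The second equivalence in (ii) then follows formally from (iv).

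For (iii) I would sum $\lambda(Y,W)\on{id}_Y$ over $W\in\hat K_M$; only $W\lraomega Y$ contributes by (ii). Using $\sum_{W\in\hat K_M}\on{pr}_W=\on{id}_{L^2(K/M)}$ one obtains
\[
\sum_{W\lraomega Y}\lambda(Y,W)\,f = \on{pr}_Y\Bigl(\sum_j \omega(X_j)^2\,f\Bigr),\qquad f\in Y.
\]
The crucial observation is that $\sum_j \omega(X_j)^2$ is the constant function $1$: for every $kM\in K/M$,
\[
\sum_j \omega(X_j)^2(kM) = \sum_j \langle\on{Ad}(k^{-1})X_j, H\rangle^2 = \|H\|^2 = 1,
\]
since $\on{Ad}(k^{-1})$ is an isometry of $\mf p_0$ and the normalization of $\langle\cdot,\cdot\rangle$ in Notation~\ref{not:inn_prod_dual_basis} forces $\langle H,H\rangle = 1$.

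Finally, for (iv) I would compute the trace of the defining operator on $V$ in two ways. Extending by zero outside $V$ and using the cyclic property of the trace — legal because each summand becomes trace class after composition with the finite-rank projections $\on{pr}_V$ and $\on{pr}_Y$ — one gets
\[
\lambda(V,Y)\dim V = \sum_j \on{tr}\bigl(\on{pr}_V\,\omega(X_j)\,\on{pr}_Y\,\omega(X_j)\bigr) = \sum_j \on{tr}\bigl(\on{pr}_Y\,\omega(X_j)\,\on{pr}_V\,\omega(X_j)\bigr) = \lambda(Y,V)\dim Y.
\]
The main technical nuisance throughout is bookkeeping with the identification $\I:\mf p\to\mf p^*$ and the various projections in order to justify each cyclic trace manipulation; once the orthonormal basis of $\mf p_0$ and the normalization of $H$ are fixed, all four parts reduce to routine Hilbert-space calculations on $L^2(K/M)$.
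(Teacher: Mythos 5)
Your proposal is correct, and it is genuinely self-contained where the paper is not: the paper offers no proof of this proposition at all, deferring entirely to \cite[Lemma 4.4, Theorem 4.6]{BransonOlafsson}, so your argument amounts to an independent re-derivation of those facts by elementary Hilbert-space computations on $L^2(K/M)$. All four steps check out. The normalization $\langle H,H\rangle=1$ and the $\Ad(K)$-invariance of $\langle\cdot,\cdot\rangle$ on $\mf p_0$ do give $\sum_j\omega(X_j)^2\equiv 1$, which is the engine behind (iii); the positivity identity $\lambda(V,Y)\|f\|^2=\sum_j\|\on{pr}_Y(\omega(X_j)f)\|^2$ gives (i) and the first equivalence in (ii) (for the converse direction, note that $V\lraomega Y$ forces some $\on{pr}_Y(\omega(X_{j_0})f_0)\neq 0$, hence $\lambda(V,Y)>0$); and the trace identity for (iv) is legitimate since each product contains a finite-rank projection. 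Two small points deserve explicit mention if you write this up: first, in (iii) the reduction of $\sum_{W\in\hat K_M}\on{pr}_W$ to $\sum_{W\lraomega Y}\on{pr}_W$ on the image of $\omega_Y$ uses that $L^2(K/M)$ is multiplicity free (Proposition \ref{prop:multone}) together with Schur's lemma, so that $\on{pr}_W\circ\omega_Y=0$ whenever $W\notleftrightomega Y$ --- citing your part (ii) alone is slightly circular in phrasing, since (ii) only speaks of the scalar $\lambda(Y,W)$ for $W\lra Y$; second, the basis-independence asserted in Lemma \ref{la:lambdaVY} is what licenses working with a real orthonormal basis of $\mf p_0$, which you need so that each $\omega(X_j)$ is real-valued and multiplication by it is self-adjoint. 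With those caveats recorded, the argument is complete.
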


\begin{proposition}\label{prop:iota_gen}
Let $Y,\, V\in\hat K_M$ with $V\lraomega Y$. Then we have for each $f\in V$
\begin{gather}\label{eq:iota_gen}
\iota_Y^V(f)=\frac{1}{\lambda(V,Y)}\sum_{j=1}^{\dim\mf p}\on{pr}_Y(\omega(X_j)f)\otimes\I(\tilde X_j).
\end{gather}
\end{proposition}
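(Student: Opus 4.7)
The plan is to verify that the right-hand side of \eqref{eq:iota_gen}, call it $\widetilde\iota\colon V\to Y\otimes\mf p^*$, satisfies the two characterizing properties of $\iota_Y^V$ from Definition \ref{def:omega}, namely $T_V^Y\circ\widetilde\iota=\on{id}_V$ and (automatically) $T_V^Y\circ\widetilde\iota'=0$ for any $K$-equivariant embedding $\widetilde\iota'\colon W\to Y\otimes\mf p^*$ with $W\lra Y$, $W\not\cong V$.

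First, since $V\lraomega Y$, Proposition \ref{prop:lambdaVY_BO}\,\ref{it:lambdaVY_2} gives $\lambda(V,Y)\neq 0$, so the scalar factor $1/\lambda(V,Y)$ is defined. Writing $\Phi(f)\coloneqq\sum_{j=1}^{\dim\mf p}\on{pr}_Y(\omega(X_j)f)\otimes\I(\tilde X_j)$ as in Lemma \ref{la:lambdaVY}, that lemma tells us $\Phi$ is independent of the chosen basis and $K$-equivariant, and hence so is $\widetilde\iota=\frac{1}{\lambda(V,Y)}\Phi$.

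Next, I would compute $T_V^Y\circ\widetilde\iota$ directly. Unpacking the definition of $T_V^Y$ from Definition \ref{def:omega}, we have $T_V^Y(\varphi\otimes\I(\tilde X_j))=\on{pr}_V(\omega(\tilde X_j)\varphi)$, so
\begin{equation*}
T_V^Y(\Phi(f))=\sum_{j=1}^{\dim\mf p}\on{pr}_V\bigl(\omega(\tilde X_j)\on{pr}_Y(\omega(X_j)f)\bigr)=\lambda(V,Y)\,f,
\end{equation*}
where the last equality is the very definition of the scalar $\lambda(V,Y)$ from Lemma \ref{la:lambdaVY}. Dividing by $\lambda(V,Y)$ yields $T_V^Y\circ\widetilde\iota=\on{id}_V$.

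Finally, by the multiplicity-freeness of $Y\otimes\mf p^*$ (Proposition \ref{prop:mult_one}, as invoked in Definition \ref{def:omega}), the $K$-equivariant embedding of the isotype $V$ into $Y\otimes\mf p^*$ is unique up to a scalar, so $\widetilde\iota$ and $\iota_Y^V$ differ by a scalar; the identity $T_V^Y\circ\widetilde\iota=\on{id}_V=T_V^Y\circ\iota_Y^V$ forces this scalar to be $1$, giving $\widetilde\iota=\iota_Y^V$ and proving \eqref{eq:iota_gen}. There is no real obstacle; the only point requiring care is that the defining conditions $T_V^Y\circ\iota_Y^W=0$ for $W\not\cong V$ are automatic in view of Schur's lemma and multiplicity-freeness, so matching $T_V^Y\circ\widetilde\iota=\on{id}_V$ is the only condition one needs to check.
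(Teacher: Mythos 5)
Your proposal is correct and follows essentially the same route as the paper: check that the candidate map is $K$-equivariant (Lemma \ref{la:lambdaVY}), note $\lambda(V,Y)\neq 0$ via Proposition \ref{prop:lambdaVY_BO}, observe that composing with $T_V^Y$ gives $\lambda(V,Y)\cdot\mathrm{id}_V$ by the very definition of $\lambda(V,Y)$, and conclude by multiplicity-freeness (Proposition \ref{prop:mult_one}) together with Schur's lemma for the vanishing conditions. Your write-up is, if anything, slightly more explicit than the paper's about why $T_V^Y\circ\widetilde\iota=\mathrm{id}_V$ is the only condition that needs checking.
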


\begin{proof}
By Lemma \ref{la:lambdaVY} we know that the rand hand side of \eqref{eq:iota_gen} is $K$-equivariant as a function in $f$. The scalar $\lambda(V,Y)$ is non-zero by Proposition \ref{prop:lambdaVY_BO}. For each $W\in\hat K$ with $W\lra Y$ and $V\not\cong W$, the map $T_W^Y\circ\iota_Y^V$ is an intertwiner between $V$ and $W$ and thus zero by Schur's lemma. The normalization by $\lambda(V,Y)$ ensures that $T_V^Y\circ\iota_Y^V$ is the identity on $V$. This finishes the proof since we have multiplicity one by Proposition \ref{prop:mult_one}.
\end{proof}

The following lemma gives a method to calculate the scalars $\lambda(V,Y)$ (see Appendix \ref{app:comp_scalars}).
\begin{lemma}\label{la:lambda_explicit}
The scalar $\lambda(V,Y)$ from Lemma \ref{la:lambdaVY} is given by
\begin{gather*}
\lambda(V,Y)=\on{pr}_Y(\omega(H)\phi_V)(eM).
\end{gather*}
\end{lemma}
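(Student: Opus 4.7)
The plan is to eliminate the outer projection $\on{pr}_V$ in the definition of $\lambda(V,Y)$ by showing it is redundant. Introduce the operator
\begin{gather*}
S\colon L^2(K/M)\to L^2(K/M),\qquad Sf\coloneqq\sum_{j=1}^{\dim\mf p}\omega(\tilde X_j)\on{pr}_Y(\omega(X_j)f),
\end{gather*}
so that the operator $T$ from the proof of Lemma \ref{la:lambdaVY} is $T=\on{pr}_V\circ\restr{S}{V}$. The goal is to prove $\restr{S}{V}=\lambda(V,Y)\on{id}_V$ and then to evaluate $S\phi_V$ at $eM$.

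First I would check that $S$ is both basis-independent and $K$-equivariant. Basis independence is immediate: passing to a new basis $X_j'=\sum_ia_{ji}X_i$ of $\mf p$ forces the dual basis to transform by the inverse transpose, and the two copies cancel in the sum exactly as in the proof of Lemma \ref{la:lambdaVY}. For $K$-equivariance, conjugation by $k\in K$ turns multiplication by $\omega(X)$ into multiplication by $\omega(\on{Ad}(k)X)$ (since $\omega$ is $K$-equivariant), while leaving $\on{pr}_Y$ unchanged; $K$-invariance of $\langle\cdot,\cdot\rangle$ ensures that $\{\on{Ad}(k)X_j\}$ and $\{\on{Ad}(k)\tilde X_j\}$ remain dual bases, so basis independence gives $L_k\,S\,L_k^{-1}=S$. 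Proposition \ref{prop:multone} yields the multiplicity-free decomposition $L^2(K/M)\cong\widehat\bigoplus_{W\in\hat K_M}W$, and Schur's lemma then forces $S$ to act on $V$ as a scalar $\mu$. In particular $S(V)\subseteq V$, so the outer projection $\on{pr}_V$ in the definition of $T$ is redundant, whence $\mu=\lambda(V,Y)$.

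Second, I would evaluate $S\phi_V$ at $eM$ using $\omega(X)(eM)=\langle X,H\rangle$:
\begin{gather*}
S\phi_V(eM)=\sum_j\langle\tilde X_j,H\rangle\,\on{pr}_Y(\omega(X_j)\phi_V)(eM)=\on{pr}_Y\bigl(\omega\bigl(\textstyle\sum_j\langle\tilde X_j,H\rangle X_j\bigr)\phi_V\bigr)(eM),
\end{gather*}
using the linearity of the map $X\mapsto\on{pr}_Y(\omega(X)\phi_V)$. The dual basis identity $H=\sum_j\langle\tilde X_j,H\rangle X_j$ then yields $S\phi_V(eM)=\on{pr}_Y(\omega(H)\phi_V)(eM)$. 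Combining with $S\phi_V(eM)=\lambda(V,Y)\phi_V(eM)=\lambda(V,Y)$, where $\phi_V(eM)=1$ by Proposition \ref{prop:helg_general}\,\ref{prop:helg_general_i}, finishes the proof.

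The main conceptual point is recognizing that once one enlarges the target space from $V$ to all of $L^2(K/M)$ the outer $\on{pr}_V$ may be dropped; everything else is driven by the $K$-equivariance of $S$ together with multiplicity one, and the final evaluation at $eM$ via the dual basis identity is routine.
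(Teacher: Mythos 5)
Your proof is correct, and it takes a mildly but genuinely different route from the paper's. The paper argues through the embedding $\iota_Y^V$: it expands $\iota_Y^V(f)=\sum_j f_j\otimes\I(\tilde X_j)$ in a basis adapted to $H$ (i.e.\@ $X_1=H$, $X_j\in\mf k\oplus\mf n$ for $j\geq2$), uses $\sum_{W\lraomega Y}T_W^Y\circ\iota_Y^V=\on{id}_V$ together with $\omega(\tilde X_j)(eM)=0$ for $j\geq2$ to obtain $f(eM)=\iota_Y^V(f)(H)(eM)$, and then substitutes the explicit formula for $\iota_Y^V(\phi_V)$ from Proposition \ref{prop:iota_gen}. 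You bypass $\iota_Y^V$ entirely: you re-run the equivariance/Schur/multiplicity-one argument for the un-projected operator $S$, conclude that the outer $\on{pr}_V$ in Lemma \ref{la:lambdaVY} is redundant, and evaluate $S\phi_V$ at $eM$ via the general dual-basis identity $H=\sum_j\langle\tilde X_j,H\rangle X_j$ instead of an adapted basis. Both arguments rest on the same two ingredients ($K$-equivariance plus multiplicity one, and the fact that evaluating $\omega$ at $eM$ isolates the $H$-direction), so neither buys a shorter proof; but yours is more self-contained and, notably, does not pass through Proposition \ref{prop:iota_gen}, whose formula presupposes $V\lraomega Y$, i.e.\@ $\lambda(V,Y)\neq0$. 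Your version therefore also handles the degenerate case $V\lra Y$ but $V\notleftrightomega Y$ without any division by $\lambda(V,Y)$, directly yielding $\on{pr}_Y(\omega(H)\phi_V)(eM)=0$ there, which is exactly how the lemma gets used later (via Proposition \ref{prop:lambdaVY_BO}) to detect when $V\lraomega Y$.
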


\begin{proof}
If $H=X_1,\ldots,X_{\dim\mf p}$ is as in Lemma \ref{la:gen_diffops} and $H=\tilde X_1,\ldots,\tilde X_{\dim p}$ its dual basis (see Notation \ref{not:inn_prod_dual_basis}) we may write, for each $f\in V$,
\begin{gather}\label{eq:proof_base_decomp}
\iota_Y^V(f)=\sum_{j=1}^{\dim\mf p}f_j\otimes\I(\tilde X_j)\in Y\otimes\mf p^*
\end{gather}
for some $f_1,\ldots,f_{\dim\mf p}\in Y$.
In particular, we have $\iota_Y^V(f)(H)(eM)=f_1(eM)$ by considering $\iota_Y^V(f)$ as an element of $\Hom{}{\mf p}{Y}$. By Definition \ref{def:omega} and Remark \ref{rem:sum_of_projections} we infer
\begin{gather*}
f=\sum_{W\lraomega Y}T_W^Y(\iota_Y^V(f))=\omega_Y((\on{id}_Y\otimes\,\I^{-1})(\iota_Y^V(f)))=\sum_{j=1}^{\dim\mf p}\omega_Y(f_j\otimes\tilde X_j)=\sum_{j=1}^{\dim\mf p}\omega(\tilde X_j)f_j.
\end{gather*}
Note that, since $X_j\in\mf k\oplus\mf n$ for $j=2,\ldots,\dim\mf p$ and $X_1\in\mf a$, the orthogonality of $\mf a$ and $\mf k\oplus\mf n$ with respect to $\langle\cdot,\cdot\rangle$ implies 
$\omega(\tilde X_j)(eM)=\langle\tilde X_j,H\rangle=0$ for each $j=2,\ldots,\dim\mf p$ and therefore
\begin{gather*}
f(eM)=\sum_{j=1}^{\dim\mf p}\omega(\tilde X_j)(eM)f_j(eM)=f_1(eM)=\iota_Y^V(f)(H)(eM).
\end{gather*}
In particular, we have for $f=\phi_V$
\begin{gather*}
\iota_Y^V(\phi_V)(H)(eM)=\phi_V(eM)=1.
\end{gather*}
On the other hand, Proposition \ref{prop:iota_gen} shows that 
\begin{gather*}
\iota_Y^V(\phi_V)(H)(eM)=\frac{1}{\lambda(V,Y)}\on{pr}_Y(\omega(H)\phi_V)(eM).\qedhere
\end{gather*}
\end{proof}

Note that, in the situation of Lemma \ref{la:gen_diffops}, we have for $V,Y\in\hat K_M$ with $V\lraomega Y$ that 
\begin{gather}\label{eq:T_decomp}
T_Y^V(p_{V,\mu})(e)=(\mu+\rhoa)(H)\lambda(V,Y)+\nu(V,Y)\text{ with }\nu(V,Y)\coloneqq T_Y^V(p_{V,-\rhoa})(e).
\end{gather}
The following lemma allows us to compute the scalars $T_Y^V(p_{V,\mu})(e)$ from Lemma \ref{la:gen_diffops} explicitly in all the rank one cases (see Appendix \ref{app:comp_scalars}).

\begin{lemma}\label{la:nu_comp_series}
Let $V,\, Y\in\hat K_M$ such that $V\lraomega Y$. If $\{0\}\neq U\leq H^\mu$ is a closed $G$-invariant subspace such that $\on{mult}_K(Y,U)\neq 0$ and $\on{mult}_K(V,U)=0$ we have $T_Y^V(p_{V,\mu})(e)=0$ and thus 
\begin{gather*}
\nu(V,Y)=-(\mu+\rhoa)(H)\lambda(V,Y).
\end{gather*}
Moreover, for $V\in\hat K_M$ with $V\lraomega\C$ we have
\begin{gather*}
T_{\C}^V(p_{V,\mu})(e)=0\Leftrightarrow\mu(H)=\rhoa(H).
\end{gather*}
\end{lemma}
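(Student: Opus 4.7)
The plan is to combine the generalized-gradient identity of Lemma~\ref{la:gen_diffops}\,\ref{it:gen_diffops2} (with the roles of source and target $K$-types interchanged) with the kernel characterization of Proposition~\ref{prop:PoissonInjective}, and then to exploit the constant-function subrepresentation available at $\mu = \rhoa$ to pin down the $\mu$-independent constant $\nu(V,\C)$. Swapping $V$ and $Y$ in Lemma~\ref{la:gen_diffops}\,\ref{it:gen_diffops2} (both are $M$-spherical by assumption) yields the identity
\begin{equation*}
\mathrm{d}_Y^V \circ P_\mu^V = T_Y^V(p_{V,\mu})(e)\cdot P_\mu^Y
\end{equation*}
on $\pstd{\mu}$. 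For the first statement I would argue as follows: by Proposition~\ref{prop:PoissonInjective}, the hypothesis $\on{mult}_K(V,U)=0$ forces $U^{-\infty} \subseteq \ker P_\mu^V$; on the other hand, picking a non-zero $K$-finite vector $v$ lying in the $Y$-isotypic component of $U$ (non-empty thanks to $\on{mult}_K(Y,U)\neq 0$) and combining Equation~\eqref{eq:Poisson_proj} with the multiplicity-one statement of Proposition~\ref{prop:multone} gives
\begin{equation*}
P_\mu^Y(v)(e) = \frac{1}{\dim Y}\on{pr}_Y(v) = \frac{v}{\dim Y} \neq 0.
\end{equation*}
Applying $\mathrm{d}_Y^V$ to $P_\mu^V(v)=0$ therefore forces $T_Y^V(p_{V,\mu})(e)=0$, and the expression for $\nu(V,Y)$ is then immediate from the affine decomposition \eqref{eq:T_decomp}.

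For the second statement, the direction ``$\Leftarrow$'' reduces to the first part. When $\mu = \rhoa$, the induced-picture transformation rule $f(gman) = a^{\mu-\rhoa}f(g)$ shows that the constant functions form a one-dimensional trivial $G$-subrepresentation $U = \C \cdot 1 \leq \pst{\rhoa}$. Since $\mf p$ is $K$-irreducible and non-trivial in all rank-one cases, it contains no trivial $K$-type, so $V \lraomega \C$ forces $V \neq \C$; hence $\on{mult}_K(\C,U) = 1$ while $\on{mult}_K(V,U)=0$, and the first part applied with $Y=\C$ gives $T_\C^V(p_{V,\rhoa})(e) = 0$. Plugging this into \eqref{eq:T_decomp} pins down $\nu(V,\C) = -2\rhoa(H)\lambda(V,\C)$, and substituting back into \eqref{eq:T_decomp} produces
\begin{equation*}
T_\C^V(p_{V,\mu})(e) = (\mu-\rhoa)(H)\lambda(V,\C).
\end{equation*}
Since $V \lraomega \C$ implies $\lambda(V,\C)\neq 0$ by Proposition~\ref{prop:lambdaVY_BO}\,\ref{it:lambdaVY_2}, the direction ``$\Rightarrow$'' and the full equivalence drop out at once.

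Neither step involves serious computation. The one conceptual point at which a careful reader will pause is the observation that \eqref{eq:T_decomp} is affine in $\mu$, so that a single special value of $\mu$ at which the first-part hypothesis holds is enough to determine the $\mu$-independent constant $\nu(V,Y)$ completely; identifying $\mu = \rhoa$ together with the canonical constant-function subrepresentation as such a value is the only genuinely delicate ingredient.
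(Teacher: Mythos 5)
Your proof is correct and follows essentially the same route as the paper's: evaluate the swapped gradient identity $\mathrm{d}_Y^V\circ P_\mu^V=T_Y^V(p_{V,\mu})(e)P_\mu^Y$ on a vector in the $Y$-isotypic component of $U$, which lies in $\ker P_\mu^V$ by Proposition~\ref{prop:PoissonInjective} but has non-zero $P_\mu^Y$-image at $e$ by \eqref{eq:Poisson_proj}, and then use the affineness of $\mu(H)\mapsto T_\C^V(p_{V,\mu})(e)$ together with $\lambda(V,\C)\neq 0$ for the equivalence. One cosmetic remark: $\mf p$ is not $K$-irreducible in the Hermitian cases $\GSO{2}$ and $\GSU{n}$, but all your argument needs is $\mf p^K=0$, which does hold in every rank one case, so $V\lraomega\C$ still forces $V\neq\C$.
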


\begin{proof}
Let $0\neq f\in Y\leq U$. Then, by Equation \eqref{eq:Poisson_proj}, we have $P_\mu^Y(f)(e)=\frac{1}{\dim Y}\on{pr}_V(f)\neq 0$. On the other hand Proposition \ref{prop:PoissonInjective} implies that $P_\mu^V(f)=0$. Therefore,
\begin{gather*}
0=\mathrm{d}_Y^V(P_\mu^V(f))(e)=T_Y^V(p_{V,\mu})(e)P_\mu^Y(f)(e)
\end{gather*}
implies that $T_Y^V(p_{V,\mu})(e)=0$. For $\mu(H)=\rhoa(H)$ we have that the constant functions form an invariant subspace, proving one direction. For the equivalence note that for each $V\in\hat K_M$ with $V\lraomega\C$, $T_{\C}^V(p_{V,\mu})(e)=\nu(V,\C)+(\mu+\rhoa)(H)\lambda(V,\C)$ is an affine map in $\mu(H)$ with $\lambda(V,\C)\neq0$ (by Proposition \ref{prop:lambdaVY_BO}.\ref{it:lambdaVY_2}).
\end{proof}

We have the following multiplicity one result.
\begin{proposition}\label{prop:mult_one}
Let $Y\in\hat K$. Then $Y\otimes\mf p^*$ decomposes multiplicity-freely.
\end{proposition}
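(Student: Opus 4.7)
My plan is to handle the four families of rank-one simple Lie groups case by case, exploiting the fact that the $K$-module structure of $\mf p$ is known explicitly in each case. A first reduction: since $\langle\cdot,\cdot\rangle$ from Notation \ref{not:inn_prod_dual_basis} is $K$-invariant, the map $\I\colon\mf p\to\mf p^*$ is a $K$-equivariant isomorphism, so it suffices to prove that $Y\otimes\mf p$ decomposes multiplicity-freely for every $Y\in\hat K$.

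For $G=\GSO{n}$ one has $K=\SO{n}$ and $\mf p\cong\C^n$ is the complexified defining representation; the multiplicity-freeness of $Y\otimes\C^n$ then follows from the Pieri-type branching rule in types $B$ and $D$ (equivalently, from Frobenius reciprocity together with the classical $\SO{n+1}\downarrow\SO{n}$ branching rule). For $G=\GSU{n}$ one has $K=\mathrm{S}(\mathrm{U}(n)\times \mathrm{U}(1))$ and $\mf p=\mf p^+\oplus\mf p^-$, where $\mf p^+$ and $\mf p^-$ are mutually dual copies of the defining $\mathrm{U}(n)$-representation carrying opposite (in particular distinct) $\mathrm{U}(1)$-weights. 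Tensoring with either summand gives a multiplicity-free sum by the classical Pieri rule, and the $\mathrm{U}(1)$-character separates the two contributions, so $Y\otimes\mf p$ is multiplicity-free.

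For $G=\GSp{n}$ one has $K=\Sp{n}\times\Sp{1}$ and $\mf p\cong V_{\Sp{n}}\boxtimes V_{\Sp{1}}$ is the external tensor product of the two defining representations; the type-$C$ Pieri rule makes $Z\otimes V_{\Sp{n}}$ multiplicity-free for every irreducible $\Sp{n}$-module $Z$, while the $\SU{2}$ Clebsch--Gordan rule handles the $\Sp{1}$-factor. Finally, for $G=\GF$ one has $K=\mathrm{Spin}(9)$ and $\mf p$ is the $16$-dimensional spin representation $\Delta_9$; here multiplicity-freeness of $Y\otimes\Delta_9$ must be verified by a direct highest-weight computation, which is precisely the sort of case-by-case analysis collected in Appendix \ref{app:comp_scalars}.

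The main obstacle is the $F_4$ case, where no generic Pieri-type decomposition is available and one must rely on the explicit highest-weight formulas for $\mathrm{Spin}(9)$; the other three families follow cleanly from classical branching or Pieri rules. In each case the resulting decomposition is exactly the one tabulated in Appendix \ref{app:comp_scalars}, which is used throughout the paper to define the embeddings $\iota_Y^V$ and the scalars $\lambda(V,Y)$ unambiguously.
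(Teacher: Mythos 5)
Your case-by-case strategy is correct in outline but genuinely different from the paper's argument, which is uniform: by \cite[Ch.\@ IX.8, Problem 15]{knapplie} it suffices to show that every weight of $\mf p$ under a maximal torus $\mf t_0\leq\mf k_0$ has multiplicity one, and the paper checks this once and for all — in the equal rank case by intersecting the one-dimensional, $\theta$-stable root spaces of $\mf g$ with $\mf p$, and in the case $\on{rk}\mf k_0<\on{rk}\mf g_0$ (which reduces to $\GSO{2p+1}$) by showing that two roots with the same restriction to $\mf t$ differ by $\theta$. Your route instead identifies $\mf p$ explicitly in each family and invokes Pieri-type rules; this works for $\GSO{n}$, $\GSU{n}$ and $\GSp{n}$ (the central-character separation of $\mf p^\pm$ and the external tensor product argument for $\Sp{n}\times\Sp{1}$ are both fine), and it has the merit of exhibiting the constituents explicitly, something the paper only does later in Propositions \ref{prop:tensor_decomp_odd}--\ref{prop:tensor_decomp_eq_rk} and Appendix \ref{app:comp_scalars}.

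Two points need repair. First, the $\GF$ case is asserted, not proved: deferring to ``a direct highest-weight computation'' in the appendix does not close the argument, since Remark \ref{rem:dim_F} there rests on Proposition \ref{prop:tensor_decomp_eq_rk}, whose multiplicity bound is itself obtained from the weight-multiplicity criterion you have not stated. The fix is immediate: the $16$ weights of the spin representation $\Delta_9$ of $\mathrm{Spin}(9)$ are $\frac{1}{2}(\pm e_1\pm e_2\pm e_3\pm e_4)$, each of multiplicity one, so \cite[Ch.\@ IX.8, Problem 15]{knapplie} applies directly — and once you invoke that criterion you may as well observe that it also subsumes your other three cases, which is exactly the paper's proof. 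Second, the parenthetical deriving the type $B/D$ Pieri rule ``from Frobenius reciprocity together with the classical $\SO{n+1}\downarrow\SO{n}$ branching rule'' is not correct as stated: Frobenius reciprocity relates induction and restriction, not tensoring with the vector representation; either drop the parenthesis or justify the Pieri rule by the same weight-multiplicity criterion.
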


\begin{proof}
By \cite[Ch. IX.8, Problem 15]{knapplie} it suffices to prove that all weights of $\mf p\cong_K\mf p^*$ have multiplicity one, i.e.\@ if $\mf t_0\leq\mf k_0$ is a maximal torus we have that $\mf t$ acts multiplicity-freely on $\mf p$. 

Let us first assume that the ranks $\on{rk}\mf k_0$ and $\on{rk}\mf g_0$ coincide. Then $\mf t\leq\mf k\leq\mf g$ is a Cartan subalgebra of $\mf g$ and we have the root-space decomposition
\begin{gather*}
\mf g=\mf t\oplus\bigoplus_{\alpha\in\Delta(\mf g,\mf t)}\mf g_{\alpha},
\end{gather*}
where each $\mf g_\alpha$ is one-dimensional. We note that the root spaces $\mf g_\alpha$ are invariant under the ($\C$-linear continuation of the) Cartan involution $\theta$; indeed we have for each $X\in\mf g_\alpha$
\begin{gather*}
\forall H\in\mf t\colon\quad[H,\theta X]=\theta[\theta H,X]=\theta[H,X]=\alpha(H)\theta X\qquad\Rightarrow\qquad \theta X\in\mf g_\alpha.
\end{gather*}
Therefore, writing $X=\frac{X+\theta X}{2}+\frac{X-\theta X}{2}$, we obtain $\mf g_\alpha=(\mf k\cap\mf g_\alpha)\oplus(\mf p\cap\mf g_\alpha)$ and thus
\begin{gather*}
\mf p=\bigoplus_{\alpha\in\Delta(\mf g,\mf t)}(\mf p\cap\mf g_\alpha).
\end{gather*}
Since $\dim_\C(\mf p\cap\mf g_\alpha)\in\{0,1\}$ we see that $\mf t$ acts multiplicity-freely on $\mf p$.

Let us now consider the case $\on{rk}\mf k_0<\on{rk}\mf g_0$. By \cite[Prop.\@ 6.60]{knapplie} the centralizer $\mf h_0\coloneqq Z_{\mf g_0}(\mf t_0)=\mf t_0\oplus Z_{\mf p_0}(\mf t_0)$ is a $\theta$-stable Cartan subalgebra of $\mf g_0$. Our real rank one assumption shows that 
$\mf a_0\coloneqq Z_{\mf p_0}(\mf t_0)$ is one-dimensional. For $\alpha\in\Delta$ we first note that
\begin{gather*}
X\in\mf g_\alpha\qquad\Rightarrow\qquad\theta X\in\mf g_{\theta\alpha},
\end{gather*}
where we define $(\theta\alpha)(H)\coloneqq\alpha(\theta H)$. Thus, $\mf g_\alpha+\mf g_{\theta\alpha}$ is $\theta$-stable and decomposes into a $\mf k$- and $\mf p$-part.

We claim that if $\alpha,\:\alpha'\in\Delta$ are two roots with $\restr{\alpha}{\mf t}=\restr{\alpha'}{\mf t}$, then $\alpha'=\alpha$ or $\alpha'=\theta\alpha$. If this is true we obtain the result as follows. Let $\beta\in\mf t^*$. For $\beta=0$ the weight space of $\beta$ in $\mf p$ is given by $\mf a$, which is one-dimensional. For $\beta\neq0$ the weight space of $\beta$ in $\mf p$ is given by
\begin{gather*}
\sum_{\substack{\alpha\in\Delta\\\alpha|_{\mf t}=\beta}}\pi(\mf g_\alpha+\mf g_{\theta\alpha}),
\end{gather*}
where $\pi\colon \mf g\to\mf p,\quad X\mapsto\frac{X-\theta X}{2}$ denotes the projection onto $\mf p$. Then our claim implies that there are at most two roots $\alpha,\: \theta\alpha\in\Delta$ with $\restr{\alpha}{\mf t}=\restr{\theta\alpha}{\mf t}=\beta$. Therefore, the weight space of $\beta$ in $\mf p$ is given by the one-dimensional space $\pi(\mf g_\alpha+\mf g_{\theta\alpha})$.

Let us finally prove our claim in the rank one case. By the classification of real forms it suffices to consider the groups $\GSO{n}$ with $n=2p+1$ odd (recall that we are in the case $\on{rk}\mf k_0<\on{rk}\mf g_0$). In this case all roots have the same length and this implies our claim since every root $\alpha\in\Delta$ is determined by its restrictions to $\mf t$ and $\mf a$.
\end{proof}

\begin{remark}
Note that the proof of Proposition \ref{prop:mult_one} does not use our rank one assumption if $\on{rk}\mf g=\on{rk}\mf k$. In this case we can say more.
\end{remark}

\begin{proposition}\label{prop:tensor_decomp_eq_rk}
Let $\on{rk}\mf g=\on{rk}\mf k$ and $Y_\tau\in\hat K$ with highest weight $\tau$. Denote the non-compact roots by $\Delta_n$. Then the tensor product $Y_\tau\otimes\mf p^*$ decomposes into
\begin{gather*}
Y_\tau\otimes\mf p^*\cong\bigoplus_{\beta\in\Delta_n}m(\beta) Y_{\tau,\beta},
\end{gather*}
where the multiplicities $m(\beta)$ are at most $1$ and $Y_{\tau,\beta}$ has weight $\tau+\beta$. Moreover, we have
\begin{gather*}
m(\beta)=1\qquad\quad\Longrightarrow\qquad\quad \beta\in S,
\end{gather*}
with $S\coloneqq\{\beta\in\Delta_n\mid\tau+\beta\text{ dominant}\}\subseteq\Delta_n$.
\end{proposition}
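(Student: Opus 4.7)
The plan is to combine the explicit weight structure of $\mf p$ in the equal-rank case with Proposition~\ref{prop:mult_one} and a standard multiplicity bound for tensor products.

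First I would recall, from the first half of the proof of Proposition~\ref{prop:mult_one}, that the assumption $\on{rk}\mf g=\on{rk}\mf k$ makes $\mf t$ a Cartan subalgebra of $\mf g$ and yields the $\theta$-stable decomposition $\mf p=\bigoplus_{\beta\in\Delta_n}\mf g_\beta$ with each weight space $\mf g_\beta$ one-dimensional. Dualising and using $\Delta_n=-\Delta_n$, the $\mf t$-weights of $\mf p^*$ are exactly the non-compact roots, each of multiplicity one. Proposition~\ref{prop:mult_one} then asserts that $Y_\tau\otimes\mf p^*$ decomposes multiplicity-freely into $K$-irreducibles, so the bound $m(\beta)\le 1$ is automatic.

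Next, to restrict the possible highest weights I would invoke the standard bound
$$\dim\Hom{K}{Y_\lambda}{Y_\tau\otimes V}\leq\dim V(\lambda-\tau),$$
valid for any finite-dimensional $K$-module $V$, where $V(\lambda-\tau)$ denotes the $\mf t$-weight space of weight $\lambda-\tau$. This follows from the Parthasarathy--Ranga Rao--Varadarajan estimate for tensor products of two irreducibles by decomposing $V$ into its $K$-irreducible components and summing. Applied to $V=\mf p^*$, the right-hand side is $1$ when $\lambda-\tau\in\Delta_n$ and $0$ otherwise, so every irreducible constituent of $Y_\tau\otimes\mf p^*$ is of the form $Y_{\tau+\beta}$ for some $\beta\in\Delta_n$. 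Setting $Y_{\tau,\beta}\coloneqq Y_{\tau+\beta}$ gives the asserted decomposition, and the implication $m(\beta)=1\Rightarrow\beta\in S$ is then tautological, since the highest weight of any irreducible $K$-representation is by definition dominant.

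I expect the only non-routine ingredient to be the multiplicity bound above. In the present setting, where $\mf p^*$ has all weight spaces one-dimensional, it can alternatively be proved by direct inspection: one constructs the natural map $\Hom{K}{Y_\lambda}{Y_\tau\otimes\mf p^*}\to\mf p^*(\lambda-\tau)$ sending an intertwiner $f$ to the component of $f(v_\lambda)$ lying in the one-dimensional line $(Y_\tau)(\tau)\otimes\mf p^*(\lambda-\tau)$, and verifies injectivity using that $f(v_\lambda)$ is $\mf n^+$-invariant of weight $\lambda$. Conceptually, this is the one place where the argument genuinely uses something beyond weight bookkeeping, but it is entirely standard and avoids any case-by-case computation.
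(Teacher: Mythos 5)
Your argument is correct and takes essentially the same route as the paper's proof: both rest on the observation (already contained in the proof of Proposition~\ref{prop:mult_one}) that in the equal-rank case the $\mf t$-weights of $\mf p\cong_K\mf p^*$ are precisely the non-compact roots with one-dimensional weight spaces, combined with the standard facts that every constituent of $Y_\tau\otimes\mf p^*$ has highest weight $\tau+\beta$ for $\beta$ a weight of $\mf p^*$ and occurs with multiplicity at most $\dim\mf p^*(\beta)$, plus dominance of highest weights. The only difference is bookkeeping: the paper cites \cite[Prop.\@ 9.72]{knapplie} and \cite[Ch.\@ IX.8, Problem 15]{knapplie} for these two facts, while you package them into the single weight-space estimate $\dim\Hom{K}{Y_\lambda}{Y_\tau\otimes V}\leq\dim V(\lambda-\tau)$, which is equivalent in content.
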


\begin{proof}
First we note that $\mf p\cong_K\mf p^*$ by the Killing form. By \cite[Prop.\@ 9.72]{knapplie} the highest weight of each irreducible constituent of $Y_\tau\otimes\mf p$ is of the form $\tau+\beta$, where $\beta$ is a weight of $\mf p$, i.e.\@ $\beta\in\Delta_n$. Moreover each irreducible constituent occurs at most with multiplicity one by \cite[Ch.\@ IX.8, Problem 15]{knapplie} since the weight spaces of $\mf p$ have multiplicity one by the root space decomposition. Since the highest weight $\tau+\beta$ has to be dominant we can restrict the sum to the subset $S\subseteq\Delta_n$.
\end{proof}

\begin{proposition}\label{prop:gen_grad_omega}
Let $Y\in\hat K_M$ and $V\in\hat K$ with $V\lra Y$. Then, for each $\mu\in\mf a$,
\begin{gather*}
\mathrm{d}_V^Y\circ P_\mu^Y\neq0\Rightarrow V\lraomega Y.
\end{gather*}
\end{proposition}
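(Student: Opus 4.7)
My plan is to prove the contrapositive: assume $V \lra Y$ but $V \not\lraomega Y$, and show that $\mathrm{d}_V^Y \circ P_\mu^Y = 0$ for every $\mu \in \mf a^*$. The argument splits into two cases according to whether the abstract irreducible $K$-representation $V$ contains a nonzero $M$-invariant vector.

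In the first case, $V^M = 0$, Frobenius reciprocity yields $\Hom{K}{\pst{\mu}}{V} \cong V^M = 0$, so by Lemma \ref{la:Poisson_natural} every $G$-equivariant map $\pstd{\mu} \to C^\infty(G \times_K V)$ must vanish; in particular $\mathrm{d}_V^Y \circ P_\mu^Y = 0$. Equivalently, this is the content of Lemma \ref{la:gen_diffops}\,\ref{it:gen_diffops3}, which already handles the case of a non-$M$-spherical target regardless of the choice of $T_V^Y$.

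In the second case, $V \in \hat K_M$ and $V \not\lraomega Y$, I realize $V \leq L^2(K/M)$ so that Lemma \ref{la:gen_diffops}\,\ref{it:gen_diffops2} applies and expresses $\mathrm{d}_V^Y \circ P_\mu^Y = T_V^Y(p_{Y,\mu})(e) \cdot P_\mu^V$ for any element $T_V^Y$ of $\Hom{K}{Y \otimes \mf p^*}{V}$; by Proposition \ref{prop:mult_one} this Hom-space is one-dimensional. I take $T_V^Y$ to be the canonical representative from the first construction in Definition \ref{def:omega}, namely $T_V^Y = \on{pr}_V \circ \omega_Y \circ (\on{id}_Y \otimes \I^{-1})$. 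The hypothesis $V \not\lraomega Y$ amounts to saying that $V$ is not contained in $\omega_Y(Y \otimes \mf p) \subseteq L^2(K/M)$, so $\on{pr}_V \circ \omega_Y = 0$ and therefore $T_V^Y$ itself is identically zero. This forces the scalar $T_V^Y(p_{Y,\mu})(e)$ to vanish, hence $\mathrm{d}_V^Y \circ P_\mu^Y = 0$.

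The principal obstacle is that Definition \ref{def:omega} actually prescribes the \emph{second} construction of $T_V^Y$ when $V \not\lraomega Y$ --- with codomain realized as $V \leq Y \otimes \mf p$ --- and this representative is genuinely nonzero. To bridge the two constructions, I invoke Corollary \ref{cor:constant_Poisson_transforms} together with Proposition \ref{prop:multone}: regardless of which admissible $T_V^Y$ is used, the composition $\mathrm{d}_V^Y \circ P_\mu^Y$ is characterized by a single scalar inside the one-dimensional space $\Hom{K}{\pst{\mu}}{V}$ via the canonical Poisson transform $P_\mu^V$ into the abstract bundle $G \times_K V$. Since this scalar vanishes for the first construction by the computation above, it must vanish for any construction, completing the proof.
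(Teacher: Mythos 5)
Your overall structure (contrapositive, splitting on whether $V^M=0$) is sensible, and the first case is fine: it is exactly Lemma \ref{la:gen_diffops}\,\ref{it:gen_diffops3}. The gap is in your ``bridge'' for the second case. When $V\in\hat K_M$, $V\lra Y$ but $V\not\lraomega Y$, Definition \ref{def:omega} defines $\mathrm{d}_V^Y$ using the \emph{nonzero} intertwiner $T_V^Y=\on{pr}_V\circ(\on{id}_Y\otimes\I^{-1})$ with $\on{pr}_V$ the projection inside $Y\otimes\mf p$. Your ``first construction'' $\on{pr}_V\circ\omega_Y\circ(\on{id}_Y\otimes\I^{-1})$ degenerates to the \emph{zero} element of the one-dimensional space $\Hom{K}{Y\otimes\mf p^*}{V}$ precisely because $V\not\lraomega Y$. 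The composition $\mathrm{d}_V^Y\circ P_\mu^Y$ is linear in the choice of $T_V^Y$, so the fact that it vanishes when $T_V^Y=0$ is a tautology and gives no information about the nonzero choice: two \emph{nonzero} elements of a one-dimensional Hom space are proportional, so vanishing of the scalar in Corollary \ref{cor:constant_Poisson_transforms} for one of them propagates to the other, but the zero map does not span the space and cannot serve as the reference representative. What actually has to be shown is that $T_V^Y(p_{Y,\mu})(e)=0$ for the \emph{nonzero} projection, and that is a genuine computation, not a formal consequence of $V\not\lraomega Y$.

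The paper sidesteps this as follows. Using Proposition \ref{prop:lambdaVY_BO}\,\ref{it:lambdaVY_2}, Lemma \ref{la:lambda_explicit} and the explicit hypergeometric recursions of Lemmas \ref{la:decomp_omega_R}, \ref{la:decomp_omega_C}, \ref{la:decomp_omega_H}, \ref{la:decomp_omega_F}, one checks case by case that for $G\neq\GSO{3}$ every $M$-spherical constituent $V$ of $Y\otimes\mf p^*$ already satisfies $V\lraomega Y$, so the problematic configuration ($V\in\hat K_M$, $V\lra Y$, $V\not\lraomega Y$) is vacuous. The single exception is $G=\GSO{3}$, where $Y_k\lra Y_k$ but $Y_k\notleftrightomega Y_k$; there the paper performs exactly the computation your argument omits, namely realizing $Y_k$ inside $Y_k\otimes\mf p^*$ and verifying directly that $\on{pr}_{Y_k}((\on{id}\otimes\I^{-1})(p_{Y_k,\mu}))(e)=0$ for all $\mu$. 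To repair your proof you would either have to reproduce that verification or import the appendix computations showing the second case is empty for the remaining groups.
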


\begin{proof}[Proof for $G\neq\GSO{3}$\protect\footnotemark]\footnotetext{For $G=\GSO{3}$ we have, for $k\in\N_0$, $Y_k\lra Y_k$ but $Y_k\notleftrightomega Y_k$ by Proposition \ref{prop:tensor_decomp_odd} and Lemma \ref{la:decomp_omega_R}. Realizing $Y_k$ explicitly as a subrepresentation of $Y_k\otimes\mf p^*$ one can prove that $\on{pr}_{Y_k}((\on{id}_Y\otimes\,\I^{-1})(p_{Y_k,\mu}))(e)=0$ for each $\mu\in\mf a$ and thus $\mathrm{d}_{Y_k}^{Y_k}\circ P_\mu^{Y_k}=0$ by Lemma \ref{la:gen_diffops}.\ref{it:gen_diffops2}. Thus, Proposition \ref{prop:gen_grad_omega} is also valid for $G=\GSO{3}$.}
By Lemma \ref{la:gen_diffops}.\ref{it:gen_diffops3} we see that $\mathrm{d}_V^Y\circ P_\mu^Y\neq0$ implies that $V\in\hat K_M$. Using Proposition \ref{prop:lambdaVY_BO}.\ref{it:lambdaVY_2}, Lemma \ref{la:lambda_explicit} and Lemma \ref{la:decomp_omega_R}, \ref{la:decomp_omega_C}, \ref{la:decomp_omega_H} resp.\@ \ref{la:decomp_omega_F} we infer that $V\lraomega Y$ if and only if $V\lra Y$ and $V\in\hat K_M$.
\end{proof}

\subsection{Computations for the Fourier characterization}
The aim of this subsection is proving the converse direction in Lemma \ref{la:gen_diffops}, i.e.\@ we want to prove that if the equations derived from Lemma \ref{la:gen_diffops} are satisfied for some distribution $f\in\mc D'(G/M)$ we already have $f\in\pstd{\mu}$. The precise result is given in Theorem \ref{thm:fourier_char}. It provides a technique to determine images for Poisson transforms. We start with the following reformulation of Lemma \ref{la:gen_diffops}.
\begin{lemma}\label{la:gen_diffops2}
Assume the setting from Lemma \ref{la:gen_diffops}. Then for each $f\in\pstd{\mu}$
\begin{enumerate}
\item\label{it:gen_diffops21} $(\mathrm{d}_V^Y\circ\pi_{Y}^*)(f)=T_V^Y(p_{Y,\mu})(e)\frac{\dim Y}{\dim V}\pi_{V}^*(f)$ if $V$ is $M$-spherical, i.e.\@ $V\leq L^2(K)^M$,
\item\label{it:gen_diffops22}$(\mathrm{d}_V^Y\circ\pi_{Y}^*)(f)=0$ if $V$ is not $M$-spherical, i.e.\@ $V^M=0$.
\end{enumerate}
\end{lemma}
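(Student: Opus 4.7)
The plan is to translate Lemma \ref{la:gen_diffops} from the bundle picture $C^\infty(G\times_K V)$ into the scalar picture on $G/M$ via the maps $\pi_V^*$, using as bridge Lemma \ref{la:pi_gamma}.\ref{it:la_pi_gamma5}, which says that $P_\mu^Y=\frac{1}{\dim Y}\pi_Y^*\circ\mc Q_\mu$ on $\mc D'(K/M)$ for every $Y\in\hat K_M$. Identifying $\pstd{\mu}$ with its image $\mc R(\mu-\rhoa)\subseteq\mc D'(G/M)$ under $\mc Q_\mu$, this rewrites as $\pi_Y^*(f)=\dim Y\cdot P_\mu^Y(f)$ for every $f\in\pstd{\mu}$, which is the only identity needed.

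For part \ref{it:gen_diffops21}, I would apply $\mathrm{d}_V^Y$ to both sides of this identity and invoke Lemma \ref{la:gen_diffops}.\ref{it:gen_diffops2} to obtain
\[
\mathrm{d}_V^Y(\pi_Y^*(f))=\dim Y\cdot T_V^Y(p_{Y,\mu})(e)\,P_\mu^V(f).
\]
Since $V\in\hat K_M$, a second application of Lemma \ref{la:pi_gamma}.\ref{it:la_pi_gamma5} then rewrites $P_\mu^V(f)=\frac{1}{\dim V}\pi_V^*(f)$, producing the stated prefactor $\frac{\dim Y}{\dim V}T_V^Y(p_{Y,\mu})(e)$. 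For part \ref{it:gen_diffops22}, the same reduction combined with Lemma \ref{la:gen_diffops}.\ref{it:gen_diffops3} gives $\mathrm{d}_V^Y(\pi_Y^*(f))=\dim Y\cdot\mathrm{d}_V^Y(P_\mu^Y(f))=0$.

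No real obstacle arises: both parts of Lemma \ref{la:gen_diffops} are identities of $G$-equivariant operators defined on all of $\pstd{\mu}$, whose proportionality scalars are pinned down by Corollary \ref{cor:constant_Poisson_transforms}, so no passage from smooth to distributional vectors needs to be justified separately. The statement is essentially a bookkeeping bridge between the bundle and scalar formulations, and the proof amounts to invoking Lemma \ref{la:pi_gamma}.\ref{it:la_pi_gamma5} twice, once on $Y$ and once on $V$.
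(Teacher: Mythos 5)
Your proof is correct and follows exactly the route the paper takes: the paper's own proof is the one-line observation that the claim is a direct consequence of Lemma \ref{la:gen_diffops} and Lemma \ref{la:pi_gamma}\,\ref{it:la_pi_gamma5}, which is precisely the double application of the identity $\pi_Y^*=\dim Y\cdot P_\mu^Y\circ\mc Q_\mu^{-1}$ that you carry out. Your remark that the scalars in Lemma \ref{la:gen_diffops} are already pinned down on all of $\pstd{\mu}$ by Corollary \ref{cor:constant_Poisson_transforms} correctly disposes of the only point that might have needed separate justification.
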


\begin{proof}
This is a direct consequence of Lemma \ref{la:gen_diffops} and Lemma \ref{la:pi_gamma}\,\ref{it:la_pi_gamma5}.
\end{proof}

 We consider the $\mf a_0$- and $\mf n_0$-action separately and start with the first one.
\begin{lemma}\label{la:a_action_gen_gradients_general}
Let $\mu\in\mf a^*$ and $f=\fourier{f}\in\mc D'(G/M)$ (recall Lemma \ref{la:distribution_decomp}) with $\pi_{Y_\tau}^*(f)\in C^\infty(G\times_KY_\tau)$ such that the equations from Lemma \ref{la:gen_diffops2}\,\ref{it:gen_diffops21} and \ref{it:gen_diffops22} hold for $f$ for every irreducible constituent of $Y_\tau\otimes\mf p^*$ and every $Y_\tau\in\hat K_M$. Let $X\in\mf a_0$. For each $V,\,Y_\tau\in\hat K_M$ with $V\leftrightarrow Y_\tau$ we define
\begin{gather*}
f_{V,\tau,X}\in C^\infty(G/M),\qquad f_{V,\tau,X}(gM)\coloneqq\iota_V^{Y_{\tau}}(\pi_{Y_\tau}^*(f)(g))(X)(e).
\end{gather*}
Then, in the weak sense,
\begin{gather*}
r(X)f=\sum_{\tau\in\hat K_M}\sum_{\substack{V\lraomega Y_\tau\\V\in\hat K_M}}\frac{\dim V}{\dim Y_\tau}T_{Y_{\tau}}^V(p_{V,\mu})(e)f_{V,\tau,X},
\end{gather*}
where $r$ denotes the right regular representation of $\mf a_0$ on $\mc D'(G/M)$.
\end{lemma}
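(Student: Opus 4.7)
My plan is to differentiate the weak Fourier expansion from Lemma~\ref{la:distribution_decomp} term-by-term and then rewrite each differentiated summand using the generalized-gradient machinery of Lemma~\ref{la:gen_diffops2} together with the multiplicity-free decomposition of $Y_\tau\otimes\mf p^*$ from Proposition~\ref{prop:mult_one}.

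First I will start from $f=\sum_{\tau\in\hat K_M}\pi_{Y_\tau}(\pi_{Y_\tau}^*(f))$, which converges weakly in $\mc D'(G/M)$, and commute $r(X)$ with the sum using the continuity of $r(X)$ on $\mc D'(G/M)$. Since each $\pi_{Y_\tau}^*(f)$ is smooth by hypothesis, Proposition~\ref{prop:oersted_nabla} identifies the $\tau$-th summand pointwise as
\begin{gather*}
r(X)\pi_{Y_\tau}(\pi_{Y_\tau}^*(f))(g)=(\nabla\pi_{Y_\tau}^*(f))(g)(X)(e).
\end{gather*}

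Next I will apply the multiplicity-free decomposition of $Y_\tau\otimes\mf p^*$ and the maps $T_V^{Y_\tau},\iota_{Y_\tau}^V$ from Definition~\ref{def:omega} to write
\begin{gather*}
(\nabla\pi_{Y_\tau}^*(f))(g)=\sum_{V\lra Y_\tau}\iota_{Y_\tau}^V\bigl((\mathrm{d}_V^{Y_\tau}\pi_{Y_\tau}^*(f))(g)\bigr).
\end{gather*}
Lemma~\ref{la:gen_diffops2} kills every summand with $V$ not $M$-spherical and, for $V\in\hat K_M$, replaces $\mathrm{d}_V^{Y_\tau}\pi_{Y_\tau}^*(f)$ by $T_V^{Y_\tau}(p_{Y_\tau,\mu})(e)\,\tfrac{\dim Y_\tau}{\dim V}\,\pi_V^*(f)$. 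For $V\in\hat K_M$ with $V\notleftrightomega Y_\tau$ the scalar $T_V^{Y_\tau}(p_{Y_\tau,\mu})(e)$ itself must vanish: Proposition~\ref{prop:gen_grad_omega} combined with Lemma~\ref{la:gen_diffops} forces $T_V^{Y_\tau}(p_{Y_\tau,\mu})(e)\,P_\mu^V=0$, and $P_\mu^V\neq 0$ because $P_\mu^V(\delta_{eM})(e)=\phi_V$ with $\phi_V(e)=1$. Summing over $\tau$ I therefore obtain
\begin{gather*}
r(X)f=\sum_{\tau}\sum_{\substack{V\lraomega Y_\tau\\ V\in\hat K_M}}T_V^{Y_\tau}(p_{Y_\tau,\mu})(e)\,\frac{\dim Y_\tau}{\dim V}\,\iota_{Y_\tau}^V(\pi_V^*(f)(g))(X)(e).
\end{gather*}

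The final move is a symmetric reindexing. Since $V\lraomega Y\Leftrightarrow Y\lraomega V$ on $\hat K_M$, the index set $\{(\tau,V)\in\hat K_M\times\hat K_M:V\lraomega Y_\tau\}$ is invariant under the involution swapping its two components. Renaming the bound summation variables along this involution transforms the last display into
\begin{gather*}
r(X)f=\sum_\tau\sum_{\substack{V\lraomega Y_\tau\\ V\in\hat K_M}}\frac{\dim V}{\dim Y_\tau}\,T_{Y_\tau}^V(p_{V,\mu})(e)\,f_{V,\tau,X},
\end{gather*}
which is exactly the claimed identity.

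The main obstacle will be keeping everything rigorous at the distributional level. Swapping $r(X)$ with the weak Fourier sum is immediate from continuity, but the reordered double sum also has to converge in the weak sense; this I expect to follow from Sobolev-type estimates in the spirit of Lemma~\ref{la:def_distr}, since the Fourier coefficients of $f$ grow only polynomially and the generalized gradients lose only one derivative. The most delicate symbolic point is the off-diagonal vanishing $T_V^{Y_\tau}(p_{Y_\tau,\mu})(e)=0$ for $V\in\hat K_M$ with $V\notleftrightomega Y_\tau$, which requires careful book-keeping of the two realisations of $T_V^{Y_\tau}$ appearing in Definition~\ref{def:omega}.
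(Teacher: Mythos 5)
Your proposal is correct and follows essentially the same route as the paper's proof: term-by-term differentiation of the weak Fourier expansion, the decomposition $\nabla\pi_{Y_\tau}^*(f)(g)=\sum_{V\lra Y_\tau}\iota_{Y_\tau}^V(\mathrm{d}_V^{Y_\tau}\pi_{Y_\tau}^*(f)(g))$, the substitution via Lemma~\ref{la:gen_diffops2} with the restriction to $V\lraomega Y_\tau$ (your vanishing argument for $T_V^{Y_\tau}(p_{Y_\tau,\mu})(e)$ is a correct unpacking of the paper's appeal to Proposition~\ref{prop:gen_grad_omega}), and the final symmetric reindexing. The only ingredient you leave implicit is the routine check that $f_{V,\tau,X}$ is right $M$-invariant, and the convergence of the reordered sum follows already from the absolute weak convergence of $\sum_\tau r(X)\pi_{Y_\tau}(\pi_{Y_\tau}^*(f))$ rather than requiring new Sobolev estimates.
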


\begin{proof}
We first prove that $f_{V,\tau,X}\in C^\infty(G/M)$. For each $g\in G$ and $m\in M$ we have
\begin{gather*}
\iota_V^{Y_{\tau}}(\pi_{Y_\tau}^*(f)(gm))(X)(e)=\iota_V^{Y_{\tau}}(\tau(m^{-1})\pi_{Y_\tau}^*(f)(g))(X)(e)
\end{gather*}
since $\pi_{Y_\tau}^*(f)\in  C^\infty(G\times_KY_\tau)$. As $\iota_V^{Y_{\tau}}$ is $K$-equivariant we obtain
\begin{gather*}
\iota_V^{Y_{\tau}}(\tau(m^{-1})\pi_{Y_\tau}^*(f)(g))(X)(e)=\iota_V^{Y_{\tau}}(\pi_{Y_\tau}^*(f)(g))(m.X)(m)
\end{gather*}
which equals $\iota_V^{Y_{\tau}}(\pi_{Y_\tau}^*(f)(g))(X)(e)$, since $M$ acts trivially on $\mf a_0$ and each element of $V$ is right $M$-invariant.

For each $\varphi\in C_c^\infty(G/M)$ we have (denoting $\iota_{G/M}(f)(\varphi)$ by $\langle f,\varphi\rangle$)
\begin{gather*}
\langle r(X)f,\varphi\rangle=-\langle f,r(X)\varphi\rangle=-\sum_{\tau\in\hat K_M}\langle\pi_{Y_\tau}(\pi_{Y_\tau}^*(f)),r(X)\varphi\rangle=\sum_{\tau\in\hat K_M}\langle r(X)\pi_{Y_\tau}(\pi_{Y_\tau}^*(f)),\varphi\rangle.
\end{gather*}
In particular, by the absolute convergence from Lemma \ref{la:distribution_decomp}, we obtain that
\begin{gather*}
\sum_{\tau\in\hat K_M}r(X)\pi_{Y_\tau}(\pi_{Y_\tau}^*(f))
\end{gather*}
converges absolutely to $r(X)f$ in the weak sense. We will now compute the summands explicitly. Note first that for each $g\in G$
\begin{gather}\label{eq:proof_a_action_gen1}
(r(X)\pi_{Y_\tau}(\pi_{Y_\tau}^*(f)))(g)=\d\pi_{Y_\tau}^*(f)(g\exp tX)(e)=(((\nabla\circ\pi_{Y_\tau}^*(f))(g))(X))(e).
\end{gather}
We claim that
\begin{gather*}
(\nabla\circ\pi_{Y_\tau}^*(f))(g)=\sum_{V\leftrightarrow Y_\tau}(\iota^V_{Y_{\tau}}\circ T^{Y_{\tau}}_V)((\nabla\circ\pi_{Y_\tau}^*(f))(g)).
\end{gather*}
Indeed, both sides are elements of $Y_\tau\otimes\mf p^*$ and by Definition \ref{def:omega} they are equal if 
\begin{gather*}
T^{Y_\tau}_W((\nabla\circ\pi_{Y_\tau}^*(f))(g))=T^{Y_\tau}_W\left(\sum_{V\leftrightarrow Y_\tau}(\iota^V_{Y_{\tau}}\circ T^{Y_{\tau}}_V)((\nabla\circ\pi_{Y_\tau}^*(f))(g))\right)
\end{gather*}
for each irreducible subrepresentation $W$ with $W\leftrightarrow Y_\tau$. But this follows from the definition of the $\iota^V_{Y_\tau}$.

Note that, since $\mathrm{d}^{Y_{\tau}}_V=T^{Y_\tau}_V\circ\nabla$ (see Lemma \ref{la:gen_diffops}),
\begin{gather*}
\sum_{V\leftrightarrow Y_\tau}(\iota^V_{Y_{\tau}}\circ T^{Y_{\tau}}_V)((\nabla\circ\pi_{Y_\tau}^*(f))(g))=\sum_{V\leftrightarrow Y_\tau}\iota^V_{Y_{\tau}}(\mathrm{d}^{Y_{\tau}}_V(\pi_{Y_\tau}^*(f))(g)).
\end{gather*}
The equations from Lemma \ref{la:gen_diffops2} yield
\begin{align*}
(\nabla\circ\pi_{Y_\tau}^*(f))(g)&=\sum_{V\leftrightarrow Y_\tau}\iota^V_{Y_{\tau}}(\mathrm{d}^{Y_{\tau}}_V(\pi_{Y_\tau}^*(f))(g))\\
&=\sum_{\substack{V\leftrightarrow Y_\tau\\V\in\hat K_M}}\iota^V_{Y_{\tau}}\left(\frac{\dim Y_\tau}{\dim V}T_V^{Y_\tau}(p_{Y_\tau,\mu})(e)\pi_{V}^*(f)(g)\right)\\
&=\sum_{\substack{V\leftrightarrow Y_\tau\\V\in\hat K_M}}\frac{\dim Y_\tau}{\dim V}T_V^{Y_\tau}(p_{Y_\tau,\mu})(e)\iota^V_{Y_{\tau}}(\pi_{V}^*(f)(g)).
\end{align*}
By Proposition \ref{prop:gen_grad_omega} it suffices to sum over all $V\in\hat K_M$ with $V\lraomega Y_\tau$.
Using Equation \eqref{eq:proof_a_action_gen1} we thus obtain
\begin{align*}
(r(X)\pi_{Y_\tau}(\pi_{Y_\tau}^*(f)))(g)&=\sum_{\substack{V\lraomega Y_\tau\\V\in\hat K_M}}\frac{\dim Y_\tau}{\dim V}T_V^{Y_\tau}(p_{Y_\tau,\mu})(e)((\iota^V_{Y_{\tau}}(\pi_{V}^*(f)(g)))(X))(e)\\
&=\sum_{\substack{V\lraomega Y_\tau\\V\in\hat K_M}}\frac{\dim Y_\tau}{\dim V}T_V^{Y_\tau}(p_{Y_\tau,\mu})(e)f_{Y_\tau,V,X}(gM)
\end{align*}
and $r(X)f=\sum_{\tau\in\hat K_M}r(X)\pi_{Y_\tau}(\pi_{Y_\tau}^*(f))$ equals
\begin{gather*}
\sum_{\tau\in\hat K_M}\sum_{\substack{V\lraomega Y_\tau\\V\in\hat K_M}}\frac{\dim Y_\tau}{\dim V}T_V^{Y_\tau}(p_{Y_\tau,\mu})(e)f_{Y_\tau,V,X}=\sum_{V\in\hat K_M}\sum_{\substack{V\lraomega Y_\tau\\\tau\in\hat K_M}}\frac{\dim Y_\tau}{\dim V}T_V^{Y_\tau}(p_{Y_\tau,\mu})(e)f_{Y_\tau,V,X}.\qedhere
\end{gather*}
\end{proof}

In order to compute the sums occurring in the proof of Lemma \ref{la:a_action_gen_gradients_general} we write
\begin{gather}\label{eq:decomp_p}
p_{Y_\tau,\mu}=(\mu+\rhoa)(H)\phi_Y\otimes\I(H)+p_{Y_\tau,-\rhoa}.
\end{gather}
We first consider the contribution of first summand in this decomposition.
\begin{lemma}\label{la:p_first_part}
Let $Y\in\hat K_M,\ X\in\mf p$ and $\varphi\in Y$. Then
\begin{gather*}
\sum_{V\lraomega Y}\frac{\dim V}{\dim Y}T_{Y}^V(\phi_V\otimes\I(H))(e)\iota_V^{Y}(\varphi)(X)(e)=(\omega(X)\varphi)(e).
\end{gather*}
\end{lemma}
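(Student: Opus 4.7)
My plan is to reduce each summand on the left-hand side to exactly $\on{pr}_V(\omega(X)\varphi)(e)$ and then recognize the resulting sum via the completeness relation from Remark \ref{rem:sum_of_projections}.

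The first step will be to evaluate the scalar $T_Y^V(\phi_V\otimes\I(H))(e)$. Unwinding Definition \ref{def:omega} gives
$$T_Y^V(\phi_V\otimes\I(H))=\on{pr}_Y(\omega_V(\phi_V\otimes H))=\on{pr}_Y(\omega(H)\phi_V),$$
so Lemma \ref{la:lambda_explicit} identifies its value at $e$ as $\lambda(V,Y)$.

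The second step, and the only one needing some care, is to rewrite $\iota_V^Y(\varphi)(X)$. I will apply Proposition \ref{prop:iota_gen} with the roles of $Y$ and $V$ interchanged (justified because $V\lraomega Y\Leftrightarrow Y\lraomega V$) to obtain
$$\iota_V^Y(\varphi)=\frac{1}{\lambda(Y,V)}\sum_{j=1}^{\dim\mf p}\on{pr}_V(\omega(X_j)\varphi)\otimes\I(\tilde X_j)\in V\otimes\mf p^*.$$
Viewing this element as a homomorphism $\mf p\to V$ via $f\otimes\lambda\mapsto(X\mapsto\lambda(X)f)$ and using the dual-basis expansion $X=\sum_j\langle\tilde X_j,X\rangle X_j$ together with $\C$-linearity of $\omega$, the sum will collapse to
$$\iota_V^Y(\varphi)(X)=\frac{1}{\lambda(Y,V)}\on{pr}_V(\omega(X)\varphi).$$

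For the third step, inserting the two expressions above, the coefficient in front of $\on{pr}_V(\omega(X)\varphi)(e)$ in the $V$-th summand becomes $\frac{\dim V}{\dim Y}\cdot\frac{\lambda(V,Y)}{\lambda(Y,V)}$, which is exactly $1$ by the reciprocity $\lambda(V,Y)\dim V=\lambda(Y,V)\dim Y$ of Proposition \ref{prop:lambdaVY_BO}\,\ref{it:lambdaVY_4}. Finally, applying Remark \ref{rem:sum_of_projections} to $\varphi\otimes\I(X)$ yields
$$\sum_{V\lraomega Y}\on{pr}_V(\omega(X)\varphi)=\omega_Y(\varphi\otimes X)=\omega(X)\varphi,$$
and evaluating at $e$ delivers the lemma. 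I do not anticipate a conceptual obstacle; the only delicate bookkeeping is the identification of $\iota_V^Y(\varphi)\in V\otimes\mf p^*$ with its value on a single $X\in\mf p$, which is immediate from the duality between $(X_j)$ and $(\tilde X_j)$.
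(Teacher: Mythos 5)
Your argument is correct and coincides with the paper's own proof: both identify $T_Y^V(\phi_V\otimes\I(H))(e)=\lambda(V,Y)$ via Lemma \ref{la:lambda_explicit}, expand $\iota_V^Y(\varphi)$ with Proposition \ref{prop:iota_gen} (roles of $V$ and $Y$ swapped), cancel the scalars by Proposition \ref{prop:lambdaVY_BO}\,\ref{it:lambdaVY_4}, and conclude with the completeness relation of Remark \ref{rem:sum_of_projections}. The only difference is cosmetic (you collapse the dual-basis sum before summing over $V$, the paper after), so no further comment is needed.
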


\begin{proof}
By Definition \ref{def:omega} and Lemma \ref{la:lambda_explicit} we have for each $V\in\hat K$ with $V\lraomega Y$
\begin{gather*}
T_{Y}^V(\phi_V\otimes\I(H))(e)=\on{pr}_Y(\omega(H)\phi_V)(e)=\lambda(V,Y).
\end{gather*}
Using Proposition \ref{prop:lambdaVY_BO}\:\ref{it:lambdaVY_4} and \ref{prop:iota_gen} we calculate
\begin{align*}
&\phantom{{}={}}\sum_{V\lraomega Y}\frac{\dim V}{\dim Y}T_{Y}^V(\phi_V\otimes\I(H))(e)\iota_V^{Y}(\varphi)(X)(e)\\
&=\sum_{V\lraomega Y}\frac{\lambda(Y,V)}{\lambda(V,Y)}\lambda(V,Y)\frac{1}{\lambda(Y,V)}\sum_{j=1}^{\dim\mf p}\on{pr}_V(\omega(X_j)\varphi)(e)\I(\tilde X_j)(X)\\
&=\sum_{V\lraomega Y}\sum_{j=1}^{\dim\mf p}\on{pr}_V(\omega(X_j)\varphi)(e)\I(\tilde X_j)(X)\\
&=\sum_{V\lraomega Y}\on{pr}_V(\omega(X)\varphi)(e)=(\omega(X)\varphi)(e).\qedhere
\end{align*}
\end{proof}

For the contribution of the second summand in \eqref{eq:decomp_p} we need some preparation. This is the content of the following three lemmas.
\begin{lemma}\label{la:2rho}
Let $\mf g_0$ be a semisimple Lie algebra, $B$ be some non-zero multiple of the Killing form $\kappa$ and $\theta$ be a Cartan involution. If $X_1,\ldots,X_{\dim(\mf p_0/\mf a_0)}$ is a basis of $\mf p_0\cap(\mf k_0\oplus\mf n_0)$ let $\tilde X_1,\ldots,\tilde X_{\dim(\mf p_0/\mf a_0)}$ denote the dual basis defined by $B(\tilde X_i,X_j)=\delta_{ij}$. Then $\sum_{j=1}^{\dim(\mf p_0/\mf a_0)}[\tilde X_j,\Ik{X_j}]\in\mf a_0$ and 
\begin{gather*}
\sum_{j=1}^{\dim(\mf p_0/\mf a_0)}B([\tilde X_j,\Ik{X_j}],H)=2\rhoa(H)\quad\forall H\in\mf a_0.
\end{gather*}
\end{lemma}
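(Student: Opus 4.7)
First I would observe that the sum in question is basis-independent. Since the form $-B(\,\cdot\,,\theta\,\cdot\,)$ is definite on $\mf g_0$, its restriction to $\mf p_0\cap(\mf k_0\oplus\mf n_0)$ is non-degenerate, so $\sum_j \tilde X_j\otimes X_j$ is the canonical ``identity tensor'' in $(\mf p_0\cap(\mf k_0\oplus\mf n_0))^{\otimes 2}$; applying the bilinear operation $(Y,X)\mapsto[Y,\Ik{X}]$ yields a basis-free element. I am thus free to pick any convenient basis.

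The natural choice is one adapted to the restricted root decomposition. The map $n\mapsto\tfrac12(n-\theta n)$ is a linear isomorphism from $\mf n_0^+=\bigoplus_{\alpha\in\Sigma^+}\mf g_0^\alpha$ onto $\mf p_0\cap(\mf k_0\oplus\mf n_0)$, so a basis $\{E_i^\alpha\}_{i=1}^{m_\alpha}$ of each $\mf g_0^\alpha$ gives a basis $X_i^\alpha:=\tfrac12(E_i^\alpha-\theta E_i^\alpha)$ whose Iwasawa components are
\begin{gather*}
\In{X_i^\alpha}=E_i^\alpha,\qquad \Ik{X_i^\alpha}=-\tfrac12(E_i^\alpha+\theta E_i^\alpha).
\end{gather*}
I would normalize the $E_i^\alpha$ so that $B(E_i^\alpha,\theta E_j^\alpha)=-\delta_{ij}$ (possible because $-B(\,\cdot\,,\theta\,\cdot\,)$ is definite on $\mf g_0^\alpha$, up to a sign of $B$; but both sides of the claimed identity transform identically under rescaling $B$, so its sign is immaterial). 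Using $B(\theta X,Y)=B(X,\theta Y)$ together with the fact that $B$ pairs $\mf g^\alpha$ only with $\mf g^{-\alpha}$, a direct expansion yields $B(X_i^\alpha,X_j^\beta)=\tfrac12\delta_{ij}\delta_{\alpha\beta}$, so $\tilde X_i^\alpha=2X_i^\alpha=E_i^\alpha-\theta E_i^\alpha$.

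The key computation is then
\begin{gather*}
[\tilde X_i^\alpha,\Ik{X_i^\alpha}]=-\tfrac12\bigl[E_i^\alpha-\theta E_i^\alpha,\ E_i^\alpha+\theta E_i^\alpha\bigr]=-[E_i^\alpha,\theta E_i^\alpha],
\end{gather*}
hence $Z:=\sum_j[\tilde X_j,\Ik{X_j}]=-\sum_{\alpha\in\Sigma^+}\sum_i[E_i^\alpha,\theta E_i^\alpha]$. Each summand lies in $[\mf g_0^\alpha,\mf g_0^{-\alpha}]\subseteq\mf m_0\oplus\mf a_0$, while $\theta[E_i^\alpha,\theta E_i^\alpha]=-[E_i^\alpha,\theta E_i^\alpha]$ places it in $\mf p_0$; thus each summand, and hence $Z$, belongs to $(\mf m_0\oplus\mf a_0)\cap\mf p_0=\mf a_0$.

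For the pairing, $B$-invariance and $[\theta E_i^\alpha,H]=\alpha(H)\theta E_i^\alpha$ give, for $H\in\mf a_0$,
\begin{gather*}
B([E_i^\alpha,\theta E_i^\alpha],H)=B(E_i^\alpha,[\theta E_i^\alpha,H])=\alpha(H)B(E_i^\alpha,\theta E_i^\alpha)=-\alpha(H),
\end{gather*}
so $B(Z,H)=\sum_{\alpha\in\Sigma^+}\sum_{i=1}^{m_\alpha}\alpha(H)=\sum_{\alpha\in\Sigma^+}m_\alpha\alpha(H)=2\rhoa(H)$. The only delicate point in the whole argument is careful bookkeeping of factors of $\tfrac12$ and sign conventions in the Iwasawa decomposition and $B$-duality; once the root-adapted basis is in place, the rest is a clean contraction.
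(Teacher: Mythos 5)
Your proof is correct and follows essentially the same route as the paper's: establish basis independence, pass to a root-space-adapted basis of the form $E_i^\alpha-\theta E_i^\alpha$ normalized via $B(E_i^\alpha,\theta E_j^\alpha)=-\delta_{ij}$, reduce the bracket to $-[E_i^\alpha,\theta E_i^\alpha]\in\mf g^0\cap\mf p_0=\mf a_0$, and evaluate the pairing by invariance of $B$. The only difference is a cosmetic choice of normalization (the paper scales so that the basis is $B$-orthonormal and $\tilde X_j=X_j$, whereas you get $\tilde X_j=2X_j$), which does not affect the argument.
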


\begin{proof}[Proof of Lemma \ref{la:2rho}]
We first claim that $\sum_{j=1}^{\dim(\mf p_0/\mf a_0)}[\tilde X_j,\Ik{X_j}]\in\mf p_0$ is independent of the basis. Let $X_1',\ldots,X_{\dim(\mf p_0/\mf a_0)}'$ be another basis with base change matrix $(a_{ij})$, i.e.\@ $X_j'=\sum_ma_{mj}X_m$.
If $(b_{ij})$ denotes the inverse of $(a_{ij})$ we claim that $\tilde X_j'=\sum_\ell b_{j\ell}\tilde X_\ell$. Indeed,
\begin{align*}
B(\sum_\ell b_{j\ell}\tilde X_\ell, X_i')=B(\sum_\ell b_{j\ell}\tilde X_\ell, \sum_ma_{mi}X_m)=\sum_\ell\sum_m b_{j\ell}a_{mi}B(\tilde X_\ell, X_m)=\sum_m b_{jm}a_{mi}=\delta_{ij}.
\end{align*}
Thus,
\begin{align*}
\sum_j[\tilde X_j',\Ik{X_j'}]&=
\sum_j[\sum_\ell b_{j\ell}\tilde X_\ell,\Ik{\sum_ma_{mj}X_m}]=\sum_m\sum_\ell[\tilde X_\ell,\Ik{X_m}]\sum_ja_{mj}b_{j\ell}\\
&=\sum_m\sum_\ell[\tilde X_\ell,\Ik{X_m}]\delta_{m\ell}=\sum_m[\tilde X_m,\Ik{X_m}]
\end{align*}
is independent of the basis.\par
We will now construct a convenient basis of $\mf p_0\cap(\mf k_0\oplus\mf n_0)$. Let $\Sigma^+$ denote the set of positive restricted roots. We may assume that $B$ is a positive multiple of the Killing form (otherwise $-B$ is of this form and the signs of the $\tilde X_j$'s are flipped). For each $\lambda\in\Sigma^+$ we choose a basis $Y_1^\lambda,\ldots,Y_{\dim\mf g^\lambda}^\lambda$ of the restricted root space $\mf g^\lambda$ such that $B(Y_j^\lambda,\theta Y_k^\lambda)=-\frac12\delta_{jk}$ and define
\begin{gather*}
X_j^\lambda\coloneqq Y_j^\lambda-\theta Y_j^\lambda,\quad j\in\{1,\ldots,\dim\mf g^\lambda\}.
\end{gather*}
Note that, since
\begin{gather*}
B(X_j^\lambda,X_k^\mu)=-2B(Y_j^\lambda,\theta Y_k^\mu)=-2B(Y_j^\lambda,\theta Y_k^\mu)\delta_{\lambda\mu},
\end{gather*}
we have that the $X_j^\lambda$'s are orthonormal, i.e.\@ $\tilde X_j^\lambda=X_j^\lambda$.
By the restricted root space decomposition, every $X\in\mf p_0\cap(\mf k_0\oplus\mf n_0)$ is of the form $\sum_{\lambda\in\Sigma^+}X_\lambda-\theta X_\lambda$ for some $X_\lambda\in\mf g^\lambda$. Therefore, the $X_j^\lambda,\ \lambda\in\Sigma^+,$ form a basis of $\mf p_0\cap(\mf k_0\oplus\mf n_0)$. Note that
\begin{gather*}
X_j^\lambda=2Y_j^\lambda-(Y_j^\lambda+\theta Y_j^\lambda)\in\mf n_0\oplus\mf k_0\qquad\Longrightarrow\qquad\Ik{X_j^\lambda}=-(Y_j^\lambda+\theta Y_j^\lambda).
\end{gather*}
By the invariance of the Killing form we deduce for each $H\in\mf a_0$
\begin{alignat*}{2}
B([\tilde X_j^\lambda,\Ik{X_j^\lambda}],H)&=B(\tilde X_j^\lambda,[\Ik{X_j^\lambda},H])&&=B(\tilde X_j^\lambda,[H,Y_j^\lambda+\theta Y_j^\lambda])\\
&=B(\tilde X_j^\lambda,\lambda(H)(Y_j^\lambda-\theta Y_j^\lambda))&&=\lambda(H)B(\tilde X_j^\lambda,X_j^\lambda)=\lambda(H).
\end{alignat*}
Thus,
\begin{gather*}
\sum_{\lambda\in\Sigma^+}\sum_{j=1}^{\dim\mf g^\lambda}B([\tilde X_j^\lambda,\Ik{X_j^\lambda}],H)=\sum_{\lambda\in\Sigma^+}\lambda(H)\dim\mf g^\lambda=2\rhoa(H).
\end{gather*}
Moreover,
\begin{gather*}
[\tilde X_j^\lambda,\Ik{X_j^\lambda}]=[X_j^\lambda,\Ik{X_j^\lambda}]=[Y_j^\lambda-\theta Y_j^\lambda,-(Y_j^\lambda+\theta Y_j^\lambda)]=2[\theta Y_j^\lambda,Y_j^\lambda]\in\mf g^0\cap\mf p_0
\end{gather*}
implies that $[\tilde X_j^\lambda,\Ik{X_j^\lambda}]\in\mf a_0$ since $\mf g^0=\mf m_0\oplus\mf a_0$.
\end{proof}

\begin{lemma}\label{la:2rho_rank1}
Let $X_1,\ldots,X_{\dim\mf p}$ be as in Lemma \ref{la:gen_diffops}. Then $\sum_{j=2}^{\dim\mf p}\ell(\Ik{X_j})\omega(\tilde X_j)=-2\rhoa(H)\omega(H)$.
\end{lemma}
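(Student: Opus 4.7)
The plan is to reduce the claimed identity to Lemma~\ref{la:2rho} by pushing the $K$-action through $\omega$. First I would establish the commutation formula
\[
\ell(Z)\omega(X)=\omega([Z,X]) \qquad \forall\,Z\in\mf k_0,\ X\in\mf p_0,
\]
which follows directly from differentiating the defining formula $\omega(X)(kM)=\langle\on{Ad}(k^{-1})X,H\rangle$: the derivative at $t=0$ along $\exp(-tZ)k$ produces $\on{Ad}(\exp tZ)X$ inside the inner product, which yields $[Z,X]$. Applied termwise, this rewrites the left-hand side as
\[
\sum_{j=2}^{\dim\mf p}\omega([\Ik{X_j},\tilde X_j])
=-\omega\Bigl(\sum_{j=2}^{\dim\mf p}[\tilde X_j,\Ik{X_j}]\Bigr).
\]

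Second, I would check that the truncated basis $X_2,\dots,X_{\dim\mf p}$ of $\mf p_0\cap(\mf k_0\oplus\mf n_0)$ and its dual $\tilde X_2,\dots,\tilde X_{\dim\mf p}$ satisfy the hypotheses of Lemma~\ref{la:2rho} with $B\coloneqq\kappa/\kappa(H,H)$. The key point is that on $\mf p$ we have $\langle\cdot,\cdot\rangle=B(\cdot,\cdot)$ since $\theta|_{\mf p}=-\on{id}$, so the two notions of dual basis coincide. Moreover, $\langle\tilde X_j,H\rangle=\langle\tilde X_j,X_1\rangle=0$ for $j\geq 2$, which forces $\tilde X_j\in H^\perp=\mf p_0\cap(\mf k_0\oplus\mf n_0)$ (using that $\mf a_0$ and $\mf p_0\cap(\mf k_0\oplus\mf n_0)$ are $\langle\cdot,\cdot\rangle$-orthogonal, as one sees from the restricted root-space decomposition). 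So the dual basis lives in the right subspace.

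Third, Lemma~\ref{la:2rho} yields $A\coloneqq\sum_{j=2}^{\dim\mf p}[\tilde X_j,\Ik{X_j}]\in\mf a_0$ together with $\langle A,H\rangle=B(A,H)=2\rhoa(H)$. Since the rank is one, $\mf a_0=\R H$, and because $\langle H,H\rangle=-\kappa(H,-H)/\kappa(H,H)=1$, we get $A=2\rhoa(H)\,H$. Combining this with the reduction above gives
\[
\sum_{j=2}^{\dim\mf p}\ell(\Ik{X_j})\omega(\tilde X_j)=-\omega(2\rhoa(H)H)=-2\rhoa(H)\,\omega(H),
\]
which is the claim.

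There is no real obstacle here; the proof is essentially bookkeeping. The only care needed is in (i) keeping track of signs coming from the convention $\ell(Z)f(x)=\tfrac{d}{dt}|_0 f(\exp(-tZ)x)$ and the antisymmetry of the bracket, and (ii) matching the normalizations of $\langle\cdot,\cdot\rangle$ and $B$, which only becomes trivial once one notes that both restrict to the same bilinear form on $\mf p$.
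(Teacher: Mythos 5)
Your proof is correct and follows essentially the same route as the paper's: use the $K$-equivariance of $\omega$ to convert $\ell(\Ik{X_j})\omega(\tilde X_j)$ into $\omega([\Ik{X_j},\tilde X_j])$, then apply Lemma~\ref{la:2rho} and the fact that $\mf a_0=\R H$ with $\langle H,H\rangle=1$. Your extra verification that the truncated dual basis lies in $\mf p_0\cap(\mf k_0\oplus\mf n_0)$ and that $\langle\cdot,\cdot\rangle$ agrees with $B$ on $\mf p$ is a detail the paper leaves implicit, but it does not change the argument.
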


\begin{proof}
Since $\omega:\mf p\to C^\infty(K/M)$ is $K$-equivariant we have
\begin{gather*}
\sum_{j=2}^{\dim\mf p}\ell(\Ik{X_j})\omega(\tilde X_j)=\sum_{j=2}^{\dim\mf p}\omega([\Ik{X_j},\tilde X_j])
\end{gather*}
By Lemma \ref{la:2rho}, $\sum_{j=2}^{\dim\mf p}[\Ik{X_j},\tilde X_j]$ is an element of $\mf a_0$ and therefore a multiple of $H$. Let $\lambda\in\R$ denote this multiple. Then Lemma \ref{la:2rho} implies that
\begin{gather*}
\lambda=\langle\lambda H,H\rangle=\sum_{j=2}^{\dim\mf p}\langle[\Ik{X_j},\tilde X_j],H\rangle=-2\rhoa(H).\qedhere
\end{gather*}
\end{proof}

\begin{lemma}\label{la:p_second_part_local}
Let $Y\in\hat K_M$ and $X\in\mf p$. Then
\begin{gather*}
\sum_{V\lraomega Y}\frac{\dim V}{\dim Y}\overline{T_{Y}^V(p_{V,-\rhoa})(e)}\iota_Y^V(\phi_V)(X)=
\begin{cases}
-2\rhoa(H)\phi_Y&\colon X=H\\
\ell(\Ik{X})\phi_Y&\colon X\perp\mf a
\end{cases},
\end{gather*}
where the bar denotes complex conjugation.
\end{lemma}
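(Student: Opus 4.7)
The plan is to reduce the lemma to the single identity
\begin{equation*}
\frac{\dim V}{\dim Y}\,\overline{\nu(V,Y)} \;=\; -\,T_V^Y(p_{Y,\rhoa})(e)\qquad(V\lraomega Y),
\end{equation*}
where $\nu(V,Y)=T_Y^V(p_{V,-\rhoa})(e)$ as in \eqref{eq:T_decomp}. Granting this, I use that $p_{Y,\rhoa}\in(Y\otimes\mf p^*)^M$ by Lemma~\ref{la:gen_diffops} and that $\{\iota_Y^V(\phi_V)\mid V\lraomega Y\}$ is a basis of $(Y\otimes\mf p^*)^M$: applying $T_V^Y$ and using $T_V^Y\circ\iota_Y^V=\on{id}_V$ together with $T_V^Y(p_{Y,\rhoa})\in V^M=\C\phi_V$ gives
\begin{equation*}
p_{Y,\rhoa}\;=\;\sum_{V\lraomega Y} T_V^Y(p_{Y,\rhoa})(e)\,\iota_Y^V(\phi_V).
\end{equation*}
Substituting the key identity therefore identifies the left-hand side of the lemma with $-p_{Y,\rhoa}(X)$. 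The description of $p_{Y,\rhoa}$ as a map $\mf p\to Y$ read off from its definition in Lemma~\ref{la:gen_diffops}, namely $p_{Y,\rhoa}(H)=2\rhoa(H)\phi_Y$ and $p_{Y,\rhoa}(X)=-\ell(\Ik{X})\phi_Y$ for $X\in\mf p\cap(\mf k\oplus\mf n)$, then yields the two claimed formulas.

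The key identity is an integration-by-parts computation. Unfolding the definition of $T_Y^V$ and invoking Proposition~\ref{prop:helg_general}\,\ref{prop:helg_general_ii} in the form $\varphi(e)=\dim Y\,\langle\varphi,\phi_Y\rangle$ for $\varphi\in Y$ gives
\begin{equation*}
\nu(V,Y)\;=\;-\dim Y\sum_{j\geq 2}\langle\omega(\tilde X_j)\,\ell(\Ik{X_j})\phi_V,\,\phi_Y\rangle.
\end{equation*}
Since $X_j\in\mf p_0$, the function $\omega(\tilde X_j)$ is real-valued, and $\ell(\Ik{X_j})$ is skew-adjoint on $L^2(K)$ because $\Ik{X_j}\in\mf k_0$ generates a one-parameter group of isometries. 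Taking complex conjugates and moving $\ell(\Ik{X_j})$ across yields
\begin{equation*}
\overline{\nu(V,Y)}\;=\;\dim Y\sum_{j\geq 2}\langle\ell(\Ik{X_j})(\omega(\tilde X_j)\phi_Y),\,\phi_V\rangle.
\end{equation*}
The Leibniz rule together with the $K$-equivariance identity $\ell(\Ik{X_j})\omega(\tilde X_j)=\omega([\Ik{X_j},\tilde X_j])$ splits this into two contributions. The first, involving $\sum_{j\geq 2}\omega([\Ik{X_j},\tilde X_j])=-2\rhoa(H)\omega(H)$ by Lemma~\ref{la:2rho_rank1} together with the equality $\langle\omega(H)\phi_Y,\phi_V\rangle=\lambda(Y,V)/\dim V$ (consequence of Lemma~\ref{la:lambda_explicit} and $V^M=\C\phi_V$), contributes $-2\rhoa(H)\lambda(Y,V)\,\dim Y/\dim V$. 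The second, namely $\sum_{j\geq 2}\omega(\tilde X_j)\ell(\Ik{X_j})\phi_Y$, is recognized via Remark~\ref{rem:sum_of_projections} applied to $p_{Y,-\rhoa}$ as $-\sum_{W\lraomega Y}\nu(Y,W)\phi_W$, whence orthogonality of the $\phi_W$ contributes $-\nu(Y,V)\,\dim Y/\dim V$. Summing the two pieces and reinvoking \eqref{eq:T_decomp} completes the identity.

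The main obstacle is the careful bookkeeping in this integration by parts: tracking how the complex conjugate is absorbed by the reality of $\omega$, how the Leibniz boundary term yields exactly the $-2\rhoa(H)$ prefactor via Lemma~\ref{la:2rho_rank1}, and how the remaining interior term is recognized as $-\nu(Y,V)$ through the dual interpretation of $p_{Y,-\rhoa}$ in Remark~\ref{rem:sum_of_projections}. Once these pieces are in place, the reduction of the left-hand side to $-p_{Y,\rhoa}(X)$ via the $M$-invariant expansion is a formal substitution.
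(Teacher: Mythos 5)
Your proof is correct and is essentially the paper's own argument: the paper establishes exactly your key identity $\tfrac{\dim V}{\dim Y}\overline{T_Y^V(p_{V,-\rhoa})(e)}=T_V^Y(-p_{Y,\rhoa})(e)$ by the same integration by parts on $K$ (reality of $\omega$ on $\mf p_0$, skew-adjointness of $\ell(\mf k_0)$, the Leibniz rule combined with Lemma \ref{la:2rho_rank1}, and recognition of the remaining term as $-\nu(Y,V)$ via Remark \ref{rem:sum_of_projections}), and then collapses the sum to $-p_{Y,\rhoa}(X)$. The only slight imprecision is your claim that $\{\iota_Y^V(\phi_V)\mid V\lraomega Y\}$ is a basis of $(Y\otimes\mf p^*)^M$ — for $\GSO{3}$ it is a proper subset — but what you actually need, namely $T_V^Y(p_{Y,\rhoa})=0$ for spherical $V\lra Y$ with $V\notleftrightomega Y$, holds (cf.\ the footnote to Proposition \ref{prop:gen_grad_omega}), and the paper's proof passes over the same point.
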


\begin{proof}
For each $\psi\in L^2(K/M)$ we have by orthogonality and Proposition \ref{prop:helg_general}\,\ref{prop:helg_general_ii},
\begin{gather}\label{eq:proof_proj_at_e}
\on{pr}_Y(\psi)(e)=\langle\on{pr}_{Y}(\psi),\frac{\phi_Y}{\langle\phi_Y,\phi_Y\rangle_{L^2(K)}}\rangle_{L^2(K)}=\langle\psi,\frac{\phi_Y}{\langle\phi_Y,\phi_Y\rangle_{L^2(K)}}\rangle_{L^2(K)}.
\end{gather}
Therefore, since $\omega(X),\ X\in\mf p_0,$ is real valued (third step) and using the product rule and Lemma \ref{la:2rho_rank1} (fourth step), $T_{Y}^V(p_{V,-\rhoa})(e)$ equals
\begin{align*}
-\sum_{j=2}^{\dim\mf p}\on{pr}_Y(\omega(\tilde X_j)\ell(\Ik{X_j})\phi_V)(e)&=-\langle\sum_{j=2}^{\dim\mf p}\omega(\tilde X_j)\ell(\Ik{X_j})\phi_V,\frac{\phi_Y}{\langle\phi_Y,\phi_Y\rangle_{L^2(K)}}\rangle_{L^2(K)}\\
&=-\sum_{j=2}^{\dim\mf p}\langle\ell(\Ik{X_j})\phi_V,\omega(\tilde X_j)\frac{\phi_Y}{\langle\phi_Y,\phi_Y\rangle_{L^2(K)}}\rangle_{L^2(K)}\\
&=\langle\phi_V,-2\rhoa(H)\omega(H)\frac{\phi_Y}{\langle\phi_Y,\phi_Y\rangle_{L^2(K)}}\rangle_{L^2(K)}\\
&\qquad+\langle\phi_V,\sum_{j=2}^{\dim\mf p}\omega(\tilde X_j)\ell(\Ik{X_j})\frac{\phi_Y}{\langle\phi_Y,\phi_Y\rangle_{L^2(K)}}\rangle_{L^2(K)}.
\end{align*}
Applying Equation \eqref{eq:proof_proj_at_e} for $V$ and Proposition \ref{prop:helg_general}\,\ref{prop:helg_general_iii} we infer $\dim V\cdot T_{Y}^V(p_{V,-\rhoa})(e)=\dim Y\cdot\overline{T_V^Y(-p_{Y,\rhoa})(e)}$ and thus
\begin{gather*}
\sum_{V\lraomega Y}\frac{\dim V}{\dim Y}\overline{T_{Y}^V(p_{V,-\rhoa})(e)}\iota_Y^V(\phi_V)(X)=\sum_{V\lraomega Y}T_V^Y(-p_{Y,\rhoa})(e)\iota_Y^V(\phi_V)(X).
\end{gather*}
Note that $T_V^Y(-p_{Y,\rhoa})\in V$ is left $M$-invariant since $p_{Y,\rhoa}$ is left $M$-invariant by Lemma \ref{la:gen_diffops}\,\ref{it:gen_diffops1} and $T_V^Y:Y\otimes\mf p^*\to V$ is $K$-equivariant. Therefore it is a multiple of $\phi_V$ and we have $T_V^Y(-p_{Y,\rhoa})=T_V^Y(-p_{Y,\rhoa})(e)\phi_V$.
We infer that
\begin{gather*}
\sum_{V\lraomega Y}T_V^Y(-p_{Y,\rhoa})(e)\iota_Y^V(\phi_V)(X)=\sum_{V\lraomega Y}\iota_Y^V(T_V^Y(-p_{Y,\rhoa}))(X)=-p_{Y,\rhoa}(X).
\end{gather*}
The lemma now follows from the definition of $p_{Y,\rhoa}(X)$.
\end{proof}

We are now able to compute the contribution of the second part in \eqref{eq:decomp_p}.
\begin{lemma}\label{la:p_second_part}
Let $Y\in\hat K_M,\ X\in\mf p$ and $\varphi\in Y$. Then
\begin{gather*}
\sum_{V\lraomega Y}\frac{\dim V}{\dim Y}T_{Y}^V(p_{V,-\rhoa})(e)\iota_V^{Y}(\varphi)(X)(e)=\begin{cases}
-2\rhoa(H)\varphi(e)&\colon X=H\\
-(\ell(\Ik{X})\varphi)(e)&\colon X\perp\mf a
\end{cases}.
\end{gather*}
\end{lemma}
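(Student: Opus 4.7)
The plan is to reduce the identity to the statement $\on{pr}_Y(\omega(X)N)=-p_{Y,\rhoa}(X)$ in $Y$, for a specific $M$-spherical function $N\in L^2(K/M)$, and then to deduce this from the multiplicity-free decomposition $\sum_{V\lraomega Y}\iota_Y^V\circ T_V^Y=\on{id}_{Y\otimes\mf p^*}$ applied to the tensor $-p_{Y,\rhoa}$.

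First I would rewrite the left-hand side as a sesquilinear pairing with $\varphi$. Applying Proposition~\ref{prop:iota_gen} with the roles of $V$ and $Y$ swapped gives $\iota_V^Y(\varphi)(X)=\lambda(Y,V)^{-1}\on{pr}_V(\omega(X)\varphi)$, so that the formula $\on{pr}_V(\psi)(e)=\dim V\cdot\langle\psi,\phi_V\rangle_{L^2(K)}$, the self-adjointness of the real function $\omega(X)$, and Proposition~\ref{prop:lambdaVY_BO}\,\ref{it:lambdaVY_4} combine to yield
\begin{gather*}
S(X)\coloneqq\sum_{V\lraomega Y}\frac{\dim V\cdot T_Y^V(p_{V,-\rhoa})(e)}{\lambda(V,Y)}\,\langle\varphi,\omega(X)\phi_V\rangle_{L^2(K)}.
\end{gather*}
Pulling each scalar coefficient into the second slot of the pairing produces a complex conjugation, which is then eliminated by the identity $\dim V\cdot T_Y^V(p_{V,-\rhoa})(e)=\dim Y\cdot\overline{T_V^Y(-p_{Y,\rhoa})(e)}$ established inside the proof of Lemma~\ref{la:p_second_part_local}. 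Since $-p_{Y,\rhoa}$ is $M$-invariant by Lemma~\ref{la:gen_diffops}\,\ref{it:gen_diffops1}, one has $T_V^Y(-p_{Y,\rhoa})=T_V^Y(-p_{Y,\rhoa})(e)\phi_V\in V^M$, and the expression takes the form $S(X)=\dim Y\cdot\langle\varphi,\omega(X)N\rangle_{L^2(K)}$ with $N\coloneqq\sum_{V\lraomega Y}T_V^Y(-p_{Y,\rhoa})/\lambda(V,Y)$.

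Because $\varphi\in Y$, the inner product sees only $\on{pr}_Y(\omega(X)N)$, and the heart of the argument is to identify this projection with $-p_{Y,\rhoa}(X)$. The multiplicity-freeness of $Y\otimes\mf p^*$ (Proposition~\ref{prop:mult_one}) gives $-p_{Y,\rhoa}=\sum_{V\lra Y}\iota_Y^V(T_V^Y(-p_{Y,\rhoa}))$, and the $M$-invariance of $-p_{Y,\rhoa}$ reduces the sum to $V\lraomega Y$: non-$M$-spherical summands vanish since $V^M=0$, while $M$-spherical $V$ with $V\notleftrightomega Y$ have $T_V^Y(p_{Y,\rhoa})(e)=0$ by Proposition~\ref{prop:gen_grad_omega} together with Lemma~\ref{la:gen_diffops}\,\ref{it:gen_diffops2}. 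Evaluating at $X$ and using Proposition~\ref{prop:iota_gen} in the form $\iota_Y^V(\phi_V)(X)=\lambda(V,Y)^{-1}\on{pr}_Y(\omega(X)\phi_V)$ identifies $-p_{Y,\rhoa}(X)$ with $\on{pr}_Y(\omega(X)N)$, so $S(X)=-\dim Y\cdot\langle\varphi,p_{Y,\rhoa}(X)\rangle_{L^2(K)}$.

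It remains to unfold the two cases. For $X=H$, Lemma~\ref{la:gen_diffops} yields $p_{Y,\rhoa}(H)=2\rhoa(H)\phi_Y$, and the evaluation formula $\dim Y\cdot\langle\varphi,\phi_Y\rangle=\varphi(e)$ gives $S(H)=-2\rhoa(H)\varphi(e)$. For $X\perp\mf a$, one has $p_{Y,\rhoa}(X)=-\ell(\Ik{X})\phi_Y$, and the skew-adjointness of $\ell(\Ik{X})$ (since $\Ik{X}\in\mf k_0$) combined with the same evaluation formula yields $S(X)=-(\ell(\Ik{X})\varphi)(e)$. The main obstacle is the scalar-and-conjugation bookkeeping: the sign mismatch with Lemma~\ref{la:p_second_part_local} in the $X\perp\mf a$ case is precisely what forces one to invoke the duality $\dim V\cdot T_Y^V(p_{V,-\rhoa})(e)=\dim Y\cdot\overline{T_V^Y(-p_{Y,\rhoa})(e)}$, rather than mimicking the proof of that lemma directly.
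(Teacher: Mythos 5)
Your proposal is correct and follows essentially the same route as the paper: reduce the sum to an $L^2(K)$-pairing via Proposition~\ref{prop:iota_gen} and the evaluation formula for $\on{pr}_V$, use the adjointness relation $\dim V\cdot T_Y^V(p_{V,-\rhoa})(e)=\dim Y\cdot\overline{T_V^Y(-p_{Y,\rhoa})(e)}$, identify the resulting second slot with $-p_{Y,\rhoa}(X)$ through the decomposition $\sum_V\iota_Y^V\circ T_V^Y=\on{id}$, and finish with the skew-adjointness of $\ell(\Ik{X})$ — this is exactly the paper's argument with Lemma~\ref{la:p_second_part_local} inlined. Your explicit justification that the summands with $V\lra Y$ but $V\notleftrightomega Y$ drop out (via $V^M=0$ or Proposition~\ref{prop:gen_grad_omega} combined with Lemma~\ref{la:gen_diffops}) is a point the paper's proof of Lemma~\ref{la:p_second_part_local} leaves implicit, and is welcome.
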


\begin{proof}
Note first that Proposition \ref{prop:iota_gen} implies that
\begin{gather*}
\sum_{V\lraomega Y}\frac{\dim V}{\dim Y}T_{Y}^V(p_{V,-\rhoa})(e)\iota_V^{Y}(\varphi)(X)(e)
=\sum_{V\lraomega Y}\frac{\dim V}{\dim Y}T_{Y}^V(p_{V,-\rhoa})(e)\frac{1}{\lambda(Y,V)}\on{pr}_V(\omega(X)\varphi)(e).
\end{gather*}
By Equation \eqref{eq:proof_proj_at_e} we infer that
\begin{align*}
&\phantom{{}={}}\sum_{V\lraomega Y}\frac{\dim V}{\dim Y}T_{Y}^V(p_{V,-\rhoa})(e)\frac{1}{\lambda(Y,V)}\on{pr}_V(\omega(X)\varphi)(e)\\
&=\sum_{V\lraomega Y}\frac{\dim V}{\dim Y}T_{Y}^V(p_{V,-\rhoa})(e)\frac{1}{\lambda(Y,V)}\langle\omega(X)\varphi,\frac{\phi_V}{\langle\phi_V,\phi_V\rangle_{L^2(K)}}\rangle_{L^2(K)}\\
&=\langle\varphi,\sum_{V\lraomega Y}\frac{\dim V}{\dim Y}\overline{T_{Y}^V(p_{V,-\rhoa})(e)}\frac{1}{\lambda(Y,V)}\omega(X)\frac{\phi_V}{\langle\phi_V,\phi_V\rangle_{L^2(K)}}\rangle_{L^2(K)}\\
&=\langle\varphi,\on{pr}_Y(\sum_{V\lraomega Y}\frac{\dim V}{\dim Y}\overline{T_{Y}^V(p_{V,-\rhoa})(e)}\frac{1}{\lambda(Y,V)}\omega(X)\frac{\phi_V}{\langle\phi_V,\phi_V\rangle_{L^2(K)}})\rangle_{L^2(K)},
\end{align*}
where the last equation follows from $\varphi\in Y$ and the orthogonality of the $K$-types. Using Proposition \ref{prop:iota_gen} and Proposition \ref{prop:lambdaVY_BO} we deduce that
\begin{align*}
&\phantom{{}={}}\on{pr}_Y(\sum_{V\lraomega Y}\frac{\dim V}{\dim Y}\overline{T_{Y}^V(p_{V,-\rhoa})(e)}\frac{1}{\lambda(Y,V)}\omega(X)\frac{\phi_V}{\langle\phi_V,\phi_V\rangle_{L^2(K)}})\\
&=\sum_{V\lraomega Y}\frac{\dim V}{\dim Y}\overline{T_{Y}^V(p_{V,-\rhoa})(e)}\frac{1}{\lambda(Y,V)}\on{pr}_Y(\omega(X)\frac{\phi_V}{\langle\phi_V,\phi_V\rangle_{L^2(K)}})\\
&=\sum_{V\lraomega Y}\frac{\dim V}{\dim Y}\overline{T_{Y}^V(p_{V,-\rhoa})(e)}\frac{\lambda(V,Y)}{\lambda(Y,V)}\iota_Y^V(\frac{\phi_V}{\langle\phi_V,\phi_V\rangle_{L^2(K)}})(X)\\
&=\sum_{V\lraomega Y}\overline{T_{Y}^V(p_{V,-\rhoa})(e)}\iota_Y^V(\frac{\phi_V}{\langle\phi_V,\phi_V\rangle_{L^2(K)}})(X).
\end{align*}
Finally Proposition \ref{prop:helg_general}\,\ref{prop:helg_general_iii} and Lemma \ref{la:p_second_part_local} imply that
\begin{align*}
&\phantom{{}={}}\sum_{V\lraomega Y}\overline{T_{Y}^V(p_{V,-\rhoa})(e)}\iota_Y^V(\frac{\phi_V}{\langle\phi_V,\phi_V\rangle_{L^2(K)}})(X)\\
&=\frac{1}{\langle\phi_Y,\phi_Y\rangle_{L^2(K)}}\sum_{V\lraomega Y}\frac{\langle\phi_Y,\phi_Y\rangle_{L^2(K)}}{\langle\phi_V,\phi_V\rangle_{L^2(K)}}\overline{T_{Y}^V(p_{V,-\rhoa})(e)}\iota_Y^V(\phi_V)(X)\\
&=\frac{1}{\langle\phi_Y,\phi_Y\rangle_{L^2(K)}}\sum_{V\lraomega Y}\frac{\dim V}{\dim Y}\overline{T_{Y}^V(p_{V,-\rhoa})(e)}\iota_Y^V(\phi_V)(X)\\
&=\frac{1}{\langle\phi_Y,\phi_Y\rangle_{L^2(K)}}\begin{cases}
-2\rhoa(H)\phi_Y&\colon X=H\\
\ell(\Ik{X})\phi_Y&\colon X\perp\mf a
\end{cases}.
\end{align*}
Summarizing, we have for $X=H$
\begin{align*}
&\phantom{{}={}}\sum_{V\lraomega Y}\frac{\dim V}{\dim Y}T_{Y}^V(p_{V,-\rhoa})(e)\iota_V^{Y}(\varphi)(X)(e)\\
&=-2\rhoa(H)\langle\varphi,\frac{\phi_Y}{\langle\phi_Y,\phi_Y\rangle_{L^2(K)}}\rangle_{L^2(K)}=-2\rhoa(H)\varphi(e)
\end{align*}
and for $X\in\mf p$ with $X\perp\mf a$
\begin{align*}
&\phantom{{}={}}\sum_{V\lraomega Y}\frac{\dim V}{\dim Y}T_{Y}^V(p_{V,-\rhoa})(e)\iota_V^{Y}(\varphi)(X)(e)=\langle\varphi,\ell(\Ik{X})\frac{\phi_Y}{\langle\phi_Y,\phi_Y\rangle_{L^2(K)}}\rangle_{L^2(K)}\\
&=-\langle\ell(\Ik{X})\varphi,\frac{\phi_Y}{\langle\phi_Y,\phi_Y\rangle_{L^2(K)}}\rangle_{L^2(K)}=-(\ell(\Ik{X})\varphi)(e).\qedhere
\end{align*}
\end{proof}

We are now ready to prove the Theorem \ref{thm:fourier_char}.
\begin{proposition}\label{prop:a_action}
In the setting of Lemma \ref{la:a_action_gen_gradients_general} we have
\begin{gather*}
r(H)f=(\mu-\rhoa)(H)f.
\end{gather*}
\end{proposition}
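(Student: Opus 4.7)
The plan is to specialize Lemma \ref{la:a_action_gen_gradients_general} to $X=H$ and show that the resulting double sum collapses termwise, with the inner sum over $V\lraomega Y_\tau$ reducing to a scalar multiple of $\pi_{Y_\tau}(\pi_{Y_\tau}^*(f))$. Summing over $\tau$ will then recover $f$ via Lemma \ref{la:distribution_decomp}, with the correct scalar $(\mu-\rhoa)(H)$.

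First I apply Lemma \ref{la:a_action_gen_gradients_general} with $X=H$, which yields, in the weak sense,
\begin{gather*}
r(H)f=\sum_{\tau\in\hat K_M}\sum_{\substack{V\lraomega Y_\tau\\V\in\hat K_M}}\frac{\dim V}{\dim Y_\tau}T_{Y_{\tau}}^V(p_{V,\mu})(e)f_{V,\tau,H},
\end{gather*}
where $f_{V,\tau,H}(gM)=\iota_V^{Y_{\tau}}(\pi_{Y_\tau}^*(f)(g))(H)(e)$. Next I use the decomposition \eqref{eq:decomp_p},
\begin{gather*}
p_{V,\mu}=(\mu+\rhoa)(H)\,\phi_V\otimes\I(H)+p_{V,-\rhoa},
\end{gather*}
to split $T_{Y_\tau}^V(p_{V,\mu})(e)$ into two contributions, which may be treated separately.

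For fixed $\tau$ and $g$, set $\varphi\coloneqq\pi_{Y_\tau}^*(f)(g)\in Y_\tau$, so that $f_{V,\tau,H}(gM)=\iota_V^{Y_\tau}(\varphi)(H)(e)$. The first contribution is handled by Lemma \ref{la:p_first_part} with $X=H$: since $\omega(H)(eM)=\langle H,H\rangle=1$, we obtain
\begin{gather*}
\sum_{V\lraomega Y_\tau}\frac{\dim V}{\dim Y_\tau}T_{Y_\tau}^V(\phi_V\otimes\I(H))(e)\,\iota_V^{Y_\tau}(\varphi)(H)(e)=(\omega(H)\varphi)(e)=\varphi(e).
\end{gather*}
The second contribution is exactly the content of Lemma \ref{la:p_second_part} with $X=H$, giving the value $-2\rhoa(H)\varphi(e)$. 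Adding the two pieces, the inner sum over $V$ reduces to
\begin{gather*}
\bigl[(\mu+\rhoa)(H)-2\rhoa(H)\bigr]\,\varphi(e)=(\mu-\rhoa)(H)\,\pi_{Y_\tau}(\pi_{Y_\tau}^*(f))(gM),
\end{gather*}
since $\varphi(e)=\pi_{Y_\tau}^*(f)(g)(e)=\pi_{Y_\tau}(\pi_{Y_\tau}^*(f))(g)$.

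Finally, summing over $\tau\in\hat K_M$ and invoking Lemma \ref{la:distribution_decomp}, which ensures absolute convergence of $\sum_\tau\pi_{Y_\tau}(\pi_{Y_\tau}^*(f))$ to $f$ in the weak sense and hence justifies the regrouping of the double sum, yields
\begin{gather*}
r(H)f=(\mu-\rhoa)(H)\sum_{\tau\in\hat K_M}\pi_{Y_\tau}(\pi_{Y_\tau}^*(f))=(\mu-\rhoa)(H)\,f.
\end{gather*}
The only subtle point is the interchange of summations, but the absolute convergence guaranteed by Lemmas \ref{la:fourier_conv}, \ref{la:distribution_decomp} and the proof of Lemma \ref{la:a_action_gen_gradients_general} (applied to each test function) makes this harmless; the bulk of the work has already been carried out in Lemmas \ref{la:p_first_part} and \ref{la:p_second_part}, which neatly package the cancellation producing $-2\rhoa(H)$.
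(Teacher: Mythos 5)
Your proposal is correct and follows essentially the same route as the paper's own proof: specialize Lemma \ref{la:a_action_gen_gradients_general} to $X=H$, split $T_{Y_\tau}^V(p_{V,\mu})(e)$ via \eqref{eq:decomp_p}, and evaluate the two pieces with Lemmas \ref{la:p_first_part} and \ref{la:p_second_part} to get $(\mu+\rhoa)(H)-2\rhoa(H)=(\mu-\rhoa)(H)$ termwise before summing over $\tau$. The convergence remarks you add are consistent with what the paper relies on implicitly.
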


\begin{proof}
By Lemma \ref{la:a_action_gen_gradients_general} and Proposition \ref{prop:gen_grad_omega} we have
\begin{gather*}
r(H)f=\sum_{\tau\in\hat K_M}\sum_{V\lraomega Y_\tau}\frac{\dim V}{\dim Y_\tau}T_{Y_{\tau}}^V(p_{V,\mu})(e)f_{V,\tau,H},
\end{gather*}
with (for $g\in G$) $f_{V,\tau,H}(gM)\coloneqq\iota_V^{Y_{\tau}}(\pi_{Y_\tau}^*(f)(g))(H)(e)$.
Lemma \ref{la:p_first_part} and \ref{la:p_second_part} imply that
\begin{align*}
&\phantom{{}={}}\sum_{V\lraomega Y_\tau}\frac{\dim V}{\dim Y_\tau}T_{Y_{\tau}}^V(p_{V,\mu})(e)\iota_V^{Y_{\tau}}(\pi_{Y_\tau}^*(f)(g))(H)(e)\\
&=(\mu+\rhoa)(H)\pi_{Y_\tau}^*(f)(g)(e)-2\rhoa(H)\pi_{Y_\tau}^*(f)(g)(e)\\
&=(\mu-\rhoa)(H)\pi_{Y_\tau}^*(f)(g)(e)\\
&=(\mu-\rhoa)(H)\pi_{Y_\tau}(\pi_{Y_\tau}^*(f))(g).
\end{align*}
Thus, $r(H)f=\sum_{\tau\in\hat K_M}(\mu-\rhoa)(H)\pi_{\tau}^*(\pi_{Y_\tau}^*(f))=(\mu-\rhoa)(H)f$.
\end{proof}

\begin{proposition}\label{prop:n_action}
Let $\mu\in\mf a^*$ and $f=\fourier{f}\in\mc D'(G/M)$ (recall Lemma \ref{la:distribution_decomp}) with $\pi_{Y_\tau}^*(f)\in C^\infty(G\times_KY_\tau)$ such that the equations from Lemma \ref{la:gen_diffops2}\,\ref{it:gen_diffops21} and \ref{it:gen_diffops22} hold for $f$ for every irreducible constituent of $Y_\tau\otimes\mf p^*$ and every $Y_\tau\in\hat K_M$. Let $U_+\in C^\infty(G\times_M\mf n)$ be a smooth section. Then $U_+f=0$.
\end{proposition}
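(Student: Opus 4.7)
The plan is to follow the strategy of Proposition~\ref{prop:a_action}, reducing $U_+f=0$ to a pointwise cancellation between a contribution from the generalized gradient equations (on the $\mf p$-part) and one from the $K$-equivariance of $\pi_{Y_\tau}^*(f)$ (on the $\mf k$-part), linked through an Iwasawa identity valid specifically on $\mf n$.

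A preliminary observation is that although Lemma~\ref{la:a_action_gen_gradients_general}, together with the subsequent Lemmas~\ref{la:p_first_part} and~\ref{la:p_second_part}, is formulated for $X\in\mf a_0$, the only input about $X$ is that $\nabla$ can be evaluated on it; hence the same formulas hold for every $X\in\mf p_0$, and, applied pointwise, for every smooth section of $G\times_M\mf p$. Using the Cartan decomposition $\mf g=\mf k\oplus\mf p$ I would write $U_+(g)=U_{+,\mf p}(g)+U_{+,\mf k}(g)$ with $U_{+,\mf p}=\tfrac12(U_+-\theta U_+)$ and $U_{+,\mf k}=\tfrac12(U_++\theta U_+)$; both are smooth $M$-equivariant sections since $\on{Ad}(M)$ preserves both $\mf k$ and $\mf p$.

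For the $\mf p$-piece, apply the extended version of Lemma~\ref{la:a_action_gen_gradients_general} at each $g$ with $X=U_{+,\mf p}(g)$, and decompose $p_{V,\mu}=(\mu+\rhoa)(H)\phi_V\otimes\I(H)+p_{V,-\rhoa}$. A Killing-form computation using the restricted root space decomposition shows that $\mf n\perp\mf a$ with respect to $\langle\cdot,\cdot\rangle$, so $U_{+,\mf p}(g)\perp\mf a$. Hence the contribution of the first summand vanishes by Lemma~\ref{la:p_first_part} (its value at $e$ is $\omega(U_{+,\mf p}(g))(eM)\cdot\pi_{Y_\tau}^*(f)(g)(eM)=\langle U_{+,\mf p}(g),H\rangle\cdot\pi_{Y_\tau}^*(f)(g)(eM)=0$), and the second summand collapses via the $X\perp\mf a$ case of Lemma~\ref{la:p_second_part} to
\[ (U_{+,\mf p}\pi_{Y_\tau}(\pi_{Y_\tau}^*(f)))(g)=-\bigl(\ell(\Ik{U_{+,\mf p}(g)})\pi_{Y_\tau}^*(f)(g)\bigr)(e). \]
For the $\mf k$-piece, differentiating the relation $\pi_{Y_\tau}^*(f)(gk)=\tau(k^{-1})\pi_{Y_\tau}^*(f)(g)$ in the direction $U_{+,\mf k}(g)\in\mf k_0$ and evaluating at $e$ gives immediately
\[ (U_{+,\mf k}\pi_{Y_\tau}(\pi_{Y_\tau}^*(f)))(g)=-\bigl(\ell(U_{+,\mf k}(g))\pi_{Y_\tau}^*(f)(g)\bigr)(e). \]

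The two contributions cancel by the pointwise Iwasawa identity $\Ik{X_{\mf p}}+X_{\mf k}=0$ valid for every $X\in\mf n$: since $\theta X\in\overline{\mf n}$ one has $\theta X=(X+\theta X)-X\in\mf k+\mf n$, hence $X_{\mf p}=-\tfrac12(X+\theta X)+X\in\mf k+\mf n$ with $\Ik{X_{\mf p}}=-\tfrac12(X+\theta X)=-X_{\mf k}$. Thus each Fourier summand $U_+\pi_{Y_\tau}(\pi_{Y_\tau}^*(f))$ vanishes pointwise, and pairing with any test function $\varphi\in C_c^\infty(G/M)$ while invoking the weak convergence $f=\sum_\tau\pi_{Y_\tau}(\pi_{Y_\tau}^*(f))$ from Lemma~\ref{la:distribution_decomp} yields $U_+f=0$ in $\mc D'(G/M)$. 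The main obstacle is bookkeeping: carefully extending Lemma~\ref{la:a_action_gen_gradients_general} to the variable vector $X(g)\in\mf p$ and verifying that the pointwise vanishing of each Fourier coefficient transfers correctly to the distributional equation via the weak convergence. The heart of the argument is then the brief algebraic cancellation $\Ik{X_{\mf p}}=-X_{\mf k}$, which is exactly the feature distinguishing $\mf n$ from other subspaces of $\mf g$.
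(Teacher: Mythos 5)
Your proposal is correct and follows essentially the same route as the paper: split $U_+$ into its Cartan $\mf k$- and $\mf p$-parts, handle the $\mf k$-part by $K$-equivariance of $\pi_{Y_\tau}^*(f)$, handle the $\mf p$-part by the generalized-gradient identities together with Lemmas~\ref{la:p_first_part} and~\ref{la:p_second_part} (the $\I(H)$-term dying because the $\mf p$-part of $\mf n$ is orthogonal to $\mf a$), and cancel via $\Ik{X_{\mf p}}=-X_{\mf k}$ for $X\in\mf n$, which is exactly the step the paper performs in passing from $-\ell(\Ik{\Cp{X_j}})$ to $\ell(\Ck{X_j})$.
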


\begin{proof}
Note first that
\begin{gather*}
U_+f=\sum_{\tau\in\hat K_M}U_+\pi_{Y_\tau}(\pi_{Y_\tau}^*(f)).
\end{gather*}
Let $X_1,\ldots,X_{\dim\mf n}$ be a basis of $\mf n_0$. Then there exist functions $\kappa_j\in C^\infty(G)$ such that
\begin{gather*}
U_+(g)=\sum_{j=1}^{\dim\mf n}\kappa_j(g)X_j\quad\forall g\in G.
\end{gather*}
Writing $\Ck{X_j}$ resp.\@ $\Cp{X_j}$ for the $\mf k$- resp.\@ $\mf p$-part of the Cartan decomposition of $Y_j$ we define
\begin{gather*}
U_+^\mf k(g)\coloneqq\sum_{j=1}^{\dim\mf n}\kappa_j(g)\Ck{X_j},\quad U_+^\mf p(g)\coloneqq\sum_{j=1}^{\dim\mf n}\kappa_j(g)\Cp{X_j}.
\end{gather*}
Note that, by definition of $U_+$ and since $M$ preserves the Cartan decomposition, we have
\begin{align*}
U_+(gm)=\on{Ad}(m^{-1})U_+(g),\ U_+^\mf k(gm)=\on{Ad}(m^{-1})U_+^\mf k(g),\ U_+^\mf p(gm)=\on{Ad}(m^{-1})U_+^\mf p(g)
\end{align*}
for each $g\in G$ and $m\in M$. We have
\begin{align}
\nonumber U_+^\mf k\pi_{Y_\tau}(\pi_{Y_\tau}^*(f))(gM)&=\sum_{j=1}^{\dim\mf n}\kappa_j(g)\d\pi_{Y_\tau}(\pi_{Y_\tau}^*(f))(g\exp t\Ck{X_j}M)\\
\nonumber&=\sum_{j=1}^{\dim\mf n}\kappa_j(g)\d\pi_{Y_\tau}^*(f)(g\exp t\Ck{X_j})(e)\\
\label{eq:proof_p_k_part}&=-\sum_{j=1}^{\dim\mf n}\kappa_j(g)(\ell(\Ck{X_j})\pi_{Y_\tau}^*(f)(g))(e).
\end{align}
For the $\mf p$-part we obtain
\begin{align*}
U_+^\mf p\pi_{Y_\tau}(\pi_{Y_\tau}^*(f))(gM)&=\sum_{j=1}^{\dim\mf n}\kappa_j(g)\d\pi_{Y_\tau}(\pi_{Y_\tau}^*(f))(g\exp t\Cp{X_j}M)\\
&=\sum_{j=1}^{\dim\mf n}\kappa_j(g)\d\pi_{Y_\tau}^*(f)(g\exp t\Cp{X_j})(e)\\
&=\sum_{j=1}^{\dim\mf n}\kappa_j(g)(((\nabla\circ\pi_{Y_\tau}^*(f))(g))(\Cp{X_j}))(e).
\end{align*}
As in the proof of Lemma \ref{la:a_action_gen_gradients_general} we infer that
\begin{gather*}
U_+^\mf p\pi_{Y_\tau}(\pi_{Y_\tau}^*(f))(gM)=
\sum_{j=1}^{\dim\mf n}\kappa_j(g)\sum_{V\lraomega Y_\tau}\frac{\dim Y_\tau}{\dim V}T_V^{Y_\tau}(p_{Y_\tau,\mu})(e)\iota_{Y_\tau}^V(\pi_{V}^*(f)(g))(\Cp{X_j})(e).
\end{gather*}
If we define
\begin{gather*}
\Psi_{V,Y_\tau}\in C^\infty(G/M),\quad\Psi_{V,Y_\tau}(gM)\coloneqq\sum_{j=1}^{\dim\mf n}\kappa_j(g)\iota_{Y_\tau}^V(\pi_{V}^*(f)(g))(\Cp{X_j})(e)
\end{gather*}
we thus have
\begin{gather*}
U_+^\mf p\pi_{Y_\tau}(\pi_{Y_\tau}^*(f))=
\sum_{V\lraomega Y_\tau}\frac{\dim Y_\tau}{\dim V}T_V^{Y_\tau}(p_{Y_\tau,\mu})(e)\Psi_{V,Y_\tau}.
\end{gather*}
Therefore
\begin{align*}
U_+^{\mf p}f&=\sum_{Y_\tau\in\hat K_M}U_+^{\mf p}\pi_{Y_\tau}(\pi_{Y_\tau}^*(f))=\sum_{\tau\in\hat K_M}\sum_{V\lraomega Y_\tau}\frac{\dim Y_\tau}{\dim V}T_V^{Y_\tau}(p_{Y_\tau,\mu})(e)\Psi_{V,Y_\tau}\\
&=\sum_{V\in\hat K_M}\sum_{V\lraomega Y_\tau}\frac{\dim Y_\tau}{\dim V}T_V^{Y_\tau}(p_{Y_\tau,\mu})(e)\Psi_{V,Y_\tau}.
\end{align*}
Finally Lemma \ref{la:p_first_part} and \ref{la:p_second_part} imply that, for $V\in\hat K_M$ fixed,
\begingroup
\allowdisplaybreaks
\begin{align}
\nonumber&\phantom{{}={}}\sum_{V\lraomega Y_\tau}\frac{\dim Y_\tau}{\dim V}T_V^{Y_\tau}(p_{Y_\tau,\mu})(e)\Psi_{V,Y_\tau}(gM)\\
\nonumber&=\sum_{j=1}^{\dim\mf n}\kappa_j(g)\sum_{V\lraomega Y_\tau}\frac{\dim Y_\tau}{\dim V}T_V^{Y_\tau}(p_{Y_\tau,\mu})(e)\iota_{Y_\tau}^V(\pi_{V}^*(f)(g))(\Cp{X_j})(e)\\
\nonumber&=\sum_{j=1}^{\dim\mf n}\kappa_j(g)(-\ell(\Ik{\Cp{X_j}})\pi_{V}^*(f)(g))(e)\\
\label{eq:proof_p_p_part}&=\sum_{j=1}^{\dim\mf n}\kappa_j(g)(\ell(\Ck{X_j})\pi_{V}^*(f)(g))(e).
\end{align}
\endgroup
Combining Equation \eqref{eq:proof_p_k_part} and \eqref{eq:proof_p_p_part} we infer
\begin{gather*}
U_+f=U_+^\mf kf+U_+^\mf pf=\sum_{V\in\hat K_M}U_+^\mf k\pi_V(\pi_{V}^*(f))+\sum_{V\lraomega Y_\tau}\frac{\dim Y_\tau}{\dim V}T_V^{Y_\tau}(p_{Y_\tau,\mu})(e)\Psi_{V,Y_\tau}=0.\qedhere
\end{gather*}
\end{proof}

\begin{theorem}[Fourier characterization of spherical principal series]\label{thm:fourier_char}
Let $\mu\in\mf a^*$ and $f=\fourier{f}\in\mc D'(G/M)$ (recall Lemma \ref{la:distribution_decomp}) with $\pi_{Y_\tau}^*(f)\in C^\infty(G\times_KY_\tau)$ such that the equations from Lemma \ref{la:gen_diffops2}\,\ref{it:gen_diffops21} and \ref{it:gen_diffops22} hold for $f$ for every irreducible constituent of $Y_\tau\otimes\mf p^*$ and every $Y_\tau\in\hat K_M$. Then $f\in\pstd{\mu}$.
\end{theorem}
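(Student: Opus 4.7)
The proof is essentially a packaging result: the substance has already been produced in Propositions~\ref{prop:a_action} and \ref{prop:n_action}, and what remains is to match those two conclusions against the description of $\pstd{\mu}$ in the induced picture.

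The plan is as follows. Recall from \eqref{eq:end_point} that the topological isomorphism
\[
\mc Q_\mu\colon (\mc D'(K/M),\pi_\mu^{\mathrm{cpt}})\longrightarrow\mc R(\mu-\rhoa)
\]
identifies the distribution vectors $\pstd{\mu}$ in the compact picture with
\[
\mc R(\mu-\rhoa)=\bigl\{u\in\mc D'(G/M)\;\bigm|\;(H-\mu(H)+\rhoa(H))u=0,\ \forall\,U_+\in C^\infty(G\times_M\mf n_0)\colon U_+u=0\bigr\}
\]
sitting inside $\mc D'(G/M)$. So it suffices to verify that the given $f$ lies in $\mc R(\mu-\rhoa)$, i.e.\@ that $f$ satisfies both defining equations.

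First I would invoke Proposition~\ref{prop:a_action}: since $f$ fulfils the gradient equations of Lemma~\ref{la:gen_diffops2}\,\ref{it:gen_diffops21}, \ref{it:gen_diffops22} at every $Y_\tau\in\hat K_M$, that proposition directly gives $r(H)f=(\mu-\rhoa)(H)f$, which is exactly the first defining equation. Second, I would apply Proposition~\ref{prop:n_action} under the same hypotheses to conclude $U_+f=0$ for every $U_+\in C^\infty(G\times_M\mf n_0)$, which is the second defining equation. Both equalities hold in the sense of distributions by construction (the series $f=\sum_{\tau\in\hat K_M}\pi_{Y_\tau}(\pi_{Y_\tau}^*(f))$ converges absolutely in the weak sense by Lemma~\ref{la:distribution_decomp}, and the $\mf g_0$-action commutes with this limit).

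Combining these two items we conclude $f\in\mc R(\mu-\rhoa)$, so $\mc Q_\mu^{-1}(f)\in\pstd{\mu}$, which is the desired statement. The main obstacle in this argument is not the final combination but the two propositions it relies on; their proofs required the case-by-case tensor-product and scalar computations summarized in Lemmas~\ref{la:p_first_part} and \ref{la:p_second_part} (via Lemma~\ref{la:2rho_rank1}), which ultimately feed on the multiplicity-one statement of Proposition~\ref{prop:mult_one} and on the $K$-equivariance of the map $\omega$ introduced in Definition~\ref{def:omega}. Once these technical results are in place, the theorem itself is a clean consequence.
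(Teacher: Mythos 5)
Your proposal is correct and follows exactly the paper's own argument: the theorem is obtained by combining Proposition~\ref{prop:a_action} (giving $r(H)f=(\mu-\rhoa)(H)f$) and Proposition~\ref{prop:n_action} (giving $U_+f=0$) with the realization of $\pstd{\mu}$ as $\mc R(\mu-\rhoa)$ from Equation~\eqref{eq:end_point}. Your additional remarks correctly locate where the real work lies, namely in the two cited propositions and the computations feeding into them.
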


\begin{proof}
This follows from Proposition \ref{prop:a_action}, Proposition \ref{prop:n_action} and the characterization $\mc R(\mu-\rhoa)$ of $\pstd{\mu}$ from page \pageref{eq:distr_elem_ps}.
\end{proof}

\begin{lemma}\label{la:sum_gen_grad}
Let $Y\in\hat K_M$ and $f\in C^\infty(G\times_KY)$. Then, for each $g\in G$,
\begin{gather*}
\sum_{V\lraomega Y}\mathrm{d}_V^Y(f)(g)=\sum_{j=1}^{\dim\mf p}\omega(\widetilde X_j)(r(X_j)f)(g).
\end{gather*}
\end{lemma}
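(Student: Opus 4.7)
The proof is essentially an unwinding of the definitions, and I expect no serious obstacle; the only delicate point is keeping straight the various identifications between $Y\otimes\mf p^*$, $\Hom(\mf p,Y)$, and elements of $L^2(K/M)$.

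The plan is to start from the definition $\mathrm{d}_V^Y = T_V^Y\circ\nabla$ and invoke Remark~\ref{rem:sum_of_projections}, which gives
\begin{gather*}
\sum_{V\lraomega Y} T_V^Y \;=\; \omega_Y\circ(\on{id}_Y\otimes\,\I^{-1}).
\end{gather*}
Applying this identity pointwise at $\nabla f(g)\in Y\otimes\mf p^*$ yields
\begin{gather*}
\sum_{V\lraomega Y}\mathrm{d}_V^Y(f)(g)\;=\;\omega_Y\bigl((\on{id}_Y\otimes\,\I^{-1})(\nabla f(g))\bigr),
\end{gather*}
so it remains to identify the right-hand side as $\sum_j \omega(\tilde X_j)(r(X_j)f)(g)$.

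Next I would expand $\nabla f(g)$ in the given basis. Using the isomorphism $Y\otimes\mf p^*\cong\Hom(\mf p,Y)$ from Proposition~\ref{prop:oersted_nabla}, and the fact that $\I(\tilde X_j)(X_i)=\langle\tilde X_j,X_i\rangle=\delta_{ij}$, one has the representation
\begin{gather*}
\nabla f(g)\;=\;\sum_{j=1}^{\dim\mf p}(\nabla f(g))(X_j)\otimes\I(\tilde X_j).
\end{gather*}
Applying $\on{id}_Y\otimes\,\I^{-1}$ replaces the $\I(\tilde X_j)$'s by $\tilde X_j\in\mf p$, and then the definition $\omega_Y(\varphi\otimes X)=\omega(X)\varphi$ produces
\begin{gather*}
\omega_Y\bigl((\on{id}_Y\otimes\,\I^{-1})(\nabla f(g))\bigr)\;=\;\sum_{j=1}^{\dim\mf p}\omega(\tilde X_j)\,(\nabla f(g))(X_j).
\end{gather*}

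Finally I would match $(\nabla f(g))(X_j) = \frac{d}{dt}\big|_{t=0} f(g\exp tX_j) = (r(X_j)f)(g)$ (viewed as an element of $Y\leq L^2(K/M)$, so that the multiplication by the scalar function $\omega(\tilde X_j)\in C^\infty(K/M)$ makes sense), which yields exactly the stated formula. The only point worth double-checking is that both sides of the identity genuinely live in the same space, namely functions on $K/M$ — and this is ensured by realizing $Y\leq L^2(K/M)$ as in Proposition~\ref{prop:multone}. Since this is a direct composition of definitions, no case analysis is required and Lemma~\ref{la:sum_gen_grad} follows.
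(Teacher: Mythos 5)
Your argument is correct and coincides with the paper's own proof: both expand $(\nabla f)(g)=\sum_j (r(X_j)f)(g)\otimes\I(\tilde X_j)$, apply Remark~\ref{rem:sum_of_projections} to identify $\sum_{V\lraomega Y}T_V^Y$ with $\omega_Y\circ(\on{id}_Y\otimes\I^{-1})$, and read off the result. No gaps.
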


\begin{proof}
By definition we have $(\nabla f)(g)=\sum_{j=1}^{\dim\mf p}(r(X_j)f)(g)\otimes\I(\widetilde X_j)\in Y\otimes\mf p^*$.
Therefore,
\begin{gather*}
(\omega_Y\circ(\on{id}_Y\otimes\I^{-1}))((\nabla f)(g))=\sum_{j=1}^{\dim\mf p}\omega(\widetilde X_j)(r(X_j)f)(g)
\end{gather*}
and by Remark \ref{rem:sum_of_projections} and the definition of the generalized gradients $\mathrm{d}_V^Y=T_V^Y\circ\nabla$ we obtain
\begin{gather*}
\sum_{V\lraomega Y}\mathrm{d}_V^Y(f)(g)=\sum_{j=1}^{\dim\mf p}\omega(\widetilde X_j)(r(X_j)f)(g).\qedhere
\end{gather*}
\end{proof}

\begin{lemma}\label{la:gen_grad_adjoint}
Let $V,\, Y\in\hat K_M$ with $V\lraomega Y$, $\varphi\in C^\infty(G\times_KY)$ and $\psi\in C^\infty(G\times_KV)$. Then, if one side exists,
\begin{gather*}
\langle\pi_Y(\varphi),\pi_Y(\mathrm{d}_Y^V(\psi))\rangle_{L^2(G)}=-\langle\pi_V(\mathrm{d}_V^Y(\varphi)),\pi_V(\psi)\rangle_{L^2(G)}.
\end{gather*}
\end{lemma}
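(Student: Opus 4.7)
The plan is to convert each $L^2(G)$-pairing into a $G/K$-integral of pointwise $L^2(K/M)$-pairings of the bundle sections, and then integrate by parts along the real directions $\mf p_0$ using the right-invariance of the Haar measure on $G$.

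First, the equivariance $f(gk)=\tau(k^{-1})f(g)=L(k^{-1})f(g)$ yields the pointwise identity $\pi_Y(f)(gk)=f(g)(kM)$. Decomposing $dg=dgK\,dk$ and then $\int_K F(kM)\,dk=\on{vol}(M)\int_{K/M}F(kM)\,dkM$ gives
\begin{gather*}
\langle\pi_Y(f_1),\pi_Y(f_2)\rangle_{L^2(G)}=\on{vol}(M)\int_{G/K}\langle f_1(g),f_2(g)\rangle_{L^2(K/M)}\,dgK,
\end{gather*}
and analogously for $V$; the integrand on the right is right $K$-invariant in $g$ since so is the $L^2(K/M)$-pairing.

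Next, I pick an orthonormal basis $X_1,\ldots,X_{\dim\mf p}$ of $\mf p_0$, so that $\nabla\varphi(g)=\sum_j r(X_j)\varphi(g)\otimes\I(X_j)$. Because $V\lraomega Y$, Definition~\ref{def:omega} gives
\begin{gather*}
\mathrm{d}_V^Y\varphi(g)=\sum_j\on{pr}_V\bigl(\omega(X_j)\cdot r(X_j)\varphi(g)\bigr),\qquad \mathrm{d}_Y^V\psi(g)=\sum_j\on{pr}_Y\bigl(\omega(X_j)\cdot r(X_j)\psi(g)\bigr).
\end{gather*}
Pairing against $\psi(g)\in V$ resp.\ $\varphi(g)\in Y$ removes the projections, and since $\omega(X_j)$ is real-valued on $K/M$ it can be moved across the Hermitian pairing on $L^2(K/M)$. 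Setting $\alpha_j(g):=\langle\omega(X_j)\varphi(g),\psi(g)\rangle_{L^2(K/M)}$, noting that $\omega(X_j)$ is independent of $g$ while $r(X_j)$ is a real directional derivative, the Leibniz rule under the $L^2(K/M)$-integral yields
\begin{gather*}
\sum_{j=1}^{\dim\mf p}r(X_j)\alpha_j(g)=\langle\mathrm{d}_V^Y\varphi(g),\psi(g)\rangle_{L^2(K/M)}+\langle\varphi(g),\mathrm{d}_Y^V\psi(g)\rangle_{L^2(K/M)}.
\end{gather*}
In particular the sum on the left is automatically right $K$-invariant.

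Combining the two steps, the identity of the lemma is equivalent to $\int_G\sum_j r(X_j)\alpha_j(g)\,dg=0$, and this follows termwise from the right-invariance of the Haar measure on $G$:
\begin{gather*}
\int_G r(X_j)\alpha_j(g)\,dg=\d\int_G\alpha_j(g\exp tX_j)\,dg=\d\int_G\alpha_j(g)\,dg=0.
\end{gather*}
The main subtle point is the exchange of derivative and integral in this last display when $\varphi,\psi$ are not compactly supported. This is precisely where the hypothesis ``if one side exists'' enters: starting from the finiteness of one of the two $L^2(G)$-pairings, Cauchy--Schwarz together with the reduction step above provides the $L^1(G)$-bounds on $\alpha_j$ and on $r(X_j)\alpha_j$ needed for a dominated-convergence argument.
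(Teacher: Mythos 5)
Your proof is correct and reaches the identity by a recognizably different route from the paper's. The paper scalarizes first: it uses orthogonality of distinct $K$-types to replace $\pi_Y(\mathrm{d}_Y^V(\psi))$ by the full sum $\sum_{W\lraomega V}\pi_W(\mathrm{d}_W^V(\psi))$, which by Lemma \ref{la:sum_gen_grad} evaluated at $eM$ collapses to the single derivative $r(H)\pi_V(\psi)$ (because $\omega(\widetilde X_j)(eM)=0$ for $j\geq 2$); the identity then follows from two applications of $\langle f_1,r(H)f_2\rangle_{L^2(G)}=-\langle r(H)f_1,f_2\rangle_{L^2(G)}$. You instead keep the vector-valued picture, rewrite both $L^2(G)$-pairings as $G/K$-integrals of fiberwise $L^2(K/M)$-pairings, and exhibit the sum of the two integrands as the divergence $\sum_j r(X_j)\alpha_j$, which integrates to zero over the unimodular group $G$. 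Your route uses all of $\mf p$ rather than only the radial direction $H$, makes the formal anti-self-adjointness of the $\omega$-weighted gradient transparent, and bypasses Lemma \ref{la:sum_gen_grad} entirely (you essentially re-derive its content fiberwise); the paper's route is shorter once that lemma is in place, and both hinge on the same two ingredients, namely orthogonality of $K$-types on each fiber and bi-invariance of Haar measure.

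One caveat: your closing claim that finiteness of one side plus Cauchy--Schwarz yields the $L^1(G)$-bounds on all the $\alpha_j$ and $r(X_j)\alpha_j$ is optimistic. If, say, only the left side is assumed finite, you control $\norm{\varphi(g)}_{L^2(K/M)}$ and $\norm{\mathrm{d}_Y^V\psi(g)}_{L^2(K/M)}$ in $L^2(G/K)$, but not $\norm{\psi(g)}_{L^2(K/M)}$, which enters every $\alpha_j$; so the termwise integration by parts is not fully justified by the stated hypothesis. This is a shared imprecision rather than a defect of your argument: the paper's own proof performs the same boundary-term-free integration by parts for $r(H)$ (and an orthogonality step requiring Fubini) with no more justification.
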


\begin{proof}
Note first that if $Y\neq W\in\hat K$ and $\eta\in C^\infty(G\times_KW)$ we have
\begin{gather*}
\langle\pi_V(\varphi),\pi_W(\eta)\rangle_{L^2(G)}=0
\end{gather*}
by splitting the integral into $G/K$ and $K$. Therefore we obtain
\begin{gather*}
\langle\pi_Y(\varphi),\pi_Y(\mathrm{d}_Y^V(\psi))\rangle_{L^2(G)}=\langle\pi_Y(\varphi),\sum_{W\lraomega V}\pi_W(\mathrm{d}_W^V(\psi))\rangle_{L^2(G)}.
\end{gather*}
Evaluating Lemma \ref{la:sum_gen_grad} at $eM\in K/M$ yields (since $\omega(\widetilde X_j)(eM)=0$ for $j\geq 2$)
\begin{gather*}
\sum_{W\lraomega V}\pi_W(\mathrm{d}_W^V(\psi))=r(H)\pi_V(\psi).
\end{gather*}
Together we conclude that
\begin{gather*}
\langle\pi_Y(\varphi),\pi_Y(\mathrm{d}_Y^V(\psi))\rangle_{L^2(G)}=\langle\pi_Y(\varphi),r(H)\pi_V(\psi)\rangle_{L^2(G)}=-\langle r(H)\pi_Y(\varphi),\pi_V(\psi)\rangle_{L^2(G)},
\end{gather*}
where we used the right-invariance of the Haar measure on $G$. The same argument yields
\begin{gather*}
\langle r(H)\pi_Y(\varphi),\pi_V(\psi)\rangle_{L^2(G)}=\langle\pi_V(\mathrm{d}_V^Y(\varphi)),\pi_V(\psi)\rangle_{L^2(G)}.\qedhere
\end{gather*}
\end{proof}

\section{Spectral Correspondence}\label{sec:spec_corres}
In this section we describe the image of the minimal $K$-type Poisson transforms occurring in Proposition \ref{prop:Poisson_sum} restricted to the socle. This will yield a quantum-classical correspondence for the first band by Remark \ref{rem:poisson_qc}. The characterization of the Poisson images require some case by case calculations to decompose certain tensor products. We put these calculations into Appendix \ref{app:comp_scalars} to make the arguments presented in this section more transparent.

\subsection{\texorpdfstring{The Case of $G=\GSO{n},\ n\geq 3$}{The Case of G=SO(n,1), n>=3}}
By Propositions \ref{prop:tensor_decomp_odd} and \ref{prop:tensor_decomp_even} we have for each $k\in\N_0$
\begin{gather*}
Y_k\otimes\mf p^*\cong Y_{k-1}\oplus Y_{k+1}\oplus V_k\quad\text{ if }n\neq3, \qquad Y_k\otimes\mf p^*\cong Y_{k-1}\oplus Y_{k+1}\oplus Y_k\quad\text{ if }n=3,
\end{gather*}
where $V_k$, with highest weight $ke_1+e_2$, is not $M$-spherical. We define generalized gradients $\mathrm{d}_V^{Y_k}\coloneqq T_V^{Y_k}\circ\nabla$ with $T_V^{Y_k}\in\Hom{K}{Y_k\otimes\mf p^*}{V}$ as in Definitions \ref{def:gen_grad}, \ref{def:omega} and abbreviate
\begin{gather*}
\mathrm{d}_\pm\coloneqq\mathrm{d}_{Y_{k\pm1}}^{Y_k},\qquad\quad\mathrm{D}\coloneqq\mathrm{d}_{V_k}^{Y_k}\quad\text{ resp.\@ }\quad\mathrm{D}\coloneqq\mathrm{d}_{Y_k}^{Y_k}
\end{gather*}
for $n\neq 3$ resp.\@ $n=3$. Let $\mu=-\rhoa-\ell\alpha\in\mathbf{Ex}$, see Theorem \ref{thm:socle}, be an exceptional parameter and recall the structure and properties of $\on{soc}(\pst{\mu})$ from Theorem \ref{thm:socle}. Using Proposition \ref{prop:gen_grad_omega}, Proposition \ref{prop:lambdaVY_BO}.\ref{it:lambdaVY_2} and Remark \ref{rem:lambda_R} we infer for each $k\in\N_0$
\begin{gather*}
V\notleftrightomega Y_k\:\Longrightarrow\:\mathrm{d}_V^{Y_k}\circ P_\mu^{Y_k}=0\qquad\text{and}\qquad V\lraomega Y_k\:\Longleftrightarrow\: V\in\{Y_{k-1}, Y_{k+1}\}
\end{gather*}
if $Y_{k-1}$ exists\footnote{For $k=0$ we only have $Y_1$ on the right hand side of the second equivalence.}. Therefore,
\begin{gather*}
\mathrm{D}\circ P_\mu^{Y_k}=0.
\end{gather*}
By Theorem \ref{thm:socle} the minimal $K$-type of $\on{soc}(\pst{\mu})$ is $Y_{\ell+1}$. Since
\begin{gather*}
\mathrm{d}_-\circ P_\mu^{Y_{\ell+1}}=T_{Y_{\ell}}^{Y_{\ell+1}}(p_{Y_{\ell+1},\mu})(e)P_\mu^{Y_{\ell}}
\end{gather*}
by Lemma \ref{la:gen_diffops}.\ref{it:gen_diffops2}, and since Proposition \ref{prop:PoissonInjective} implies that $\restr{P_\mu^{Y_\ell}}{(\on{soc}(\pst{\mu}))^{-\infty}}=0$, we obtain
\begin{gather*}
\mathrm{d}_-\circ\restr{P_\mu^{Y_{\ell+1}}}{(\on{soc}(\pst{\mu}))^{-\infty}}=0.
\end{gather*}
Summarizing, we have
\begin{gather*}
P_\mu^{Y_{\ell+1}}:(\on{soc}(\pst{\mu}))^{-\infty}\to\{f\in C^\infty(G\times_KY_{\ell+1})\mid\mathrm{d}_-f=0,\ \mathrm{D}f=0\}.
\end{gather*}
We will now investigate which $K$-types $\mu$ with highest weight $\mu_1e_1+\ldots+\mu_me_m$, $m\coloneqq\on{rk}\mf k=\lfloor\frac{n}{2}\rfloor$, occur on the right hand side. Applying \cite[Thm.\@ 6]{DGK88} to the minimal $K$-type $\tau\coloneqq Y_{\ell+1}$ (with highest weight $(\ell+1)e_1$) of $\on{soc}(\pst{\mu})$, we find that $\mu_j=0$ for $j>1$, $\mu_1\geq\ell+1$ and that each $\mu$ of this form occurs with multiplicity one. Therefore, the highest weights of the $K$-types in $\{f\in C^\infty(G\times_KY_{\ell+1})\mid\mathrm{d}_-f=0,\ \mathrm{D}f=0\}$ are given by $ke_1$ for $k\geq\ell+1$. Since $Y_k$ has highest weight $ke_1$, these $K$-types are exactly the same as the $K$-types of $\on{soc}(\pst{\mu})$ (see Theorem \ref{thm:socle}). Hence, we have
\begin{gather*}
(\on{soc}(\pst{\mu}))_K\cong\{f\in C^\infty(G\times_KY_{\ell+1})\mid\mathrm{d}_-f=0,\ \mathrm{D}f=0\}_K,
\end{gather*}
where the $K$ in the index denotes the Harish-Chandra module. Proceeding as in \cite[Satz 4.13]{Ol} we infer that the Poisson transform $P_\mu^{Y_{\ell+1}}$ yields an isomorphism (similar to the scalar case, see Equation \eqref{eq:E_distr}, Definition \ref{def:sc_PT}) from $(\on{soc}(\pst{\mu}))^{-\infty}$ to
\begin{gather*}
\{f\in C^\infty(G\times_KY_{\ell+1})\mid\mathrm{d}_-f=0,\ \mathrm{D}f=0,\exists\: r\geq0\colon \sup_{g\in G}\abs{e^{-rd_{G/K}(eK,gK)}f(g)}<\infty\}.
\end{gather*}
In particular, we have the following correspondence for the $\Gamma$-invariant elements.
\begin{theorem}[Spectral Correspondence]\label{thm:spectral_corres_SOn}
Let $\mathbf{Ex}\ni\mu=-(\rhoa+\ell\alpha),\ \ell\in\N_0,$ be an exceptional parameter. Then the socle $\on{soc}(\pst{\mu})$ of $\pst{\mu}$ is irreducible, unitary and its $K$-types are given by $Y_k$ for $k\geq\ell+1$. The minimal $K$-type is $Y_{\ell+1}$ and the corresponding Poisson transform induces an isomorphism
\begin{gather*}
P_\mu^{Y_{\ell+1}}\colon{}^\Gamma(\on{soc}(\pst{\mu}))^{-\infty}\cong{}^\Gamma\{f\in C^\infty(G\times_KY_{\ell+1})\mid\mathrm{d}_-f=0,\ \mathrm{D}f=0\}.
\end{gather*}
\end{theorem}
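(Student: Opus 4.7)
The structural assertions about $\on{soc}(\pst{\mu})$ (irreducibility, unitarity, $K$-type decomposition, and minimal $K$-type) are restatements of Theorem~\ref{thm:socle}, so only the isomorphism induced by the Poisson transform requires new work. My plan is to first establish the corresponding $G$-equivariant isomorphism \emph{before} taking $\Gamma$-invariants, and then to pass to $\Gamma$-invariants in the final step; the latter step is essentially automatic because $\Gamma\backslash G/K$ is compact, so $\Gamma$-invariant sections automatically satisfy the moderate growth condition required by the Olbrich-type isomorphism.

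To set up the $G$-equivariant isomorphism, I would proceed in four steps. First, the containment
\[
P_\mu^{Y_{\ell+1}}\bigl((\on{soc}\pst{\mu})^{-\infty}\bigr)\subseteq\{f\in C^\infty(G\times_KY_{\ell+1})\mid\mathrm{d}_- f=0,\ \mathrm{D} f=0\}
\]
is the content of the displayed chain of identities in the paragraph preceding the theorem: the vanishing $\mathrm{D}\circ P_\mu^{Y_{\ell+1}}=0$ comes from Proposition~\ref{prop:gen_grad_omega} (since $V_k$, resp.\@ $Y_k$ in the $n=3$ case, does not satisfy $V\lraomega Y_k$), and the vanishing of $\mathrm{d}_-$ on the socle comes from Lemma~\ref{la:gen_diffops}\,\ref{it:gen_diffops2} combined with Proposition~\ref{prop:PoissonInjective} applied to the $K$-type $Y_\ell$, which does not occur in the socle. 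Second, injectivity of $P_\mu^{Y_{\ell+1}}$ on $(\on{soc}\pst{\mu})^{-\infty}$ follows from Proposition~\ref{prop:PoissonInjective}, since the socle is irreducible and has $Y_{\ell+1}$ as a $K$-type.

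Third, one must match $K$-types between the two sides. Applying the branching rule \cite[Thm.\@ 6]{DGK88} to the minimal $K$-type $Y_{\ell+1}$ shows that a smooth section of $G\times_K Y_{\ell+1}$ annihilated by $\mathrm{d}_-$ and $\mathrm{D}$ decomposes under $K$ as a multiplicity-free sum of the $K$-types $Y_k$ for $k\geq\ell+1$, precisely reproducing the $K$-type list of $\on{soc}(\pst{\mu})$ from Theorem~\ref{thm:socle}. Thus the Harish-Chandra modules on both sides agree, and by the argument of \cite[Satz 4.13]{Ol} (using the characterization of joint eigenspaces by moderate growth together with Casadio Tarabusi--van den Ban--Schlichtkrull type asymptotics) the Poisson transform $P_\mu^{Y_{\ell+1}}$ yields a topological isomorphism from $(\on{soc}\pst{\mu})^{-\infty}$ onto the space of sections annihilated by $\mathrm{d}_-$ and $\mathrm{D}$ of some moderate growth order. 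Finally, taking $\Gamma$-invariants on both sides, the $G$-equivariance of $P_\mu^{Y_{\ell+1}}$ (which implies $\Gamma$-equivariance) yields the asserted isomorphism; on the right, the growth condition disappears because $\Gamma\backslash G/K$ is compact.

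The main obstacle, and the one step that is not purely formal, is the $K$-type matching in the third step: one needs to verify that the system $\mathrm{d}_- f=0$, $\mathrm{D} f=0$ cuts out exactly the highest-weight vectors $\mu_1 e_1$ with $\mu_1\geq\ell+1$ among the $K$-types occurring in $C^\infty(G\times_K Y_{\ell+1})$. This is where the branching rule of \cite{DGK88} enters essentially: the condition $\mathrm{D} f=0$ forces $\mu_j=0$ for $j>1$, and the condition $\mathrm{d}_- f=0$ enforces $\mu_1\geq \ell+1$ (equivalently, the absence of the $K$-types $Y_0,\ldots,Y_\ell$). Once this computation is in hand, the remaining assembly into the global isomorphism and the passage to $\Gamma$-invariants are routine by the growth-and-compactness argument sketched above.
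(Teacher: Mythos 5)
Your proposal is correct and follows essentially the same route as the paper: the containment via $\mathrm{D}\circ P_\mu^{Y_{\ell+1}}=0$ and the vanishing of $\mathrm{d}_-$ on the socle, injectivity from Proposition~\ref{prop:PoissonInjective}, the $K$-type matching via \cite[Thm.\@ 6]{DGK88}, the Olbrich-type isomorphism onto the moderate-growth solution space as in \cite[Satz 4.13]{Ol}, and finally the observation that co-compactness of $\Gamma$ makes the growth condition automatic for $\Gamma$-invariant sections. No gaps.
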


\begin{proof}
This follows from the discussion above and the fact that each $\Gamma$-invariant function fulfills the growth condition (for each $r\geq 0$)
\begin{gather*}
\sup_{g\in G}\abs{e^{-rd_{G/K}(eK,gK)}f(g)}=\sup_{g\in\mc{F}}\abs{e^{-rd_{G/K}(eK,gK)}f(g)}<\infty,
\end{gather*}
where $\mc{F}$ denotes a fundamental domain of $\Gamma\backslash G$ (note that the latter is compact by assumption).
\end{proof}

\begin{example}
For the first exceptional parameter $\mu=-\rhoa$ we get ($Y_1\cong\mf p^*$)
\begin{gather*}
P_{-\rhoa}^{Y_{1}}\colon{}^\Gamma(\on{soc}(\pst{-\rhoa}))^{-\infty}\cong\{f\in C^\infty(\Lambda^1(\Gamma\backslash G/K))\mid\delta f=0,\ d{ }f=0\},
\end{gather*}
where $\Lambda^1(\Gamma\backslash G/K)$ denotes the bundle of one forms and ($\delta$ resp.\@) $d$ is the (co)-differential. The dimension is given by the first Betti number $b_1(\Gamma\backslash G/K)$ in this case.
\end{example}

\subsection{\texorpdfstring{The Case of $G=\GSU{n},\ n\geq 2$}{The Case of G=SU(n,1), n>=2}}
By Proposition \ref{prop:tensor_decomp_eq_rk} and Remark \ref{rem:dim_C} we have for $p,q\in\N_0$
\begin{gather*}
Y_{p,q}\otimes\mf p^*\cong\bigoplus_{\beta\in S}Y_{p,q,\beta},
\end{gather*}
where $S\coloneqq\{\pm(e_1-e_{n+1}),e_2-e_{n+1},-e_{n-1}+e_{n+1},\pm(e_n-e_{n+1})\}\subseteq\Delta_n$. The representations $V_1$ resp.\@ $V_2$ with highest weights $qe_1+e_2-pe_n+(p-q-1)e_{n+1}$ resp.\@  $qe_1-e_{n-1}-pe_n+(p-q+1)e_{n+1}$ are not $M$-spherical. In this notation we have
\begin{gather*}
Y_{p,q}\otimes\mf p^*\cong Y_{p-1,q}\oplus Y_{p+1,q}\oplus Y_{p,q-1}\oplus Y_{p,q+1}\oplus V_1\oplus V_2,
\end{gather*}
whenever these representations exist (i.e.\@ whenever the corresponding weights of $Y_{p,q,\beta}$ are indeed dominant). We define generalized gradients $\mathrm{d}_V^{Y_{p,q}}\coloneqq T_V^{Y_{p,q}}\circ\nabla$ with $T_V^{Y_{p,q}}\in\Hom{K}{Y_{p,q}\otimes\mf p^*}{V}$ as in Definition \ref{def:omega} and abbreviate
\begin{gather*}
\mathrm{d}_{\pm,1}\coloneqq\mathrm{d}_{Y_{p\pm1,q}}^{Y_{p,q}},\qquad\mathrm{d}_{\pm,2}\coloneqq\mathrm{d}_{Y_{p,q\pm1}}^{Y_{p,q}},\qquad\quad\mathrm{D}_j\coloneqq\mathrm{d}_{V_j}^{Y_{p,q}},\quad j=1,2.
\end{gather*}
Let $\mu=-(\rhoa+2\ell\alpha)\in\mathbf{Ex},\ \ell\in\N_0,$ be an exceptional parameter and recall the structure and properties of $\on{soc}(\pst{\mu})$ from Theorem \ref{thm:socle}. Using Proposition \ref{prop:gen_grad_omega}, Proposition \ref{prop:lambdaVY_BO}.\ref{it:lambdaVY_2} and Remark \ref{rem:lambda_C} we infer
\begin{gather*}
V\notleftrightomega Y_{p,q}\:\Longrightarrow\:\mathrm{d}_V^{Y_{p,q}}\circ P_\mu^{Y_{p,q}}=0\qquad\text{and}\qquad V\lraomega Y_{p,q}\:\Longleftrightarrow\: V\in\{Y_{p\pm1,q}, Y_{p,q\pm1}\}
\end{gather*}
whenever the occurring representations exist. Therefore, for $j\in\{1,2\}$,
\begin{gather}\label{eq:D_j_SUn}
\mathrm{D}_j\circ P_\mu^{Y_{p,q}}=0.
\end{gather}
The minimal $K$-type of $\on{soc}(\pst{\mu})$ is given by $Y_{\ell+1,\ell+1}$ (see Theorem \ref{thm:socle}). By Lemma \ref{la:gen_diffops}.\ref{it:gen_diffops2},
\begin{align*}
\mathrm{d}_{-,1}\circ P_\mu^{Y_{\ell+1,\ell+1}}&=T_{Y_{\ell,\ell+1}}^{Y_{\ell+1,\ell+1}}(p_{Y_{\ell+1,\ell+1},\mu})(e)P_\mu^{Y_{\ell,\ell+1}}\\
\mathrm{d}_{-,2}\circ P_\mu^{Y_{\ell+1,\ell+1}}&=T_{Y_{\ell+1,\ell}}^{Y_{\ell+1,\ell+1}}(p_{Y_{\ell+1,\ell+1},\mu})(e)P_\mu^{Y_{\ell+1,\ell}}.
\end{align*}
Since Proposition \ref{prop:PoissonInjective} implies that $P_\mu^{Y_{\ell,\ell+1}}|_{(\on{soc}(\pst{\mu}))^{-\infty}}=0$ and $P_\mu^{Y_{\ell+1,\ell}}|_{(\on{soc}(\pst{\mu}))^{-\infty}}=0$, we obtain that, for $j\in\{1,2\}$,
\begin{gather*}
\mathrm{d}_{-,j}\circ\restr{P_\mu^{Y_{\ell+1,\ell+1}}}{(\on{soc}(\pst{\mu}))^{-\infty}}=0.
\end{gather*}
Summarizing, we have
\begin{gather}\label{eq:PT_ker_inj}
P_\mu^{Y_{\ell+1,\ell+1}}:(\on{soc}(\pst{\mu}))^{-\infty}\to\{f\in C^\infty(G\times_KY_{\ell+1,\ell+1})\mid\mathrm{d}_{-,j}f=0,\ \mathrm{D}_jf=0,\ j\in\{1,2\}\}.
\end{gather}
We will first present a method similar to the case of $G=\GSO{n}$. For this method we have to assume $n\neq 2$ and $\ell\neq 0$. Then \cite[Eq.\@ (2.7.3), (2.7.4), Lem.\@ 6.2.1, Prop.\@ 6.4.6]{M89} imply that the highest weights of the $K$-types on the right hand side of \eqref{eq:PT_ker_inj} are given by $p'e_1-q'e_n+(q'-p')e_{n+1}$ with $p'\geq\ell+1$ and $q'\geq\ell+1$, each occurring with multiplicity at most one. By definition, the corresponding representations are $Y_{p,q}$ for $p,q\geq\ell+1$. Since the Poisson transform $P_\mu^{Y_{\ell+1,\ell+1}}$ is injective by Proposition \ref{prop:Poisson_sum}, each $K$-type of the socle (see Theorem \ref{thm:socle}) has to occur in its image (restricted to the socle). Therefore the $K$-types of $\{f\in C^\infty(G\times_KY_{\ell+1,\ell+1})\mid\mathrm{d}_{-,j}f=0,\ \mathrm{D}_jf=0,\ j\in\{1,2\}\}$ are given by $Y_{p,q},\ p,q\geq\ell+1,$ each one occurring with multiplicity one. Hence, we obtain
\begin{gather*}
(\on{soc}(\pst{\mu}))_K\cong\{f\in C^\infty(G\times_KY_{\ell+1,\ell+1})\mid\mathrm{d}_{-,j}f=0,\ \mathrm{D}_jf=0,\ j\in\{1,2\}\}_K.
\end{gather*}
Proceeding as in the case of $G=\GSO{n}$ we find
\begin{theorem}[Spectral Correspondence 1]\label{thm:spectral_corres_SUn1}
Let $n\neq 2$ and $\mathbf{Ex}\ni\mu=-(\rhoa+2\ell\alpha),\ \ell\in\N_0,$ be an exceptional parameter with $\ell\neq0$. Then the socle $\on{soc}(\pst{\mu})$ of $\pst{\mu}$ is irreducible, unitary and its $K$-types are given by $Y_{p,q}$ for $p,q\geq\ell+1$. The minimal $K$-type is $Y_{\ell+1,\ell+1}$ and the corresponding Poisson transform induces an isomorphism
\begin{gather*}
P_\mu^{Y_{\ell+1,\ell+1}}\colon{}^\Gamma(\on{soc}(\pst{\mu}))^{-\infty}\cong{}^\Gamma\{f\in C^\infty(G\times_KY_{\ell+1,\ell+1})\colon\mathrm{d}_{-,j}f=0,\ \mathrm{D}_jf=0,\ j\in\{1,2\}\}.
\end{gather*}
\end{theorem}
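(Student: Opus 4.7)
The plan is to assemble the proof from the ingredients already collected in the discussion preceding the statement. The structural claims about $\mathrm{soc}(H_\mu)$ (irreducibility, unitarity, $K$-type decomposition, and identification of the minimal $K$-type $Y_{\ell+1,\ell+1}$) are immediate from Theorem~\ref{thm:socle}, so the real work is to upgrade the inclusion in \eqref{eq:PT_ker_inj} to an isomorphism and then pass to $\Gamma$-invariants.

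First I would record that $P_\mu^{Y_{\ell+1,\ell+1}}$ maps $(\mathrm{soc}(H_\mu))^{-\infty}$ into
\[
\mathcal K \coloneqq \{f\in C^\infty(G\times_K Y_{\ell+1,\ell+1}) \mid \mathrm{d}_{-,j}f=0,\ \mathrm{D}_j f=0,\ j\in\{1,2\}\},
\]
which is exactly the content of \eqref{eq:PT_ker_inj}. To see that this map exhausts $\mathcal K$ on the level of Harish-Chandra modules, I would invoke \cite[Eq.~(2.7.3), (2.7.4), Lem.~6.2.1, Prop.~6.4.6]{M89} (valid precisely for $n\neq 2$ and $\ell\neq 0$, which is why both hypotheses appear) to compute that the $K$-types appearing in $\mathcal K$ are exactly the $Y_{p,q}$ with $p,q\geq \ell+1$, each with multiplicity one. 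Comparing with the $K$-spectrum of the socle listed in Theorem~\ref{thm:socle} gives
\[
(\mathrm{soc}(H_\mu))_K \cong \mathcal K_K
\]
as $(\mathfrak g,K)$-modules. Combined with the injectivity of $P_\mu^{Y_{\ell+1,\ell+1}}$ on $(\mathrm{soc}(H_\mu))^{-\infty}$ from Proposition~\ref{prop:PoissonInjective} (applied via Proposition~\ref{prop:Poisson_sum}, since $Y_{\ell+1,\ell+1}$ is the unique minimal $K$-type of the irreducible socle), this identifies the images of the two spaces after intersecting $\mathcal K$ with the moderate growth condition, in analogy with \cite[Satz~4.13]{Ol}.

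To go from the subspace of moderate growth sections to all of $\mathcal K$, I would restrict to $\Gamma$-invariants: because $\Gamma$ is cocompact, any $\Gamma$-invariant $f\in\mathcal K$ is bounded when lifted to $G$, so the growth condition is automatic, and the Olbrich-style isomorphism passes through verbatim to yield
\[
P_\mu^{Y_{\ell+1,\ell+1}} \colon {}^\Gamma (\mathrm{soc}(H_\mu))^{-\infty} \xrightarrow{\ \cong\ } {}^\Gamma \mathcal K.
\]

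The main obstacle, and the reason the hypothesis $\ell\neq 0$ is needed, is the $K$-type calculation for $\mathcal K$. For $\ell = 0$ the kernel conditions $\mathrm d_{-,1}f = \mathrm d_{-,2}f = 0$ with minimal $K$-type $Y_{1,1}$ do not pin down a proper solution space in the way Martin's analysis requires (the boundary of the cone $p,q\geq 1$ meets the support of the Poisson image differently), so this method breaks down and one is forced into the Fourier-series route of Section~\ref{sec:Fourier} and Theorem~\ref{thm:fourier_char}. The exclusion $n=2$ is similarly dictated by \cite{M89}, where the ambient complex-hyperbolic geometry degenerates. Beyond that, everything else is routine assembly of the already-proved intermediate statements.
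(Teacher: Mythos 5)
Your proposal is correct and follows essentially the same route as the paper: inclusion into the kernel space via the generalized gradient identities, the $K$-type count from \cite{M89} combined with injectivity of the minimal $K$-type Poisson transform, the Olbrich-style isomorphism onto moderate-growth sections, and cocompactness of $\Gamma$ to discharge the growth condition. The only minor imprecision is that \cite{M89} yields multiplicity \emph{at most} one and an upper bound on the $K$-types of the kernel space; the fact that every $Y_{p,q}$ with $p,q\geq\ell+1$ actually occurs is supplied by the injectivity of $P_\mu^{Y_{\ell+1,\ell+1}}$ on the socle, which you invoke anyway.
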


\begin{proof}
See Theorem \ref{thm:spectral_corres_SOn}.
\end{proof}

In order to treat the remaining parameters ($n=2$ or $\ell=0$) we will use the Fourier characterization of the principal series. The following lemma is based on Lemma \ref{la:def_distr}.
\begin{lemma}\label{la:poly_growth_SUn}
Let $\mu\coloneqq-(\rhoa+2\ell\alpha),\ \ell\in\N_0,$ an exceptional parameter. Let $\psi_{p,q}\in C^\infty(G\times_KY_{p,q})$ for $p,q\geq\ell+1$ be such that the equations from Lemma \ref{la:gen_diffops2} are fulfilled (with $\psi_{p,q}$ instead of $\pi_{Y_{p,q}}^*(f)$). Assume that $\pi_{Y_{\ell+1,\ell+1}}(\psi_{Y_{\ell+1,\ell+1}})\in C^\infty(G)$ has finite $L^2$-norm.
Then the formal sum
\begin{gather*}
f\coloneqq\sum_{p,q\geq\ell+1}\iota_{G/M}(\pi_{Y_{p,q}}(\psi_{p,q}))
\end{gather*}
defines a distribution on $G/M$.
\end{lemma}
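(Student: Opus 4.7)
The plan is to apply Lemma \ref{la:def_distr} to the family $\{\psi_{p,q}: p,q\geq \ell+1\}$, extended by $\psi_\tau=0$ for all other $\tau\in\hat K_M$. Such an extension is consistent with the hypotheses of Lemma \ref{la:gen_diffops2}, since the four spherical generalized gradients $\mathrm{d}_{\pm,j}$ couple the block $\{Y_{p,q}: p,q\geq\ell+1\}$ only to itself (the same block of $K$-types that appears in the socle, cf.\ the derivation of \eqref{eq:PT_ker_inj}), while Lemma \ref{la:gen_diffops2}\,\ref{it:gen_diffops22} kills the contribution of the non-spherical components $V_1, V_2$. It therefore suffices to produce constants $c, N\geq 0$ with
\[
\|\pi_{Y_{p,q}}(\psi_{p,q})\|_{L^2(G)}^{2}\leq c\,(1+p^{2}+q^{2})^{N}\quad\text{for all } p,q\geq\ell+1.
\]

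To produce such a bound, I would read the hypothesis as a system of recursions: part \ref{it:gen_diffops21} of Lemma \ref{la:gen_diffops2} furnishes scalars $\alpha_{p,q}^{\pm,j}$, $j\in\{1,2\}$, with $\mathrm{d}_{\pm,j}\psi_{p,q}=\alpha_{p,q}^{\pm,j}\,\psi_{(p,q)\pm\varepsilon_{j}}$, where $\varepsilon_{1}=(1,0)$ and $\varepsilon_{2}=(0,1)$. Plugging these relations into the adjointness identity of Lemma \ref{la:gen_grad_adjoint}, applied with $Y=Y_{p,q}$ and $V=Y_{(p,q)+\varepsilon_{j}}$, yields the one-step recursion
\[
\alpha_{p,q}^{+,j}\,\|\pi_{Y_{(p,q)+\varepsilon_{j}}}(\psi_{(p,q)+\varepsilon_{j}})\|_{L^{2}(G)}^{2}=-\alpha_{(p,q)+\varepsilon_{j}}^{-,j}\,\|\pi_{Y_{p,q}}(\psi_{p,q})\|_{L^{2}(G)}^{2}.
\]
Iterating along a monotone lattice path from $(\ell+1,\ell+1)$ to $(p,q)$ expresses every $L^{2}$-norm as an explicit product of ratios $\alpha^{-,j}/\alpha^{+,j}$ times the finite base quantity $\|\pi_{Y_{\ell+1,\ell+1}}(\psi_{\ell+1,\ell+1})\|_{L^{2}(G)}^{2}$ provided by the hypothesis.

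The final step is to show that this telescoping product is polynomially bounded in $(p,q)$. By \eqref{eq:T_decomp} together with Lemma \ref{la:lambda_explicit} and Lemma \ref{la:nu_comp_series}, each $\alpha_{p,q}^{\pm,j}$ is a rational function of $(p,q,\ell)$ built out of the $\lambda$- and $\nu$-scalars for $\mathrm{SU}(n,1)$ collected in Appendix \ref{app:comp_scalars}. One verifies case-by-case that $\alpha_{p,q}^{+,j}\neq 0$ throughout the range $p,q\geq\ell+1$ (equivalent, via Lemma \ref{la:nu_comp_series}, to the statement that $P_\mu^{Y_{(p,q)+\varepsilon_{j}}}$ does not annihilate the socle component $Y_{p,q}$) and that each ratio $\alpha_{(p,q)+\varepsilon_{j}}^{-,j}/\alpha_{p,q}^{+,j}$ is of the form $1+O(1/p)+O(1/q)$. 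A telescoping estimate then bounds the product over any path by a fixed power of $1+p^{2}+q^{2}$, producing the polynomial estimate required in Lemma \ref{la:def_distr}, and hence the desired distribution $f$.

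The main obstacle is precisely this last explicit bound; everything above it is formal manipulation. In particular, it requires a careful use of the $\mathrm{SU}(n,1)$ data from Appendix \ref{app:comp_scalars} to verify non-vanishing of the denominators and to extract the $1+O(1/p)+O(1/q)$ behaviour of the ratios uniformly in the direction of the path.
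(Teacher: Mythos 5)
Your proposal is correct and follows essentially the same route as the paper: combine the recursions from Lemma \ref{la:gen_diffops2} with the adjointness identity of Lemma \ref{la:gen_grad_adjoint} to express $\norm{\pi_{Y_{p,q}}(\psi_{p,q})}_{L^2(G)}^2$ as a telescoping product of ratios of the scalars $T^{p_1,q_1}_{p_2,q_2}$ times the finite base norm, then use the explicit $\mathrm{SU}(n,1)$ data (Proposition \ref{prop:nu_explicit}, Remarks \ref{rem:lambda_C} and \ref{rem:dim_C}) to show the product grows polynomially, so that Lemma \ref{la:def_distr} applies. The paper carries out the "main obstacle" you identify by computing the product in closed form (it is $\mc O(m^{2n+\ell})$ in each variable), which is exactly the verification your sketch defers.
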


\begin{proof}
We abbreviate $T^{p_1,q_1}_{p_2,q_2}\coloneqq T_{Y_{p_2,q_2}}^{Y_{p_1,q_1}}(p_{Y_{p_1,q_1,\mu}})(e)\in\C$. It suffices to prove the estimate in Lemma \ref{la:def_distr}. Using Lemma \ref{la:gen_grad_adjoint} (second step) and the equations from Lemma \ref{la:gen_diffops2} (first and third step) we infer for the $L^2$ inner product
\begin{align*}
\norm{\pi_{Y_{p,q}}(\psi_{p,q})}^2&=\frac{\dim Y_{p,q}}{\dim Y_{p-1,q}}\frac{1}{T^{p-1,q}_{p,q}}\langle\pi_{Y_{p,q}}(\psi_{p,q}),\pi_{Y_{p,q}}(\mathrm{d}_{+,1}\psi_{p-1,q})\rangle\\
&=-\frac{\dim Y_{p,q}}{\dim Y_{p-1,q}}\frac{1}{T^{p-1,q}_{p,q}}\langle\pi_{Y_{p-1,q}}(\mathrm{d}_{-,1}\psi_{p,q}),\pi_{Y_{p-1,q}}(\psi_{p-1,q})\rangle\\
&=-\left(\frac{\dim Y_{p,q}}{\dim Y_{p-1,q}}\right)^2\frac{T^{p,q}_{p-1,q}}{T^{p-1,q}_{p,q}}\langle\pi_{Y_{p-1,q}}(\psi_{p-1,q}),\pi_{Y_{p-1,q}}(\psi_{p-1,q})\rangle.
\end{align*}
By Proposition \ref{prop:nu_explicit}, Remark \ref{rem:lambda_C} and Remark \ref{rem:dim_C} this equals
\begin{gather*}
\frac{(n+p-2)(n+p+q-1)}{p(n+p+q-2)}\frac{n+p}{p-1-\ell}\norm{\pi_{Y_{p-1,q}}(\psi_{p-1,q})}^2.
\end{gather*}
Iteratively applying this equation we find that for each $m\in\N_0$
\begin{align*}
\norm{\pi_{Y_{\ell+m,q}}(\psi_{\ell+m,q})}^2=\prod_{r=2}^m\frac{(n+\ell+r-2)(n+\ell+r+q-1)}{(\ell+r)(n+\ell+r+q-2)}\frac{n+\ell+r}{r-1}\norm{\pi_{Y_{\ell+1,q}}(\psi_{\ell+1,q})}^2.
\end{align*}
The latter product equals
\begin{gather*}
\frac{(n+\ell+m+q-1)(n+\ell+m-2)!(\ell+1)!(n+\ell+m)!}{(n+\ell+q)(n+\ell-1)!(\ell+m)!(m-1)!(n+\ell+1)!}\norm{\pi_{Y_{\ell+1,q}}(\psi_{\ell+1,q})}^2,
\end{gather*}
which grows polynomially in $m$ (in fact it is $\mc O(m^{2n+\ell})$). Interchanging the roles of $p$ and $q$ this proves the estimate in Lemma \ref{la:def_distr} and therefore the lemma.
\end{proof}

\begin{theorem}[Spectral Correspondence 2]\label{thm:spectral_corres_SUn2}
Let $\mathbf{Ex}\ni\mu=-(\rhoa+2\ell\alpha),\ \ell\in\N_0,$ be an exceptional parameter. Then the socle $\on{soc}(\pst{\mu})$ of $\pst{\mu}$ is irreducible, unitary and its $K$-types are given by $Y_{p,q}$ for $p,q\geq\ell+1$. The minimal $K$-type is $Y_{\ell+1,\ell+1}$ and the corresponding Poisson transform induces an isomorphism from $\colon{}^\Gamma(\on{soc}(\pst{\mu}))^{-\infty}$ onto
\begin{gather*}
{}^\Gamma\{u\in C^\infty(G\times_KY_{\ell+1,\ell+1})\mid properties\ i)-vi)\ below\},
\end{gather*}
where the properties are as follows. For $u\in C^\infty(G\times_KY_{\ell+1,\ell+1})$ let $\psi_{\ell+1,\ell+1}\coloneqq\dim Y_{\ell+1,\ell+1}\cdot u$ and define recursively for $p,q\geq\ell+1$ (see Lemma \ref{la:gen_diffops2})
\begin{align*}
\psi_{p,\ell+1}\coloneqq\frac{\dim Y_{p,\ell+1}}{\dim Y_{p-1,\ell+1}}\frac{1}{T^{p-1,\ell+1}_{p,\ell+1}}\mathrm{d}_{+,1}\psi_{p-1,\ell+1},\qquad \psi_{p,q}\coloneqq\frac{\dim Y_{p,q}}{\dim Y_{p,q-1}}\frac{1}{T^{p,q-1}_{p,q}}\mathrm{d}_{+,2}\psi_{p,q-1},
\end{align*}
where we abbreviate $T^{p_1,q_1}_{p_2,q_2}\coloneqq T_{Y_{p_2,q_2}}^{Y_{p_1,q_1}}(p_{Y_{p_1,q_1,\mu}})(e)\in\C$. Then we define the properties
\begin{enumerate}
\item $\mathrm{d}_{+,1}\psi_{p,q}=T_{p+1,q}^{p,q}\frac{\dim Y_{p,q}}{\dim Y_{p+1,q}}\psi_{p+1,q},\hfill (p\geq\ell+1,\ q\geq\ell+2),$
\item $\mathrm{d}_{-,1}\psi_{p,q}=T_{p-1,q}^{p,q}\frac{\dim Y_{p,q}}{\dim Y_{p-1,q}}\psi_{p-1,q},\hfill (p\geq\ell+2,\ q\geq\ell+1),$
\item $\mathrm{d}_{-,1}\psi_{\ell+1,q}=0,\hfill (q\geq\ell+1),$
\item $\mathrm{d}_{-,2}\psi_{p,q}=T_{p,q-1}^{p,q}\frac{\dim Y_{p,q}}{\dim Y_{p,q-1}}\psi_{p,q-1},\hfill (p\geq\ell+1,\ q\geq\ell+2),$
\item $\mathrm{d}_{-,2}\psi_{p,\ell+1}=0,\hfill (p\geq\ell+1),$
\item $\mathrm{D}_j\psi_{p,q}=0,\hfill (p,q\geq\ell+1,\ j\in\{1,2\}).$
\end{enumerate}
\end{theorem}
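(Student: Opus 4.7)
Injectivity is contained in Proposition \ref{prop:Poisson_sum}, since by Theorem \ref{thm:socle} the socle is irreducible with minimal $K$-type $Y_{\ell+1,\ell+1}$. The substance of the statement is that the image of $P_\mu^{Y_{\ell+1,\ell+1}}$ restricted to ${}^\Gamma(\on{soc}(\pst{\mu}))^{-\infty}$ equals the $\Gamma$-invariant solution space described by (i)-(vi).

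\emph{Image inside the solution space.} Let $f\in{}^\Gamma(\on{soc}(\pst{\mu}))^{-\infty}$, set $\tilde f\coloneqq\mc Q_\mu(f)\in\mc D'(G/M)$ and $\phi_{p,q}\coloneqq\pi_{Y_{p,q}}^*(\tilde f)$. By Lemma \ref{la:pi_gamma}\,\ref{it:la_pi_gamma5} we have $\phi_{\ell+1,\ell+1}=\dim Y_{\ell+1,\ell+1}\cdot u$ with $u\coloneqq P_\mu^{Y_{\ell+1,\ell+1}}(f)$, and Lemma \ref{la:gen_diffops2} translates into
\begin{gather*}
\mathrm{d}_{\pm,1}\phi_{p,q}=T^{p,q}_{p\pm1,q}\tfrac{\dim Y_{p,q}}{\dim Y_{p\pm1,q}}\phi_{p\pm1,q},\\
\mathrm{d}_{\pm,2}\phi_{p,q}=T^{p,q}_{p,q\pm1}\tfrac{\dim Y_{p,q}}{\dim Y_{p,q\pm1}}\phi_{p,q\pm1},\quad\mathrm{D}_j\phi_{p,q}=0.
\end{gather*}
Matching these identities with the recursive definition of $\psi_{p,q}$ in the statement shows $\phi_{p,q}=\psi_{p,q}$ for all $p,q\geq\ell+1$, which yields (i), (ii), (iv) and (vi) at once. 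For (iii) and (v), invoke Lemma \ref{la:nu_comp_series}: the socle is a $G$-invariant subspace containing $Y_{\ell+1,q}$ but missing $Y_{\ell,q}$ (by Theorem \ref{thm:socle}), so $T^{\ell+1,q}_{\ell,q}=0$; the symmetric argument gives $T^{p,\ell+1}_{p,\ell}=0$.

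\emph{Solution space inside the image.} Starting from a $\Gamma$-invariant $u$ with (i)-(vi), the recursion defines $\Gamma$-invariant $\psi_{p,q}$ for all $p,q\geq\ell+1$, since the generalized gradients are $G$-equivariant. The analytic core is the convergence of
\begin{gather*}
\tilde f\coloneqq\sum_{p,q\geq\ell+1}\iota_{G/M}(\pi_{Y_{p,q}}(\psi_{p,q}))
\end{gather*}
to a distribution on $G/M$. This is the $\Gamma$-invariant analog of Lemma \ref{la:poly_growth_SUn}: performing the integration-by-parts of Lemma \ref{la:gen_grad_adjoint} on the compact quotient $\Gamma\backslash G$ rather than on $G$, and inserting the explicit scalars from Proposition \ref{prop:nu_explicit}, one obtains polynomial growth of $\|\pi_{Y_{p,q}}(\psi_{p,q})\|_{L^2(\Gamma\backslash G)}$ in $p+q$. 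Combined with the rapid-decay estimate of Lemma \ref{la:fourier_conv} and a covering argument bounding the $L^2$-norm over $\on{supp}(\varphi)\cdot K$ by finitely many copies of the $L^2(\Gamma\backslash G)$-norm, this yields continuity of $\tilde f$ on $C_c^\infty(G/M)$.

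Once $\tilde f\in\mc D'(G/M)$ is in hand, properties (i)-(vi) are exactly the hypotheses of Theorem \ref{thm:fourier_char}, so $\tilde f\in\mc R(\mu-\rhoa)$ and corresponds via $\mc Q_\mu^{-1}$ to some $f\in\pstd{\mu}$. $\Gamma$-invariance is inherited from the $\psi_{p,q}$, and Theorem \ref{thm:Gamma_inv_ps} together with the irreducibility of the socle (Theorem \ref{thm:socle}) place $f$ in ${}^\Gamma(\on{soc}(\pst{\mu}))^{-\infty}$. Orthogonality of $K$-isotypic components in the Fourier expansion of $\tilde f$ gives $\pi_{Y_{\ell+1,\ell+1}}^*(\tilde f)=\psi_{\ell+1,\ell+1}=\dim Y_{\ell+1,\ell+1}\cdot u$, so by Lemma \ref{la:pi_gamma}\,\ref{it:la_pi_gamma5} we have $P_\mu^{Y_{\ell+1,\ell+1}}(f)=u$, closing the correspondence. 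The principal obstacle is this convergence step: Lemma \ref{la:poly_growth_SUn} as stated requires global $L^2(G)$-boundedness, which is incompatible with $\Gamma$-invariance, so one must rerun the integration-by-parts identity over the compact quotient and translate the resulting $L^2(\Gamma\backslash G)$-bound into a local bound against test functions. Everything else reduces to bookkeeping with the constants computed in Appendix \ref{app:comp_scalars}.
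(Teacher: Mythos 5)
Your overall route coincides with the paper's: injectivity from Proposition \ref{prop:Poisson_sum}, properties i), ii), iv), vi) read off from Lemma \ref{la:gen_diffops2} via the identification $\psi_{p,q}=\pi_{Y_{p,q}}^*(f)$, and surjectivity by assembling the $\psi_{p,q}$ into a Fourier series whose convergence feeds into Theorem \ref{thm:fourier_char}. However, your justification of iii) and v) is incorrect. The scalar $T^{\ell+1,q}_{\ell,q}=T_{Y_{\ell,q}}^{Y_{\ell+1,q}}(p_{Y_{\ell+1,q},\mu})(e)$ does \emph{not} vanish: by \eqref{eq:T_decomp} and Proposition \ref{prop:nu_explicit} it equals $\bigl((\mu+\rhoa)(H)-2(\rhoa(H)+\ell)\bigr)\lambda(Y_{\ell+1,q},Y_{\ell,q})=-(2n+4\ell)\,\lambda(Y_{\ell+1,q},Y_{\ell,q})\neq 0$. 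Lemma \ref{la:nu_comp_series} applied with $U=\on{soc}(\pst{\mu})$ kills the scalar of the gradient whose \emph{source} $K$-type is missing from $U$ and whose target lies in $U$; it therefore gives $T^{\ell,q}_{\ell+1,q}=0$ (consistent with $\nu(Y_{\ell,q},Y_{\ell+1,q})=2\ell\,\lambda$ and $(\mu+\rhoa)(H)=-2\ell$), not $T^{\ell+1,q}_{\ell,q}=0$. The correct reason for iii) is that $\mathrm{d}_{-,1}\psi_{\ell+1,q}$ is a nonzero multiple of $\pi_{Y_{\ell,q}}^*(f)$, which vanishes because the socle does not contain the $K$-type $Y_{\ell,q}$; the analogous remark proves v). As written, your argument does not establish iii) and v), although the repair is one line.

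On the surjectivity side your treatment of the convergence is more careful than the paper's own: the paper invokes Lemma \ref{la:poly_growth_SUn} with the remark that $\norm{\pi_{Y_{\ell+1,\ell+1}}(\psi_{\ell+1,\ell+1})}_{L^2(G)}$ is finite by co-compactness, which taken literally fails for a nonzero $\Gamma$-invariant section. Your fix --- running the integration by parts of Lemma \ref{la:gen_grad_adjoint} over $\Gamma\backslash G$ (legitimate, since the right $G$-action preserves the measure there), deducing polynomial growth of the $L^2(\Gamma\backslash G)$-norms from the scalars in Proposition \ref{prop:nu_explicit}, and then bounding the pairing with a test function $\varphi$ by covering the compact set $\on{supp}(\varphi)\cdot K$ with finitely many translates of a fundamental domain before applying the rapid decay of Lemma \ref{la:fourier_conv} --- is exactly what is needed to make the conclusion of Lemma \ref{la:def_distr} available in the $\Gamma$-invariant setting. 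With the correction to iii) and v) above, your proposal is complete and follows the paper's strategy.
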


\begin{proof}
We first prove that the Poisson transform maps into the claimed space. If $u=P_\mu^{Y_{\ell+1,\ell+1}}(f)$ for some $f\in(\on{soc}(\pst{\mu}))^{-\infty}$ we have $\psi_{\ell+1,\ell+1}=\pi_{Y_{\ell+1,\ell+1}}^*(f)$ by Lemma \ref{la:pi_gamma}.\ref{it:la_pi_gamma5}. Properties $i),ii),iv)$ and $vi)$ are exactly the equations from Lemma \ref{la:gen_diffops2}. To prove the third property we note that 
\begin{gather*}
\mathrm{d}_{-,1}\psi_{\ell+1,q}=\mathrm{d}_{-,1}\pi_{Y_{\ell+1,q}}^*(f)=T_{\ell,q}^{\ell+1,q}\frac{\dim Y_{\ell+1,q}}{\dim Y_{\ell,q}}\pi_{Y_{\ell,q}}^*(f)=0,
\end{gather*}
since the socle does not contain the $K$-type $Y_{\ell,q}$. Similarly we see that property $v)$ is fulfilled. Since the Poisson transform is $G$-equivariant it preserves $\Gamma$-invariant elements.

For the surjectivity let $u\in {}^\Gamma C^\infty(G\times_KY_{\ell+1,\ell+1})$ with the desired properties. Define 
\begin{gather*}
f\coloneqq\sum_{p,q\geq\ell+1}\iota_{G/M}(\pi_{Y_{p,q}}(\psi_{p,q})).
\end{gather*}
By Lemma \ref{la:poly_growth_SUn}, $f$ defines a distribution on $G/M$ (note that, since $\Gamma$ is co-compact, the norm $\norm{\pi_{Y_{\ell+1,\ell+1}}(\psi_{Y_{\ell+1,\ell+1}})}_{L^2(G)}$ is finite). By Theorem \ref{thm:fourier_char} we have $f\in\pstd{\mu}$ and since there are only terms for $p,q\geq\ell+1$ in the defining sum for $f$ we also have $f\in(\on{soc}(\pst{\mu}))^{-\infty}$. Since each $\psi_{p,q}$ is $\Gamma$-invariant and each involved map is $G$-equivariant, $f$ is also $\Gamma$-invariant. The orthogonality of the $K$-types implies $\pi_{Y_{\ell+1,\ell+1}}^*(\sum_{p,q\in J}\iota_{G/M}(\pi_{Y_{p,q}}(\psi_{p,q})))=\sum_{p,q\in J}\iota_{G/M}(\pi_{Y_{p,q}}(\psi_{p,q}))\circ\pi_{Y_{\ell+1,\ell+1}}=0$ for $J\coloneqq\{(p,q)\in\N_0^2\colon(p,q)\neq(\ell+1,\ell+1)\}$ and $\pi_{Y_{\ell+1,\ell+1}}^*(\iota_{G/M}(\pi_{Y_{\ell+1,\ell+1}}(\psi_{\ell+1,\ell+1})))=\iota_{Y_{\ell+1,\ell+1}}(\psi_{\ell+1,\ell+1})$ (see Definition \ref{def:pi_gamma} for the relevant definitions). Using Lemma \ref{la:pi_gamma}.\ref{it:la_pi_gamma5} again we obtain
\begin{gather*}
P_\mu^{Y_{\ell+1,\ell+1}}(f)=\frac{1}{\dim Y_{\ell+1,\ell+1}}\pi_{Y_{\ell+1,\ell+1}}^*(f)=\frac{1}{\dim Y_{\ell+1,\ell+1}}\psi_{\ell+1,\ell+1}=u.\qedhere
\end{gather*}
\end{proof}

\subsection{\texorpdfstring{The Case of $G=\GSp{n},\ n\geq 2$}{The Case of G=Sp(n,1), n>=2}}

By Proposition \ref{prop:tensor_decomp_eq_rk} and Remark \ref{rem:dim_H} we have for each $a,b\in\N_0$ with $a\geq b$
\begin{gather*}
V_{a,b}\otimes\mf p^*\cong V_{a+1,b}\oplus V_{a-1,b}\oplus V_{a,b+1}\oplus V_{a,b-1}\oplus\bigoplus_{\substack{\beta\in S\\V_{a,b,\beta}\not\in\hat K_M}} V_{a,b,\beta}.
\end{gather*}
We define generalized gradients $\mathrm{d}_V^{V_{a,b}}\coloneqq T_V^{V_{a,b}}\circ\nabla$ with $T_V^{V_{a,b}}\in\Hom{K}{V_{a,b}\otimes\mf p^*}{V}$ as in Definition \ref{def:omega} and abbreviate
\begin{gather*}
\mathrm{d}_{1,\pm}\coloneqq\mathrm{d}_{V_{a\pm1,b}}^{V_{a,b}},\qquad\quad\mathrm{d}_{2,\pm}\coloneqq\mathrm{d}_{V_{a,b\pm1}}^{V_{a,b}},\qquad\quad\mathrm{D}_\beta\coloneqq\mathrm{d}_{V_{a,b,\beta}}^{V_{a,b}}
\end{gather*}
for each $\beta\in S$ with $V_{a,b,\beta}\not\in\hat K_M$. Let $\mu=-(\rhoa+(2\ell-2)\alpha)\in\mathbf{Ex}$ be an exceptional parameter and recall the structure and properties of $\on{soc}(\pst{\mu})$ from Theorem \ref{thm:socle}. Using Proposition \ref{prop:gen_grad_omega}, Proposition \ref{prop:lambdaVY_BO}.\ref{it:lambdaVY_2} and Remark \ref{rem:lambda_H} we infer for each $a,b\in\N_0$ with $a\geq b$
\begin{gather*}
V\notleftrightomega V_{a,b}\:\Longrightarrow\:\mathrm{d}_V^{V_{a,b}}\circ P_\mu^{V_{a,b}}=0\quad\text{and}\quad V\lraomega V_{a,b}\:\Longleftrightarrow\: V\in\{V_{a+1,b}, V_{a-1,b}, V_{a,b+1}, V_{a,b-1}\}
\end{gather*}
whenever the occurring representations exist. The minimal $K$-type of $\on{soc}(\pst{\mu})$ is given by $V_{\ell+1,\ell+1}$ (see Theorem \ref{thm:socle}).

The spectral correspondence in the quaternionic case is established by using the Fourier characterization of the principal series (see Theorem \ref{thm:fourier_char}). By Lemma \ref{la:def_distr} we obtain the following result.
\begin{lemma}\label{la:poly_growth_Spn}
Let $\mu\coloneqq-(\rhoa+(2\ell-2)\alpha),\ \ell\in\N_0,$ an exceptional parameter. Let $\psi_{a,b}\in C^\infty(G\times_KV_{a,b})$ for $a,b\geq\ell+1$ be such that the equations from Lemma \ref{la:gen_diffops2} are fulfilled (with $\psi_{a,b}$ instead of $\pi_{V_{a,b}}^*(f)$). Assume that $\pi_{V_{\ell+1,\ell+1}}(\psi_{V_{\ell+1,\ell+1}})\in C^\infty(G)$ has finite $L^2$-norm.
Then the formal sum
\begin{gather*}
f\coloneqq\sum_{a\geq b\geq\ell+1}\iota_{G/M}(\pi_{V_{a,b}}(\psi_{a,b}))
\end{gather*}
defines a distribution on $G/M$.
\end{lemma}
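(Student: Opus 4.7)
The plan is to invoke Lemma~\ref{la:def_distr} on the family $\{\psi_{a,b}\}_{a\geq b\geq \ell+1}$. This reduces the problem to establishing a polynomial-in-highest-weight bound on the $L^2$-norms
\begin{gather*}
\iota_{G/M}(\pi_{V_{a,b}}(\psi_{a,b}))(\pi_{\tilde V_{a,b}}(\overline{\psi_{a,b}}))=\|\pi_{V_{a,b}}(\psi_{a,b})\|_{L^2(G)}^2,
\end{gather*}
after which Lemma~\ref{la:def_distr} directly supplies the distributional convergence. My argument will be a direct quaternionic adaptation of the proof of Lemma~\ref{la:poly_growth_SUn}, with the only new inputs being the explicit branching data for $\GSp{n}$ collected in Appendix~\ref{app:comp_scalars}.

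The core step is to derive, just as in Lemma~\ref{la:poly_growth_SUn}, two recurrences of the form
\begin{gather*}
\|\pi_{V_{a,b}}(\psi_{a,b})\|^2=-\Bigl(\tfrac{\dim V_{a,b}}{\dim V_{a-1,b}}\Bigr)^{\!2}\,\tfrac{T^{a,b}_{a-1,b}}{T^{a-1,b}_{a,b}}\,\|\pi_{V_{a-1,b}}(\psi_{a-1,b})\|^2\qquad(a>b\geq \ell+1),
\end{gather*}
together with an analogous recurrence in the $b$-direction whenever $a\geq b\geq \ell+2$, where $T^{a_1,b_1}_{a_2,b_2}\coloneqq T^{V_{a_1,b_1}}_{V_{a_2,b_2}}(p_{V_{a_1,b_1},\mu})(e)$. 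The derivation is the same three-move computation used in the proof of Lemma~\ref{la:poly_growth_SUn}: first solve $\mathrm{d}_{1,+}\psi_{a-1,b}=T^{a-1,b}_{a,b}\tfrac{\dim V_{a-1,b}}{\dim V_{a,b}}\psi_{a,b}$ from Lemma~\ref{la:gen_diffops2} for $\psi_{a,b}$, then transfer $\mathrm{d}_{1,+}$ across the $L^2$ pairing to $\mathrm{d}_{1,-}$ via Lemma~\ref{la:gen_grad_adjoint}, and finally rewrite $\mathrm{d}_{1,-}\psi_{a,b}$ using Lemma~\ref{la:gen_diffops2} again.

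To reach an arbitrary $V_{a,b}$ with $a\geq b\geq \ell+1$ from the base case $V_{\ell+1,\ell+1}$, I iterate the $a$-recurrence along the edge $b=\ell+1$ until arriving at $V_{a,\ell+1}$ and then the $b$-recurrence from $V_{a,\ell+1}$ up to $V_{a,b}$. Every intermediate $K$-type lies in $\on{soc}(\pst{\mu})$ by Theorem~\ref{thm:socle}, so the hypothesis on $\psi$ guarantees that each recurrence step is legal. Substituting Remark~\ref{rem:dim_H} for the dimensions, Remark~\ref{rem:lambda_H} together with \eqref{eq:T_decomp} and Proposition~\ref{prop:nu_explicit} for the values $T^{\bullet}_{\bullet}$, the telescoping product simplifies to an explicit ratio of Pochhammer-type expressions in $a-\ell$ and $b-\ell$, whose growth in $a$ and $b$ is polynomial of some degree depending only on $n$ and $\ell$. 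Plugging this bound into Lemma~\ref{la:def_distr} completes the argument.

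The principal obstacle is the final bookkeeping step: one has to check that the ratios $-T^{a,b}_{a-1,b}/T^{a-1,b}_{a,b}$ and their $b$-direction counterparts are non-negative, consistent with the $L^2$-positivity of the left-hand side, and that the iterated product does indeed have at most polynomial growth. This is conceptually routine but slightly more involved than in the complex case because the $M$-branching data for $\GSp{n}$ assembled in Appendix~\ref{app:comp_scalars} is richer than the corresponding data for $\GSU{n}$.
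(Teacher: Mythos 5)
Your proposal follows the paper's own proof essentially step for step: reduce to the norm estimate of Lemma~\ref{la:def_distr}, derive the recurrence $\norm{\pi_{V_{a,b}}(\psi_{a,b})}^2=-\bigl(\tfrac{\dim V_{a,b}}{\dim V_{a-1,b}}\bigr)^2\tfrac{T^{a,b}_{a-1,b}}{T^{a-1,b}_{a,b}}\norm{\pi_{V_{a-1,b}}(\psi_{a-1,b})}^2$ from Lemmas~\ref{la:gen_diffops2} and \ref{la:gen_grad_adjoint}, and evaluate the telescoping product with Proposition~\ref{prop:nu_explicit}, Remark~\ref{rem:lambda_H}, Equation~\eqref{eq:T_decomp} and Remark~\ref{rem:dim_H} to get polynomial growth. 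The only (immaterial) difference is the order in which you traverse the $K$-type lattice, so this is the same argument.
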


\begin{proof}
We abbreviate $T^{a_1,b_1}_{a_2,b_2}\coloneqq T_{V_{a_2,b_2}}^{V_{a_1,b_1}}(p_{V_{a_1,b_1,\mu}})(e)\in\C$. It suffices to prove the estimate in Lemma \ref{la:def_distr}. Using Lemma \ref{la:gen_grad_adjoint} (second step) and the equations from Lemma \ref{la:gen_diffops2} (first and third step) we infer for the $L^2$-norm as in Lemma \ref{la:poly_growth_SUn}
\begin{align*}
\norm{\pi_{V_{a,b}}(\psi_{a,b})}^2&=
-\left(\frac{\dim V_{a,b}}{\dim V_{a-1,b}}\right)^2\frac{T^{a,b}_{a-1,b}}{T^{a-1,b}_{a,b}}\norm{\pi_{V_{a-1,b}}(\psi_{a-1,b})}^2.
\end{align*}
By Equation \eqref{eq:T_decomp}, Proposition \ref{prop:nu_explicit} and Proposition \ref{prop:lambdaVY_BO}.\ref{it:lambdaVY_4} we have
\begin{gather*}
\frac{T^{a,b}_{a-1,b}}{T^{a-1,b}_{a,b}}=\frac{-2n+1-a-\ell}{a-\ell}\frac{\lambda(V_{a,b},V_{a-1,b})}{\lambda(V_{a,b},V_{a-1,b})}=\frac{-2n+1-a-\ell}{a-\ell}\frac{\dim V_{a-1,b}}{\dim V_{a,b}}
\end{gather*}
and thus
\begin{gather*}
\norm{\pi_{V_{a,b}}(\psi_{a,b})}^2=\frac{2n-1+a+\ell}{a-\ell}\frac{\dim V_{a,b}}{\dim V_{a-1,b}}\norm{\pi_{V_{a-1,b}}(\psi_{a-1,b})}^2.
\end{gather*}
Iteratively applying this equation we infer that for each $m\in\N_0$
\begin{align*}
\norm{\pi_{V_{\ell+m,b}}(\psi_{\ell+m,b})}^2&=\prod_{r=2}^m\frac{2n-1+2\ell+r}{r}\frac{\dim V_{\ell+r,b}}{\dim V_{\ell+r-1,b}}\norm{\pi_{V_{\ell+1,b}}(\psi_{\ell+1,b})}^2\\
&=\frac{\dim V_{\ell+m,b}}{\dim V_{\ell+1,b}}\prod_{r=2}^m\frac{2n-1+2\ell+r}{r}\norm{\pi_{V_{\ell+1,b}}(\psi_{\ell+1,b})}^2.
\end{align*}
Note that $\prod_{r=2}^m\frac{2n-1+2\ell+r}{r}=\frac{(2n-1+2\ell+m)!}{m!(2n+2\ell)!}$ is $\mc O(m^{2n-1+2\ell})$. Moreover, the dimension formula from Remark \ref{rem:dim_H} shows that $\dim V_{\ell+m,b}$ grows at most polynomially in $m$. A similar argument works for the $b$-variable.
\end{proof}

\begin{theorem}[Spectral Correspondence]\label{thm:spectral_corres_Spn}
Let $\mathbf{Ex}\ni\mu=-(\rhoa+(2\ell-2)\alpha),\ \ell\in\N_0,$ be an exceptional parameter. Then the socle $\on{soc}(\pst{\mu})$ of $\pst{\mu}$ is irreducible, unitary and its $K$-types are given by $V_{a,b}$ for $a\geq b\geq\ell+1$. The minimal $K$-type is $V_{\ell+1,\ell+1}$ and the corresponding Poisson transform induces an isomorphism from $\colon{}^\Gamma(\on{soc}(\pst{\mu}))^{-\infty}$ onto
\begin{gather*}
{}^\Gamma\{u\in C^\infty(G\times_KV_{\ell+1,\ell+1})\colon properties\ i)-v)\ below\},
\end{gather*}
where the properties are as follows. For $u\in C^\infty(G\times_KV_{\ell+1,\ell+1})$ let $\psi_{\ell+1,\ell+1}\coloneqq\dim V_{\ell+1,\ell+1}\cdot u$ and define recursively for $a\geq b\geq\ell+1$ (see Lemma \ref{la:gen_diffops2})
\begin{align*}
\psi_{a,\ell+1}\coloneqq\frac{\dim V_{a,\ell+1}}{\dim V_{a-1,\ell+1}}\frac{1}{T^{a-1,\ell+1}_{a,\ell+1}}\mathrm{d}_{+,1}\psi_{a-1,\ell+1},\qquad \psi_{a,b}\coloneqq\frac{\dim V_{a,b}}{\dim V_{a,b-1}}\frac{1}{T^{a,b-1}_{a,b}}\mathrm{d}_{+,2}\psi_{a,b-1},
\end{align*}
where we abbreviate $T^{a_1,b_1}_{a_2,b_2}\coloneqq T_{V_{a_2,b_2}}^{V_{a_1,b_1}}(p_{V_{a_1,b_1,\mu}})(e)\in\C$. Then we define the properties
\begin{enumerate}
\item $\mathrm{d}_{+,1}\psi_{a,b}=T_{a+1,b}^{a,b}\frac{\dim V_{a,b}}{\dim V_{a+1,b}}\psi_{a+1,b},\hfill (a\geq b\geq\ell+2),$
\item $\mathrm{d}_{-,1}\psi_{a,b}=T_{a-1,b}^{a,b}\frac{\dim V_{a,b}}{\dim V_{a-1,b}}\psi_{a-1,b},\hfill (a\geq\ell+2,\ b\geq\ell+1),$
\item $\mathrm{d}_{-,2}\psi_{a,b}=T_{a,b-1}^{a,b}\frac{\dim V_{a,b}}{\dim V_{a,b-1}}\psi_{a,b-1},\hfill (a\geq b\geq\ell+2),$
\item $\mathrm{d}_{-,2}\psi_{a,\ell+1}=0,\hfill (a\geq\ell+1),$
\item $\mathrm{d}^{V_{a,b}}_V\psi_{a,b}=0,\hfill (a\geq b\geq\ell+1,\ V\lra V_{a,b},\ V\not\in\hat K_M).$
\end{enumerate}
\end{theorem}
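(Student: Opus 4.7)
The statements about irreducibility, unitarity, and the $K$-type decomposition of $\on{soc}(\pst{\mu})$ are already contained in Theorem \ref{thm:socle}, and that $V_{\ell+1,\ell+1}$ is the minimal $K$-type follows from the same result. The essence of the proof is therefore the asserted isomorphism, which I would establish by adapting the argument of Theorem \ref{thm:spectral_corres_SUn2}. Throughout, I abbreviate $T^{a_1,b_1}_{a_2,b_2} \coloneqq T_{V_{a_2,b_2}}^{V_{a_1,b_1}}(p_{V_{a_1,b_1},\mu})(e)$, noting that by Equation \eqref{eq:T_decomp}, Proposition \ref{prop:nu_explicit}, and Remark \ref{rem:lambda_H} these scalars are non-zero along the recursion, so the defining formulas for $\psi_{a,b}$ make sense.

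For the inclusion of $P_\mu^{V_{\ell+1,\ell+1}}\bigl({}^\Gamma(\on{soc}(\pst{\mu}))^{-\infty}\bigr)$ in the right-hand side, I would start with $f \in (\on{soc}(\pst{\mu}))^{-\infty}$ and set $u = P_\mu^{V_{\ell+1,\ell+1}}(f)$. Lemma \ref{la:pi_gamma}\,\ref{it:la_pi_gamma5} identifies $\psi_{\ell+1,\ell+1} = \pi_{V_{\ell+1,\ell+1}}^*(f)$, and an induction on $a+b$ combined with Lemma \ref{la:gen_diffops2}\,\ref{it:gen_diffops21} (applied to $\mathrm{d}_{+,1}$ along the edge $b = \ell+1$ and then to $\mathrm{d}_{+,2}$ for each fixed $a$) shows that the recursively defined $\psi_{a,b}$ equal $\pi_{V_{a,b}}^*(f)$ for every admissible $(a,b)$. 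Properties i)--iii) are then instances of Lemma \ref{la:gen_diffops2}\,\ref{it:gen_diffops21}; property v) is Lemma \ref{la:gen_diffops2}\,\ref{it:gen_diffops22}, using the characterisation in Proposition \ref{prop:gen_grad_omega} and Remark \ref{rem:lambda_H} to identify the non-$M$-spherical summands of $V_{a,b} \otimes \mf p^*$. Property iv) uses that $V_{a,\ell}$ is not a $K$-type of $\on{soc}(\pst{\mu})$, so $\pi_{V_{a,\ell}}^*(f) = 0$ by Proposition \ref{prop:PoissonInjective}; $\Gamma$-invariance is preserved because $P_\mu^{V_{\ell+1,\ell+1}}$ is $G$-equivariant.

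For surjectivity, given $u$ in the right-hand side, define $\psi_{a,b}$ by the recursion stated in the theorem and set
\begin{gather*}
f \coloneqq \sum_{a \geq b \geq \ell+1} \iota_{G/M}\bigl(\pi_{V_{a,b}}(\psi_{a,b})\bigr).
\end{gather*}
Because $\Gamma$ is co-compact and $u$ is $\Gamma$-invariant, $\pi_{V_{\ell+1,\ell+1}}(\psi_{\ell+1,\ell+1})$ descends to a smooth function on $\Gamma\backslash G$ and therefore has finite $L^2$-norm; Lemma \ref{la:poly_growth_Spn} then ensures that $f$ is a well-defined distribution on $G/M$. The properties i)--v) together with the recursion produce exactly the equations of Lemma \ref{la:gen_diffops2}\,\ref{it:gen_diffops21}--\ref{it:gen_diffops22} for every irreducible summand of every $V_{a,b} \otimes \mf p^*$ with $V_{a,b} \in \hat K_M$; for $V_{a,b}$ outside the socle $K$-types one has $\pi_{V_{a,b}}^*(f) = 0$, and the relevant equations then follow from property iv) on the boundary and trivially elsewhere. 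Theorem \ref{thm:fourier_char} yields $f \in \pstd{\mu}$. Only socle $K$-types occur in the expansion, so $f \in (\on{soc}(\pst{\mu}))^{-\infty}$; $\Gamma$-invariance of all $\psi_{a,b}$ gives $f \in {}^\Gamma(\on{soc}(\pst{\mu}))^{-\infty}$. A final orthogonality computation of $\pi_{V_{\ell+1,\ell+1}}^*(f)$ via Lemma \ref{la:pi_gamma}\,\ref{it:la_pi_gamma5} gives $P_\mu^{V_{\ell+1,\ell+1}}(f) = u$. Injectivity of $P_\mu^{V_{\ell+1,\ell+1}}$ restricted to $(\on{soc}(\pst{\mu}))^{-\infty}$ follows from Proposition \ref{prop:PoissonInjective} and the irreducibility of the socle.

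The delicate points are: (a) checking that properties i)--v) exhaust the conditions of Lemma \ref{la:gen_diffops2}, which relies on Proposition \ref{prop:tensor_decomp_eq_rk} to enumerate the irreducible constituents of $V_{a,b} \otimes \mf p^*$ and on Proposition \ref{prop:lambdaVY_BO}\,\ref{it:lambdaVY_2} with Remark \ref{rem:lambda_H} to pin down which are $M$-spherical; and (b) the growth estimate for the $L^2$-norms of the $\pi_{V_{a,b}}(\psi_{a,b})$, which is Lemma \ref{la:poly_growth_Spn}. The boundary condition iv) is the genuinely new ingredient: it encodes the vanishing of the non-socle $K$-types $V_{a,\ell}$ and is precisely what distinguishes a distribution vector of $\on{soc}(\pst{\mu})$ from one of the full $\pst{\mu}$.
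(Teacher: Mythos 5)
Your proposal is correct and follows essentially the same route as the paper, which proves this theorem by declaring it analogous to the proof of Theorem \ref{thm:spectral_corres_SUn2}: the forward inclusion via Lemma \ref{la:gen_diffops2} and the vanishing of $\pi_{V_{a,\ell}}^*(f)$ for the boundary condition iv), and surjectivity via the distributional sum, Lemma \ref{la:poly_growth_Spn}, Theorem \ref{thm:fourier_char}, and the orthogonality of $K$-types. Your write-up in fact supplies more detail than the paper does, correctly identifying the points (enumeration of the constituents of $V_{a,b}\otimes\mf p^*$ and the $L^2$-growth estimate) where the quaternionic case requires its own verification.
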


\begin{proof}
The proof is analogous to the prove of Theorem \ref{thm:spectral_corres_SUn2}.
\end{proof}

\subsection{\texorpdfstring{The Case of $G=\GF$}{The Case of G=F4}}

By Proposition \ref{prop:tensor_decomp_eq_rk} and Remark \ref{rem:dim_F} we have for each $m,k\in\N_0$ with $m\geq k$ and $m\equiv k$ mod $2$
\begin{gather*}
V_{m,k}\otimes\mf p^*\cong V_{m+1,k+1}\oplus V_{m-1,k-1}\oplus V_{m+1,k-1}\oplus V_{m-1,k+1}\oplus\bigoplus_{\substack{\beta\in S\\V_{m,k,\beta}\not\in\hat K_M}} V_{m,k,\beta}.
\end{gather*}
We define generalized gradients $\mathrm{d}_V^{V_{m,k}}\coloneqq T_V^{V_{m,k}}\circ\nabla$ with $T_V^{V_{m,k}}\in\Hom{K}{V_{m,k}\otimes\mf p^*}{V}$ as in Definition \ref{def:omega} and abbreviate
\begin{gather*}
\mathrm{d}_{1,\pm}\coloneqq\mathrm{d}_{V_{m\pm1,k\pm1}}^{V_{m,k}},\qquad\quad\mathrm{d}_{2,\pm}\coloneqq\mathrm{d}_{V_{m\pm1,k\mp1}}^{V_{m,k}},\qquad\quad\mathrm{D}_\beta\coloneqq\mathrm{d}_{V_{m,k,\beta}}^{V_{m,k}}
\end{gather*}
for each $\beta\in S$ with $V_{m,k,\beta}\not\in\hat K_M$. Let $\mu=-(\rhoa+(2\ell-6)\alpha)\in\mathbf{Ex},\ \ell\in\N_0,$ be an exceptional parameter and recall the structure and properties of $\on{soc}(\pst{\mu})$ from Theorem \ref{thm:socle}. Using Proposition \ref{prop:gen_grad_omega}, Proposition \ref{prop:lambdaVY_BO}.\ref{it:lambdaVY_2} and Remark \ref{rem:lambda_F} we infer for each $m\geq k\in\N_0$ with $m\equiv k$ mod $2$
\begin{gather*}
V\notleftrightomega V_{m,k}\:\Longrightarrow\:\mathrm{d}_V^{V_{m,k}}\circ P_\mu^{V_{m,k}}=0\qquad\text{and}\qquad V\lraomega V_{m,k}\:\Longleftrightarrow\: V\in\{V_{m\pm1,k\pm1}\}
\end{gather*}
whenever the occurring representations exist. The minimal $K$-type of $\on{soc}(\pst{\mu})$ is given by $V_{2\ell+2,0}$ (see Theorem \ref{thm:socle}).

As in the quaternionic case we use Theorem \ref{thm:fourier_char} to prove a spectral correspondence. By Lemma \ref{la:def_distr} we obtain 
\begin{lemma}\label{la:poly_growth_F}
Let $\mu\coloneqq-(\rhoa+(2\ell-6)\alpha),\ \ell\in\N_0,$ an exceptional parameter. Let $\psi_{m,k}\in C^\infty(G\times_KV_{m,k})$ for $m\equiv k\on{ mod }2,\ m-k\geq2(\ell+1),$ be such that the equations from Lemma \ref{la:gen_diffops2} are satisfied (with $\psi_{m,k}$ instead of $\pi_{V_{m,k}}^*(f)$). Assume that $\pi_{V_{2\ell+2,0}}(\psi_{V_{2\ell+2,0}})\in C^\infty(G)$ has finite $L^2$-norm.
Then the formal sum
\begin{gather*}
f\coloneqq\sum_{\substack{m-k\geq2\ell+2\\m\equiv k\on{ mod }2}}\iota_{G/M}(\pi_{V_{m,k}}(\psi_{m,k}))
\end{gather*}
defines a distribution on $G/M$.
\end{lemma}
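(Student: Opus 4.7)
The plan is to follow exactly the template established by Lemmas~\ref{la:poly_growth_SUn} and~\ref{la:poly_growth_Spn}: reduce to Lemma~\ref{la:def_distr} by exhibiting a polynomial bound of the form
\[
\|\pi_{V_{m,k}}(\psi_{m,k})\|_{L^2(G)}^2 \;\leq\; c\,(1+\|\tau_{m,k}\|^2)^N
\]
for all admissible $(m,k)$, with $c,N$ independent of the parameters. To obtain such a bound one combines Lemma~\ref{la:gen_grad_adjoint} with the recursion relations coming from Lemma~\ref{la:gen_diffops2} (the analogues of properties i)--v) in Theorems~\ref{thm:spectral_corres_SUn2} and~\ref{thm:spectral_corres_Spn}), which will read, in the $F_4$ case,
$\mathrm{d}_{1,+}\psi_{m,k} = T^{m,k}_{m+1,k+1}\frac{\dim V_{m,k}}{\dim V_{m+1,k+1}}\psi_{m+1,k+1}$ and similarly for $\mathrm{d}_{2,+}$, where $T^{m_1,k_1}_{m_2,k_2}\coloneqq T^{V_{m_1,k_1}}_{V_{m_2,k_2}}(p_{V_{m_1,k_1},\mu})(e)$. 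Plugging the $\mathrm{d}_{1,-}$-version into Lemma~\ref{la:gen_grad_adjoint} yields the two-sided identity
\[
\|\pi_{V_{m+1,k+1}}(\psi_{m+1,k+1})\|^2 \;=\; -\left(\frac{\dim V_{m+1,k+1}}{\dim V_{m,k}}\right)^2 \frac{T^{m+1,k+1}_{m,k}}{T^{m,k}_{m+1,k+1}}\,\|\pi_{V_{m,k}}(\psi_{m,k})\|^2,
\]
and an entirely parallel identity for the diagonal $(+1,-1)$-move using $\mathrm{d}_{2,\pm}$.

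Next I would simplify the ratio of $T$'s. By Equation~\eqref{eq:T_decomp} and Lemma~\ref{la:nu_comp_series}, the $T^{m+1,k\pm1}_{m,k}$ factor (which sits on the socle side) collapses to a scalar multiple of $\lambda(V_{m,k},V_{m+1,k\pm1})$, while $T^{m,k}_{m+1,k\pm1}$ is an explicit affine function of $(m,k)$ evaluated through the formulas of Proposition~\ref{prop:nu_explicit}. Combining with Proposition~\ref{prop:lambdaVY_BO}\,\ref{it:lambdaVY_4} to trade $\lambda$-ratios for dimension ratios, the factor between consecutive norms becomes an explicit rational function of $(m,k,\ell)$.

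The key combinatorial point is then to pick a path in the socle lattice $\{(m',k') : m'\equiv k' \bmod 2,\ m'-k'\geq 2\ell+2\}$ from $(2\ell+2,0)$ to a general $(m,k)$ along which the recursion is applicable. I would use only the two ``outward'' diagonal moves: first apply $(+1,+1)$ a total of $(m+k-2\ell-2)/2$ times (which keeps $m'-k'=2\ell+2$ and so stays in the admissible region), then $(+1,-1)$ a total of $(m-k-2\ell-2)/2$ times (which only increases $m'-k'$). Iterating the recursion along this path expresses $\|\pi_{V_{m,k}}(\psi_{m,k})\|^2$ as a product of explicit rational factors times $\|\pi_{V_{2\ell+2,0}}(\psi_{2\ell+2,0})\|^2$, which is finite by assumption. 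Using Remark~\ref{rem:dim_F} to estimate the dimension ratios, each per-step factor is $\mathrm{O}(1)$ in the ``along the ridge'' parameter and at most $\mathrm{O}((m+k)^c)$ for some fixed $c$ in the other. Telescoping gives a polynomial bound of the desired form and Lemma~\ref{la:def_distr} concludes.

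The main obstacle is the two-dimensional bookkeeping of the recursion: one must verify that the explicit rational factors obtained from the appendix for $F_4$ neither vanish (which would block the recursion) nor blow up faster than polynomially along the chosen path. This is the reason for starting at the ``corner'' minimal $K$-type $V_{2\ell+2,0}$ and moving only in the two outward diagonal directions, so that the denominators $T^{m,k}_{m+1,k\pm1}$ stay uniformly away from zero (they correspond to genuinely non-degenerate gradients out of the socle) and the growth is controlled by a Gamma-quotient whose asymptotics can be read off directly from Proposition~\ref{prop:nu_explicit}.
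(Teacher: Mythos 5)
Your proposal is correct and follows essentially the same route as the paper's proof: reduce to Lemma~\ref{la:def_distr}, derive the norm recursion from Lemma~\ref{la:gen_grad_adjoint} combined with the identities of Lemma~\ref{la:gen_diffops2}, evaluate the ratio of the scalars $T^{m_1,k_1}_{m_2,k_2}$ via Equation~\eqref{eq:T_decomp}, Proposition~\ref{prop:nu_explicit} and Proposition~\ref{prop:lambdaVY_BO}, and telescope along a path of diagonal moves joining $V_{2\ell+2,0}$ to $V_{m,k}$, with the product bounded polynomially using the dimension formula of Remark~\ref{rem:dim_F}. The only (cosmetic) difference is the orientation and ordering of the lattice path — you move outward from the minimal $K$-type along the ridge $m-k=2\ell+2$ first and then increase $m-k$, which keeps every intermediate weight dominant, while the paper runs the same recursion inward with the two phases interchanged.
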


\begin{proof}
We abbreviate $T^{m_1,k_1}_{m_2,k_2}\coloneqq T_{V_{m_2,k_2}}^{V_{m_1,k_1}}(p_{V_{m_1,k_1,\mu}})(e)\in\C$. It suffices to prove the estimate in Lemma \ref{la:def_distr}. Using Lemma \ref{la:gen_grad_adjoint} (second step) and the equations from Lemma \ref{la:gen_diffops2} (first and third step) we infer for the $L^2$-norm as in Lemma \ref{la:poly_growth_SUn}
\begin{align*}
\norm{\pi_{V_{m,k}}(\psi_{m,k})}^2&=
-\left(\frac{\dim V_{m,k}}{\dim V_{m-1,k-1}}\right)^2\frac{T^{m,k}_{m-1,k-1}}{T^{m-1,k-1}_{m,k}}\norm{\pi_{V_{m-1,k-1}}(\psi_{m-1,k-1})}^2.
\end{align*}
By Equation \eqref{eq:T_decomp}, Proposition \ref{prop:nu_explicit} and Proposition \ref{prop:lambdaVY_BO}.\ref{it:lambdaVY_4} we have
\begin{gather*}
\frac{T^{m,k}_{m-1,k-1}}{T^{m-1,k-1}_{m,k}}=\frac{-14-2\ell-m-k}{4-2\ell+m+k}\frac{\lambda(V_{m,k},V_{m-1,k-1})}{\lambda(V_{m,k},V_{m-1,k-1})}=\frac{-14-2\ell-m-k}{4-2\ell+m+k}\frac{\dim V_{m-1,k-1}}{\dim V_{m,k}}
\end{gather*}
and thus
\begin{gather*}
\norm{\pi_{V_{m,k}}(\psi_{m,k})}^2=\frac{14+2\ell+m+k}{4-2\ell+m+k}\frac{\dim V_{m,k}}{\dim V_{m-1,k-1}}\norm{\pi_{V_{m-1,k-1}}(\psi_{m-1,k-1})}^2.
\end{gather*}
Iteratively applying this equation we infer for $a(m,k)\coloneqq\frac{m+k}{2}$ and $p\coloneqq a(m,k)-(\ell+1)$
\begin{align*}
\norm{\pi_{V_{m,k}}(\psi_{m,k})}^2&=\prod_{r=1}^{p-1}\frac{7+\ell+a(m,k)-r}{2-\ell+a(m,k)-r}\frac{\dim V_{m-r,k-r}}{\dim V_{m-r-1,k-r-1}}\norm{\pi_{V_{m-p,k-p}}(\psi_{m-p,k-p})}^2\\
&=\frac{\dim V_{m,k}}{\dim V_{m-p,k-p}}\prod_{r=1}^{p-1}\frac{7+\ell+a(m,k)-r}{2-\ell+a(m,k)-r}\norm{\pi_{V_{m-p,k-p}}(\psi_{m-p,k-p})}^2,
\end{align*}
with $a(m-p,k-p)=\ell+1$. Note that $\prod_{r=1}^{p-1}\frac{7+\ell+a(m,k)-r}{2-\ell+a(m,k)-r}=\frac{(7+2\ell+p)!\cdot 6}{(7+2\ell+1)!\cdot(2+p)!}$ is $\mc O(p^{7+2\ell-2})$. Moreover, the dimension formula from Remark \ref{rem:dim_F} shows that $\dim V_{m,k}$ grows at most polynomially in $m$ and $k$. A similar argument works for the step from $V_{m,k}$ with $a(m,k)=\ell+1$ to $V_{2(\ell+1),0}$ by decreasing $b(m,k)\coloneqq\frac{m-k}{2}$ (by going from $V_{m,k}$ to $V_{m-1,k+1}$).
\end{proof}

\begin{theorem}[Spectral Correspondence]\label{thm:spectral_corres_F4}
Let $\mathbf{Ex}\ni\mu=-(\rhoa+(2\ell-6)\alpha),\ \ell\in\N_0,$ be an exceptional parameter. Then the socle $\on{soc}(\pst{\mu})$ of $\pst{\mu}$ is irreducible, unitary and its $K$-types are given by $V_{m,k}$ for $m\equiv k\on{ mod }2,\ m-k\geq2(\ell+1)$. The minimal $K$-type is $V_{2\ell+2,0}$ and the corresponding Poisson transform induces an isomorphism from $\colon{}^\Gamma(\on{soc}(\pst{\mu}))^{-\infty}$ onto
\begin{gather*}
{}^\Gamma\{u\in C^\infty(G\times_KV_{2\ell+2,0})\colon properties\ i)-v)\ below\},
\end{gather*}
where the properties are as follows. Let $a(m,k)\coloneqq\frac{m+k}{2}$ and $b(m,k)\coloneqq\frac{m-k}{2}$. For $u\in C^\infty(G\times_KV_{2\ell+2,0})$ let $\psi_{\ell+1,\ell+1}\coloneqq\dim V_{2\ell+2,0}\cdot u$ and define recursively for $m\equiv k\on{ mod }2,\ m-k\geq2(\ell+1)$ (see Lemma \ref{la:gen_diffops2})
\begin{align*}
\psi_{a,\ell+1}\coloneqq\frac{\dim V_{m,m-2\ell-2}}{\dim V_{m-1,m-2\ell-3}}\frac{1}{T^{a-1,\ell+1}_{a,\ell+1}}\mathrm{d}_{+,1}\psi_{a-1,\ell+1},\ \psi_{a,b}\coloneqq\frac{\dim V_{m,k}}{\dim V_{m-1,k+1}}\frac{1}{T^{a,b-1}_{a,b}}\mathrm{d}_{+,2}\psi_{a,b-1},
\end{align*}
where we abbreviate $T^{a_1,b_1}_{a_2,b_2}\coloneqq T_{V_{a_2+b_2,a_2-b_2}}^{V_{a_1+b_1,a_1-b_1}}(p_{V_{a_1+b_1,a_1-b_1,\mu}})(e)\in\C$. Then we define the properties
\begin{enumerate}
\item $\mathrm{d}_{+,1}\psi_{a,b}=T_{a+1,b}^{a,b}\frac{\dim V_{a,b}}{\dim V_{a+1,b}}\psi_{a+1,b},\hfill (a\geq b\geq\ell+2),$
\item $\mathrm{d}_{-,1}\psi_{a,b}=T_{a-1,b}^{a,b}\frac{\dim V_{a,b}}{\dim V_{a-1,b}}\psi_{a-1,b},\hfill (a\geq\ell+2,\ b\geq\ell+1),$
\item $\mathrm{d}_{-,2}\psi_{a,b}=T_{a,b-1}^{a,b}\frac{\dim V_{a,b}}{\dim V_{a,b-1}}\psi_{a,b-1},\hfill (a\geq b\geq\ell+2),$
\item $\mathrm{d}_{-,2}\psi_{a,\ell+1}=0,\hfill (a\geq\ell+1),$
\item $\mathrm{d}^{V_{a,b}}_V\psi_{a,b}=0,\hfill (a\geq b\geq\ell+1,\ V\lra V_{m,k},\ V\not\in\hat K_M).$
\end{enumerate}
\end{theorem}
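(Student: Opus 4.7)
The strategy mirrors that of Theorems \ref{thm:spectral_corres_SUn2} and \ref{thm:spectral_corres_Spn}, since the key combinatorial facts for $F_4$ — the tensor product decomposition (Proposition \ref{prop:tensor_decomp_eq_rk} combined with Remarks \ref{rem:dim_F} and \ref{rem:lambda_F}) and the polynomial-growth convergence estimate (Lemma \ref{la:poly_growth_F}) — have already been established. The proof splits naturally into two parts: inclusion of the Poisson image in the described space, and surjectivity via Fourier reconstruction.

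For the first inclusion, let $f \in {}^\Gamma(\on{soc}(\pst{\mu}))^{-\infty}$ and set $u = P_\mu^{V_{2\ell+2, 0}}(f)$. By Lemma \ref{la:pi_gamma}.\ref{it:la_pi_gamma5} we have $\psi_{\ell+1, \ell+1} = \pi_{V_{2\ell+2, 0}}^*(f)$, and an induction on $a$ (then on $b$) using Lemma \ref{la:gen_diffops2}.\ref{it:gen_diffops21} identifies each recursively constructed $\psi_{a,b}$ with $\pi_{V_{a+b, a-b}}^*(f)$. Properties $i), ii), iii)$ then become instances of the raising/lowering equations in Lemma \ref{la:gen_diffops2}.\ref{it:gen_diffops21}, and property $v)$ is Lemma \ref{la:gen_diffops2}.\ref{it:gen_diffops22} combined with Proposition \ref{prop:gen_grad_omega}. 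Property $iv)$ holds because $\mathrm{d}_{-,2}\psi_{a, \ell+1}$ is a scalar multiple of $\pi_{V_{a+\ell, a-\ell}}^*(f)$, and $V_{a+\ell, a-\ell}$ has $b$-coordinate $\ell$, which by Theorem \ref{thm:socle} is not a $K$-type of $\on{soc}(\pst{\mu})$; hence this projection vanishes. $\Gamma$-invariance is preserved because all maps involved are $G$-equivariant.

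For surjectivity, given $u \in {}^\Gamma C^\infty(G \times_K V_{2\ell+2, 0})$ satisfying $i)-v)$, construct the family $\{\psi_{m,k}\}$ recursively as in the statement. Because $\Gamma$ is co-compact, $\pi_{V_{2\ell+2, 0}}(\psi_{\ell+1, \ell+1})$ has finite $L^2$-norm, so Lemma \ref{la:poly_growth_F} assembles the formal sum
\[
f := \sum_{\substack{m - k \geq 2\ell + 2 \\ m \equiv k\ \on{mod}\ 2}} \iota_{G/M}(\pi_{V_{m,k}}(\psi_{m,k}))
\]
into an element of $\mc D'(G/M)$. To invoke Theorem \ref{thm:fourier_char} one must verify that the hypotheses of Lemma \ref{la:gen_diffops2} are satisfied for every $V_{m,k} \in \hat K_M$; properties $i)-v)$ provide exactly these relations once the recursion is spelled out. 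Theorem \ref{thm:fourier_char} then yields $f \in \pstd{\mu}$, and since only socle $K$-types occur in the expansion, $f \in (\on{soc}(\pst{\mu}))^{-\infty}$. $\Gamma$-invariance is transmitted term-by-term, and $K$-type orthogonality together with Lemma \ref{la:pi_gamma}.\ref{it:la_pi_gamma5} gives $P_\mu^{V_{2\ell+2, 0}}(f) = u$.

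The main obstacle is the bookkeeping required in the surjectivity direction: one must check that properties $i)-v)$ for $u$ imply that the recursively constructed $\{\psi_{m,k}\}$ satisfy \emph{all} raising, lowering, and kernel relations from Lemma \ref{la:gen_diffops2}, not merely those used in the definition of the recursion. This reduces to an induction on $m + k$ using Proposition \ref{prop:mult_one} to pin down scalars uniquely and Proposition \ref{prop:lambdaVY_BO}.\ref{it:lambdaVY_4} to cross-compare the coefficients $T^{m_1, k_1}_{m_2, k_2}$ in opposite directions along the edges of the weight lattice. Once these consistency relations are established, the remainder of the argument is a direct transcription of the proof of Theorem \ref{thm:spectral_corres_Spn}.
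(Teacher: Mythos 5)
Your proposal is correct and follows essentially the same route as the paper, which proves this theorem by declaring it analogous to Theorem \ref{thm:spectral_corres_SUn2}: the inclusion of the Poisson image via Lemma \ref{la:pi_gamma}\,\ref{it:la_pi_gamma5}, Lemma \ref{la:gen_diffops2} and the socle $K$-type structure from Theorem \ref{thm:socle}, and surjectivity via Lemma \ref{la:poly_growth_F}, Theorem \ref{thm:fourier_char} and $K$-type orthogonality. Your closing remark about checking that the recursion together with properties i)--v) supplies \emph{all} relations required by Theorem \ref{thm:fourier_char} is a fair observation, but no extra induction is needed: the raising relations not listed among the properties are exactly the defining equations of the recursion.
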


\begin{proof}
The proof is analogous to the prove of Theorem \ref{thm:spectral_corres_SUn2}.
\end{proof}

\appendix 
\setcounter{equation}{0}
\renewcommand{\theequation}{\thesection.\arabic{equation}}
\section{Computations of scalars relating Poisson transforms}\label{app:comp_scalars}
\begin{table}[ht]
\renewcommand{\arraystretch}{1.3}
\centering
    \begin{tabular}{C|C|C|C|C|C}
        G & K & K/M & m_\alpha & m_{2\alpha} & \rhoa(H)\\
        \hline
        \GSO{n},\ n\geq 2 & \mathrm{S}(\mathrm{O}(n)\times \mathrm{O}(1))\cong\mathrm{SO}(n) & \mathbb{S}^{n-1} & n-1 & 0 & \frac{n-1}{2}\\
        \GSU{n},\ n\geq 2 & \mathrm{S}(\mathrm{U}(n)\times\mathrm{U}(1))\cong\mathrm{U}(n) & \mathbb{S}^{2n-1} & 2n-2 & 1 & n\\
        \GSp{n},\ n\geq 2 & \mathrm{Sp}(n)\times\mathrm{Sp}(1) & \mathbb{S}^{4n-1} & 4n-4 & 3 & 2n+1\\
        \GF & \mathrm{Spin}(9) & \mathbb{S}^{15} & 8 & 7 & 11
    \end{tabular}
    \caption{Structural data of rank one groups (recall that $\alpha(H)=1$ for the unique simple positive restricted root $\alpha$ of $(\mf g,\mf a)$). The isomorphism of $K/M$ with a sphere is given by the adjoint action of $K$ on $H\in\mf a_0\subseteq\mf p$.}
    \label{table:structure_rank_one}
\end{table}
In order to compute the scalars $T_V^Y(p_{Y,\mu})$ occurring in Lemma \ref{la:gen_diffops} we first compute the scalars $\lambda(V,Y)$ in each case and then conclude by using Lemma \ref{la:nu_comp_series} and Equation \eqref{eq:T_decomp}. For the explicit calculations we will use \emph{hypergeometric functions}.

\begin{definition}	
  The (Gaussian, ordinary) \emph{hypergeometric function} $F$ (of type $(2,1)$) is defined by
  $$F(a,b,c,z)\coloneqq\sum_{n=0}^\infty \frac{(a)_n(b)_n}{(c)_n}\frac{z^n}{n!},$$
  where $a,b,c,z\in\R,c>0$ if the series converges where 
  $$(q)_n\coloneqq
  \begin{cases}
    1&:n=0\\
    q(q+1)\ldots(q+n-1)&:n>0
  \end{cases}$$
  denotes the \emph{Pochhammer symbol}. Note that $F$ is a polynomial in $z$ if $a$ or $b$ is a non-positive integer. 
 \end{definition}
\begin{lemma}(cf. \cite[Lemma 4.1]{JW})\label{hypergeo} Assume $\abs{z}<1$ or $a\in-\N_0$ or $b\in-\N_0$. Then $F$ has the following properties:
 \begin{enumerate}[label=(\roman*), ref=(\roman*)]
  \item \label{prop1}$\frac{d}{dz}F(a,b,c,z)=\frac{ab}{c}F(a+1,b+1,c+1,z)$,
  \item \label{prop2}$(c-b-a)F(a,b,c,z)=(c-b)F(a,b-1,c,z)+a(z-1)F(a+1,b,c,z)$,
  \item \label{prop2'}$(c-b-a)F(a,b,c,z)=(c-a)F(a-1,b,c,z)+b(z-1)F(a,b+1,c,z)$,
  \item \label{prop3}$F(a,b+1,c,z)-F(a,b,c,z)=\frac{az}{c}F(a+1,b+1,c+1,z)$,
  \item \label{prop3'}$F(a+1,b,c,z)-F(a,b,c,z)=\frac{bz}{c}F(a+1,b+1,c+1,z)$.
 \end{enumerate}
\end{lemma}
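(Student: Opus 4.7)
The plan is to prove each identity by direct term-by-term manipulation of the defining power series. Under any of the three convergence hypotheses ($\abs{z}<1$, $a\in -\N_0$, or $b\in -\N_0$) the series and all the rearrangements we need converge absolutely, so termwise differentiation, coefficient comparison, and index shifts are all justified without further comment.

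First I would prove \ref{prop1} by differentiating the defining series term by term. The factor $n$ coming from $\tfrac{d}{dz}z^n=nz^{n-1}$ cancels one factor in $n!$, and reindexing $n\mapsto n+1$ together with the identities $(a)_{n+1}=a\,(a+1)_n$, $(b)_{n+1}=b\,(b+1)_n$, and $(c)_{n+1}=c\,(c+1)_n$ produces exactly the prefactor $ab/c$ and the shifted hypergeometric series on the right-hand side.

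For \ref{prop3} and \ref{prop3'} the key calculation is the telescoping identity
\begin{equation*}
(b+1)_n-(b)_n=n\,(b+1)_{n-1}\qquad (n\geq1),
\end{equation*}
which follows by factoring out the common product $(b+1)(b+2)\cdots(b+n-1)$ and evaluating the remaining linear factor $(b+n)-b=n$. Substituting into the difference $F(a,b+1,c,z)-F(a,b,c,z)$ kills the $n=0$ term and, after reindexing $n\mapsto n+1$ and using $(a)_{n+1}=a(a+1)_n$, $(c)_{n+1}=c(c+1)_n$, yields the claimed $\tfrac{az}{c}F(a+1,b+1,c+1,z)$. Identity \ref{prop3'} is proved identically with the roles of $a$ and $b$ interchanged.

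Finally, for the Gauss contiguous relations \ref{prop2} and \ref{prop2'}, I would verify them coefficientwise. Writing $(z-1)F(a+1,b,c,z)$ as a single power series by splitting off the factor $z$ and reindexing, and then subtracting the appropriate multiples of $F(a,b-1,c,z)$ and $F(a,b,c,z)$, each identity reduces to showing that a polynomial in $a,b,c,n$ vanishes for all $n\in\N_0$ after dividing out the common Pochhammer factor $(a+1)_{n-1}(b-1)_{n-1}/(c)_n$. This residual identity is linear in $n$ and can be verified by checking it at two values, or one can derive \ref{prop2} from \ref{prop1}, \ref{prop3}, \ref{prop3'} and the symmetry $F(a,b,c,z)=F(b,a,c,z)$, which gives \ref{prop2'} for free. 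The only real obstacle is bookkeeping: one must keep careful track of the shifted Pochhammer symbols and ensure the $n=0$ contributions on both sides of each identity match after reindexing. There is no conceptual difficulty, and the entire lemma is a standard exercise in the calculus of the hypergeometric series (compare the reference \cite[Lemma 4.1]{JW}).
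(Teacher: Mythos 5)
The paper does not prove this lemma at all — it is imported verbatim from \cite[Lemma 4.1]{JW} — so your direct series computation is a self-contained substitute rather than a variant of an argument in the text. Your approach is correct and is the standard one: (i) follows from termwise differentiation and $(q)_{n+1}=q(q+1)_n$; (iv) and (v) follow from the telescoping identity $(b+1)_n-(b)_n=n(b+1)_{n-1}$ (your factorization is right); and (ii), (iii) are classical Gauss contiguous relations that can indeed be checked coefficientwise — I verified that after dividing out $(a)_n(b)_{n-1}$ the residual identity $(c-a-b)(b+n-1)=(c-b)(b-1)+n(c+n-1)-(a+n)(b+n-1)$ holds for all $n$. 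All rearrangements are legitimate under the stated hypotheses, since the series is either a polynomial (when $a$ or $b$ lies in $-\N_0$) or absolutely convergent for $\abs{z}<1$.

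Two small caveats. First, the residual polynomial identity for (ii) is a priori quadratic in $n$ (the $n^2$ terms cancel only after expansion), so ``check at two values of $n$'' does not quite suffice as stated; check three values, or just expand. Second, the proposed shortcut of deriving (ii) from (i), (iv), (v) and the symmetry $F(a,b,c,z)=F(b,a,c,z)$ is doubtful: (iv) and (v) together yield only the single relation $a\bigl(F(a+1,b,c,z)-F\bigr)=b\bigl(F(a,b+1,c,z)-F\bigr)=zF'$, and (ii) involves the contiguous functions $F(a,b-1,c,z)$ and $F(a+1,b,c,z)$, which cannot be reached from that one relation without an additional independent input (e.g.\ the hypergeometric differential equation). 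Since this is offered only as an alternative to the coefficientwise verification, it does not affect the validity of the proof, but the alternative route should be dropped or repaired.
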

\subsection{\texorpdfstring{The Case of $G=\GSO{n},\ n\geq 3$}{The Case of G=SO(n,1), n>=3}}
Considering the compact picture and the isomorphism $K/M\cong\mb S^{n-1}$ we see that $\pst{\mu}$ decomposes as the Hilbert space direct sum
\begin{gather}\label{eq:app_KM_sphere_iso_R}
\pst{\mu}\cong_KL^2(K/M)\cong_KL^2(\mb S^{n-1})\cong_K\widehat\bigoplus_{\ell\in\mathbb{N}_0}Y_\ell,
\end{gather}
where $Y_\ell$ denotes the space of all harmonic, homogeneous polynomials of degree $\ell$ restricted to $\mb S^{n-1}$.
\begin{remark}
For $G=\GSO{2}$ we have $\pst{\mu}\cong_K\widehat\bigoplus_{\ell\in\Z}Y_{\ell}$, with $Y_{\ell}\coloneqq\C\cdot z^\ell\subset C^\infty(\mb S^1)$.
\end{remark}
 We choose a Cartan subalgebra $\mf t$ of $\mf k$ as in \cite[§II.1 Example 2, 4]{knapplie} with roots
\begin{gather*}
\Delta_\mf k=\{\pm e_i\pm e_j\colon 1\leq i\neq j\leq m\}\cup\{\pm e_i\colon 1\leq i\leq m\}\text{ resp.\@ }\Delta_\mf k=\{\pm e_i\pm e_j\colon 1\leq i\neq j\leq m\}
\end{gather*}
if $K\cong\mathrm{SO}(2m+1)$ resp.\@ $K\cong\mathrm{SO}(2m)$ for some $m\in\N$. We choose the positive systems
\begin{gather*}
\Delta^+_\mf k=\{e_i\pm e_j\colon 1\leq i<j\leq m\}\cup\{e_i\colon 1\leq i\leq m\}\text{ resp.\@ }\Delta^+_\mf k=\{e_i\pm e_j\colon 1\leq i<j\leq m\}.
\end{gather*}
The corresponding half sum of positive roots is given by
\begin{gather*}
\rhoc=\left(m-\frac12\right)e_1+\left(m-\frac32\right)e_2+\ldots+\frac12 e_m\text{ resp.\@ }\rhoc=(m-1)e_1+\ldots+e_{m-1}.
\end{gather*}
The highest weight of $Y_\ell$ is $\ell e_1$ (see e.g.\@ \cite[Example 1 of §V.1, p. 277]{knapplie}). Introducing the angular coordinates
\begin{gather*}
x_1=r\cos(\xi),\quad x_i=r\sin(\xi)\omega_i,\ i\geq2,
\end{gather*}
where $\sum_{i=2}^n\omega_i^2=1,\ 0\leq\xi\leq\pi$, we infer by \cite[Theorem 3.1(2)]{JW} that
\begin{gather*}
 \phi_{Y_k}=\cos^k(\xi) F\left(-\frac{k}{2},\frac{1-k}{2},\frac{n-1}{2},-\tan^2(\xi)\right).
\end{gather*}
In order to compute the scalars $\lambda(V,Y)$ for $Y,\,V\in\hat K_M=\{[Y_\ell]\colon\ell\in\N_0\}$ it suffices to decompose $\omega(H)\phi_V$ by Lemma \ref{la:lambda_explicit}.

\begin{lemma}\label{la:decomp_omega_R}
For each $k\in\N_0$ we have
\begin{gather*}
\omega(H)\phi_{Y_k}=\frac{k}{n+2k-2}\phi_{Y_{k-1}}+\frac{n+k-2}{n+2k-2}\phi_{Y_{k+1}}.
\end{gather*}
\end{lemma}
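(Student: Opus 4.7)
The argument has three stages. First I will argue that the identity is forced to take the form
$$\omega(H)\phi_{Y_k}=a\phi_{Y_{k-1}}+b\phi_{Y_{k+1}}$$
for some scalars $a,b\in\C$; this reduces the lemma to the computation of two coefficients. Second I will interpret both sides as polynomials in $\cos\xi$ via the identification $K/M\cong\mb S^{n-1}$ together with the explicit hypergeometric formula for $\phi_{Y_k}$. Third I will identify $\phi_{Y_k}$ with a normalized Gegenbauer polynomial and extract $a$ and $b$ from its classical three-term recurrence.

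For the first step, Propositions \ref{prop:tensor_decomp_odd} and \ref{prop:tensor_decomp_even} give $Y_k\otimes\mf p\cong Y_{k-1}\oplus Y_{k+1}\oplus V_k$ (with the convention $Y_{-1}=0$ when $k=0$), where $V_k$ is not $M$-spherical. The vector $\phi_{Y_k}\otimes H$ is $M$-invariant in $Y_k\otimes\mf p$, and the $K$-equivariant map $\omega_{Y_k}$ of Definition \ref{def:omega} sends it to $\omega(H)\phi_{Y_k}\in L^2(K/M)$, which is in turn $M$-invariant. Hence $\omega(H)\phi_{Y_k}$ lies in the $M$-invariants of $Y_{k-1}\oplus Y_{k+1}$, and by Proposition \ref{prop:helg_general}\,\ref{prop:helg_general_i} these $M$-invariants are spanned by $\phi_{Y_{k-1}}$ and $\phi_{Y_{k+1}}$.

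For the second step, the isomorphism $K/M\cong\mb S^{n-1}$ identifies $kM$ with $\Ad(k^{-1})H$ regarded as a unit vector in $\mf p_0$ (the unit norm coming from our normalization $\langle H,H\rangle=1$ in Notation \ref{not:inn_prod_dual_basis}). Thus $\omega(H)(kM)=\langle\Ad(k^{-1})H,H\rangle=\cos\xi$ in the angular coordinates used in the preamble. The formula for $\phi_{Y_k}$ quoted from \cite[Theorem 3.1(2)]{JW} then displays $\phi_{Y_k}$ as a polynomial of degree $k$ in $\cos\xi$; comparing power series one recognizes
$$\phi_{Y_k}(\xi)=\frac{C_k^{(n-2)/2}(\cos\xi)}{C_k^{(n-2)/2}(1)},\qquad C_k^{(n-2)/2}(1)=\frac{(n-2)_k}{k!}.$$

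For the third step, the classical three-term Gegenbauer recurrence
$$(k+1)\,C_{k+1}^\lambda(x)=2(k+\lambda)\,x\,C_k^\lambda(x)-(k+2\lambda-1)\,C_{k-1}^\lambda(x),$$
specialized to $\lambda=(n-2)/2$ and $x=\cos\xi$, divided through by $C_k^\lambda(1)$ and simplified via $(n-2)_{k+1}/(n-2)_k=n+k-2$, yields
$$(n+2k-2)\cos\xi\cdot\phi_{Y_k}=k\,\phi_{Y_{k-1}}+(n+k-2)\,\phi_{Y_{k+1}},$$
which is the stated formula after dividing by $n+2k-2$. The only obstacle is the routine bookkeeping of normalization constants to pass between the Gegenbauer normalization $C_k^\lambda(1)$ and the hypergeometric normalization in \cite{JW}; the structural content is contained in the first step. (As a sanity check: for $k=0$ one has $a=0$ and $b=1$, consistent with $\omega(H)=\phi_{Y_1}$ since both evaluate to $\cos\xi$.)
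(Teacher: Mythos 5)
Your proof is correct, but it reaches the identity by a genuinely different route. The paper's proof is a direct one-step application of the Gauss contiguity relation (Lemma \ref{hypergeo}.\ref{prop2}) to the hypergeometric expression for $\phi_{Y_k}$ quoted from \cite{JW}: multiplying that relation by $\cos^{k+1}\xi$ immediately produces the two terms with the stated coefficients, with no a priori structural reduction. You instead (a) argue from the tensor product decomposition and $M$-invariance that only the $Y_{k\pm1}$-components can occur, (b) identify $\phi_{Y_k}$ with the normalized Gegenbauer polynomial $C_k^{(n-2)/2}(\cos\xi)/C_k^{(n-2)/2}(1)$, and (c) read off the coefficients from the classical three-term recurrence; the normalization bookkeeping you sketch does check out and reproduces $(n+2k-2)\cos\xi\,\phi_{Y_k}=k\phi_{Y_{k-1}}+(n+k-2)\phi_{Y_{k+1}}$. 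The two computations are equivalent (the Gegenbauer recurrence \emph{is} the contiguity relation after the quadratic transformation hidden in the $-\tan^2\xi$ argument), but your version outsources the hypergeometric manipulation to a classical fact about zonal harmonics. That identification should be justified slightly more carefully than ``comparing power series'': the clean argument is that both sides are $M$-invariant elements of $Y_k$ normalized to $1$ at $eM$, so they coincide by Proposition \ref{prop:helg_general}\,\ref{prop:helg_general_i}, once one knows that $r^kC_k^{(n-2)/2}(x_1/r)$ is harmonic.

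Two caveats, neither fatal. First, step (a) is not quite right for $n=3$: by Proposition \ref{prop:tensor_decomp_odd} the third constituent of $Y_k\otimes\mf p^*$ is then $Y_k$ itself, which \emph{is} $M$-spherical, so a priori a $\phi_{Y_k}$-term could appear. This does no harm because your step (c) computes the product outright and shows that no such term occurs, but as written the claim ``$V_k$ is not $M$-spherical'' overstates what Propositions \ref{prop:tensor_decomp_odd} and \ref{prop:tensor_decomp_even} give. Second, once you have the recurrence, step (a) is redundant; it buys you the qualitative statement (which we only record afterwards, in Remark \ref{rem:lambda_R}) without the explicit constants, whereas the paper's approach delivers both at once.
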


\begin{proof}
Recall that the identification from Equation \eqref{eq:app_KM_sphere_iso_R} comes from the $K$-action on $\mf p$, where $e_1\in\mb S^{n-1}$ corresponds to $H\in\mf a$. This implies that
\begin{gather*}
\omega(H)=x_1=\cos(\xi)
\end{gather*}
as a function in $C^\infty(\mb S^{n-1})$. Therefore,
\begin{gather*}
\omega(H)\phi_{Y_k}=\cos^{k+1}(\xi) \hyp{-\frac{k}{2}}{\frac{1-k}{2}}{\frac{n-1}{2}}{-\tan^2\xi}.
\end{gather*}
By Lemma \ref{hypergeo}.\ref{prop2} with $a=-\frac{k}{2},\ b=\frac{1-k}{2},\ c=\frac{n-1}{2}$ and $z=-\tan^2\xi$ we infer that $(n+2k-2)\hyp{-\frac{k}{2}}{\frac{1-k}{2}}{\frac{n-1}{2}}{z}$ equals
\begin{gather*}
(n+k-2)\hyp{-\frac{k+1}{2}}{-\frac{k}{2}}{\frac{n-1}{2}}{z}+\frac{k}{\cos^2\xi}\hyp{\frac{1-k}{2}}{\frac{2-k}{2}}{\frac{n-1}{2}}{z}.
\end{gather*}
Multiplying by $\cos^{k+1}\xi$ yields the result.
\end{proof}

\begin{remark}\label{rem:lambda_R}
Note that Lemma \ref{la:lambda_explicit} implies that
\begin{gather*}
\lambda(Y_k,Y_{k+1})=\on{pr}_{Y_{k+1}}(\omega(H)\phi_{Y_k})(eM)=\frac{n+k-2}{n+2k-2}\phi_{Y_{k+1}}(eM)=\frac{n+k-2}{n+2k-2}.
\end{gather*}
Similarly, we have $\lambda(Y_k,Y_{k-1})=\frac{k}{n+2k-2}$. The scalars $T_{Y_{k\pm 1}}^{Y_k}(p_{Y_k,\mu})(e)$ will be computed in Proposition \ref{prop:nu_explicit}.
\end{remark}

In order to describe the generalized gradients properly we will now decompose the relevant tensor products.
\begin{proposition}\label{prop:tensor_decomp_odd}
    Let $K=\SO{2m+1},\ m\geq1$. For $m>1$ the tensor product $Y_k\otimes\mf p^*$ decomposes for $k\in\mathbb{N}$ into
    \begin{gather*}
        Y_k\otimes\mf p^*\cong Y_{k-1}\oplus Y_{k+1}\oplus V_k
    \end{gather*}
    where $V_k$ is the $K$-representation with highest weight $k e_1+ e_2$. Moreover we have $Y_k\otimes\mf p^*\cong Y_{k-1}\oplus Y_k\oplus Y_{k+1}$ if $m=1$.
\end{proposition}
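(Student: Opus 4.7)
The plan is to identify $\mf p^*$ as a $K$-module with the standard representation $Y_1$ of $K = \SO{2m+1}$, and then decompose $Y_k \otimes Y_1$ using Klimyk's formula, exploiting the multiplicity-freeness of Proposition \ref{prop:mult_one}.

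First I would observe that $\mf p_0$ carries the restriction of $\on{Ad}$ from $\mf g_0 = \mf{so}(n,1)$ to the orthogonal complement of $\mf k_0 \cong \mf{so}(n)$, which is the standard representation of $K = \SO{n}$ on $\mathbb{R}^n$; the Killing form then identifies $\mf p$ with $\mf p^*$ as $K$-modules. Since the degree-one harmonic polynomials on $\mathbb{S}^{n-1}$ are exactly the restrictions of linear forms, $\mf p^* \cong_K Y_1$. The weights of $Y_1$ for $K = \SO{2m+1}$ are $\{0, \pm e_1, \ldots, \pm e_m\}$, each with multiplicity one. By Proposition \ref{prop:mult_one}, $Y_k \otimes \mf p^*$ decomposes multiplicity-freely, so it suffices to identify its irreducible constituents. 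Any constituent has highest weight of the form $ke_1 + \sigma$ with $\sigma$ a weight of $Y_1$; the dominant candidates are $(k+1)e_1$, $(k-1)e_1$, $ke_1 + e_2$ (the latter only if $m \geq 2$), and $ke_1$ itself.

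To compute the multiplicities I would apply Klimyk's formula: the multiplicity of $V(\nu)$ in $Y_k \otimes Y_1$ equals
\begin{gather*}
\sum_{w \in W} \on{sgn}(w)\, m_{e_1}\!\bigl(w(\nu + \rho_{\mf k}) - ke_1 - \rho_{\mf k}\bigr),
\end{gather*}
where $\rho_{\mf k}$ is the half-sum of positive roots of $\mf k$ and $m_{e_1}$ is the weight-multiplicity function of $Y_1 = V(e_1)$. For $\nu \in \{(k \pm 1)e_1,\, ke_1 + e_2\}$ only $w = \on{id}$ contributes, and the twisted difference is the corresponding weight of $Y_1$, yielding multiplicity one. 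The delicate case is $\nu = ke_1$: the identity contributes $+1$ coming from the zero weight of $Y_1$, but for $m \geq 2$ the simple reflection $s_m$ across $e_m$ sends $ke_1 + \rho_{\mf k}$ to $ke_1 + \rho_{\mf k} - e_m$, so the twisted difference equals $-e_m$, which is a weight of $Y_1$ and contributes $-1$. All other Weyl elements produce twisted differences whose coordinates grow at least linearly in $k$ (or have a coordinate of absolute value at least two), so they cannot be weights of $Y_1$; this is a short case check. Thus $Y_k$ does not occur, and one obtains $Y_k \otimes \mf p^* \cong Y_{k-1} \oplus Y_{k+1} \oplus V_k$.

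For $m = 1$ with $K = \SO{3}$, the representation $V_k$ does not exist (its highest weight involves $e_2$), and the only nontrivial Weyl reflection $s_1$ sends $ke_1 + \rho_{\mf k}$ to $-ke_1 - \rho_{\mf k}$, producing a twisted difference of $-(2k+1)e_1$, which is not a weight of $Y_1 = \mathbb{R}^3$ for $k \geq 1$. Hence the zero-weight contribution is uncancelled, $Y_k$ occurs with multiplicity one, and one obtains $Y_k \otimes \mf p^* \cong Y_{k-1} \oplus Y_k \oplus Y_{k+1}$. The main obstacle in the argument is the systematic verification of Klimyk's alternating sum: for each of the candidate highest weights and each Weyl element one must decide whether the twisted difference is a weight of $Y_1$, but this amounts to a routine finite inspection once $\rho_{\mf k}$ has been computed explicitly.
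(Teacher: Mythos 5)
Your proposal is correct and follows essentially the same route as the paper: both identify $\mf p^*$ with the standard representation $Y_1$ and apply the Klimyk/Racah--Speiser alternating sum, with the key point being the cancellation at $\nu=ke_1$ between the identity contribution (zero weight of $Y_1$) and the reflection $s_{e_m}$ contribution (weight $-e_m$) when $m>1$, and the absence of that cancellation when $m=1$. The only cosmetic difference is that you invoke the multiplicity-freeness of Proposition~\ref{prop:mult_one} up front, whereas the paper simply reads the multiplicities off the alternating sum directly.
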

\begin{proof}
The coadjoint representation of $K$ on $\mf p^*\cong \C^{2m+1}$ is equivalent to the defining representation (as well as $Y_1$) and has weights $\pm e_i,\ i\in\{1,\ldots,m\},$ and $0$. Writing 
\begin{gather*}
Y_k\otimes\mf p^*\cong Y_k\otimes Y_1\cong\bigoplus_{\Lambda_i\in\hat K}\mathcal{L}_i\Lambda_i,
\end{gather*}
where $\mc{L}_i\coloneqq\on{mult}(\Lambda_i, Y_k\otimes Y_1)$ denotes the multiplicity, we have by \cite[p.274]{FuchsSchweigert}
\begin{gather*}
 \mc{L}_i=\sum_{w\in W}\on{sign}(w)\on{mult}_{Y_1}(w(\Lambda_i+\rhoc)-\rhoc-k e_1),
\end{gather*}
    where $\on{mult}_{Y_1}(\mu)\in\mathbb N_0$ denotes the multiplicity of the weight $\mu$ in $Y_1$ and $W$ denotes the Weyl group of $\mf k$. If $\mc L_i\neq0$ there has to exist some $w\in W$ such that $w(\Lambda_i+\rhoc)-\rhoc-k e_1$ is a weight of $Y_1$, i.e.
    \begin{gather*}
    	w(\Lambda_i+\rhoc)-\rhoc-k e_1=\pm e_j\Leftrightarrow\Lambda_i=w^{-1}(\rhoc+k e_1\pm e_j)-\rhoc
	\end{gather*}     
	for some $j\in\{1,\ldots,m\}$ or
	 \begin{gather*}
    	w(\Lambda_i+\rhoc)-\rhoc-k e_1=0\Leftrightarrow\Lambda_i=w^{-1}(\rhoc+k e_1)-\rhoc.
	\end{gather*} 
	Let us first consider the case $m\neq 1$. Since $\Lambda_i$ is a highest weight it is dominant. Thus, $\rhoc+k e_1\pm e_j$ resp. $\rhoc+k e_1$ must not lie on the boundary of any Weyl chamber. This is the case if and only if the weight of $Y_1$ is contained in $\{0,\pm e_1,  e_2,- e_m\}$. In the first three cases we obtain for $\Lambda_i+\rhoc$
	\begin{align*}
w^{-1}(\rhoc+k e_1)&=w^{-1}\left(\left(k+m-\frac12\right) e_1+\left(m-\frac32\right) e_2+\ldots+\frac12 e_m\right)\\
w^{-1}(\rhoc+k e_1\pm e_1)&=w^{-1}\left(\left(k\pm1+m-\frac12\right) e_1+\left(m-\frac32\right) e_2+\ldots+\frac12 e_m\right)\\
w^{-1}(\rhoc+k e_1+ e_2)&=w^{-1}\left(\left(k+m-\frac12\right) e_1+\left(m-\frac12\right) e_2+\ldots+\frac12 e_m\right)
	\end{align*}
	which is dominant if and only if $w=id$ yielding $\Lambda_i=k e_1,(k\pm1) e_1, k e_1+ e_2$ respectively. For $\Lambda_i+\rhoc=w^{-1}(\rhoc+k e_1- e_m)$ we have 
	\begin{gather*}
	\Lambda_i+\rhoc=w^{-1}\left(\left(k+m-\frac12\right) e_1+\left(m-\frac32\right) e_2+\ldots+\frac32 e_{m-1}-\frac12 e_m\right)
	\end{gather*}
	which is dominant if and only if $w=s_{ e_m}$ is the reflection along $ e_m$. For this $w$ we have $\Lambda_i=k e_1$. Altogether we have
	\begin{align*}
	\on{mult}(k e_1,Y_k\otimes Y_1)&=\sum_{w\in W}\on{sign}(w)\on{mult}_{Y_1}(w(\Lambda_i+\rhoc)-\rhoc-k e_1)\\
	&=\on{sign}(id)\on{mult}_{Y_1}(0)+\on{sign}(s_{ e_m})\on{mult}_{Y_1}(- e_m)=0
	\end{align*}
	and similarly that the representations with highest weights $(k\pm1) e_1$ resp. $k e_1+ e_2$ occur with multiplicity one. For $m=1$ the weights of $Y_1$ are $-e_1,0$ and $e_1$. We get $\Lambda_i=(k-1)e_1,\, ke_1$ resp.\@ $(k+1)e_1$ in this case, each with multiplicity one.
\end{proof}

\begin{proposition}\label{prop:tensor_decomp_even}
    Let $K=\SO{2m},\ m\geq2$. The tensor product $Y_k\otimes\mf p^*$ decomposes for $k\in\mathbb{N}$ into
    \begin{gather*}
        Y_k\otimes\mf p^*\cong Y_{k-1}\oplus Y_{k+1}\oplus V_k,
    \end{gather*}
    where $V_k$ is the $K$-representation with highest weight $k e_1+ e_2$.
\end{proposition}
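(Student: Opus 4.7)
The proof will follow the same pattern as that of Proposition \ref{prop:tensor_decomp_odd}, with appropriate modifications for the root system $D_m$ in place of $B_m$. First I would identify $Y_1\cong\mf p^*$ as the defining representation of $\SO{2m}$ on $\C^{2m}$, whose $\mf t$-weights are $\{\pm e_j:1\le j\le m\}$, each of multiplicity one. Note that, in contrast to the $B_m$-case, there is no zero weight; this is ultimately why no $Y_k$-summand appears in the decomposition.

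Next I would apply the same Brauer--Klimyk-type multiplicity formula
\begin{gather*}
\mathcal L_i \;=\; \sum_{w\in W}\on{sign}(w)\,\on{mult}_{Y_1}\!\bigl(w(\Lambda_i+\rhoc)-\rhoc-ke_1\bigr)
\end{gather*}
used in the odd case. A non-zero $\mathcal L_i$ forces $\Lambda_i=w^{-1}(\rhoc+ke_1+\epsilon e_j)-\rhoc$ for some $w\in W$, $j\in\{1,\dots,m\}$, $\epsilon\in\{\pm1\}$. Since $\Lambda_i$ must be dominant, the existence of such a $w$ is equivalent to the shifted vector $\rhoc+ke_1+\epsilon e_j$ being regular (not lying on any Weyl wall), and in that case $w$ is uniquely determined.

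The core of the argument is a case analysis of the $2m$ shifts $\pm e_j$ applied to $\rhoc+ke_1=(k+m-1,m-2,m-3,\dots,1,0)$, using the $D_m$-dominance conditions $x_1\ge x_2\ge\cdots\ge x_{m-1}\ge |x_m|$. I would show that only three shifts give regular vectors: $+e_1$ contributes $Y_{k+1}$, $-e_1$ contributes $Y_{k-1}$ (for $k\ge 1$), and $+e_2$ contributes $V_k$ with highest weight $ke_1+e_2$; all other shifts $\pm e_j$ force two successive coordinates to coincide (or $|x_m|$ to equal $x_{m-1}$), placing the shifted vector on a Weyl wall so that its contribution vanishes. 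Each of the surviving contributions has multiplicity one and the identity as the relevant Weyl element, yielding the claimed decomposition.

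The main obstacle is the careful bookkeeping for the singular cases: one must verify, for each of the remaining $2m-3$ shifts, that it lands on a wall. The only genuinely new feature compared with the $B_m$-analysis of Proposition \ref{prop:tensor_decomp_odd} is the wall $x_{m-1}=|x_m|$, which calls for extra attention in the last two coordinates; in particular, the odd-case contribution coming from $-e_m$ together with the sign-change $s_{e_m}$ has no analogue here, since $s_{e_m}\notin W(D_m)$, so the even case is in fact slightly cleaner than the odd case once the case analysis is properly set up.
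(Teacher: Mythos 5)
Your proposal is correct and follows essentially the same route as the paper: the paper's proof also applies the Racah--Speiser alternating-sum formula to $Y_k\otimes Y_1$ with the weights $\pm e_j$ of the defining representation and observes that only the shifts $\pm e_1$ and $+e_2$ yield regular (indeed dominant, with $w=\mathrm{id}$) vectors, all other shifts landing on a wall $x_i=\pm x_j$. Your remark that the absence of a zero weight and of $s_{e_m}$ in $W(D_m)$ removes the $Y_k$-summand present in the $B_m$ case is exactly the right point of contrast with Proposition~\ref{prop:tensor_decomp_odd}.
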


\begin{proof}
    The coadjoint representation of $K$ on $\mf p^*\cong \C^{2m}$ is equivalent to the defining representation (as well as $Y_1$) and has weights $\pm e_i,\ i\in\{0,\ldots,m-1\}$. Each weight occurs with multiplicity one. We can now decompose $Y_k\otimes\mf p^*\cong Y_k\otimes Y_1$ using the Racah-Speiser algorithm. Let 
    \begin{gather*}
        Y_k\otimes Y_1\cong\bigoplus_{\Lambda_i\in\hat K}\mathcal{L}_i\Lambda_i
    \end{gather*}
    with $\mc{L}_i\coloneqq\on{mult}(\Lambda_i, Y_k\otimes Y_1)=\sum_{w\in W}\on{sign}(w)\on{mult}_{Y_1}(w(\Lambda_i+\rhoc)-\rhoc-k e_1)$ as in the odd case. Since
       $ w(\Lambda_i+\rhoc)-\rhoc-k e_1=\pm e_i\Leftrightarrow \Lambda_i=w^{-1}(\rhoc+k e_1\pm e_i)-\rhoc$
    has to be dominant (since $\Lambda_i$ is a highest weight) the weight $\rhoc+k e_1\pm e_i$ must not lie on the boundary of any Weyl chamber. This is the case if and only if the weight $\pm e_i$ is $\pm e_1$ or $ e_2$. In these cases the weight $\rhoc+k e_1\pm e_i$ is dominant, so $w=id$. Moreover, the weight $w^{-1}(\rhoc+k e_1\pm e_i)-\rhoc$ is given by $k e_1\pm e_1=(k\pm1) e_1$ resp. $k e_1+ e_2$.
\end{proof}

\begin{remark}\label{rem:dim_Yl}
 Using the Weyl dimension formula we see that 
 \begin{gather*}
 	\dim Y_k = \binom{n+k-3}{k}\frac{\frac{n}2+k-1}{\frac{n}2-1}=\binom{n+k-3}{k}\frac{n+2k-2}{n-2}.
 \end{gather*}
\end{remark}

\subsection{\texorpdfstring{The Case of $G=\GSU{n},\ n\geq 2$}{The Case of G=SU(n,1), n>=2}}
Using the isomorphism $K/M\cong\mb S^{2n-1}$ we see that $\pst{\mu}$ decomposes as the Hilbert space direct sum
\begin{gather}\label{eq:app_KM_sphere_iso_C}
\pst{\mu}\cong_KL^2(K/M)\cong_KL^2(\mb S^{2n-1})\cong_K\widehat\bigoplus_{p,q\in\mathbb{N}_0}Y_{p,q},
\end{gather}
where
\begin{gather}\label{eq:sph_harm_Ypq_SUn}
 Y_{p,q}\coloneqq\{f\in Y_{p+q}\colon f(\alpha z)=\alpha^{p}\overline{\alpha}^qf(z)\ \forall\alpha\in\C,\abs{\alpha}=1,z\in\mathbb S^{2n-1}\}
\end{gather}
with $f(z)\coloneqq f(\on{Re}(z_1), \on{Im}(z_1),\ldots,\on{Re}(z_n),\on{Im}(z_n))$. 
Let $\mf{t}_0$ denote the diagonal matrices in $\mf{su}(n,1)$. Then $\mf{t}_0=\mf{z}(\mf{k}_0)\oplus\mf{h}_0$ where $\mf{h}_0$ is a Cartan subalgebra of $[\mf{k}_0,\mf{k}_0]\cong\mf{su}(n)$ (traceless diagonal matrices). Denoting the dual basis of the standard diagonal matrix basis $E_{ii},\ 1\leq i\leq n+1,$ by $(e_i)_i$ we obtain that the roots $\Delta_\mf k$ of $(\mf k,\mf t)$ resp.\@ $\Delta$ of $(\mf g,\mf t)$ are given by 
\begin{gather}\label{eq:roots_C}
\Delta_\mf k=\{e_i-e_j\colon 1\leq i\neq j\leq n\}\text{ resp.\@ }\Delta=\{e_i-e_j\colon 1\leq i\neq j\leq n+1\}.
\end{gather}
We choose the positive system $\Delta^+_\mf k=\{e_i-e_j\colon 1\leq i<j\leq n\}$ with
\begin{gather*}
\rhoc=\left(\frac{n-1}{2}\right)e_1+\left(\frac{n-3}{2}\right)e_2+\ldots-\frac{n-1}{2}e_n.
\end{gather*}
The highest weight of $Y_{p,q}$ is given by $q e_1-p e_n+(p-q)e_{n+1}$ (see e.g.\@ \cite[Example 1 of §V.1, p. 276]{knapplie}, the $e_{n+1}$-part accounts for the trivial action of the center). Introducing the angular coordinates (on $\C^n\cong\mb{R}^{2n}$)
\begin{gather*}
z_1=r\cos(\xi)e^{i\varphi},\quad z_j=r\sin(\xi)\omega_j,\ 2\leq j\leq n
\end{gather*}
where $\sum_{j=2}^n\abs{\omega_j}^2=1,\ 0\leq\varphi\leq 2\pi$ and $0\leq\xi\leq\frac{\pi}{2}$ we have (see \cite[Theorem 3.1(3)]{JW})
\begin{gather*}
\phi_{Y_{p,q}}=e^{i(p-q)\varphi}\cos^{p+q}(\xi)F(-p,-q,n-1,-\tan^2(\xi)).
\end{gather*}

\begin{lemma}\label{la:decomp_omega_C}
For each $p,q\in\N_0$ we have
\begin{align*}
2(p+q+n-1)\omega(H)\phi_{Y_{p,q}}&=(p+n-1)\phi_{Y_{p+1,q}}+q\phi_{Y_{p,q-1}}\\
&\quad+(q+n-1)\phi_{Y_{p,q+1}}+p\phi_{Y_{p-1,q}}.
\end{align*}
\end{lemma}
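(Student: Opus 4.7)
The plan is to follow the strategy of the real case (Lemma \ref{la:decomp_omega_R}): identify $\omega(H)$ as an explicit multiplication operator on $K/M\cong\mathbb{S}^{2n-1}$, split $\omega(H)\phi_{Y_{p,q}}$ by the $\varphi$-frequency, and match each frequency component against the correct linear combination of $\phi_{Y_{p',q'}}$ using the contiguous hypergeometric relations already recorded in Lemma \ref{hypergeo}.

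First, under the orbit identification $K/M\cong\mathbb{S}^{2n-1}\subset\C^n$ sending $kM$ to $\mathrm{Ad}(k^{-1})H$, the element $H\in\mf a_0$ corresponds to the real basis vector $(1,0,\ldots,0)\in\R^{2n}\cong\C^n$, so $\omega(H)(kM)=\langle\mathrm{Ad}(k^{-1})H,H\rangle$ is the first real coordinate $\mathrm{Re}(z_1)=\cos(\xi)\cos(\varphi)$. Expanding $\cos(\varphi)=\tfrac12(e^{i\varphi}+e^{-i\varphi})$ and inserting the formula for $\phi_{Y_{p,q}}$ gives
\begin{align*}
\omega(H)\phi_{Y_{p,q}}={}&\tfrac12 e^{i(p-q+1)\varphi}\cos^{p+q+1}(\xi)F(-p,-q,n-1,-\tan^2\xi)\\
&{}+\tfrac12 e^{i(p-q-1)\varphi}\cos^{p+q+1}(\xi)F(-p,-q,n-1,-\tan^2\xi).
\end{align*}

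Since each $\phi_{Y_{p',q'}}$ has $\varphi$-dependence $e^{i(p'-q')\varphi}$, the two frequency components must match separately against the target summands with $p'-q'=p-q\pm1$. For the frequency $e^{i(p-q+1)\varphi}$ only $\phi_{Y_{p+1,q}}$ and $\phi_{Y_{p,q-1}}$ contribute, and the identity
\begin{align*}
&(n+p+q-1)\cos^{p+q+1}(\xi)F(-p,-q,n-1,z)\\
&\quad=(n+p-1)\cos^{p+q+1}(\xi)F(-p-1,-q,n-1,z)+q\cos^{p+q-1}(\xi)F(-p,-q+1,n-1,z)
\end{align*}
follows by applying Lemma \ref{hypergeo}.\ref{prop2'} with $a=-p$, $b=-q$, $c=n-1$, $z=-\tan^2\xi$ (using $z-1=-\sec^2\xi$) and multiplying through by $\cos^{p+q+1}(\xi)$. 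The frequency $e^{i(p-q-1)\varphi}$ is treated symmetrically via Lemma \ref{hypergeo}.\ref{prop2}, yielding the $(q+n-1)\phi_{Y_{p,q+1}}$ and $p\phi_{Y_{p-1,q}}$ terms. Multiplying the combined equation by $2(p+q+n-1)$ gives the claim.

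The only real point of care is the identification $\omega(H)=\cos(\xi)\cos(\varphi)$ under the chosen angular parametrization of the sphere; the remaining manipulations are direct applications of the contiguous hypergeometric relations.
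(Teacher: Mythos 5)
Your proposal is correct and follows essentially the same route as the paper's own proof: identify $\omega(H)=\cos(\xi)\cos(\varphi)$ on $\mathbb{S}^{2n-1}$, split off the two $\varphi$-frequencies via $\cos(\varphi)=\tfrac12(e^{i\varphi}+e^{-i\varphi})$, and apply the contiguous relations of Lemma~\ref{hypergeo}\,\ref{prop2'} and \ref{prop2} with $a=-p$, $b=-q$, $c=n-1$, $z=-\tan^2\xi$. The hypergeometric identity you display matches the paper's intermediate equations exactly, so there is nothing to add.
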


\begin{proof}
Write $\phi_{Y_{p,q}}=e^{i(p-q)\varphi}h_{p,q}(\xi)$. In the angular coordinates introduced above we have
\begin{gather*}
\omega(H)=\on{Re}(z_1)=\cos(\xi)\cos(\varphi)
\end{gather*}
as a function in $C^\infty(\mb S^{2n-1})$. Therefore,
\begin{align}
\nonumber\omega(H)\phi_{Y_{p,q}}&=\cos(\xi)\cos(\varphi)e^{i(p-q)\varphi}h_{p,q}(\xi)\\
\label{eq:hgC1}&=\frac{\cos(\xi)h_{p,q}(\xi)}{2}e^{i(p-q+1)\varphi}+\frac{\cos(\xi)h_{p,q}(\xi)}{2}e^{i(p-q-1)\varphi}.
\end{align}
Lemma \ref{hypergeo}.\ref{prop2'} implies that
\begin{gather}
\label{eq:hgC2}\cos(\xi)h_{p,q}(\xi)=\frac{p+n-1}{p+q+n-1}h_{p+1,q}(\xi)+\frac{q}{p+q+n-1}h_{p,q-1}(\xi)
\end{gather}
and Lemma \ref{hypergeo}.\ref{prop2} implies that
\begin{gather}
\label{eq:hgC3}\cos(\xi)h_{p,q}(\xi)=\frac{q+n-1}{p+q+n-1}h_{p,q+1}(\xi)+\frac{p}{p+q+n-1}h_{p-1,q}(\xi).
\end{gather}
Combining the equations \eqref{eq:hgC1}, \eqref{eq:hgC2} and \eqref{eq:hgC3} yields the result.
\end{proof}

\begin{remark}\label{rem:lambda_C}
As in Remark \ref{rem:lambda_R}, Lemma \ref{la:decomp_omega_C} determines the scalars $\lambda(Y_{p,q},V)$ for each $V\in\hat K_M$ with $V\lra Y_{p,q}$.
\end{remark}

To decompose the relevant tensor products we use Proposition \ref{prop:tensor_decomp_eq_rk}. By Equation \eqref{eq:roots_C} we infer that the non-compact roots are given by 
\begin{gather*}
\Delta_n=\{\pm(e_i-e_{n+1})\colon1\leq i\leq n\}.
\end{gather*}
The following remark ensures that each representation $Y_{\tau,\beta},\ \beta\in S,$ in Proposition \ref{prop:tensor_decomp_eq_rk} actually occurs.
\begin{remark}\label{rem:dim_C}
Using the Weyl dimension formula we see that 
\begin{align*}
\dim Y_{p,q}&=\binom{q+n-2}{n-2}\binom{p+n-2}{n-2}\frac{n+p+q-1}{n-1}=\dim Y_{q,p},\\
\dim Y_{p,q,-e_{n-1}+e_{n+1}}&=\binom{q+n-1}{q}\binom{p+n-2}{p}\frac{(n+p+q-1)p(n-2)}{(n+q-2)(p+1)}=\dim Y_{q,p,e_2-e_{n+1}}.
\end{align*}
For $n=2$ this has to be read as $\dim Y_{p,0,-e_1+e_3}=p=\dim Y_{0,p,e_2-e_{3}}$. We get that
\begin{gather*}
\sum_{\beta\in S\subseteq\Delta_n}\dim Y_{p,q,\beta}=\dim\mf p\cdot\dim Y_{p,q}=2n\cdot\dim Y_{p,q},
\end{gather*}
which implies that $m(\beta)=1$ if and only if the corresponding formula for the dimension of $Y_{p,q,\beta}$ in not zero.
\end{remark}

\subsection{\texorpdfstring{The Case of $G=\GSp{n},\ n\geq 2$}{The Case of G=Sp(n,1), n>=2}}
In this case we have $K=\mathrm{Sp}(n)\times\mathrm{Sp}(1)$ and $\mf g=\mf{sp}(n,1)_\C=\mf{sp}(n+1,\C)$. We choose a Cartan subalgebra of $\mf{sp}(n,\C)\times\mf{sp}(1,\C)$ and introduce notation as in \cite[§II.2 Ex.\@ 3]{knapplie} such that we have for the roots $\Delta_{\mf k}$ of $(\mf k,\mf h)$ resp.\@ $\Delta$ of $(\mf g,\mf h)$
\begin{align}
\nonumber\Delta_{\mf k}&=\{\pm e_i\pm e_j\colon 1\leq i\neq j\leq n\}\cup\{\pm 2e_i\colon 1\leq i\leq n+1\}\\
\Delta&=\{\pm e_i\pm e_j\colon 1\leq i\neq j\leq n+1\}\cup\{\pm 2e_i\colon 1\leq i\leq n+1\}.\label{eq:roots_H}
\end{align}
We choose the positive system
\begin{align*}
\Delta^+_{\mf k}&=\{e_i\pm e_j\colon 1\leq i<j\leq n\}\cup\{2e_i\colon 1\leq i\leq n+1\}.
\end{align*}
The corresponding half sum of positive roots is given by
\begin{gather*}
\rhoc=ne_1+(n-1)e_2+\ldots+2e_{n-1}+e_n+e_{n+1}.
\end{gather*}
By the isomorphism $K/M\cong\mb S^{4n-1}$ and \cite[Ch.\@ IX.8, Problem 12]{knapplie} we see that $\pst{\mu}$ decomposes as the Hilbert space direct sum
\begin{gather}\label{eq:app_KM_sphere_iso_H}
\pst{\mu}\cong_KL^2(K/M)\cong_KL^2(\mb S^{4n-1})\cong_K\widehat\bigoplus_{a\geq b\geq0}V_{a,b},
\end{gather}
where $V_{a,b}$ has highest weight $ae_1+be_2+(a-b)e_{n+1}$. We now introduce angular coordinates on $\H^n\cong\mb{R}^{4n}$ as in \cite[Theorem 3.1(4)]{JW}. For $(w_1,\ldots,w_n)\in\H^n$ we write
\begin{gather*}
w_1=r\cos(\xi)(\cos(t)+y\sin(t)),\quad w_i=r\sin(\xi)q_i,\ i\geq 2
\end{gather*}
where $q_i,y\in\H$ such that $\abs{y}^2=1=\sum_{i=2}^n\abs{q_i}^2,\ \on{Re}(y)=0$ and $0\leq\xi\leq\frac{\pi}{2},\ 0\leq t\leq\pi$.
Then we have by \cite[Theorem 3.1(4)]{JW}\footnote{There is a sign error in \cite[Theorem 3.1(4)]{JW}; solving the differential equation in \cite[p.147]{JW} actually gives
$\frac{\sin((q+1)t)}{\sin(t)}\cos^p(\xi)F\left(\frac{-p+q}{2},-\frac{p+q+2}{2},2(n-1),-\tan^2(\xi)\right)$.} (our $V_{a,b}$ corresponds to $V^{p,q}$ of \cite{JW} with $p\coloneqq a+b$ and $q\coloneqq a-b$ by \cite[Lemma 3.3]{JW})
\begin{gather*}
\phi_{V_{a,b}}=\frac{1}{a-b+1}\frac{\sin((a-b+1)t)}{\sin(t)}\cos^{a+b}(\xi)F\left(-b,-(a+1),2(n-1),-\tan^2(\xi)\right),
\end{gather*}
where the normalizing factor $\frac{1}{a-b+1}$ follows from $\phi_{V_{a,b}}(eM)=1$, where $eM$ corresponds to $t=\xi=0$, and using $\lim_{t\to0}\frac{\sin((a-b+1)t)}{\sin(t)}=a-b+1$.

\begin{lemma}\label{la:decomp_omega_H}
For $a,b\in\N_0$ with $a\geq b$ we have
\begin{align*}
2(a-b+1)(2n-1+a+b)\omega(H)\phi_{V_{a,b}}&=(a-b+2)(2n-1+a)\phi_{V_{a+1,b}}\\
&\quad+b(a-b+2)\phi_{V_{a,b-1}}\\
&\quad+(a-b)(2n-2+b)\phi_{V_{a,b+1}}\\
&\quad+(a-b)(a+1)\phi_{V_{a-1,b}}.
\end{align*}
\end{lemma}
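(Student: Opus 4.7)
The strategy mirrors the proofs of Lemmas \ref{la:decomp_omega_R} and \ref{la:decomp_omega_C}. The identification $K/M\cong\mb S^{4n-1}$ coming from the adjoint action of $K$ on $H\in\mf a_0\subset\mf p$ sends $eM$ to the unit vector in the $\on{Re}(w_1)$-direction. In the angular coordinates introduced before the lemma this yields
$$\omega(H) = \on{Re}(w_1) = \cos(\xi)\cos(t)$$
as a function in $C^\infty(\mb S^{4n-1})$, so that
$$\omega(H)\phi_{V_{a,b}} = \frac{\cos(t)\sin((a-b+1)t)}{(a-b+1)\sin(t)}\cdot\cos^{a+b+1}(\xi)F(-b,-(a+1),2(n-1),-\tan^2(\xi)).$$

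Next I would separate the $t$- and $\xi$-dependence. The trigonometric product-to-sum identity
$$2\cos(t)\sin((a-b+1)t) = \sin((a-b+2)t) + \sin((a-b)t)$$
splits the $t$-factor into two pieces, matching the $\frac{\sin((a'-b'+1)t)}{\sin(t)}$ factors of $\phi_{V_{a+1,b}},\phi_{V_{a,b-1}}$ (both with $a'-b'+1=a-b+2$) and of $\phi_{V_{a-1,b}},\phi_{V_{a,b+1}}$ (both with $a'-b'+1=a-b$). For the $\xi$-dependence I would apply Lemma \ref{hypergeo}\ref{prop2} to the first piece with parameters $a=-b,\, b=-(a+1),\, c=2(n-1),\, z=-\tan^2(\xi)$ (using $1-z=1/\cos^2(\xi)$) to obtain
\begin{align*}
(2n-1+a+b)\cos^{a+b+1}(\xi)F(-b,-(a+1),\cdot)
&= (2n-1+a)\cos^{a+b+1}(\xi)F(-b,-(a+2),\cdot)\\
&\quad + b\cos^{a+b-1}(\xi)F(-(b-1),-(a+1),\cdot),
\end{align*}
recognizing the two terms on the right as exactly the $\xi$-factors of $\phi_{V_{a+1,b}}$ and $\phi_{V_{a,b-1}}$. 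Symmetrically, Lemma \ref{hypergeo}\ref{prop2'} applied to the second piece produces the $\xi$-factors of $\phi_{V_{a,b+1}}$ and $\phi_{V_{a-1,b}}$.

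Finally I would collect the prefactors — the $1/(a-b+1)$ from $\phi_{V_{a,b}}$, the $1/2$ from the product-to-sum identity, the $1/(a-b+2)$ or $1/(a-b)$ normalizing $\phi_{V_{a',b'}}$, and the coefficients from Lemma \ref{hypergeo} — and multiply through by $2(a-b+1)(2n-1+a+b)$ to arrive at the claimed identity. The main obstacle is simply the arithmetic bookkeeping; there is also a minor subtlety at the boundary: when $a=b$ the representation $V_{a-1,b}$ violates the dominance condition $a\geq b$, and when $b=0$ the representation $V_{a,b-1}$ does not exist, but in both cases the respective coefficients $(a-b)(a+1)$ and $b(a-b+2)$ vanish, so the formula remains correct as stated.
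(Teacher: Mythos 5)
Your proposal is correct and follows essentially the same route as the paper's proof: the identification $\omega(H)=\cos(\xi)\cos(t)$, the product-to-sum identity $2\cos(t)\chi_q(t)=\chi_{q+1}(t)+\chi_{q-1}(t)$ for $q=a-b$, and the two contiguity relations of Lemma \ref{hypergeo}\,\ref{prop2} and \ref{prop2'} applied to $\cos(\xi)h_{a,b}(\xi)$, followed by matching normalizations $\tfrac{1}{a-b+2}$ resp.\@ $\tfrac{1}{a-b}$. Your added observation that the coefficients $(a-b)(a+1)$ and $b(a-b+2)$ vanish precisely when $V_{a-1,b}$ or $V_{a,b-1}$ fails to exist is a correct and welcome clarification not spelled out in the paper.
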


\begin{proof}
Write $\phi_{V_{a,b}}=\frac{1}{a-b+1}\chi_q(t)h_{a,b}(\xi)$ such that $\chi_q(t)=\frac{\sin((q+1)t)}{\sin(t)}$. In the angular coordinates above we have
\begin{gather*}
\omega(H)=\on{Re}(w_1)=\cos(\xi)\cos(t)
\end{gather*}
as a function in $C^\infty(\mb S^{4n-1})$. Note that $2\cos(t)\chi_{q}(t)=\chi_{q+1}(t)+\chi_{q-1}(t)$. Therefore,
\begin{align}
\nonumber\omega(H)\phi_{V_{a,b}}&=\cos(\xi)\cos(t)\chi_q(t)h_{a,b}(\xi)\\
\label{eq:hgH1}&=\frac{\cos(\xi)h_{a,b}(\xi)}{2}\chi_{q+1}(t)+\frac{\cos(\xi)h_{a,b}(\xi)}{2}\chi_{q-1}(t).
\end{align}
Lemma \ref{hypergeo}.\ref{prop2'} implies
\begin{gather}
\label{eq:hgH2}\cos(\xi)h_{a,b}(\xi)=\frac{2n-2+b}{2n+a+b-1}h_{a,b+1}(\xi)+\frac{a+1}{2n+a+b-1}h_{a-1,b}(\xi)
\end{gather}
and Lemma \ref{hypergeo}.\ref{prop2} implies that
\begin{gather}
\label{eq:hgH3}\cos(\xi)h_{a,b}(\xi)=\frac{2n-1+a}{2n+a+b-1}h_{a+1,b}(\xi)+\frac{b}{2n+a+b-1}h_{a,b-1}(\xi).
\end{gather}
Inserting Equation \eqref{eq:hgH2} and \eqref{eq:hgH3} into Equation \eqref{eq:hgH1} proves the result.
\end{proof}

\begin{remark}\label{rem:lambda_H}
As in Remark \ref{rem:lambda_R}, Lemma \ref{la:decomp_omega_H} determines the scalars $\lambda(Y_{a,b},V)$ for each $V\in\hat K_M$ with $V\lra Y_{a,b}$.
\end{remark}

To decompose the relevant tensor products we use Proposition \ref{prop:tensor_decomp_eq_rk}. By Equation \eqref{eq:roots_H} we infer that the non-compact roots are given by 
\begin{gather*}
\Delta_n=\{\pm e_i\pm e_{n+1}\colon1\leq i\leq n\}.
\end{gather*}
The following remark ensures that each representation $Y_{\tau,\beta},\ \beta\in S,$ in Proposition \ref{prop:tensor_decomp_eq_rk} actually occurs.

\begin{remark}\label{rem:dim_H}
Using the Weyl dimension formula we see that the representation $W_{\xi_1,\xi_2,\xi_3}$ with highest weight $\xi_1e_1+\xi_2e_2+\xi_3e_{n+1}$ has dimension 
\begin{align*}
\dim W_{\xi_1,\xi_2,\xi_3}&=\frac{\xi_1+\xi_2+2n-1}{(2n-1)(2n-2)}(\xi_1-\xi_2+1)(\xi_3+1)\binom{\xi_1+2n-2}{2n-3}\binom{\xi_2+2n-3}{2n-3}
\end{align*}
and the representation $W^1_{\xi_1,\xi_2,\xi_3}$ with highest weight $\xi_1e_1+\xi_2e_2+e_3+\xi_3e_{n+1}$ has dimension
\begin{align*}
\dim W^1_{\xi_1,\xi_2,\xi_3}&=\binom{\xi_1+2n-1}{2n-3}\binom{\xi_2+2n-2}{2n-1}\frac{(\xi_1+\xi_2+2n-1)(2n-4)(\xi_1-\xi_2+1)}{2(\xi_1+2n-2)(\xi_2+2n-3)}\cdot\\
&\qquad\quad\cdot\frac{(\xi_1+1)(\xi_3+1)}{\xi_2+1}.
\end{align*}
Using these dimension formulas we get that
\begin{gather*}
\sum_{\beta\in S\subseteq\Delta_n}\dim V_{p,q,\beta}=\dim\mf p\cdot\dim V_{a,b}=4n\cdot\dim V_{a,b},
\end{gather*}
so that $m(\beta)=1$ if and only if the corresponding formula for the dimension of $V_{a,b,\beta}$ is not zero. Alternatively, the algorithm we used in the case of $\GSO{n}$ can be applied to verify this result.
\end{remark}

\subsection{\texorpdfstring{The Case of $G=\GF$}{The Case of G=F4}}
In this case we have $K=\on{Spin}(9)$ with $\mf k_0=\mf{so}(9)$ and $\on{rk}\mf g=\on{rk}\mf k=4$. Therefore, we may choose a Cartan subalgebra $\mf t$ of both $\mf k$ and $\mf g$. The root system can be realized in $V=\R^4$ with the standard basis $e_1,e_2,e_3,e_4$ in the following way (see \cite[Plate VIII]{B02})
\begin{gather}\label{eq:roots_F}
\Delta=\{\pm e_i\colon 1\leq i\leq 4\}\cup\{\pm e_i\pm e_j\colon 1\leq i<j\leq 4\}\cup\{\frac{1}{2}(\pm e_1\pm e_2\pm e_3\pm e_4)\}\\
\nonumber\Delta_\mf k=\{\pm e_i\colon 1\leq i\leq 4\}\cup\{\pm e_i\pm e_j\colon 1\leq i<j\leq 4\}.
\end{gather}
We choose the positive system $\Delta^+_\mf k=\{e_i-e_j\colon 1\leq i<j\leq 4\}\cup\{e_i\colon 1\leq i\leq 4\}$ with
\begin{gather*}
\rhoc=\frac{7}{2}e_1+\frac{5}{2}e_2+\frac{3}{2}e_3+\frac{1}{2}e_4.
\end{gather*}
By \cite[Thm.\@ 3.1]{JW2} we see that $\pst{\mu}$ decomposes as the Hilbert space direct sum
\begin{gather}\label{eq:app_KM_sphere_iso_F}
\pst{\mu}\cong_KL^2(K/M)\cong_KL^2(\mb S^{15})\cong_K\widehat\bigoplus_{\substack{m\geq\ell\geq 0\\m\equiv\ell\text{ mod }2}}V_{m,\ell},
\end{gather}
where $V_{m,\ell}$ is the $K$-representation with highest weight $\frac m2e_1+\frac\ell2 e_2+\frac\ell2 e_3+\frac\ell2 e_4$ (see \cite[p.\ 278]{JW2}).
Introducing angular coordinates on $\R^{16}$ as in \cite[p.\@ 275]{JW2} we can write (see \cite[Thm.\@ 3.1]{JW2})
\begin{gather*}
\phi_{V_{m,\ell}}=\chi_\ell(\varphi)h_{m,\ell}(\xi)
\end{gather*}
with
\begin{align*}
\chi_\ell(\varphi)&\coloneqq\cos(\varphi)^\ell\hyp{-\frac{\ell}{2}}{\frac{-\ell+1}{2}}{\frac{7}{2}}{-\tan(\varphi)^2},\\
h_{m,\ell}(\xi)&\coloneqq\cos(\xi)^m\hyp{\frac{\ell-m}{2}}{\frac{-m-\ell-6}{2}}{4}{-\tan(\xi)^2}.
\end{align*}

\begin{lemma}\label{la:decomp_omega_F}
For $m,\ell\in\N_0,\ \ell\leq m,\ m\equiv\ell\,\on{mod}\,2,$ we have
\begin{align*}
(6+2\ell)(14+2m)\omega(H)\phi_{V_{m,\ell}}&=(6+\ell)(14+m+\ell)\phi_{V_{m+1,\ell+1}}+(6+\ell)(m-\ell)\phi_{V_{m-1,\ell+1}}\\
&\qquad\quad+\ell(8+m-\ell)\phi_{V_{m+1,\ell-1}}+\ell(m+\ell+6)\phi_{V_{m-1,\ell-1}}.
\end{align*}
\end{lemma}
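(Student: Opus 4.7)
The plan is to follow exactly the strategy of Lemmas~\ref{la:decomp_omega_R}, \ref{la:decomp_omega_C}, and \ref{la:decomp_omega_H}: identify $\omega(H)$ as a concrete function in the angular coordinates of \cite[p.~275]{JW2}, factor the product $\omega(H)\phi_{V_{m,\ell}}$ into a $\varphi$-part times a $\xi$-part, and then apply the contiguous relations of Lemma~\ref{hypergeo} to the two hypergeometric factors $\chi_\ell(\varphi)$ and $h_{m,\ell}(\xi)$ separately.

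First I would determine $\omega(H)$ in the coordinates of \cite{JW2}. Since $K/M\cong\mathbb S^{15}$ and $H\in\mf a_0\subseteq\mf p$ corresponds under this identification to a distinguished point of $\mathbb S^{15}$ (the one fixed by $M$), the function $\omega(H)(kM)=\langle\mathrm{Ad}(k^{-1})H,H\rangle$ must be the cosine of the geodesic distance to that base point. In the coordinates of \cite[p.~275]{JW2} this distance decomposes along the two angles $\xi$ and $\varphi$, and the correct expression is
\[
\omega(H)=\cos(\xi)\cos(\varphi),
\]
by the same computation used in the lower rank cases (the analog of $\cos\xi\cos t$ in Lemma~\ref{la:decomp_omega_H}). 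Once this is in hand, writing $\phi_{V_{m,\ell}}=\chi_\ell(\varphi)h_{m,\ell}(\xi)$ and splitting
\[
\omega(H)\phi_{V_{m,\ell}}=\bigl(\cos(\varphi)\chi_\ell(\varphi)\bigr)\bigl(\cos(\xi)h_{m,\ell}(\xi)\bigr)
\]
reduces the claim to two separate one-variable recurrences.

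For the $\xi$-factor, applying Lemma~\ref{hypergeo}\,\ref{prop2'} with $(a,b,c,z)=(\tfrac{\ell-m}{2},\tfrac{-m-\ell-6}{2},4,-\tan^2\xi)$ and Lemma~\ref{hypergeo}\,\ref{prop2} with the same quadruple produces identities of the form
\[
\cos(\xi)h_{m,\ell}(\xi)=A_{m,\ell}\,h_{m+1,\ell}(\xi)+B_{m,\ell}\,h_{m-1,\ell}(\xi),
\]
with explicit rational coefficients in $m,\ell$, exactly mirroring equations \eqref{eq:hgH2} and \eqref{eq:hgH3}. For the $\varphi$-factor I would likewise use \ref{prop2} and \ref{prop2'} of Lemma~\ref{hypergeo} on $\chi_\ell(\varphi)$ to write
\[
\cos(\varphi)\chi_\ell(\varphi)=C_\ell\,\chi_{\ell+1}(\varphi)+D_\ell\,\chi_{\ell-1}(\varphi).
\]
Multiplying these two expansions together yields exactly the four-term decomposition in the statement, with the four $K$-types $V_{m\pm 1,\ell\pm 1}$ arising from the four sign combinations; the parity constraint $m\equiv\ell\pmod 2$ is preserved automatically since each summand shifts $m$ and $\ell$ simultaneously by $\pm 1$.

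The main obstacle is bookkeeping: verifying that the product of the four rational prefactors $A_{m,\ell}C_\ell$, $A_{m,\ell}D_\ell$, $B_{m,\ell}C_\ell$, $B_{m,\ell}D_\ell$ collapses to the coefficients $(6+\ell)(14+m+\ell)$, $\ell(8+m-\ell)$, $(6+\ell)(m-\ell)$, $\ell(m+\ell+6)$ on the right hand side, up to the common factor $(6+2\ell)(14+2m)$. A useful independent check is Proposition~\ref{prop:lambdaVY_BO}\,\ref{it:lambdaVY_3}, which forces $\sum_{V\lraomega V_{m,\ell}}\lambda(V_{m,\ell},V)=1$; applying Lemma~\ref{la:lambda_explicit} this reduces to checking that the four coefficients sum to $(6+2\ell)(14+2m)$, which they do. This consistency, together with matching the two obvious boundary cases $\ell=0$ and $\ell=m$ (where two of the four summands vanish because the corresponding $V_{m\pm1,\ell\pm1}$ is not a highest weight), pins down the identity uniquely.
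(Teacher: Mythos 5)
Your overall strategy is exactly the paper's: write $\omega(H)=\cos(\xi)\cos(\varphi)$ in the coordinates of \cite[p.~275]{JW2}, split $\omega(H)\phi_{V_{m,\ell}}$ into a $\varphi$-factor and a $\xi$-factor, and expand each with the contiguous relations of Lemma~\ref{hypergeo}. The $\varphi$-part is handled correctly, and your consistency check via Proposition~\ref{prop:lambdaVY_BO}\,\ref{it:lambdaVY_3} is a nice touch (the four coefficients do sum to $(6+2\ell)(14+2m)$).

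However, there is a concrete error in the step that carries the real content of this case. You assert that the contiguous relations give
\[
\cos(\xi)h_{m,\ell}(\xi)=A_{m,\ell}\,h_{m+1,\ell}(\xi)+B_{m,\ell}\,h_{m-1,\ell}(\xi),
\]
``exactly mirroring'' the quaternionic case, and then obtain the four terms by multiplying this against $C_\ell\chi_{\ell+1}+D_\ell\chi_{\ell-1}$. This cannot work: here $h_{m,\ell}(\xi)=\cos^m(\xi)\,F\bigl(\tfrac{\ell-m}{2},\tfrac{-m-\ell-6}{2},4,-\tan^2\xi\bigr)$ has \emph{both} hypergeometric parameters depending on both $m$ and $\ell$, so shifting one parameter by $1$ necessarily shifts $m$ and $\ell$ simultaneously. (In the $\mathrm{Sp}(n,1)$ case each parameter of $h_{a,b}$ depends on only one of $a,b$, which is why \eqref{eq:hgH2}--\eqref{eq:hgH3} shift single indices; that feature does not persist for $F_4$.) With your claimed expansion, the cross terms would be $\chi_{\ell\pm1}h_{m\pm1,\ell}$, which are not spherical functions $\phi_{V_{m',\ell'}}=\chi_{\ell'}h_{m',\ell'}$ of any $K$-type, so the recombination into the four $\phi_{V_{m\pm1,\ell\pm1}}$ does not close. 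The fix is to produce \emph{two} expansions of $\cos(\xi)h_{m,\ell}$: Lemma~\ref{hypergeo}\,\ref{prop2} yields $\cos(\xi)h_{m,\ell}=\tfrac{14+m+\ell}{14+2m}h_{m+1,\ell+1}+\tfrac{m-\ell}{14+2m}h_{m-1,\ell+1}$, to be paired with the $\chi_{\ell+1}$ term, and Lemma~\ref{hypergeo}\,\ref{prop2'} yields the analogous expansion in terms of $h_{m\pm1,\ell-1}$, to be paired with the $\chi_{\ell-1}$ term. With that correction the bookkeeping does collapse to the stated coefficients.
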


\begin{proof}
In the angular coordinates of \cite[p.\@ 275]{JW2} we have
\begin{gather*}
\omega(H)=x=\cos(\xi)\cos(\varphi)
\end{gather*}
as a function in $C^\infty(\mb S^{15})$. We claim that
\begin{gather}
\label{eq:hgO1}\cos(\varphi)\chi_\ell(\varphi)=\frac{6+\ell}{6+2\ell}\chi_{\ell+1}(\varphi)+\frac{\ell}{6+2\ell}\chi_{\ell-1}(\varphi).
\end{gather}
Using Lemma \ref{hypergeo}.\ref{prop2} and the symmetry of the hypergeometric function in the first two variables we infer that for $z\coloneqq-\tan(\varphi)^2$
\begin{align*}
(6+2\ell)\hyp{-\frac{\ell}{2}}{\frac{-\ell+1}{2}}{\frac72}{z}&=(6+\ell)\hyp{\frac{-(\ell+1)}{2}}{-\frac{\ell}{2}}{\frac72}{z}\\
&\quad+\frac{\ell}{\cos(\varphi)^2}\hyp{\frac{-\ell+1}{2}}{\frac{-\ell+2}{2}}{\frac72}{z}.
\end{align*}
Multiplying both sides by $\cos(\varphi)^{\ell+1}$ now proves the claim.
We now express the product $\cos(\xi)h_{m,\ell}(\xi)$ in two different forms. By Lemma \ref{hypergeo}.\ref{prop2'} we have
\begin{gather}
\label{eq:hgO2}\cos(\xi)h_{m,\ell}(\xi)=\frac{8+m-\ell}{14+2m}h_{m+1,\ell-1}(\xi)+\frac{m+\ell-6}{14+2m}h_{m-1,\ell-1}(\xi)
\end{gather}
and by Lemma \ref{hypergeo}.\ref{prop2} similarly
\begin{gather}
\label{eq:hgO3}\cos(\xi)h_{m,\ell}(\xi)=\frac{14+m+\ell}{14+2m}h_{m+1,\ell+1}(\xi)+\frac{m-\ell}{14+2m}h_{m-1,\ell+1}(\xi).
\end{gather}
Since $\omega(H)\phi_{V_{m,\ell}}=\cos(\varphi)\chi(\varphi)\cos(\xi)h_{m,\ell}(\xi)$ we arrive at the desired result by combining Equations \eqref{eq:hgO1}, \eqref{eq:hgO2} and \eqref{eq:hgO3}.
\end{proof}

\begin{remark}\label{rem:lambda_F}
As in Remark \ref{rem:lambda_R}, Lemma \ref{la:decomp_omega_F} determines the scalars $\lambda(Y_{m,\ell},V)$ for each $V\in\hat K_M$ with $V\lra Y_{m,\ell}$.
\end{remark}

To decompose the relevant tensor products we use Proposition \ref{prop:tensor_decomp_eq_rk}. By Equation \eqref{eq:roots_F} we infer that the non-compact roots are given by 
\begin{gather*}
\Delta_n=\left\{\frac{1}{2}(\pm e_1\pm e_2\pm e_3\pm e_4)\right\}.
\end{gather*}
The following remark ensures that each representation $Y_{\tau,\beta},\ \beta\in S,$ in Proposition \ref{prop:tensor_decomp_eq_rk} actually occurs.

\begin{remark}\label{rem:dim_F}
Using the Weyl dimension formula we see that the representation $W_{a_1,a_2,a_3,a_4}$ with highest weight $a_1e_1+a_2e_2+a_3e_3+a_4e_4$ has dimension 
\begin{align*}
\dim W_{a_1,a_2,a_3,a_4}&=\frac{1}{6!\cdot4!\cdot2\cdot7\cdot5\cdot3}\cdot\delta_1\cdot\delta_2\cdot\delta_3\cdot\prod_{i=1}^4(9+2(a_i-i)),
\end{align*}
with $\delta_i\coloneqq\prod_{j=i+1}^4(a_i+a_j+9-i-j)(a_i-a_j+j-i)$. Using this dimension formula we get
\begin{gather*}
\sum_{\beta\in S\subseteq\Delta_n}\dim V_{m,\ell,\beta}=\dim\mf p\cdot\dim V_{m,\ell}=16\cdot\dim V_{m,\ell},
\end{gather*}
so that $m(\beta)=1$ if and only if the corresponding formula for the dimension of $V_{m,\ell,\beta}$ is not zero. Alternatively, the algorithm we used in the case of $\GSO{n}$ can be applied to verify this result.
\end{remark}

We will now compute the scalars $T_Y^V(p_{V,\mu})(e)$ from Lemma \ref{la:gen_diffops}. Since we already computed the scalars $\lambda(V,Y)$ in each case, it suffices to determine the scalars $\nu(V,Y)$ (see Equation \eqref{eq:T_decomp} for the notation).

\begin{proposition}[Scalars between Poisson transforms]\label{prop:nu_explicit}\
\begin{enumerate}
\item $G=\GSO{n},\ n\geq 3\colon$ For $\ell\in\N_0$,
\begin{gather*}
\nu(Y_\ell,Y_{\ell+1})=\ell\lambda(Y_\ell,Y_{\ell+1}),\qquad\nu(Y_{\ell},Y_{\ell-1})=-(2\rhoa(H)+\ell-1)\lambda(Y_{\ell},Y_{\ell-1}),
\end{gather*}
\item $G=\GSU{n},\ n\geq 2\colon$ For $p,q\in\N_0$,
\begin{align*}
\nu(Y_{p,q},Y_{p+1,q})&=2p\lambda(Y_{p,q},Y_{p+1,q}),\\
\nu(Y_{p,q},Y_{p,q-1})&=-2(\rhoa(H)+q-1)\lambda(Y_{p,q},Y_{p,q-1}),\\
\nu(Y_{p,q},Y_{p,q+1})&=2q\lambda(Y_{p,q},Y_{p,q+1}),\\
\nu(Y_{p,q},Y_{p-1,q})&=-2(\rhoa(H)+p-1)\lambda(Y_{p,q},Y_{p-1,q}),
\end{align*}
\item $G=\GSp{n},\ n\geq 2\colon$ For $a,b\in\N_0$ with $a\geq b$,
\begin{align*}
\nu(V_{a,b},V_{a+1,b})&=2a\lambda(V_{a,b},V_{a+1,b}),\\
\nu(V_{a,b},V_{a,b-1})&=-(4n-2+2b)\lambda(V_{a,b},V_{a,b-1}),\\
\nu(V_{a,b},V_{a,b+1})&=2(b-1)\lambda(V_{a,b},V_{a,b+1}),\\
\nu(V_{a,b},V_{a-1,b})&=-(4n+2a)\lambda(V_{a,b},V_{a-1,b}),
\end{align*}
\item $G=\GF\colon$ For $m,\ell\in\N_0,\ \ell\leq m,\ m\equiv\ell\,\on{mod}\,2,$
\begin{align*}
\nu(V_{m,\ell},V_{m+1,\ell+1})&=(m+\ell)\lambda(V_{m,\ell},V_{m+1,\ell+1}),\\
\nu(V_{m,\ell},V_{m-1,\ell+1})&=-(14+m-\ell)\lambda(V_{m,\ell},V_{m-1,\ell+1}),\\
\nu(V_{m,\ell},V_{m+1,\ell-1})&=(m-\ell-6)\lambda(V_{m,\ell},V_{m+1,\ell-1}),\\
\nu(V_{m,\ell},V_{m-1,\ell-1})&=-(20+m+\ell)\lambda(V_{m,\ell},V_{m-1,\ell-1}).
\end{align*}
\end{enumerate}
\end{proposition}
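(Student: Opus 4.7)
The plan is to apply Lemma \ref{la:nu_comp_series} uniformly in all four cases. For each pair $(V, Y)$ appearing in the statement, I first read off the target formula $\nu(V, Y) = c_{V,Y} \cdot \lambda(V, Y)$ and then solve $-(\mu + \rhoa)(H) = c_{V, Y}$ to identify the candidate reducible spectral parameter $\mu \in \mf{a}^*$. The observation here is that \eqref{eq:T_decomp} makes $T_Y^V(p_{V, \mu})(e)$ affine in $(\mu+\rhoa)(H)$ with slope $\lambda(V,Y)$ (nonzero by Proposition~\ref{prop:lambdaVY_BO}\,\ref{it:lambdaVY_2}), so the target formula uniquely determines the $\mu$ at which the scalar vanishes. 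It then suffices to exhibit a nontrivial $G$-invariant subspace $U \subseteq \pst{\mu}$ with $\on{mult}_K(Y, U) \neq 0$ and $\on{mult}_K(V, U) = 0$; Lemma \ref{la:nu_comp_series} gives $T_Y^V(p_{V, \mu})(e) = 0$ and \eqref{eq:T_decomp} then yields the claimed value.

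For $G = \GSO{n}$, the two families of relations correspond to $\mu = \mu_\ell = -\rhoa - \ell\alpha$ and $\mu = -\mu_{\ell - 1} = \rhoa + (\ell - 1)\alpha$ respectively. At $\mu_\ell$ the socle $\bigoplus_{k \geq \ell + 1} Y_k$ from Theorem \ref{thm:socle} separates $Y_{\ell + 1}$ from $Y_\ell$. For the second formula, duality between $\pst{\mu}$ and $\pst{-\mu}$ shows that $\pst{-\mu_{\ell-1}}$ contains a finite-dimensional subrepresentation with $K$-type content $\bigoplus_{k = 0}^{\ell - 1} Y_k$, which separates $Y_{\ell - 1}$ from $Y_\ell$. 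I expect the $\GSU{n}$ case to be entirely analogous: the ``lowering'' relations are handled by the socle at the appropriate exceptional $\mu_\ell$, and the ``raising'' relations by dualising to a finite-dimensional subrepresentation at $-\mu_{\ell-1}$, in both cases reading off the $K$-type content from Theorem~\ref{thm:socle}.

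The cases $G = \GSp{n}$ and $G = \GF$ will be more delicate because the pair $(V, Y)$ to be separated need not lie entirely on one side of the socle of $\pst{\mu}$ at the dictated $\mu$; for example, the $\GSp{n}$ socle $\bigoplus_{c \geq d \geq \ell + 1} V_{c, d}$ at $\mu_\ell$ contains both or neither of $V_{a, b}$ and $V_{a+1, b}$ when $b \leq a+1$, so it cannot serve as the separating $U$ for $\nu(V_{a, b}, V_{a + 1, b})$. In such situations I plan to use an intermediate subspace $U \subsetneq \pst{\mu}$ arising from the fact that the composition series at the relevant (possibly dualised) exceptional parameter has length $\geq 3$: besides the unitary socle and a finite-dimensional co-socle there is a middle composition factor. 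The Johnson--Wallach analysis \cite{JW, JW2} provides the $K$-type content of each step of the composition series, and I will take $U$ to be the preimage in $\pst{\mu}$ of the finite-dimensional top (i.e.\@ socle plus middle factor), or equivalently, after passing to $\pst{-\mu}$, a suitable finite-dimensional subrepresentation, depending on which of the two $K$-types lies in the co-socle.

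The hard part will be verifying in each of the remaining cases that the appropriate composition factor of $\pst{\mu}$ at the dictated parameter has precisely the $K$-type content needed to separate $Y$ from $V$. Once this is pinned down, the concluding scalar check is a one-line substitution combining $(\mu + \rhoa)(H)$ with the value of $\lambda(V, Y)$ read off from Lemmas \ref{la:decomp_omega_R}, \ref{la:decomp_omega_C}, \ref{la:decomp_omega_H} or \ref{la:decomp_omega_F} via Lemma \ref{la:lambda_explicit}, using where convenient the symmetry $\dim V \cdot \lambda(V, Y) = \dim Y \cdot \lambda(Y, V)$ from Proposition \ref{prop:lambdaVY_BO}\,\ref{it:lambdaVY_4}.
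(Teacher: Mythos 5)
Your proposal is essentially the paper's own proof: the paper also reduces everything to Lemma \ref{la:nu_comp_series} by exhibiting, for each pair $(V,Y)$ and the parameter $\mu$ dictated by \eqref{eq:T_decomp}, a closed $G$-invariant subspace $U\leq\pst{\mu}$ with $\on{mult}_K(Y,U)\neq0$ and $\on{mult}_K(V,U)=0$, reading off the candidate $U_K$ from the Johnson--Wallach description of the full lattice of invariant subspaces (\cite[Thm.\@ 5.1]{JW}, \cite[Thm.\@ 5.2]{JW2}). Your treatment of $\GSO{n}$ and your diagnosis of the difficulty for $\GSp{n}$ and $\GF$ (the socle need not separate the two $K$-types, so one must use an intermediate term of the composition series) are exactly right.

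One concrete correction: your claim that $\GSU{n}$ is ``entirely analogous'' to $\GSO{n}$, with the socle handling one family and a finite-dimensional dual subrepresentation the other, does not go through as stated. For $\nu(Y_{p,q},Y_{p+1,q})$ the dictated parameter is $\mu_p=-\rhoa-2p\alpha$, and the socle $\bigoplus_{p',q'\geq p+1}Y_{p',q'}$ contains $Y_{p+1,q}$ only when $q\geq p+1$; for $q\leq p$ it contains neither $K$-type, so it cannot separate. Likewise, for $\nu(Y_{p,q},Y_{p-1,q})$ at $\mu=\rhoa+2(p-1)\alpha$ the separating subspace is not finite-dimensional. In both cases one needs the ``half-quadrant'' invariant subspaces $\bigoplus_{p'\geq p+1,\,q'\geq0}Y_{p',q'}$ resp.\@ $\bigoplus_{p'\leq p-1,\,q'\geq0}Y_{p',q'}$ from the Johnson--Wallach lattice --- i.e.\@ precisely the intermediate composition-series device you reserve for $\GSp{n}$ and $\GF$. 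Since that fallback is already part of your plan, this is a misplacement of where the difficulty occurs rather than a missing idea, but as written the $\GSU{n}$ step would fail for $q\leq p$.
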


\begin{proof}In view of Lemma \ref{la:nu_comp_series} it suffices to find a closed $G$-invariant subspace $U\leq H^\mu$, for some $\mu\in\mf a^*$, such that $\on{mult}_K(V,U)=0$ and $\on{mult}_K(Y,U)\neq0$. In this case we have $\nu(V,Y)=-(\mu+\rhoa)(H)\lambda(V,Y)$. The following table determines the Harish-Chandra module $U_K$ of $U$ in each case (see \cite[Thm.\@ 5.1]{JW} and \cite[Thm.\@ 5.2]{JW2}).
\begin{center}
\begin{tabular}{c|ccccc}
$G$&$V$& $Y$&$U_K$&$\mu(H)$&$(\mu+\rhoa)(H)$\\\hline
$\GSO{n}$&$Y_\ell$&$Y_{\ell+1}$&$\oplus_{j=\ell+1}^\infty Y_j$&$-\rhoa(H)-\ell$&$-\ell$\\
&$Y_\ell$&$Y_{\ell-1}$&$\oplus_{j=0}^{\ell-1} Y_j$&$\rhoa(H)+\ell-1$&$n+\ell-2$\\\hline
\GSU{n}&$Y_{p,q}$&$Y_{p+1,q}$&$\oplus_{p'\geq p+1,q'\geq 0}Y_{p',q'}$&$-2p-\rhoa(H)$&$-2p$\\
&$Y_{p,q}$&$Y_{p,q-1}$&$\oplus_{p'\geq 0,q'\leq q-1}Y_{p',q'}$&$\rhoa(H)+2(q-1)$&$2(n+q-1)$\\
&$Y_{p,q}$&$Y_{p,q+1}$&$\oplus_{p'\geq 0,q'\geq q+1}Y_{p',q'}$&$-2q-\rhoa(H)$&$-2q$\\
&$Y_{p,q}$&$Y_{p-1,q}$&$\oplus_{p'\leq p-1,q'\geq 0}Y_{p',q'}$&$\rhoa(H)+2(p-1)$&$2(n+p-1)$\\\hline
\GSp{n}&$V_{a,b}$&$V_{a+1,b}$&$\oplus_{a'\geq a+1, a'\geq b'}V_{a',b'}$&$-(\rhoa(H)+2a)$&$-2a$\\
&$V_{a,b}$&$V_{a,b-1}$&$\oplus_{b'\leq b-1, a'\geq b'}V_{a',b'}$&$\rhoa(H)+2b-4$&$4n+2(b-1)$\\
&$V_{a,b}$&$V_{a,b+1}$&$\oplus_{b'\geq b+1, a'\geq b'}V_{a',b'}$&$-(\rhoa(H)-2+2b)$&$-2(b-1)$\\
&$V_{a,b}$&$V_{a-1,b}$&$\oplus_{a'\leq a-1, a'\geq b'}V_{a',b'}$&$\rhoa(H)-2+2a$&$4n+2a$\\\hline
\GF&$V_{m,\ell}$&$V_{m+1,\ell+1}$&$\oplus_{m'+\ell'\geq m+\ell+2}V_{m',\ell'}$&$-(\rhoa(H)+m+\ell)$&$-(m+\ell)$\\
&$V_{m,\ell}$&$V_{m-1,\ell+1}$&$\oplus_{m'-\ell'\leq m-\ell-2}V_{m',\ell'}$&$\rhoa(H)+m-\ell-8$&$14+m-\ell$\\
&$V_{m,\ell}$&$V_{m+1,\ell-1}$&$\oplus_{m'-\ell'\geq m-\ell+2}V_{m',\ell'}$&$-(\rhoa(H)-6+m-\ell)$&$6-m+\ell$\\
&$V_{m,\ell}$&$V_{m-1,\ell-1}$&$\oplus_{m'+\ell'\leq m+\ell-2}V_{m',\ell'}$&$\rhoa(H)-2+m+\ell$&$20+m+\ell$
\end{tabular}
\end{center}
\end{proof}

\bibliographystyle{amsalpha}
\bibliography{literatur.bib}
\end{document}